\documentclass[12pt]{amsart}

\usepackage{graphicx}
\usepackage{bbm}
\usepackage{ marvosym }
\usepackage{calligra,mathrsfs}
\usepackage[all]{xy}
\usepackage{float, comment}
\usepackage{mathtools}
\usepackage{amsmath}
\usepackage{amsthm}
\usepackage{amssymb}
\usepackage{amsbsy}
\usepackage{amstext}
\usepackage{amsopn}
\usepackage{mathrsfs} 
\usepackage{enumerate}
\usepackage{xcolor}
\usepackage{graphicx} 
\usepackage{microtype} 
\usepackage[margin=1in,marginparwidth=0.8in, marginparsep=0.1in]{geometry}
\usepackage[pagebackref, bookmarks=true, bookmarksopen=true, bookmarksdepth=3,bookmarksopenlevel=2, colorlinks=true, linkcolor=blue, citecolor=blue, filecolor=blue, menucolor=blue, urlcolor=blue]{hyperref}
\usepackage{tikz}
\usepackage{bbm}
\usepackage[all]{xy}
\usetikzlibrary{arrows,calc,positioning,decorations.pathreplacing} 

\usepackage[shortlabels]{enumitem}

\numberwithin{equation}{section}

\usepackage{todonotes}

\usepackage{xcolor}

\usepackage{stmaryrd}

\usepackage{tikz-cd}

\usepackage{enumitem}

\usepackage{quiver}

\numberwithin{equation}{section}
\newtheorem{thm}{Theorem}[section]
\newtheorem{prop}[thm]{Proposition}
\newtheorem{lem}[thm]{Lemma}
\newtheorem{cor}[thm]{Corollary}
\newtheorem{rem}[thm]{Remark}
\newtheorem{definition}[thm]{Definition}
\newtheorem{example}[thm]{Example}

\newtheorem{conj}[thm]{{\bf Conjecture}}

\newtheorem*{cor*}{Corollary}

\newcommand{\nc}{\newcommand}

\nc{\bA}{\mathbb A}
\nc{\bC}{\mathbb C}
\nc{\bc}{{\bf c}}
\nc{\bD}{\mathbb D}
\nc{\bd}{\mathbb d}
\nc{\bG}{\mathbb G}
\nc{\bi}{\bold i}
\nc{\bL}{\mathbb L}
\nc{\bN}{\mathbb N}
\nc{\bO}{\mathbb O}
\nc{\bP}{\mathbb P}
\nc{\bQ}{\mathbb Q}
\nc{\bR}{\mathbb R}
\nc{\bS}{\mathbb S}
\nc{\bu}{\mathbb u}
\nc{\bv}{\bold v}
\nc{\bw}{\bold w}
\nc{\bW}{\mathbb W}
\nc{\bX}{\mathbb X}
\nc{\bY}{\mathbb Y}
\nc{\bZ}{\mathbb Z}

\nc{\cA}{\mathcal A}
\nc{\cB}{\mathcal B}
\nc{\cC}{\mathcal C}
\nc{\cD}{\mathcal D}
\nc{\cE}{\mathcal E}
\nc{\cF}{\mathcal F}
\nc{\cG}{\mathcal G}
\nc{\cH}{\mathcal H}
\nc{\cI}{\mathcal I}
\nc{\cK}{\mathcal K}
\nc{\cL}{\mathcal L}
\nc{\cM}{\mathcal M}
\nc{\cN}{\mathcal N}
\nc{\cO}{\mathcal O}
\nc{\cP}{\mathcal P}
\nc{\cQ}{\mathcal Q}
\nc{\cR}{\mathcal R}
\nc{\cT}{\mathcal T}
\nc{\cU}{\mathcal U}
\nc{\cV}{\mathcal V}
\nc{\cW}{\mathcal W}
\nc{\cX}{\mathcal X}

\nc{\al}{\alpha}
\nc{\be}{\beta}
\nc{\la}{\lambda}
\nc{\La}{\Lambda}
\nc{\ve}{\varepsilon}
\nc{\om}{\omega}
\nc{\bPsi}{\boldsymbol{\Psi}}

\nc{\gl}{\mathfrak{gl}}
\nc{\fsl}{\mathfrak{sl}}
\nc{\g}{\mathfrak{g}}
\nc{\gh}{\widehat\g}
\nc{\h}{\mathfrak{h}}
\nc{\fb}{{\mathfrak b}}
\nc{\fg}{{\mathfrak g}}
\nc{\fgh}{{\widehat{\mathfrak g}}}
\nc{\fh}{{\mathfrak h}}
\nc{\fl}{\mathfrak{l}}
\nc{\fm}{{\mathfrak m}}
\nc{\fM}{{\mathfrak M}}
\nc{\fp}{{\mathfrak p}}
\nc{\ft}{\mathfrak{t}}

\nc{\fn}{{\mathfrak n}}
\nc{\fQ}{\mathfrak{Q}}

\nc{\Aut}{\mathrm{Aut}}
\nc{\ch}{{\mathop {\rm ch}}}
\nc{\tr}{{\mathop {\rm tr}\,}}
\nc{\im}{{\mathop {\rm im}}}
\nc{\id}{{\mathop {\rm id}}}
\nc{\ad}{{\mathop {\rm ad}}}
\nc{\gr}{\mathrm{gr}}
\nc{\ord}{\mathrm{ord}}
\nc{\red}{\mathrm{red}}
\nc{\End}{\operatorname{End}}
\nc{\Spec}{\operatorname{Spec}}
\nc{\Spf}{\operatorname{Spf}}
\nc{\Proj}{\operatorname{Proj}}
\nc{\Pic}{\operatorname{Pic}}
\nc{\Lie}{\operatorname{Lie}}
\nc{\Der}{\operatorname{Der}}
\nc{\Coh}{\operatorname{Coh}}
\nc{\coh}{\mathrm{coh}}
\nc{\qcoh}{\mathrm{Qcoh }}
\nc{\Gal}{\operatorname{Gal}}
\nc{\Hom}{\mathrm{Hom}}
\nc{\Rhom}{\mathrm{RHom}}
\nc{\cHom}{\mathcal{Hom}}
\nc{\Ann}{\mathrm{Ann}}
\nc{\Vect}{\mathrm{Vect}}
\nc{\wt}{\mathrm{wt}}
\nc{\hw}{\mathrm{hw}}
\nc{\rk}{\operatorname{rank}}
\nc{\Gr}{{\mathrm {Gr}}}
\nc{\Fl}{\mathrm{Fl}}
\nc{\spn}{\mathrm{span}}
\nc{\Rep}{\operatorname{Rep}}
\nc{\Irrep}{\mathrm{Irrep }}
\nc{\supp}{\operatorname{supp}}
\nc{\tp}{\mathrm{top}}
\nc{\codim}{\mathrm{codim}}
\nc{\IC}{\operatorname{IC}}
\nc{\Res}{\mathrm{Res}}
\nc{\modules}{\mathrm{-mod}}
\nc{\Perv}{\mathrm{Perv}}
\nc{\Forg}{\operatorname{Forg}}
\nc{\Maps}{\mathrm{Maps}}
\nc{\Frac}{\operatorname{Frac}}
\nc{\Stab}{\operatorname{Stab}}
\nc{\shhom}{\mathop{\mathcal{H}\! \mathit{om}}\nolimits}
\nc{\chom}{\mathop{\mathcal{H}\! \mathit{om}}\nolimits}
\nc{\cEnd}{\mathop{\mathcal{E}\! \mathit{nd}}\nolimits}
\nc{\mods}{\mathrm{-mod}}
\nc{\dgmods}{\mathrm{-dgmod}}
\nc{\exo}{\mathrm{exo}}
\nc{\Ext}{\operatorname{Ext}}
\nc{\dg}{\mathrm{dg}}
\nc{\pr}{\mathrm{pr}}
\nc{\op}{\mathrm{op}}
\nc{\ukey}{{\underline{k}}}
\nc{\St}{\operatorname{St}}
\nc{\GL}{\operatorname{GL}}
\nc{\PGL}{\operatorname{PGL}}

\nc{\GfL}{{(G \times \bC^\times, \fl \oplus \bC D)}}
\nc{\Gfl}{{(G \times \bC^\times, \fl \oplus \bC D)}}

\nc{\eO}{\EuScript{O}}

\nc{\bra}{\langle}
\nc{\ket}{\rangle}
\nc{\pa}{\partial}
\nc{\ld}{\ldots}
\nc{\cd}{\cdots}
\nc{\hk}{\hookrightarrow}
\nc{\T}{\otimes}
\nc{\ov}{\overline}
\nc{\wh}{\widehat}
\nc{\wti}{\widetilde}
\nc{\svee}{{\!\scriptscriptstyle\vee}}
\nc{\ula}{{\underline{\la}}}
\nc{\umu}{{\underline{\mu}}}
\nc{\conv}{{\widetilde \times}}
\nc{\lach}{{\la^\svee}}
\nc{\alch}{{\al^\svee}}
\nc{\omch}{{\omega^\svee}}
\nc{\much}{{\mu^\svee}}
\nc{\md}{\text {--mod}}
\nc{\pt}{\mathrm{pt}}
\nc{\torus}{\bC^\times}

\nc{\Tr}{{\mathop {\rm Tr}\,}}
\nc{\Id}{{\mathop {\rm Id}}}

\nc{\msl}{\mathfrak{sl}}
\nc{\mgl}{\mathfrak{gl}}

\nc{\U}{\mathrm U}

\nc{\Q}{\mathfrak Q}

\nc{\on}{\operatorname} \nc\ol{\overline} \nc\ul{\underline}

\nc{\BA}{{\mathbb{A}}} \nc{\BC}{{\mathbb{C}}} \nc{\BF}{{\mathbb{F}}}
\nc{\BD}{{\mathbb{D}}} \nc{\BG}{{\mathbb{G}}} \nc{\BQ}{{\mathbb{Q}}}
\nc{\BM}{{\mathbb{M}}} \nc{\BN}{{\mathbb{N}}} \nc{\BO}{{\mathbb{O}}}
\nc{\BP}{{\mathbb{P}}} \nc{\BR}{{\mathbb{R}}}
\nc{\BZ}{{\mathbb{Z}}} \nc{\BS}{{\mathbb{S}}} \nc{\BW}{{\mathbb{W}}}

\nc{\CA}{{\mathcal{A}}} \nc{\CL}{{\mathcal{L}}} \nc{\CV}{{\mathcal{V}}} \nc{\CW}{{\mathcal{W}}}
\nc{\CalD}{{\mathcal{D}}}

\nc{\sic}{{\on{sc}}}
\nc{\add}{{\on{add}}}

\nc{\Quiv}{Q}
\nc{\Core}{\dot{\mathfrak{L}}}

\newcommand\iso{\,\vphantom{j^{X^2}}\smash{\overset{\sim}{\vphantom{\rule{0pt}{0.20em}}\smash{\longrightarrow}}}\,}

\title[Noncommutative resolutions and canonical bases]{Noncommutative resolutions of affine Schubert varieties in type A and canonical bases}

\author{Ilya Dumanski}
\address{Ilya Dumanski:\newline
		Department of Mathematics, MIT, Cambridge, MA 02139, USA.
	}
	\email{ilyadumnsk@gmail.com}

\author{Vasily Krylov}
\address{Vasily Krylov: \newline
Department of Mathematics
Harvard University and CMSA
\newline
1 Oxford Street,
Cambridge, MA 02138,
USA}
\email{vkrylov@math.harvard.edu, krylovasya@gmail.com}

\begin{document}

\begin{abstract}
Given a resolution $\widetilde{\mathrm{Gr}}^{\underline{\lambda}} \rightarrow \overline{\mathrm{Gr}}^\lambda$ of an affine Schubert variety for $\GL_n$, we define its noncommutative version --- a sheaf of algebras on $\overline{\mathrm{Gr}}^\lambda$, derived equivalent to $\widetilde{\mathrm{Gr}}^{\underline{\lambda}}$ as well as its Steinberg versions in both zero and sufficiently large positive characteristics. 
This, in particular, allows us to define the perversely-exotic t-structure on the derived category of equivariant coherent sheaves on  $\widetilde{\mathrm{Gr}}^{\underline{\lambda}}$, analogously to Bezrukavnikov--Mirkovi\'c in the case of the Springer resolution. We study the basis of classes of irreducible objects in the equivariant K-theory, and explicitly identify it with the (parabolic) Kazhdan--Lusztig canonical basis in a certain cell quotient. This allows us to relate it to  the canonical basis for the quantum affine group.
In the course of the proof, we establish some properties of coherent-constructible equivalences.
\end{abstract}

\maketitle

\tableofcontents










\section{Introduction}
\subsection{Bases from t-structures}
Many representation-theoretic objects possess important bases with  canonical properties. One way to construct a canonical basis in a vector space is to identify this vector space with a Grothendieck group of a finite length abelian category, and take classes of simple objects.

Often, the role of such a category is played by the category of perverse (constructible) sheaves on an appropriate variety.
In such a way one can construct canonical bases in Hecke algebras \cite{KL79, KL80}, quantum groups \cite{Lus90, Lus10}, and related spaces.

Lusztig \cite{Lus98, Lus99} suggested that important representation-theoretic bases should also appear in (equivariant) K-theory of varieties. Although equivariant K-theory is by definition the Grothendieck group of the abelian category of equivariant coherent sheaves, this category is typically not of finite length, and hence there is no obvious natural basis.

A strategy to overcome this difficulty was suggested in \cite{BM13}: one can define a nonstandard t-structure on the derived category of equivariant coherent sheaves, whose heart is of finite length. Classes of simple objects in the heart of this t-structure then give a basis in equivariant K-theory. In this manner, in particular, the authors of \cite{BM13} proved Lusztig's conjectures about the canonical basis in the K-theory of Springer fibers.

One can also realize the Kazhdan--Lusztig canonical basis of the affine Hecke algebra in this way. Namely, let $G$ be a connected reductive  group, and let $\wti \cN \rightarrow \cN$ be the associated Springer resolution; recall \cite{KL87} (see also \cite{CG97}) that  one has a natural isomorphism of algebras \[K^{G \times \bG_m} (\wti \cN \times_\cN \wti \cN) \simeq \cH_{G^\vee};\] 
here $\bG_m$ stands for the contracting action on $\cN$, and $\cH_{G^\vee}$ is the affine Hecke algebra of the Langlands dual group. 
One also identifies $K^{G \times \bG_m}(\wti \cN)$ with the anti-spherical $\cH_{G^\vee}$-module. One defines the {\it perversely-exotic} t-structure on $D_\coh^{b, G \times \bG_m} (\wti \cN)$ as follows. First, one constructs the {\it noncommutative Springer resolution} \cite{Bez06b} --- a noncommutative $\cO(\cN)$-algebra $A$, for which the derived equivalence $D_\coh^{b, G \times \bG_m} (\wti \cN) \simeq D^b(A\mods^{G \times \bG_m})$ holds. Then, one considers the perverse coherent t-structure \cite{Bez00, AB10} on $D^b(A\mods^{G \times \bG_m})$. It follows from \cite[Theorem~6.2.1]{BM13} that classes of simple objects in the heart of this t-structure correspond to the Kazhdan--Lusztig canonical basis in the anti-spherical module $K^{G \times \bG_m}(\wti \cN)$. One can similarly define the perversely-exotic t-structure on $\wti \cN \times^\bL_\g \wti \cN$, classes of simple objects in whose heart give the canonical basis of $\wh \cH_{G^\vee}$, see \cite[Theorem~54]{Bez16}.

\subsection{The main results}
\subsubsection{Noncommutative resolutions and perversely-exotic t-structures}
The goal of this paper is to implement a similar strategy to define and study canonical bases in the equivariant K-theory of the resolutions of affine Schubert varieties in type A, as well as their Steinberg-type versions.

Namely, consider the group $\operatorname{GL}_n$, its affine Grassmannian $\Gr = \Gr_{\operatorname{GL}_n}$, and a resolution $p\colon \wti \Gr^{\ula} \rightarrow \ol \Gr^\la$ by the convolution diagram of an affine Schubert variety (here $\ula = (\la_1, \hdots, \la_m)$ are fundamental coweights, $\la = \sum_{i} \la_i$). 
Our first result is the construction of a \textit{noncommutative counterpart of this resolution}. Similarly to the Springer case, we construct a $\operatorname{GL}_n(\cO)\rtimes \bG_m$-equivariant vector bundle $\cE_\ula$ on $\wti \Gr^\ula$, which is a relative tilting generator: the functor $\bR p_* \shhom(\cE_\ula, -)$ defines derived equivalences (see Theorem~\ref{thm: noncommutative resolution of affine Schubert variety})
\begin{align*}
D^b(\Coh(\wti \Gr^\ula)) &\simeq D^b(\cA_\ula\mathrm{-mod});\\
D^b(\Coh^{\operatorname{GL}_n(\cO) \rtimes \bG_m} (\wti \Gr^\ula)) &\simeq D^b(\cA_{\ula}\mathrm{-mod}^{\operatorname{GL}_n(\cO) \rtimes \bG_m}),
\end{align*}
where $\cA_\ula = \bR p_* \shhom(\cE_\ula, \cE_\ula)$ is a sheaf of noncommutative algebras on $\ol \Gr^\la$ (here $\bG_m$ stands for the loop rotation torus).  

For two sequences of fundamental coweights $\ula^1, \ula^2$ such that $\sum_i |\la^1_i| = \sum_i |\la^2_i| = N$, we consider the (derived) Steinberg-type varieties $\wti \Gr^{\ula^1} \times^\bL_{\Gr} \wti \Gr^{\ula^2}$ (in fact, one needs to take the derived product not over ${\Gr}$, but over a certain Beilinson--Drinfeld deformation; we explain it and the reason for considering it in Sections~\ref{sec: modified Lusztig's correspondence} and \ref{subsec: tiltings for steinbergs}, but ignore this issue in the Introduction); here $|\om_i| = i$ for a fundamental $\om_i$. 
There is a tilting object $\cE_{\ula^1} \boxtimes \cE^*_{\ula^2}$, which similarly defines the derived equivalence (see~\eqref{eq: derived equivalence for steinbergs}):
\begin{equation*}
D^b \Coh^{\operatorname{GL}_n(\cO) \rtimes \bG_m} ( \wti \Gr^{\ula^1} \times^\bL_{\Gr} \wti \Gr^{\ula^2}) \simeq D^b(\cA_{\ula^1} \otimes^\bR_{\cO_{\Gr}} \cA_{\ula^2}^\op\mods^{\operatorname{GL}_n(\cO) \rtimes \bG_m}).
\end{equation*}

This allows us to define the perversely-exotic t-structures on $D^b\Coh^{\operatorname{GL}_n(\cO) \rtimes \bG_m} (\wti \Gr^\ula)$ and $D^b \Coh^{\operatorname{GL}_n(\cO) \rtimes \bG_m} ( \wti \Gr^{\ula^1} \times^\bL_{\Gr} \wti \Gr^{\ula^2})$ as the perverse coherent t-structures on the categories of        $\operatorname{GL}_n(\cO) \rtimes \bG_m$-equivariant $\cA_{\ula}$ and $\cA_{\ula^1} \otimes^\bR_{\cO_{\Gr}} \cA_{\ula^2}^\op$-modules respectively.
The base field may have characteristic zero or sufficiently large positive characteristic.

Note that for such t-structure to be well-behaved, it is essential that dimensions of $G(\cO) \rtimes \bG_m$-orbits in $\ol \Gr^\la$ have the same parity (see \cite{AB10}).

\subsubsection{Description of the basis}
The hearts of the t-structures described above are of finite length, and hence the classes of simple objects form bases in $K^{\GL_n(\cO) \rtimes \bG_m} (\wti \Gr^\ula)$ and $K^{\GL_n(\cO) \rtimes \bG_m} (\wti \Gr^{\ula^1} \times_{\Gr} \wti \Gr^{\ula^2})$. We call them the perversely-exotic bases. Our goal is to describe it algebraically: namely, we first explicitly identify it with the parabolic Kazhdan--Lusztig basis in a certain cell quotient and then, using the result of \cite{SV00}, relate it to the Lusztig canonical basis for the quantum loop group. Let us describe the main results.

\

For $\ula = (\la_1, \hdots, \la_m)$ as above, take integers $k_i$ such that $\la_i = \om_{k_i}$; denote $\ukey = (k_1, \hdots, k_m)$. Let $d \in \bZ_{\geq 0}$. We define 
$'\ukey = (\underbrace{n, \hdots, n}_d, k_1, \hdots, k_m)$, it is a composition of $N+nd$, where $N=\sum_i k_i$. 
To a pair $\underline{\lambda}^1, \underline{\lambda}^2$ with $|\underline{\lambda}^1|=|\underline{\lambda}^2|$ one can associate tuples $'\underline{k}^1, '\underline{k}^2$ which determine the {\emph{parabolic}} version of the affine Hecke algebra modules that we denote by ${}^{'\underline{k}^1}\mathcal{H}^{'\underline{k}^2}$. By definition, ${}^{'\underline{k}^1}\mathcal{H}^{'\underline{k}^2}$ is obtained from $\mathcal{H}$ (affine Hecke algebra for $\operatorname{GL}_{N+nd}$) via left symmetrization by the idempotent coming from ${'\underline{k}^1}$ and right symmetrization by the idempotent coming from ${'\underline{k}^2}$ (see Section~\ref{subsec: canonical bases in parabolic hecke} for details). Similarly, one can define the parabolic anti-spherical module ${}^{'\underline{k}}\mathcal{H}^{\mathrm{asph}}$ obtained from the anti-spherical module of $\mathcal{H}$ by partial symmetrization. The spaces ${}^{'\underline{k}^1}\mathcal{H}^{'\underline{k}^2}$,  ${}^{'\underline{k}}\mathcal{H}^{\mathrm{asph}}$ have Kazhdan--Lusztig canonical bases. Moreover, ${}^{'\underline{k}^1}\mathcal{H}^{'\underline{k}^2}$ has a filtration by two-sided cells labeled by a subset of nilpotent orbits for $\operatorname{GL}_{N+nd}$. Similarly, ${}^{'\underline{k}}\mathcal{H}^{\mathrm{asph}}$ has a filtration by cells. We will denote by ${}^{'\underline{k}^1}\mathcal{H}^{'\underline{k}^2}_{\leq n}$, ${}^{'\underline{k}}\mathcal{H}^{\mathrm{asph}}_{\leq n}$ their quotients by the subspaces generated by canonical basis elements corresponding to nilpotents that have at least $n+1$ Jordan blocks. 
In the following Theorem we identify the perversely-exotic basis with the Kazhdan--Lusztig canonical basis.  
\begin{thm}[See  Theorem \ref{thm: exotic in convolution via Hecke}]
\label{thm:main thm hecke} 
\begin{enumerate}[(a)]
\
\item For any $d \geqslant 0$ there exist explicit homomorphisms:
\begin{align}\label{eq:maps from hecke to convolution}
{}^{'\underline{k}^1}\mathcal{H}^{'\underline{k}^2}_{\leq n} &\rightarrow 
K^{\GL_n(\cO) \rtimes \bG_m} (\wti \Gr^{\ula^1} \times_{\Gr} \wti \Gr^{\ula^2}), & {}^{'\underline{k}}\mathcal{H}^{\mathrm{asph}}_{\leq n} &\rightarrow K^{\GL_n(\cO) \rtimes \bG_m} (\wti \Gr^\ula) 
\end{align}
which are injective for $d=0$ and are isomorphisms for $d>0$.

\item The homomorphisms (\ref{eq:maps from hecke to convolution}) send Kazhdan--Lusztig canonical basis to the perversely-exotic basis.
\end{enumerate}
\end{thm}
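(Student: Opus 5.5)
The approach is to reduce everything to the (partial) Springer setting for the larger group $\GL_{N+nd}$, where the analogous statements are known by \cite{BM13, Bez16}. The bridge is the modified Lusztig correspondence of Section~\ref{sec: modified Lusztig's correspondence}: an open embedding of $\ol\Gr^\la$, and of its Beilinson--Drinfeld deformation, into a transversal-slice type subvariety of the nilpotent cone of $\gl_{N+nd}$. Its image should be the locus of nilpotents with at most $n$ Jordan blocks, taken inside a Mirkovi\'c--Vybornov type slice.

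\textbf{Step 1: identify the resolutions.} Pulled back along this correspondence, the convolution resolution $\wti\Gr^\ula\to\ol\Gr^\la$ should be the restriction of the partial Springer resolution $T^*(\GL_{N+nd}/P_{'\ukey})\to\cN$. Similarly, the Steinberg-type spaces $\wti\Gr^{\ula^1}\times^\bL_{\Gr}\wti\Gr^{\ula^2}$ should be restrictions of the parabolic Steinberg varieties for $(P_{'\ukey^1},P_{'\ukey^2})$. This is why one must work over the Beilinson--Drinfeld deformation: it is what makes the derived fiber products agree. Under this identification the equivariance group $\GL_n(\cO)\rtimes\bG_m$ must be matched, up to a unipotent kernel that does not affect K-theory or the relevant categories, with the reductive part of a stabilizer of the slice inside $\GL_{N+nd}\times\bG_m$, the loop rotation going to the dilation torus.

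\textbf{Step 2: construct the maps and prove part (a).} By Kazhdan--Lusztig/Ginzburg, the $\GL_{N+nd}\times\bG_m$-equivariant K-theory of the parabolic Steinberg variety is ${}^{'\ukey^1}\cH^{'\ukey^2}$, and that of $T^*(G/P_{'\ukey})$ is ${}^{'\ukey}\cH^{\mathrm{asph}}$. The homomorphisms \eqref{eq:maps from hecke to convolution} are then restriction to the open slice followed by induction equivalence. Restriction to an open subset is surjective on K-theory by the localization sequence. Its kernel is spanned by classes supported on orbits with more than $n$ Jordan blocks. Using the description of two-sided cells by nilpotent orbits (Lusztig), this kernel is exactly the span of canonical basis elements for those cells, so the map factors through the quotient ${}^{'\ukey^1}\cH^{'\ukey^2}_{\leq n}$ injectively. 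For $d>0$ the slice sees the whole affine Schubert variety, so the map is an isomorphism. For $d=0$ only part of the relevant $\GL_n(\cO)$-orbits are hit, so we expect only injectivity; there the map should instead be embedded into the $d>0$ case by a stabilization $d\mapsto d+1$.

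\textbf{Step 3: prove part (b).} Show that the restriction of the Bezrukavnikov tilting bundle on $T^*(G/P_{'\ukey})$ agrees, up to adding or removing summands, with $\cE_\ula$ (respectively $\cE_{\ula^1}\boxtimes\cE^*_{\ula^2}$). Then the noncommutative algebras $\cA_\ula$ and $\cA_{\ula^1}\otimes^\bR_{\cO_{\Gr}}\cA^\op_{\ula^2}$ are Morita equivalent to restrictions of the noncommutative partial Springer algebras. Perverse coherent t-structures for a middle perversity are compatible with restriction to transversal slices: the codimensions of orbits are preserved, and the parity of orbit dimensions holds on both sides. So the restriction functor is t-exact, and it sends irreducible objects to irreducible objects or to zero. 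By \cite[Theorem~6.2.1]{BM13} and \cite[Theorem~54]{Bez16}, in their parabolic versions, simple perversely-exotic objects upstairs give the parabolic KL basis. The simples killed are exactly those supported on orbits with more than $n$ blocks. Combined with Step 2, this proves (b).

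\textbf{Main obstacle.} I expect the hardest step to be Step 3's comparison of tilting bundles together with the equivariance bookkeeping: verifying that $\cE_\ula$ is, up to Morita equivalence, the restriction of the parabolic noncommutative Springer tilting bundle. This must hold compatibly with the Beilinson--Drinfeld deformation and in large positive characteristic. I would first try to check it on generators, namely line bundles and exceptional collections coming from fundamental representations on each convolution factor. Then I would use that both sides are relative tilting generators over the same base, so a surjective Hom comparison suffices.
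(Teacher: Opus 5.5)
Your proposal has a genuine gap in part (a), caused by an incorrect picture of the Lusztig correspondence. There is no open embedding of $\ol\Gr^\la$ (or of $\bX_N$) into a slice of the nilpotent cone, since both are projective and the slices are affine. What the paper uses is a \emph{smooth morphism of quotient stacks} $\wti\Gr^{\ula}\to \wti\cN^{\leq n}_{'\ukey}/\GL_{N+dn}$. It is built from the $\GL_{N+dn}$-torsor ${}'\bY_N\to\bX_N$ and the map $L\mapsto (z\curvearrowright L_0/z^dL)$. The map in (a) is the open restriction $K(\wti\cN_{'\ukey})\to K(\wti\cN^{\leq n}_{'\ukey})$, which is fine in your Step~2, followed by \emph{pullback} along that morphism. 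It is not followed by an induction equivalence, and $\GL_n(\cO)\rtimes\bG_m$ is not globally identified with a stabilizer. If your Step~2 were right, it would also give surjectivity for $d=0$, which is false. For $n=2$, $\ula=(\om_1)$ the map is $K^{\GL_1\times\bG_m}(\pt)\to K^{\GL_2(\cO)\rtimes\bG_m}(\bP^1)\simeq R(\GL_1\times\GL_1\times\bG_m)$, which is not onto. Your explanation of the $d=0$ defect is also wrong. Every orbit $\Gr^\mu\subset\ol\Gr^\la$, $\mu=(a_1,\dots,a_n)$, corresponds to the nilpotent orbit whose Jordan blocks are the nonzero $a_i$, so all orbits are hit for every $d$. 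And ``for $d>0$ the slice sees the whole Schubert variety'' is not an argument for bijectivity.

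The missing idea is the orbit-by-orbit stabilizer comparison of Proposition~\ref{prop:iso_on_K_theory}. One filters both sides by strata and uses the Cartesian squares \eqref{eq:cartesian_diagr_Y_to_X} to identify the fibers over $i_\mu(z^\mu)$ and over $e_\mu$. This reduces the problem to comparing reductive stabilizers: $\prod_{i\geq 0}\GL_{r_i}\times\bG_m$ on the Grassmannian side ($r_i$ the multiplicity of $i$ in $\mu$), versus $\prod_{i\geq 1}\GL_{r_i}\times\bG_m$ for the centralizer of $e_\mu$. These agree exactly when no $a_i$ vanishes, which is what prepending $d\geq 1$ copies of $\om_n$ guarantees. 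Otherwise the extra reductive factor $\GL_{r_0}$ makes the graded pieces injective but not surjective; this is also why your ``up to a unipotent kernel'' bookkeeping in Step~1 fails at $d=0$.

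The same comparison is what makes $\Psi_\cA$ send simples to simples in (b): the reductive centralizer of $e$ is a direct factor of the reductive stabilizer of $z^\mu$, and $\cA_{\ula}|_{p'}\simeq\cA_{\ukey}|_p$ via the Mirkovi\'c--Vybornov slice. Your remark that open restriction sends irreducibles to irreducibles or zero only covers the passage to $\wti\cN^{\leq n}$, not the change of equivariance group.

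For $d>0$, part (b) also needs the algebras pulled back from $\wti\cN_{\ukey}$ and $\wti\cN_{'\ukey}$ to be Morita equivalent on $\wti\Gr^\ula$. The paper proves this by restricting to the maximal slice, where the two resolutions are identified (Corollary~\ref{cor:iso_pull_st}). It then shows each restriction is Morita equivalent to the intrinsic Kaledin tilting there (Proposition~\ref{prop: morita-equivalence on slices}); comparing line bundles or exceptional collections is not what is needed. Finally, the ``parabolic versions'' of \cite{BM13, Bez16} you invoke are not in those papers. They are supplied in Section~\ref{sec: coherent sheaves on parabolic steinbergs} via \cite{CD23} and the t-exactness of $\pi^\star$.
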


By combining Theorem \ref{thm:main thm hecke} with the results of \cite{SV00} we obtain the following result describing the perversely-exotic basis via the quantum loop group. The language we use is that of the geometric quantum affine skew Howe duality, see \cite[Section~6]{CK18b}.

\begin{thm}[See Theorems~\ref{thm: quantum affine skew howe duality},~\ref{thm: main theorem description of basis}, Corollary~\ref{cor: action on module} for full statements] \label{thm:description via quantum group intro}
\begin{enumerate}[(a)]
\
\item \label{item: intro skew howe a} For any $d \geqslant 0$ there is an action of $U_v(\widehat \gl_{m + d})$ on
\begin{equation} \label{eq: intro skew howe homomorphism}
\bigoplus_{\ukey} K^{\GL_n(\cO) \rtimes \bG_m} (\wti \Gr^{\ukey} ),
\end{equation}
where the sum runs over all $\underline k = (k_1, \hdots, k_m)$ with $1 \leq k_i \leq n-1$ and with fixed $\sum_{i} k_i$.

\item \label{item: intro skew howe b} Perversely-exotic basis in~\eqref{eq: intro skew howe homomorphism} corresponds to the Lusztig canonical basis for quantum affine groups. It is compatible with embeddings $\dot U_v(\widehat \gl_{m + d}) \hookrightarrow \dot U_v(\widehat \gl_{m + d + 1})$.
\end{enumerate}
\end{thm}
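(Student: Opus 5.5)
The plan is to deduce Theorem~\ref{thm:description via quantum group intro} from Theorem~\ref{thm:main thm hecke}, combined with the Schur--Weyl type identification of \cite{SV00} between parabolic affine Hecke modules and the modified quantum affine algebra $\dot U_v(\widehat\gl_{m+d})$.

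\textbf{Step 1: affine $v$-Schur algebra.} Fix $d$ and the total size $N+nd$. By affine quantum Schur--Weyl duality, the direct sum
\[
\bigoplus_{'\ukey^1,\,'\ukey^2} {}^{'\ukey^1}\cH^{'\ukey^2}
\]
over compositions with $m+d$ parts is the affine $v$-Schur algebra $S_v^{\mathrm{aff}}(m+d, N+nd)$. This algebra is a quotient of $\dot U_v(\widehat\gl_{m+d})$, and idempotents $1_{'\ukey}$ correspond to weights. By \cite{SV00}, under this quotient map the Lusztig canonical basis of $\dot U_v(\widehat\gl_{m+d})$ goes to the canonical basis of the Schur algebra or to zero, and the latter coincides with the parabolic Kazhdan--Lusztig basis.

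\textbf{Step 2: compatibility with the cell quotient.} The cell filtration is by nilpotent orbits, i.e.\ by partitions. I will check that the span of canonical basis elements whose nilpotent has at least $n+1$ Jordan blocks is a two-sided ideal. This holds because cells give ideals, and the order on orbits is compatible with the number of blocks. The quotient ${}^{\bullet}\cH^{\bullet}_{\leq n}$ is then again an algebra, the quotient map sends canonical basis to canonical basis or zero, and $\dot U_v(\widehat\gl_{m+d})$ acts on it through the Schur algebra.

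\textbf{Step 3: transport to K-theory.} Theorem~\ref{thm:main thm hecke}(a) identifies the parabolic anti-spherical quotient ${}^{'\ukey}\cH^{\mathrm{asph}}_{\leq n}$ with $K^{\GL_n(\cO)\rtimes\bG_m}(\wti\Gr^\ukey)$ for $d>0$, and for $d=0$ embeds it. Restricting to compositions whose first $d$ entries equal $n$, the $\dot U_v$-action, which comes from the left action of the Schur algebra, gives \eqref{eq: intro skew howe homomorphism}.

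\textbf{Step 4: bases and stability.} Part (b) follows by combining Theorem~\ref{thm:main thm hecke}(b) with Step 1: the perversely-exotic basis corresponds to the images of Lusztig canonical basis elements acting on the canonical highest weight vectors.

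\textbf{Main obstacle.} The hard part is the compatibility with the embeddings $\dot U_v(\widehat\gl_{m+d})\hookrightarrow\dot U_v(\widehat\gl_{m+d+1})$. Passing from $d$ to $d+1$ prepends a part equal to $n$. Geometrically this corresponds to convolving with the point orbit $\ol\Gr^{\om_n}$, i.e.\ tensoring by the determinant line bundle. I would therefore show two things:
\begin{itemize}
\item the maps of Theorem~\ref{thm:main thm hecke} for $d$ and $d+1$ intertwine the stabilization map on the Hecke side with the identity on K-theory;
\item on the Hecke side, the stabilization map sends canonical basis to canonical basis. This is the statement that adding a full column of size $n$ is a shift by the central element.
\end{itemize}
I expect this verification to be the most delicate point. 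To carry it out I would compare Kazhdan--Lusztig polynomials via length additivity for the translation by the central coweight.
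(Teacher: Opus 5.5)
Your backbone (Theorem~\ref{thm:main thm hecke} combined with \cite{SV00}) is the same as the paper's. However, two steps fail as written.

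\emph{Surjectivity.} In Step~1, the affine $v$-Schur algebra is \emph{not} a quotient of $\dot U_v(\wh\gl_{m+d})$. By \cite[Section~4.3]{SV00} (aperiodicity), the image of $\dot U_v(\wh\gl_{m+d})$ is spanned only by the canonical basis elements indexed by aperiodic periodic matrices, which in general is a proper subspace. So Theorem~\ref{thm:main thm hecke} plus \cite{SV00} only tells you that Lusztig canonical basis elements go to perversely-exotic classes or to $0$. It does not show that every perversely-exotic class is hit. That is what ``corresponds'' means here (compatibility of a \emph{surjection} with bases), and your cyclic-module description in Step~4 needs it too. The missing idea is the padding in Theorem~\ref{thm: quantum affine skew howe duality}(3): insert both a $0$ and an $n$ into the two weights.
\begin{itemize}
\item A zero entry forces the corresponding row of every periodic matrix in the block to vanish. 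Hence all of them are aperiodic, and $\dot U_v\to\dot{\mathbb S}$ is onto that block.
\item The entry $n$ makes $\dot{\mathbb S}\to\mathrm{Conv}^n$ an isomorphism, by Corollary~\ref{cor: Psi define iso on K-theory}.
\end{itemize}

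\emph{The module in Step~3.} The span of weights whose first $d$ entries equal $n$ is not stable under, e.g., $F_d$, so restricting does not give a $U_v(\wh\gl_{m+d})$-module. The action has to be defined on all weights in $[0,n]^{m+d}$, deleting $0$'s and $n$'s on the Grassmannian side. Concretely, map Schur algebra blocks to $K^{\GL_n(\cO)\rtimes\bG_m}(\wti\Gr^{\ukey^1}\times_{\Gr}\wti\Gr^{\ukey^2})$ by the Lusztig correspondence, and let these act by convolution. For weights with no entry equal to $n$ the Hecke-to-K-theory map is in general only injective. So you cannot simply transport the module structure through the isomorphism of Theorem~\ref{thm:main thm hecke}(a).

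\emph{The ``main obstacle''.} This is aimed at the wrong target, and the proposed tool does not apply. Going from $d$ to $d+1$ replaces $\GL_{N+nd}$ by $\GL_{N+n(d+1)}$. There is therefore no central translation inside a single extended affine Weyl group, and no length-additivity comparison of Kazhdan--Lusztig polynomials to make. The identification of the two truncated Hecke modules exists only through K-theory; from $d=0$ to $d=1$ it is not even surjective.

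That this identification preserves Kazhdan--Lusztig bases is already contained in Theorem~\ref{thm:main thm hecke}(b). The reason is that the perversely-exotic t-structure on $\wti\Gr^{\ula}$ does not depend on $d$. The paper proves this geometrically in Proposition~\ref{prop: compatibility of t-structures for modified lusztigs}: the noncommutative resolutions from different $d$ are Morita equivalent, via quantizations in positive characteristic (Proposition~\ref{prop: morita-equivalence on slices}).

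What genuinely remains, and your plan never addresses, is that the functors $\dot U_v(\wh\gl_{m+d})^n\to\mathrm{Conv}^n$ commute with the insertion embeddings $\iota^0,\iota^n$ at arbitrary positions. The paper gets $\iota^0$ from the compatible Schur algebra embedding of \cite[Proposition~2.3.1]{Li24}, together with deleting a repeated step in the Lusztig correspondence. It then deduces $\iota^n$ via the Chevalley involution $k_i\mapsto n-k_i$, $E_i^{(s)}\leftrightarrow F_i^{(s)}$, matched with the inverse-transpose of $\PGL_n$.
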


Note that part~\ref{item: intro skew howe a} of the theorem was already proved in \cite[Section 6]{CK18b} and constitutes an important step in the diagrammatic realization of \eqref{eq: intro skew howe homomorphism}. Our proof is different and has some advantages\footnote{There may be a minor issue (which does not affect the main results of this paper) with Cautis--Kamnitzer's argument in \cite[Section 6]{CK18b} for the case $m + d = 2$ (i.e., affine type A$_1$). The  relation (5) in \cite[Section 5.1]{CK18b} is not correct as stated in this case and should instead be replaced by the more complicated relation
\begin{equation*}\label{eq:correct_Serre}
E_1E_0^{(3)}-E_0E_1E_0^{(2)}+E_0^{(2)}E_1E_0-E_0^{(3)}E_1=0.
\end{equation*}
One should then verify this relation manually in the course of the proof of \cite[Lemma 5.5]{CK18b}. By contrast, our proof does not require checking any relations and goes through in this case without modification.}. For example, in \cite[10.1]{CK18b} it is conjectured that a certain homomorphism from affine Hecke algebra is surjective (even before tensoring with $\bC(q)$). This is evident from our construction. The kernel of this map has a geometric interpretation (compare with the discussion in Section \ref{sec:descr_of_exotic_via_canonical}). We also note that we prove considerably more than state here (see Theorem~\ref{thm: quantum affine skew howe duality}) and that the precise statement of part~\ref{item: intro skew howe b} is elaborated in Corollary~\ref{cor: action on module}.

We also mention that compatibility of bases for different $d$ in Theorem~\ref{thm:description via quantum group intro} is proved using the reduction to positive characteristic and studying quantizations therein, see Section~\ref{sec:tiltings_for_lambda_and_shift}.

\subsection{Outline of arguments}

Note that a result related to ours was proved in \cite{FF21}: there, the perverse coherent basis in $K^{\GL_n(\cO) \rtimes \bG_m}(\Gr_{\GL_n})$ was related to the Lusztig dual canonical basis. Our strategy for the proof is similar.

As we have already mentioned, Theorem \ref{thm:description via quantum group intro} is a formal consequence of Theorem \ref{thm:main thm hecke} combined with  \cite{SV00}. So, we explain how to prove Theorem \ref{thm:main thm hecke}.

\subsubsection{Modified Lusztig correspondences}
 As a first step, we employ an idea due to Lusztig \cite{L81}, later developed by Mirkovi\'c--Vybornov \cite{MV07, MVK22} and Cautis--Kamnitzer \cite[Section 3.3]{CK18b}, that the geometry of affine Grassmannian is related to the geometry of nilpotent orbits in type A. We propose a slight modification of this construction. 

Namely, recall that to $\ula$ as above we associate $'\underline{k}$.
Consider the parabolic Springer resolution $\wti \cN^{'\ukey} \rightarrow \cN^{'\ukey}$ for the group $\GL_{N + dn}$, where $N = \sum_i k_i$. Recall that $\wti\cN^{'\ukey}$ parametrizes pairs $(x,F_\bullet)$ consisting of a nilpotent element $x$ and a partial flag $F_{\bullet}$ that is strictly preserved by $x$. We will denote by  $\wti{\cN}^{'\ukey}_{\leq n}$ the open subset consisting of pairs $(x,F_{\bullet})$ such that $x$ has at most $n$ Jordan blocks.

To prove Theorem \ref{thm:description via quantum group intro}, we define functors between derived categories of equivariant coherent sheaves:
\begin{equation}\label{eq:funct nilp to conv}
\begin{aligned}
D^b\operatorname{Coh}^{\GL_{N + dn} \times \bG_m}(\wti \cN^{'\ukey}_{\leq n}) &\rightarrow  D^b\operatorname{Coh}^{\GL_n(\cO) \rtimes \bG_m} (\wti \Gr^\ula); \\
D^b\operatorname{Coh}^{\GL_{N + dn} \times \bG_m}(\wti \cN^{' \ukey^1}_{\leq n} \times^\bL_{\gl_{N + dn}} \wti \cN^{'\ukey^2}_{\leq n})  &\rightarrow D^b\operatorname{Coh}^{\GL_n(\cO) \rtimes \bG_m} (\wti \Gr^{\ula^1} \times^\bL_\Gr \wti \Gr^{\ula^2})
\end{aligned}
\end{equation}
and prove that they are compatible with convolution, perversely-exotic t-structures, and send irreducible perversely-exotic sheaves to irreducible ones. These functors are given by what we call {\emph{modified Lusztig’s correspondences}}. 
We then prove that for $d > 0$ functors \eqref{eq:funct nilp to conv} induce {\it isomorphisms} on equivariant K-theory (note that the original construction for $d=0$ does not induce an isomorphism, see e.g. \cite[Section~5]{FF21}):
\begin{equation*} 
\begin{aligned}
K^{\GL_{N + dn} \times \bG_m}(\wti \cN^{'\ukey}_{\leq n}) &\xrightarrow{\sim} K^{\GL_n(\cO) \rtimes \bG_m} (\wti \Gr^\ula); \\
K^{\GL_{N + dn} \times \bG_m}(\wti \cN^{' \ukey^1}_{\leq n} \times_{\gl_{N + dn}} \wti \cN^{'\ukey^2}_{\leq n}) &\xrightarrow{\sim} K^{\GL_n(\cO) \rtimes \bG_m} (\wti \Gr^{\ula^1} \times_\Gr \wti \Gr^{\ula^2}).
\end{aligned}
\end{equation*}
See Section~\ref{sec: modified Lusztig's correspondence} for details on the construction and the results.










\subsubsection{Parabolic Bezrukavnikov equivalences}
The above construction allows us to reduce the problem about coherent sheaves on affine Grassmannian-type varieties to a problem about coherent sheaves on Springer-type varieties. As the next step, we need to construct   the identifications
\begin{align*}
{}^{'\underline{k}^1}\mathcal{H}^{'\underline{k}^2}_{\leq n} &\simeq K^{\GL_{N + dn} \times \bG_m}(\wti \cN^{' \ukey^1}_{\leq n} \times_{\gl_{N + dn}} \wti \cN^{'\ukey^2}_{\leq n}), & {}^{'\underline{k}}\mathcal{H}^{\mathrm{asph}}_{\leq n} &\simeq K^{\GL_{N + dn} \times \bG_m}(\wti \cN^{'\ukey}_{\leq n})
\end{align*}
and prove that the Kazhdan--Lusztig canonical basis in the LHS corresponds to the perversely-exotic basis in the RHS. For this, we use the parabolic variant of Bezrukavnikov equivalence \cite{Bez16}, recently proved by Chen and Dhillon \cite{CD23}, which reduces the problem to one about constructible perverse sheaves on an affine flag variety, for which the powerful tools (e.g. Deligne weights theory) are available, and which is well-studied.

In order to reduce the problem to a statement in the constructible world, we study what happens with t-structures under parabolic Bezrukavnikov's equivalences. The corresponding statement is proved in Theorem~\ref{thm: t-structures under bezrukav equivalences}. Note that for the full (non-parabolic) case, it is \cite[Theorem~54]{Bez16}. For the parabolic case, partial results appeared in \cite[2.D]{ACR18}, \cite[6.5]{BL23}, \cite[A.0.8(5)]{Pro26}.



\subsection{Directions for future research}

\subsubsection{Comparison with Cautis--Koppensteiner t-structures}
t-structures on the categories of equivariant coherent sheaves on $\wti \Gr^{\ula}$ were also defined in \cite{CK20} by completely different methods. 
We conjecture that these t-structures coincide.

\subsubsection{Projective basis and pairing}\label{subsec:projective basis and pairing}
In the heart of the t-structure we construct, there is also a ``dual'' basis, formed by the classes of projective objects (as opposed to simple objects). It would be interesting to understand it. Note that the equivariant K-theory of $\widetilde{\operatorname{Gr}}^{\underline{\lambda}}$ admits a natural nondegenerate $\Ext$-pairing which is well-defined since $\widetilde{\operatorname{Gr}}^{\underline{\lambda}}$ is smooth and projective. It would be interesting to see if this pairing is compatible with these two bases. Note that an analogous pairing together with bases formed by simple and projective objects is the central object of \cite[Sections 5, 6]{BM13} where the authors prove Lusztig's conjectures. In the situation of {\it loc.cit.} the pairing is between equivariant K-theories of Springer fibers and Slodowy varieties. In our picture both Springer fibers and Slodowy varieties (of type $A$) live inside $\widetilde{\operatorname{Gr}}^{\underline{\lambda}}$ and there are two ``transversal'' filtrations on $\widetilde{\operatorname{Gr}}^{\underline{\lambda}}$ whose associated graded is either the direct sum of equivariant K-theories of Springer fibers or  the direct sum of equivariant K-theories of Slodowy varieties.

\subsubsection{Other types and alternative descriptions}
The problem of constructing noncommutative resolutions of affine Schubert varieties and studying the corresponding perversely-exotic t-structure exists for groups of arbitrary type, not only for $\GL_n$. We think it is an interesting direction for research (see Conjecture~\ref{conjecture: tilting is restriction}). 
Our description of the basis has a (skew) Howe duality flavor. It would be interesting to generalize our results to other settings in which Howe duality-type statements are known (see \cite{LXY26} and references therein for the related geometry at the Springer side). It is possible that the quantum group appearing in Theorem~\ref{thm:description via quantum group intro} should be replaced by its $\imath$-quantum version (see, for example, \cite{SW24}).  

For example, motivated by \cite{BER24}, Elijah Bodish suggested that one could consider the additional $\bZ/2\bZ$-equivariance, coming from the automorphism of the Dynkin diagram of type A$_{2n - 1}$, and define an action of an $\imath$-quantum group $\dot U^\imath_q(\wh{ \mathfrak{so}}_m)$ on the equivariant K-theory of the form $K^{\GL_{2n} \times \bG_m \times \bZ/2\bZ}(\wti \Gr^{\ula}_{\GL_{2n}})$ as well as a homomorphism from $\dot U^\imath_q(\wh{ \mathfrak{so}}_m)$ to the equivariant K-theory of the corresponding Steinberg varieties. It would be interesting to investigate whether the Lusztig correspondences used in this paper are compatible with this additional equivariance and allow us to construct the desired maps. By analogy with \cite{CK18b}, one may hope that a homomorphism of the above type could help construct diagrammatic realizations of the corresponding equivariant K-theories. 

\vspace{0.2cm}

Another natural goal is an algebraic description of the canonical basis in $K^{\GL_n(\mathcal{O}) \rtimes \bG_m}(\widetilde{\operatorname{Gr}}^{\underline{\lambda}})$ not involving any Schur--Weyl duality-type statements. Namely, note that one can canonically identify
\begin{equation}\label{eq:ident tensor product intro}
K^{G(\mathcal{O}) \rtimes \bG_m}(\widetilde{\operatorname{Gr}}^{\underline{\lambda}}) \simeq K^{G(\mathcal{O}) \rtimes \bG_m}(\operatorname{Gr}^{\lambda_1}) \otimes_{K^{G(\mathcal{O}) \rtimes \bG_m}(\operatorname{pt})} \otimes \ldots \otimes_{K^{G(\mathcal{O}) \rtimes \bG_m}(\operatorname{pt})}  K^{G(\mathcal{O}) \rtimes \bG_m}(\operatorname{Gr}^{\lambda_m}) 
\end{equation}
for an arbitrary group $G$ assuming that $\lambda_i$ are minuscule. By geometric Satake, one has $K(\Gr^{\la_i}) \simeq H^\bullet(\Gr^{\la_i}) \simeq H_{\IC}(\Gr^{\la_i}) \simeq V_{\la_i}$ --- an irreducible $G^\vee$-representation, and $K^{G(\cO) \rtimes \bG_m}(\Gr^{\la_i})$ is its deformation over $K^{G(\cO) \rtimes \bG_m}(\pt)$. Already in type A, it would be very interesting to axiomatically describe the canonical basis we get in the RHS of \eqref{eq:ident tensor product intro}. The pairing mentioned in Section~\ref{subsec:projective basis and pairing} might be helpful (compare with Lusztig's description of the canonical basis in \cite[Conjectures 5.12,  5.16]{Lus99}). An axiomatic description would be useful as it may be generalizable to other types. 

\subsubsection{Perversely-exotic sheaves on symplectic resolutions} The t-structures we study here are the correct ``lifts'' of perverse coherent t-structure from $\ol \Gr^{\la}$ to the resolutions $\wti \Gr^\ula$. In \cite{Dum25}, the first-named author suggested the notion of perverse coherent sheaves on an arbitrary symplectic singularity. We expect that this notion can be lifted to the notion of perversely-exotic sheaves on symplectic resolutions.

\subsection{The paper is organized as follows}
Section~\ref{sec: coherent sheaves on parabolic steinbergs} is devoted to studying the properties of a parabolic version \cite{CD23} of Bezrukavnikov's equivalence \cite{Bez16} for any reductive group. We introduce the notion of perverse coherent modules over dg-algebras in Subsection~\ref{subsec: perv coh modules over dg-algebras}. We prove the properties of t-structures under the equivalences in Subsection~\ref{subsec: t-structures under coh-const}, and the properties of the parabolic Kazhdan--Lusztig basis in Subsection~\ref{subsec: canonical bases in parabolic hecke}.

In Section~\ref{sec: modified Lusztig's correspondence}, we define a modification of the Lusztig and Mirkovi\'c--Vybornov correspondences, which relate the geometry of the affine Grassmannian to the geometry of nilpotent orbits in type~A. The main new feature of the modified correspondences is the isomorphism on equivariant K-theory, which is Corollary~\ref{cor: Psi define iso on K-theory}.

In Section~\ref{sec:noncomm_resol_conv_diagr}, we apply these results to construct the desired noncommutative resolutions, perversely-exotic t-structures, and describe the corresponding basis in equivariant K-theory. The main results are  Theorems~\ref{thm: exotic in convolution via Hecke},~\ref{thm: main theorem description of basis}.

\subsection*{Acknowledgments}
We would like to thank Elijah Bodish for fruitful  discussions. We learned many of the algebraic aspects of this paper from him. 
We are also indebted to Roman Bezrukavnikov for his constant support. The second author is grateful to Joel Kamnitzer, who, among many other things, explained to him that slices in affine Grassmannians are precisely the natural splittings in the Lusztig correspondence.  The second author is also grateful to Minh-T\^am Trinh for helpful discussions at an early stage of this project. We are also grateful to Elijah Bodish, Sabin Cautis, Michael Finkelberg, Joel Kamnitzer, Dinakar Muthiah and Minh-T\^am Trinh for useful comments on an earlier version of this paper. The second author was supported by the Simons Foundation Award
888988 as part of the Simons Collaboration on Global Categorical Symmetries. 

\section{Coherent sheaves on parabolic Steinberg varieties} \label{sec: coherent sheaves on parabolic steinbergs}


In this section, we study the categories of equivariant coherent sheaves on parabolic Springer resolutions and Steinberg varieties, and generalize some of the constructions of \cite{BM13, Bez16} to the parabolic case.

We work over the field $\Bbbk = \ol \bQ_\ell$. All the results which do not involve coherent-constructible equivalences are true over any algebraically closed field of sufficiently large positive characteristic.

\subsection{Perverse coherent modules over dg-algebras} \label{subsec: perv coh modules over dg-algebras}
For this subsection, let $X$ be a finite type scheme and $H$  an algebraic group acting on $X$ with a finite number of orbits. Let $p$ be a strictly monotone and strictly comonotone perversity on the set of orbits, see \cite{AB10} for definitions (existence of $p$, in particular, implies that dimensions of adjacent orbits differ at least by 2). Consider the perverse coherent t-structure $( ^pD_X^{>0},  \ ^pD_X^{<0})$ on the bounded derived category $D_{\coh}^{b, H} X$ of $H$-equivariant coherent sheaves.

Suppose that $X$ is embedded in a smooth scheme $Y$, that $Y$ carries an action of $H$, and that the embedding is $H$-equivariant.
Consider the derived category $D^{b, H}_X Y$ of equivariant coherent sheaves on $Y$, set-theoretically supported on $X$ (see \cite[Section~2]{Orl11} for generalities on such categories, including equivalence of different natural ways to define them). One then easily defines the perverse coherent t-structure $( ^pD_X^{> 0} Y, \ ^pD_X^{< 0} Y)$ on this category, generalizing the construction of \cite{AB10} in a straightforward way. The construction of coherent IC-extension also passes without modification, and simple objects in the heart of this t-structure are the same as in the heart of $( ^pD_X^{>0},  \ ^pD_X^{<0})$.

Now, let $R$ be an $\cO_Y$-coherent $H$-equivariant sheaf of nonpositively graded $\dg$-algebras on $Y$, set-theoretically supported on $X$. Denote by $R\mods^H_\dg$ the dg-category of $\cO_Y$-coherent $H$-equivariant dg-modules over $R$.  
Consider also its (bounded) derived category, 
to be denoted $D^{b, H}_\coh R\mods$ (see \cite[Section~2]{AK17}, \cite[Tag~0FQS]{SP} for generalities). It has the standard dg-enhancement, which we keep in mind. 




Our goal in this subsection is to define the perverse coherent t-structure on $D^{b, H}_\coh R\mods$ and establish its properties, analogous to \cite{Bez00, AB10}. In the case of (non-dg) algebras it is implicit in \cite[6.2]{BM13} and \cite[11.2]{Bez16}; for modules over Harish-Chandra Lie algebroids, it is \cite[Section~3]{Dum25}. The dg-case is completely similar.

Namely, the existence of the t-structure itself follows formally from the existence of this t-structure on $D^{b, H}_X Y$ (see proof of Theorem~\ref{thm: perverse coherent t-structure for dg-algebras} below). A minute of reflection is needed to be assured about the existence of the IC-extension functor (analogous to \cite[Section~4]{AB10}). In view of it being well-documented in the literature in various contexts (see above), we do not repeat it here in full detail, but rather sketch it in the proof of Theorem~\ref{thm: perverse coherent t-structure for dg-algebras}.


Let $\Forg: D^{b, H}_\coh R\mods \rightarrow D^{b, H}_X Y$ be the functor which forgets the structure of $R$-module.
Let $^pD_R^{> 0}$ and $^pD_R^{<0}$ be the full subcategories of $D^{b, H}_\coh R\mods$, defined as $\Forg^{-1}(^pD_X^{>0} Y)$ and $\Forg^{-1}(^pD_X^{<0} Y)$ respectively.
\begin{thm} \label{thm: perverse coherent t-structure for dg-algebras}
Suppose the induction functor $- \otimes^\bR_{\cO_Y} R: D^{b, H}_X Y \rightarrow D^{b, H}_X Y$ is right t-exact with respect to perverse coherent t-structure. Then \ 
\begin{enumerate}
    \item 
$(^pD_R^{> 0},\ ^pD_R^{<0})$ define a t-structure on $D^{b, H}_\coh R\mods$.
The heart of this t-structure is called the category of perverse coherent $R$-modules, and will be denoted $\cP_\coh^H (R\mods)$.

\item For every locally closed $H$-invariant subscheme $Z \hookrightarrow X$, the functor of IC-extension $\IC(Z, -): \cP_\coh^H (R|_Z\mods) \rightarrow \cP_\coh^H (R\mods)$ is defined.
An object of $\cP_\coh^H (R\mods)$ is simple if and only if it is isomorphic to an object of the form $\IC(O, V)$, where $O$ is an $H$-orbit, and $V$ is a simple $H$-equivariant $R|_O$-module.

\item $\cP_\coh^H (R\mods)$ is an Artinian and Noetherian abelian category.
\end{enumerate}
\end{thm}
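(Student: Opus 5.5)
The plan is to transport the perverse coherent t-structure on $D^{b,H}_X Y$ along the adjunction $\mathrm{Ind} = -\otimes^\bR_{\cO_Y} R \dashv \Forg$. For part (1), I first note that $^pD_R^{\le 0}$ and $^pD_R^{\ge 0}$ are stable under the appropriate shifts. The Hom-vanishing between $^pD_R^{<0}$ and $^pD_R^{\ge 0}$ does not follow directly from the vanishing after applying $\Forg$. Instead I use that $D^{b,H}_\coh R\mods$ is generated under cones and extensions by induced objects $\mathrm{Ind}(\cF)$.

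The cleanest route is an aisle argument. Let $\cU \subset D^{b,H}_\coh R\mods$ be the smallest subcategory that contains $\mathrm{Ind}(^pD_X^{\le 0}Y)$ and is closed under extensions and positive shifts. By adjunction, $\Hom(\mathrm{Ind}\cF, M) = \Hom(\cF, \Forg M)$, so the right orthogonal $\cU^{\perp}$ is exactly $\Forg^{-1}(^pD_X^{>0}Y)$. Right t-exactness of $\mathrm{Ind}$, combined with $\Forg\,\mathrm{Ind} = -\otimes^\bR R$ and conservativity of $\Forg$, shows that $\cU = {}^pD_R^{\le 0}$. Indeed, $\Forg\cU \subset {}^pD^{\le 0}$ by right exactness. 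Conversely, for $M$ with $\Forg M \in {}^pD^{\le0}$, the bar resolution $\cdots \to \mathrm{Ind}\,\mathrm{Ind}\,\Forg M \to \mathrm{Ind}\,\Forg M$ exhibits $M$ as an iterated extension of shifts $\mathrm{Ind}(\ldots)[k]$ with $k\ge0$. Boundedness and coherence are handled by truncation, using that $R$ is nonpositively graded and coherent.

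The remaining axiom is the existence of truncation triangles. I plan to obtain it by the standard Noetherian induction on orbits from \cite{AB10}. Fix an open orbit $O$ with complement $Z$. On $O$ the perverse t-structure is a shift of the standard one, so truncations of $R|_O$-dg-modules exist by the usual cohomological truncation, which is legitimate because $R$ is nonpositive. One then glues with $Z$ via the recollement $i_*, i^!, j^*, j_*$. Here $j_*$ is replaced by a coherent extension as in \cite{AB10}, using local cohomology and the Deligne–Artin–Rees style argument, and these functors commute with $\Forg$. I expect this gluing step, namely making coherent extensions work for dg-modules, to be the main obstacle. I would handle it by working in the ind-completion (quasi-coherent dg-modules), where $j_*$ exists, and then showing that the truncated extensions are coherent: after $\Forg$, they coincide with the perverse truncations in $D^{b,H}_X Y$, which are known to be coherent.

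For part (2), I define $\IC(Z,\mathcal V)$ as the image of $^p\tau^{\le}$-truncated $j_{!*}$ in the heart, following \cite[Section~4]{AB10}. I characterize it as the unique extension with no subobjects or quotients supported on $\ol Z\setminus Z$. Strict monotonicity and comonotonicity make the two truncations agree, exactly as in the commutative case; all the required Hom-vanishing is checked after $\Forg$, where it is the statement of \cite{AB10}. The classification of simple objects then follows by restricting a simple object to the open orbit of its support: the restriction is a simple $R|_O$-module in the shifted standard heart, and the simple object is the IC-extension of it. For part (3), finite length follows from induction on the number of orbits. An object of the heart has a finite filtration whose pieces are supported on orbit closures, with an IC quotient and kernel of smaller support. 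On a single orbit, the heart is $R|_O$-modules over $H^0(R|_O)$ that are coherent over the homogeneous space, hence Noetherian and Artinian by equivariant descent to a stabilizer representation. Combining these yields the Artinian and Noetherian property.
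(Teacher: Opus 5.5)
There is a genuine gap in part (1). Your orthogonality argument rests on the identification $\cU={}^pD^{\le0}_R$, and that identification is false in general.

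Since $Y$ is smooth, every induced object $\mathrm{Ind}(\cF)=\cF\otimes^\bR_{\cO_Y}R$ is a perfect $R$-module. So $\cU$, even after closing under summands, consists of perfect objects. By contrast, $D^{b,H}_{\coh}R\mods$ usually contains non-perfect ones:
\begin{itemize}
\item already for $X=Y=\mathrm{pt}$ and $R=\Bbbk[\epsilon]/\epsilon^2$, the simple module $\Bbbk$ lies in ${}^pD^{\le0}_R$ but not in $\cU$;
\item in the situation of Corollary~\ref{cor: perversely-exotic t-structure on steinberg}, the category models coherent sheaves on the non-smooth derived Steinberg variety.
\end{itemize}
The bar resolution is genuinely infinite, and truncating it does not put $M$ into $\cU$. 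From $\mathrm{sk}_k\to M\to C_k$ you would need the map $M\to C_k$ to vanish, which is a bound on Ext-amplitude that fails here.

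What the truncation does give is orthogonality, by mapping \emph{out of} $M$. First, $\Hom(\mathrm{sk}_k,N)=0$ by adjunction. Second, $\Hom(C_k,N)=0$ for $k\gg0$: $C_k$ sits in very negative degrees for the standard t-structure on $R$-modules (which exists because $R$ is nonpositive), while $N$ is bounded.

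The truncation triangles have a separate problem: you defer them to a gluing step whose recollement is incomplete. Quasi-coherently there is no $i^*$ or $j_!$, and coherently there is no $j_*$. So BBD gluing does not apply as stated, and you would have to redo the Deligne-style construction of Arinkin--Bezrukavnikov for dg-modules.

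None of this orbit-by-orbit work is needed for (1). The paper deduces it in one step from right t-exactness by citing Polishchuk's Theorem~2.1.2. Your own ingredients also suffice if you stay in the big category of quasi-coherent $H$-equivariant $R$-dg-modules:
\begin{itemize}
\item Take the t-structure generated by $\mathrm{Ind}({}^pD^{\le0}_XY)$. By adjunction, its coaisle is $\Forg^{-1}$ of the coaisle generated by ${}^pD^{\le0}_XY$ on the $Y$-side.
\item $\Forg$ preserves colimits and, by right t-exactness, sends generators into ${}^pD^{\le0}_XY$, so it is t-exact.
\item The big t-structure on the $Y$-side restricts to the coherent perverse one, because the coherent truncation triangle of a coherent object is already a big one.
\item Hence $\Forg(\tau^{\le0}M)\simeq{}^p\tau^{\le0}\Forg M$ is coherent for coherent $M$, so the truncations stay in $D^{b,H}_{\coh}R\mods$.
\item Conservativity of $\Forg$ then identifies the two halves with $\Forg^{-1}({}^pD^{\le0}_XY)$ and $\Forg^{-1}({}^pD^{>0}_XY)$.
\end{itemize}

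For (2) and (3) your plan is the paper's: transport the Arinkin--Bezrukavnikov construction using coherent extensions of equivariant modules and morphisms from open pieces. Two corrections are needed:
\begin{itemize}
\item $\Forg$ is a right adjoint and does not detect vanishing of $\Hom$ between $R$-modules. The Hom-vanishing in the IC construction must therefore come from the t-structures on $R$-modules themselves, for the auxiliary perversities.
\item An object of the heart in general has both a subobject and a quotient supported on the boundary around its IC piece, not just an IC quotient.
\end{itemize}
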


\begin{proof}
The first part of the theorem immediately follows from right t-exactness of induction by applying \cite[Theorem~2.1.2]{Pol07}.

Let us discuss the IC-extension functor. We go through the lemmata of \cite{AB10}, used in its construction, and mention how they fit into our context.

Lemmata~2.21,~2.22 of  \cite{AB10} are independent of the equivariance conditions, and hence are true in our setting.

For Lemma~2.13 of {\it loc.cit.}, see how in \cite[Lemma~3.1]{Dum25} it is deduced from the group equivariant setting, to the setting of simultaneous equivariance of a group and a Lie algebroid. Completely same argument works for the case of an additional structure of a module over a dg-algebra.

See \cite[Lemma~3.10]{Dum25} for a proof that a coherent module on an open $H$-equivariant subscheme has some coherent extension to the whole scheme, as well as any morphism between two such modules (note that it uses that the dimensions of adjacent orbits differ at least by 2, and shows that the usual non-derived pushforward gives the required extension). Of course, this argument works in our setting.
Note also that in the construction of the IC-extension, it is shown in \cite[Lemma~3.11, Theorem~3.12]{Dum25}, that it is sufficient to use that there is {\it some} coherent extension for the abelian category, and not for the derived category, as done in \cite[Lemma~4.3]{AB10}.

The above-mentioned lemmata are the only ingredients needed for the definition of the IC-extension. Using them, the construction proceeds along the lines of \cite{AB10}.

Now the classification of simples and the claims that $\cP_\coh^H (R\mods)$ is Artinian and Noetherian are proved along the same lines as in \cite{AB10} without changes.
\end{proof}

\begin{rem} \label{rem: simples are supported on zero coh}
We restrict our attention to the middle perversity function.
On an $H$-orbit $O$, we have 
\begin{equation*}
\cP^H_\coh(R|_O\mods) \simeq \bigl( H^0(R)|_O \mods \bigr) [\frac{1}{2} \dim O].
\end{equation*}
Here $H^0(R)$ is a sheaf of algebras --- the zeroth cohomology of $R$. In particular, simple objects appearing in the theorem above are simple $H_x$-equivariant modules over the algebra $H^0(R)|_x$ (here $x \in O$ is a closed point).
Note also that it follows that the action of $R$ on simple modules factors through $H^0(R)$. 
\end{rem}

It is possible that our assumption of right t-exactness is not necessary.



\subsection{Full flag variety case}

In this subsection, we treat the full flag variety case. Let $G$ be a connected reductive group, $\g$ its Lie algebra, $\cN \subset \g$ the nilpotent cone, and $T^* G/B = T^* \cB = \wti \cN \xrightarrow{s} \cN$ the Springer resolution. Let also $\wti \cN \times^\bL_\g \wti \cN$ be the derived Steinberg variety. We denote by $\bG_m$ the scaling action.

Recall that if $\cG$ is a generator of the category $D^b \Coh(X)$, then the functor $R\Hom(\cG, -)$ defines equivalences
\begin{align*}
D^b\Coh(X) &\simeq D^b(R\mods);\\
D^b\Coh^H(X) &\simeq D^b(R\mods^H).
\end{align*}
Here $R$ is the $\dg$-algebra of endomorphisms of $\cG$ in a dg-enhancement of $D^b\Coh(X)$ (defined up to quasi-isomorphism), and $H$ is any group acting on $X$ such that $\cG$ has an $H$-equivariant structure. If $\cG$ is also a tilting, then $R$ is a usual (non-dg) algebra. Note that even if one only wants to obtain the equivariant version of the equivalence, one would need to check the generator and possibly the tilting property in the non-equivariant category (and take the non-equivariant endomorphisms algebra).

In \cite{BM13}, there is a construction of tilting generators, parametrized by alcoves, see Theorem~1.8.2 {\it loc.cit.}; in the present paper, we work only with the one corresponding to the fundamental alcove (and occasionally, with the one corresponding to the antifundamental, which yields the dual tilting).
Here is the statement, see \cite[1.5]{BM13}:

\begin{thm} \label{thm: BM, tiltings}
There is a $G \times \bG_m$-equivariant vector bundle $\cE$ (tilting generator) on $\wti \cN$, such that 
\begin{enumerate}[a)]
\item the functor $\cF \mapsto \bR\Hom(\cE, \cF)$ defines equivalences
\begin{align*}
D^b \Coh(\wti \cN) &\simeq D^b A\mods; \\
D^b \Coh^{G \times \bG_m}(\wti \cN) &\simeq D^b A\mods^{G \times \bG_m},
\end{align*}
where $A = \Hom(\cE, \cE)$, and $\Ext^{> 0}(\cE, \cE) = 0$. 

\item For any closed $S \hookrightarrow \cN$, denote $\wti \cN_S = \wti \cN \times^\bL_\g S$ and $\cE_S$ --- the (derived) restriction of $\cE$ to $\wti \cN_S$. The functor $\cF \mapsto \bR\Hom(\cE_S, \cF)$ defines equivalences
\begin{align*}
D^b \Coh(\wti \cN_S) &\simeq D^b A_S\mods; \\
D^b \Coh^{G'}(\wti \cN_S) &\simeq D^b A_S\mods^{G'},
\end{align*}
where $A_S = \bR\Hom(\cE_S, \cE_S)$ is a $\dg$-algebra (defined up to a quasi-isomorphism), and $G'$ is any subgroup of $G \times \bG_m$ such that $S$ is $G'$-invariant.

\item If $S$ is an exact base change (see \cite[1.3]{BM13} for definitions), then $\wti \cN \times^\bL_\g S \simeq \wti \cN \times_\g S$, and $\bR\Hom(\cE_S, \cE_S) = \Hom(\cE_S, \cE_S)$ (higher Ext's vanish), so $A_S$ is an ordinary (non-derived) algebra.
\end{enumerate}
\end{thm}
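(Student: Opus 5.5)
This theorem is essentially a recollection from \cite{BM13}, so the plan is to assemble it from the construction there rather than reprove it from scratch. For the construction of $\cE$, we take the fundamental alcove and proceed in three steps:
\begin{enumerate}
\item Work over a field of large positive characteristic $p$.
\item Use the Frobenius splitting of $\wti\cN^{(1)}$ and the localization theorem of Bezrukavnikov--Mirkovi\'c--Rumynin, which identifies $D^b\Coh$ of the Frobenius neighborhood of $\wti\cN^{(1)}$ with modules over $U(\g)$ with fixed central character.
\item The splitting bundle of the Azumaya algebra $\widetilde{\cD}$ on the formal neighbourhood of $\cB^{(1)}$ gives a vector bundle whose endomorphisms realise $U(\g)$ with that central character.
\end{enumerate}
Its indecomposable summands lift to a $G\times\bG_m$-equivariant bundle on $\wti\cN$ over a suitable localization of $\bZ$, and hence to characteristic zero. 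Equivariance under $\bG_m$ comes from the contracting action and the uniqueness of the lift. This is \cite[1.5, 1.8.2]{BM13}, and we would simply cite it.

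\textbf{Part a).} We check two things for $\cE$:
\begin{enumerate}
\item \emph{Tilting:} $\Ext^{>0}(\cE,\cE)=0$. In characteristic $p$ this reduces, via the localization equivalence, to the vanishing of higher cohomology $H^{>0}(\wti\cN,\cEnd(\cE))$. That vanishing follows from $\Ext^{>0}_{U}$ vanishing for the projective (free) module of global sections. Semicontinuity then transfers the vanishing to characteristic zero.
\item \emph{Generation:} it suffices to show $\bR\Hom(\cE,\cF)=0$ implies $\cF=0$. This again is transported from the derived localization equivalence.
\end{enumerate}
With these in hand, the standard argument (Keller/Bondal tilting theory) shows that $\bR\Hom(\cE,-)$ is an equivalence onto $D^b$ of finitely generated $A$-modules. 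The equivariant version follows because the functor is compatible with the equivariant structure. Fully faithfulness on equivariant categories is obtained by taking invariants of the non-equivariant Hom complexes, which is exact for the reductive group $G\times\bG_m$ in characteristic zero.

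\textbf{Parts b) and c).} For b), derived base change along $S\to\g$ preserves the generation property: the pullback of a generator generates, since the pushforward along $\wti\cN_S\to\wti\cN$ is conservative and adjunction identifies Homs. The pullback $\cE_S$ therefore generates $D^b\Coh(\wti\cN_S)$, and the dg-enhanced endomorphisms give $A_S$, with $G'$-equivariance inherited. For c), if $S$ is an exact base change, the Tor-independence gives $\wti\cN\times^\bL_\g S\simeq \wti\cN\times_\g S$. The derived tensor product then gives
\[\bR\Hom(\cE_S,\cE_S)\simeq \bR\Hom(\cE,\cE)\otimes^\bL_{\cO(\g)}\cO(S)=A\otimes_{\cO(\g)}\cO(S),\]
which is concentrated in degree zero by the exactness.

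The main obstacle is the tilting vanishing of $\Ext^{>0}(\cE,\cE)$ in part a). It genuinely needs the positive characteristic localization together with the lifting argument, so I would quote \cite{BM13} for it.
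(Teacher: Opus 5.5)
Your proposal is correct and follows the same route as the paper: quote \cite[Theorem~1.5.1]{BM13} for part a) and for the exact base change case, and observe that weak generation of $\cE_S$ survives arbitrary derived base change. Your argument $\bR\Hom(\cE_S,\cF)\simeq\bR\Hom(\cE,i_*\cF)$, together with conservativity of $i_*$, is exactly what the paper alludes to, and it gives the dg-equivalence in b). Two small refinements are worth making. In c), "by exactness" alone is too quick: argue by degree bounds, since $A\otimes^\bL_{\cO(\g)}\cO(S)$ lives in degrees $\le 0$, while by exactness it is $\bR\Gamma$ of the vector bundle $\cEnd(\cE_S)$ on a classical scheme, which lives in degrees $\ge 0$, so it is concentrated in degree $0$. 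For the equivariant equivalences with a possibly non-reductive $G'$ (or in positive characteristic), use conservativity of the forgetful functors on the unit and counit rather than exactness of invariants.
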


\begin{proof}
When $S$ is an exact base change, this is~\cite[Theorem~1.5.1]{BM13}. The proof that $\cE_S$ is a weak generator works without assumption of base change exactness. Hence, an equivalence with derived category of $\dg$-modules over the $\dg$-algebra $\bR\Hom(\cE_S, \cE_S)$ follows.
\end{proof}

Viewing $A$ as a sheaf of algebras on $\cN$, one can consider the perverse-coherent t-structure on the category $D^b A\mods^{G \times \bG_m}$. We call the corresponding t-structure on $D^b \Coh^{G \times \bG_m}(\wti \cN)$ the {\it perversely-exotic}, see~\cite[6.2]{BM13}.

Now consider the (derived) Steinberg variety $\wti \cN \times^\bL_\g \wti \cN$. 
Consider the tilting generator $\cE$ on $\wti \cN$ and let $A$ be the corresponding noncommutative resolution. $\cE^*$ is also a tilting generator, see \cite[Theorem~1.8.2]{BM13}. The corresponding noncommutative resolution is $A^{\text{op}}$.
\begin{cor} \label{cor: sheaves on steinberg equivalent to bimodules}
We have the derived equivalence
\begin{equation} \label{eq: equivalence for noncommutative resolutions on steinberg}
D^b \Coh^{G \times \bG_m} (T^* \cB \times^\bL_\g T^* \mathcal B) \simeq D^b (A \otimes^\bR_{\cO_\g} A^{\text{op}}\mods)^{G \times \bG_m}.
\end{equation}
\end{cor}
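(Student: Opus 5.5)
We will show that $\cE \boxtimes \cE^*$ is a generator of $D^b\Coh(T^*\cB \times^\bL_\g T^*\cB)$ whose derived endomorphism dg-algebra is quasi-isomorphic to $A \otimes^\bR_{\cO_\g} A^{\op}$, and then invoke the generator principle recalled before Theorem~\ref{thm: BM, tiltings}. Equivariance comes for free, since $\cE$ and $\cE^*$ are $G \times \bG_m$-equivariant. Write $\wti\cN \times^\bL_\g \wti\cN$ as the derived fiber product of $\pi_1\colon \wti\cN \to \g$ and $\pi_2\colon \wti\cN \to \g$, with $\pi_i$ proper (through $\cN$). Let $q_1,q_2$ be the two projections and set $\cT = q_1^*\cE \otimes q_2^*\cE^*$.

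\textbf{Step 1: endomorphisms.} By adjunction,
\[
\bR\Hom(\cT,\cT) \simeq \bR\Gamma\bigl(\g, \bR(\pi_1\times\pi_2)_* \cEnd(\cT)\bigr).
\]
Base change for the derived fiber product (Tor-independence is built into the derived fiber product) together with the projection formula gives
\[
\bR (\pi_1 \times_\g \pi_2)_*\bigl(q_1^*\cEnd(\cE)\otimes q_2^*\cEnd(\cE^*)\bigr) \simeq \bR\pi_{1*}\cEnd(\cE) \otimes^\bR_{\cO_\g} \bR\pi_{2*}\cEnd(\cE^*).
\]
By Theorem~\ref{thm: BM, tiltings}(a), $\bR\pi_*\cEnd(\cE) = A$ has no higher cohomology, and $\cEnd(\cE^*) \simeq \cEnd(\cE)^{\op}$ gives $A^{\op}$. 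Since $\g$ is affine, we get $A \otimes^\bR_{\cO_\g} A^{\op}$. These identifications must be made compatibly with the dg-enhancement, so that the resulting equivalence of dg-algebras respects multiplication. We will do this by working with the sheaf of dg-algebras $\bR\pi_*\cEnd$ via Künneth for dg-enhanced pushforwards, equivalently in the setting of \cite[Section~2]{AK17}.

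\textbf{Step 2: generation.} This is the main obstacle. The derived fiber product is the derived restriction of $\wti\cN\times\wti\cN$ along the diagonal $\g \hookrightarrow \g\times\g$, an instance of Theorem~\ref{thm: BM, tiltings}(b) for the product. So we first show that $\cE\boxtimes\cE^*$ generates $D^b\Coh(\wti\cN\times\wti\cN)$ relative to $\g \times \g$. This holds because exterior products of generators generate a product of quasi-projective schemes: $\cE$ generates, so $\cE^*$ generates as well, being a tilting generator by \cite[Theorem~1.8.2]{BM13}, and one can then argue via Bondal--Van den Bergh, since objects of the form $\cF_1\boxtimes\cF_2$ generate the unbounded quasi-coherent category. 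Second, we pull back along the diagonal. The argument in Theorem~\ref{thm: BM, tiltings}(b), that the derived restriction of a relative generator to a derived base change is still a weak generator, then applies. Concretely, if $\bR\Hom(\cT,\cF)=0$, push $\cF$ forward to $\wti\cN\times\wti\cN$ along the affine closed map $i$. By adjunction,
\[
\bR\Hom(\cE\boxtimes\cE^*, i_*\cF) = \bR\Hom(\cT,\cF) = 0,
\]
so $i_*\cF = 0$ and hence $\cF = 0$. For the dg-scheme we should be careful that $i_*$ is conservative; it is, since $i$ is affine.

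Finally, we combine Steps 1 and 2 with the equivalence $D^b\Coh^H(X)\simeq D^b(R\mods^H)$ for the dg-algebra $R=\bR\End(\cT)$. We check that coherent objects correspond to modules that are finitely generated over $\cO_\g$; this holds by properness of $\pi_1\times_\g \pi_2$. This yields \eqref{eq: equivalence for noncommutative resolutions on steinberg}.
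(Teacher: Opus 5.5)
Your proposal is correct and follows the same route as the paper. The paper's proof simply applies Theorem~\ref{thm: BM, tiltings}(b) to the group $G \times G$, with $\cE \boxtimes \cE^*$ on $\wti\cN \times \wti\cN$, and to the non-exact base change along the diagonal $\g \hookrightarrow \g \times \g$. Your Step~2 is exactly the weak-generation argument behind part~(b), and your Step~1 (derived base change plus projection formula) makes explicit the identification of the endomorphism dg-algebra with $A \otimes^\bR_{\cO_\g} A^{\op}$, which the paper leaves implicit; the only cosmetic difference is that you justify generation on the product through exterior products of compact generators rather than citing \cite{BM13} for $G \times G$.
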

\begin{proof}
Apply Theorem~\ref{thm: BM, tiltings} for the group $G \times G$ and the (non-exact) base change $\g \xhookrightarrow{\Delta} \g \times \g $ (compare also with \cite[11.2]{Bez16}).
\end{proof}
The above claim means that the sheaf $\cE \boxtimes \cE^*$ on $\wti \cN \times^\bL_\g \wti \cN$ is a generator (but not a tilting) of the derived (non-equivariant) category, and $A \otimes^\bR_{\cO_\g} A^{\text{op}}$ is its $\dg$-algebra of derived (non-equivariant) endomorphisms.

\begin{cor} \label{cor: perversely-exotic t-structure on steinberg}
Consider $A \otimes^\bR_{\cO_\g} A^{\text{op}}$ as a sheaf of $\dg$-algebras on $\g$, set-theoretically supported on $\cN$.
There is a well-defined perverse coherent t-structure on categories 
\eqref{eq: equivalence for noncommutative resolutions on steinberg}.
\end{cor}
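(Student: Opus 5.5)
The plan is to deduce the statement from Theorem~\ref{thm: perverse coherent t-structure for dg-algebras}. We take $Y = \g$, $X = \cN$, $H = G \times \bG_m$ (acting diagonally via the adjoint action and scaling), and $R = A \otimes^\bR_{\cO_\g} A^{\op}$, with the middle perversity. The general hypotheses are standard. The group $G \times \bG_m$ acts on $\cN$ with finitely many orbits, $\g$ is smooth, and all nilpotent orbits are even-dimensional. Hence the middle perversity $p(O) = \frac{1}{2}\dim O$ is strictly monotone and comonotone. Moreover, $R$ is a coherent, equivariant, nonpositively graded dg-algebra. It is supported set-theoretically on $\cN$, because $A$ is a coherent $\cO_\cN$-algebra and the derived tensor product over $\cO_\g$ of two sheaves supported on $\cN$ is again supported on $\cN$. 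By Corollary~\ref{cor: sheaves on steinberg equivalent to bimodules}, the category $D^b_{\coh}{}^{G\times\bG_m}(R\mods)$ is identified with the categories in \eqref{eq: equivalence for noncommutative resolutions on steinberg}. So once we produce the t-structure on the module side, we can transport it.

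The only real hypothesis left is that the induction $-\otimes^\bR_{\cO_\g} R$ is right t-exact for the perverse coherent t-structure on $D^{b,H}_{\cN}\g$. We write the induction as the composite $\cF \mapsto (\cF \otimes^\bR_{\cO_\g} A)\otimes^\bR_{\cO_\g} A^{\op}$. It therefore suffices to show that, for $B = A$ or $A^{\op}$, the functor $-\otimes^\bR_{\cO_\g} B$ is right t-exact. For this we use the characterization from \cite{AB10}: $\cF \in {}^pD^{\le 0}$ if and only if, for each orbit $O$ with generic point $i_O$, the fibre $Li_O^*\cF$ is concentrated in degrees $\le p(O)$. Since $A$ is $\cO_\g$-coherent, $Li_O^*(\cF\otimes^\bR B) \simeq Li_O^*\cF \otimes^\bL Li_O^*B$. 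Thus it is enough to know that $Li_O^* B$ is concentrated in degrees $\le 0$. This holds automatically, because $B$ is a sheaf (a module in degree $0$) and derived pullback is right exact, so it only adds negative cohomology.

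On second thought, the sharper statement is just that tensoring with a nonpositively graded coherent complex is right t-exact, and the fibre computation above gives exactly this. The same argument applies directly to $R$, which is nonpositively graded: $Li_O^*R$ lies in degrees $\le 0$, so no factorization into two steps is needed.

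The main obstacle is making this fibre criterion rigorous in the equivariant, set-theoretically supported setting on $Y = \g$ rather than on $X$. The criterion for ${}^pD^{\le 0}$ via derived restrictions to generic points of orbits has to hold in $D^{b,H}_X Y$. We expect this to follow from the construction of the t-structure in \cite{AB10}, using the localization to orbit closures, since derived fibres at points of $\g$ commute with the forgetful functor to $X$-supported complexes. Once this is in place, Theorem~\ref{thm: perverse coherent t-structure for dg-algebras} yields the t-structure, together with IC-extensions and finite length, on the bimodule category. Transporting it along \eqref{eq: equivalence for noncommutative resolutions on steinberg} gives the t-structure on the derived Steinberg side.
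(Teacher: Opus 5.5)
Your proposal is correct and follows the paper's proof: factor the induction $-\otimes^\bR_{\cO_\g} R$ as tensoring with $A$ and then with $A^{\op}$, note that each step is perverse right t-exact because $A$ and $A^{\op}$ sit in degree zero, and apply Theorem~\ref{thm: perverse coherent t-structure for dg-algebras}. Your stalk computation is exactly the justification the paper leaves implicit. One cosmetic point: in the conventions of \cite{AB10} that match Remark~\ref{rem: simples are supported on zero coh}, the middle perversity is $p(O)=-\frac12\dim O$, though your argument works for any perversity.
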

\begin{proof}
Tensor product with each of $A, A^{\text{op}}$ is perverse right-t-exact because they both are concentrated in the zero homological degree. Since tensor product with $A \otimes^\bR_{\cO_\g} A^{\text{op}}$ is the composition of tensor products, it is also perverse right-t-exact. Hence Theorem~\ref{thm: perverse coherent t-structure for dg-algebras} is applicable.
\end{proof}

\subsection{Parabolic case}

Fix $P, Q \subset G$ parabolic subgroups. Let $L, M$ be the corresponding Levi subgroups. Denote $\cP = G/P$, $\cQ = G/Q$, $\cB = G/B$.
There are morphisms 
\begin{equation*}
T^*\cP \times^\bL_\g T^*\cQ \xleftarrow{\pr_1} (T^*\cP \times_\cP \cB) \times^\bL_\g (T^*\cQ \times_\cQ \cB)  \xhookrightarrow{i} T^* \cB \times^\bL_\g T^*\cB.
\end{equation*}
These are the Steinberg versions of the morphisms for the Springer case, considered in \cite[Section~4]{BM13}, see also \cite[6.0.3]{CD23}. 

Assume that the derived group of $G$ is simply connected. Then it has the characters $\rho_L, \rho_M$ --- the half-sums of positive roots of the corresponding Levi subgroups. We consider the adjoint derived functors
\begin{equation} \label{eq: correspondenses for parabollic steinbergs}
\begin{aligned} 
\pi^{\star} : D^{b}(\Coh^{G \times \bG_m} ( T^*\cP \times^\bL_\g T^*\cQ) )
&\to D^{b}(\operatorname{Coh}^{G \times \bG_m}(T^* \cB \times^\bL_\g T^*\cB)), 
\\ 
\pi^{\star}\mathcal{F} &= i_{*}pr_{1}^{*}(\mathcal{F}) \otimes \cO(\rho_L, \rho_M); \\
\pi_{\star} : D^{b}(\operatorname{Coh}^{G \times \bG_m}(T^* \cB \times^\bL_\g T^*\cB)) 
&\to D^{b}(\operatorname{Coh}^{G \times \bG_m}(T^*\cP \times^\bL_\g T^*\cQ)), 
\\ \pi_{\star}\mathcal{G} &= (pr_{1})_{*}i^{*}(\mathcal{G}(-\rho_L, -\rho_M)).
\end{aligned}
\end{equation}
By $\cO(\la_1, \la_2)$ here we denote the natural line bundle on $T^* \cB \times^\bL_\g T^* \cB$.

As above, we consider the sheaf $\cE_{BB} := \cE \boxtimes \cE^* \in D^b \Coh^{G \times \bG_m}(T^* \cB \times^\bL_\g T^* \cB)$.
\begin{lem} \label{lem: E_PQ is a generator}
The sheaf $\cE_{PQ} := \pi_\star \cE_{BB}$ is a generator of $D^b \Coh (T^* \cP \times^\bL_\g T^* \cQ)$.
\end{lem}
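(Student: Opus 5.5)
Since $\pi_\star$ and $\pi^\star$ are adjoint and $\cE_{BB}$ is a generator of $D^b\Coh(T^*\cB\times^\bL_\g T^*\cB)$ by Corollary~\ref{cor: sheaves on steinberg equivalent to bimodules}, the natural route is adjunction. The plan is: take $\cF \in D^b\Coh(T^*\cP\times^\bL_\g T^*\cQ)$ (or its ind-completion) with $\bR\Hom(\cE_{PQ},\cF[j])=0$ for all $j$, and show $\cF=0$. Since $\pi_\star$ is left adjoint to $\pi^\star$ (with the twists by $\cO(\pm\rho_L,\pm\rho_M)$ chosen so that this holds), we get
\begin{equation*}
\bR\Hom(\pi_\star\cE_{BB},\cF)\simeq \bR\Hom(\cE_{BB},\pi^\star\cF).
\end{equation*}
I must check the direction of adjunction. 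The natural adjunction is $i^*\dashv i_*$ and $\pr_1^*\dashv (\pr_1)_*$. So $\pi_\star = (\pr_1)_* i^*(-\otimes\cO(-\rho))$ is left adjoint to $i_*\pr_1^!(-)\otimes\cO(\rho)$. Here $\pr_1^! = \pr_1^*\otimes\omega_{\pr_1}[\dim]$, and $\pr_1$ is a base change of $\cB\times\cB\to\cP\times\cQ$, whose relative canonical bundle is $\cO(-2\rho_L,-2\rho_M)$ up to a shift. Thus the twists are exactly those making $\pi^\star$ the right adjoint up to a shift and a sign convention; I would verify that the $\rho$-shift in the definition is arranged precisely for this, possibly after replacing $\pi^\star$ by the twist by $(-\rho_L,-\rho_M)$, which is harmless since any twist of $\pr_1^*$ is still conservative.

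Since $\cE_{BB}$ generates, the vanishing above forces $\pi^\star\cF=0$, i.e.\ $i_*(\pr_1^*\cF\otimes\mathcal L)=0$ for a line bundle $\mathcal L$. Now $i_*$ is a closed embedding pushforward, hence conservative, so $\pr_1^*\cF=0$. Finally $\pr_1$ is the base change of the smooth proper surjection $\cB\times\cB\to\cP\times\cQ$ (with fibers $L/B_L\times M/B_M$), hence faithfully flat. So $\pr_1^*$ is conservative, for instance because $(\pr_1)_*\cO=\cO$ by the projection formula and $\bR\Gamma(\cO_{L/B_L\times M/B_M})=\Bbbk$. This gives $\cF=0$.

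The main obstacle is the bookkeeping in the derived setting. One has to check that the derived fiber product $(T^*\cP\times_\cP\cB)\times^\bL_\g(T^*\cQ\times_\cQ\cB)$ is indeed the derived base change of $T^*\cP\times^\bL_\g T^*\cQ$ along $\cB\times\cB\to\cP\times\cQ$, so that base change, the projection formula and the identification of $\pr_1^!$ hold for these dg-schemes. One also has to check that ``generator'' means classical generation of $D^b\Coh$, not merely weak generation. For the latter, I would first argue weak generation in $\mathrm{IndCoh}$ (or $D_{qc}$) as above. I would then upgrade it using that $\cE_{PQ}$ is coherent and the category is compactly generated by such objects, or, as in Theorem~\ref{thm: BM, tiltings}(b), note that weak generation is what the equivalence with dg-modules requires.
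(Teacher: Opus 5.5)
Your argument is correct, but it is organized differently from the paper's proof. The paper does not argue on the Steinberg variety directly. It regards $T^*\cP\times^\bL_\g T^*\cQ$ as the derived base change of $T^*(\cP\times\cQ)$ along the diagonal $\g\hookrightarrow\g\times\g$, applies \cite[Theorem~4.2]{BM13} for the group $G\times G$, and observes that exactness of the base change is used there only for Ext-vanishing, not for generation.

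You instead start from the generator $\cE_{BB}$, which is already available on $T^*\cB\times^\bL_\g T^*\cB$ by Corollary~\ref{cor: sheaves on steinberg equivalent to bimodules}, and descend along $\pi_\star$. For this you only need the right adjoint $i_*\bigl(\pr_1^!(-)\bigr)\otimes\cO(\rho_L,\rho_M)$ to be conservative. It is, because $i$ is a closed embedding and $\pr_1$ is a base change of the smooth proper surjection $\cB\times\cB\to\cP\times\cQ$. As you note, the precise $\rho$-normalization is irrelevant for this. It is also the same adjunction the paper uses right afterwards in Proposition~\ref{prop: pi star is t-exact}.

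Your route is self-contained, and it in effect checks the paper's parenthetical claim that exactness plays no role in generation. The paper's route is a one-line citation, and it ties $\cE_{PQ}$ directly to the BM parabolic tilting bundle on $T^*(\cP\times\cQ)$.

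One correction to your last paragraph: \emph{generator} can only mean weak generation here, so the proposed upgrade should be dropped.
\begin{itemize}
\item Classical generation fails. Since $\cE_{PQ}$ is a vector bundle, its thick closure lies in $\mathrm{Perf}$. That is strictly smaller than $D^b\Coh$ of the derived Steinberg variety, which is non-classical and non-smooth. So $\cE_{PQ}$ does not classically generate $D^b\Coh$.
\item Weak generation of $\mathrm{IndCoh}$ fails for the same reason, for $\cE_{BB}$ as well as $\cE_{PQ}$. The objects of $\mathrm{IndCoh}$ killed by the functor to $\mathrm{QCoh}$ form a nonzero subcategory that is right-orthogonal to all perfect complexes, so your argument breaks at its input there.
\end{itemize}
The right ambient category is $D_{qc}$, where your proof goes through verbatim. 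Testing against $\cF\in D^b\Coh$, as you do, is exactly what the equivalence \eqref{eq: equivalence for noncommutative resolutions on parabolic steinberg} needs, as in Theorem~\ref{thm: BM, tiltings}(b).
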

\begin{proof}
Again, apply \cite[Theorem~4.2]{BM13} for the group $G \times G$ and the (non-exact) base change $\g \xhookrightarrow{\Delta} \g \times \g$. Note that the exactness of the base change in {\it loc.cit.} is used only for the claim that $\pi_\star \cE_{BB}$ is a tilting object, which we do not make.
\end{proof}

It is easy to see that $\cE_{PQ}$ is of the form $\cE_P \boxtimes \cE^*_Q$, where $\cE_P$ and $\cE^*_Q$ are obtained by applying the Springer variants of the functor $\pi_\star$ to $\cE_B$ and $\cE_B^*$. We denote by $A_P$ and $A_Q^{\text{op}}$ the algebras of endomorphisms of $\cE_P$ and $\cE^*_Q$ respectively. Lemma~\ref{lem: E_PQ is a generator} then guarantees that there is an equivalence
\begin{equation} \label{eq: equivalence for noncommutative resolutions on parabolic steinberg}
D^b \Coh^{G \times \bG_m} (T^* \cP \times^\bL_\g T^* \mathcal Q) \simeq D^b (A_P \otimes^\bR_{\cO_\g} A_Q^{\text{op}}\mods^{G \times \bG_m}),
\end{equation}
and similarly for the $G$ (as opposed to $G \times \bG_m$)-equivariant case.


A complete analog of Corollary~\ref{cor: perversely-exotic t-structure on steinberg} then guarantees that there is the perverse t-structure on the RHS of~\eqref{eq: equivalence for noncommutative resolutions on parabolic steinberg}. We also call the corresponding t-structures on $\Coh^{G} (T^* \cP \times^\bL_\g T^* \mathcal Q)$ and $D^b \Coh^{G \times \bG_m} (T^* \cP \times^\bL_\g T^* \mathcal Q)$ the {\bf perversely-exotic t-structures}.

The following is the equivariant Steinberg version of \cite[Theorem~4.2]{BM13} (which deals with the Springer case, and the non-equivariant categories and t-structure), see also Remark~\ref{rem: simples to simples for pi^*}. 
\begin{prop} \label{prop: pi star is t-exact} 
\begin{enumerate}[(a)]
    \item 
 The functor $\pi^\star$ is t-exact with respect to the perversely-exotic t-structures on both sides.
\item The perversely-exotic t-structure on $D^b(\Coh^{G}(T^*\cP \times^\bL_\g T^*\cQ))$ is the unique t-structure, such that $\pi^\star$ is t-exact for the perversely-exotic t-structure on $D^b(\Coh^{G}(T^*\cB \times^\bL_\g T^* \cB))$.
\end{enumerate}
\end{prop}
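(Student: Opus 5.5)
The plan is to translate $\pi^\star$ into a functor between module categories over the noncommutative resolutions and check exactness there, following the proof of \cite[Theorem~4.2]{BM13}.

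\emph{Step 1: identify $\pi^\star$ on the module side.} Since $\pi_\star$ is right adjoint to $\pi^\star$ and $\cE_{PQ} = \pi_\star \cE_{BB}$, adjunction gives, for $\cF$ on $T^*\cP \times^\bL_\g T^*\cQ$,
\[
\bR\Hom(\cE_{BB}, \pi^\star\cF) \simeq \bR\Hom(\pi_\star\cE_{BB}, \cF)\ \text{(after the appropriate twist/duality)}.
\]
More concretely, I would show that under the equivalences \eqref{eq: equivalence for noncommutative resolutions on steinberg} and \eqref{eq: equivalence for noncommutative resolutions on parabolic steinberg}, $\pi^\star$ corresponds to $M \mapsto \bR\Hom_{A_P\otimes^\bR A_Q^{\op}}(\pi_\star\cE_{BB}^{\vee}\text{-bimodule}, M)$. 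The key point is that this bimodule, call it $\mathcal M = \bR\Hom(\cE_{PQ}, \pi_\star \cE_{BB})$ or the analogous object, is a coherent sheaf on $\g$ set-theoretically supported on $\cN$, and the functor is the identity on underlying sheaves on $\g$ \emph{up to} tensoring/Hom with $\mathcal M$. Following \cite[4.2]{BM13}, $\cE_P$ is built so that $\pi_\star\cE_B$ is a direct summand of $\cE_P^{\oplus r}$ and conversely $\pi^\star \cE_P$ relates to $\cE_B$; in the Springer case this shows the module-side functor is $M\mapsto e A \otimes_{A_P} M$-type, i.e. a projective, locally free over $\cO_\g$ bimodule of finite rank. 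I would establish the same for $A_P$ and $A_Q^{\op}$ separately and then take the (derived) tensor over $\cO_\g$ of the two, using the factorization $\cE_{PQ}=\cE_P\boxtimes\cE_Q^*$.

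\emph{Step 2: exactness (a).} The perverse coherent t-structures on both module categories are defined via $\Forg$ to $D^{b}_{\cN}(\g\times\g)$-type categories (Theorem~\ref{thm: perverse coherent t-structure for dg-algebras}). If the module-side functor is, after forgetting, $M\mapsto \mathcal B\otimes_{\cO}M$ with $\mathcal B$ a $G$-equivariant bimodule that is locally a direct summand of a free module over the source algebra, then on underlying $\cO_\g$-sheaves it is a direct summand of $M^{\oplus r}$ (twisted by a $G$-representation), which preserves ${}^pD^{\le0}$ and ${}^pD^{\ge0}$ since perverse coherent conditions are checked on stalks/costalks at orbits and are stable under summands and twisting by finite-dimensional representations. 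This gives t-exactness.

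\emph{Step 3: uniqueness (b).} The functor $\pi^\star$ is conservative: $\pi_\star\pi^\star\cF \simeq \cF\otimes V$ where $V$ is the nonzero cohomology of the fibers $P/B\times Q/B$ twisted by $\rho$'s (by Borel--Weil--Bott with the $\rho_L,\rho_M$ shift, this is a one-dimensional shift of $\cF$, so in fact $\cF$ is a direct summand/equals $\pi_\star\pi^\star\cF$ up to shift). Then for any t-structure making $\pi^\star$ t-exact, $\cF\in D^{\le0}$ iff $\pi^\star\cF\in D^{\le 0}$ (the ``if'' using conservativity plus exactness: the truncation triangle maps to the truncation triangle, and $\pi^\star$ of the positive part vanishes, hence it vanishes). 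So both halves are determined, giving uniqueness; existence is (a).

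The main obstacle is Step 1 in the derived, non-exact base-change setting: $A_P\otimes^\bR_{\cO_\g}A_Q^{\op}$ is a dg-algebra, so I must verify the projectivity/summand property of the transfer bimodule over each tensor factor separately and check that derived tensoring over $\cO_\g$ preserves it; I'd try to reduce to the Springer statements of \cite[Section~4]{BM13} applied to $\cE_P$ and $\cE_Q^*$ individually.
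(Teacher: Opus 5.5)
Your first display is exactly the identity the paper's proof rests on. You do not draw the conclusion it already contains, and the arguments you substitute for it have gaps. Here $\bR\Hom$ is the $\cO_\g$-linear (sheaf-valued) Hom, so
\[
\bR\Hom(\cE_{PQ},\cF)\simeq\bR\Hom(\pi_\star\cE_{BB},\cF)\simeq\bR\Hom(\cE_{BB},\pi^\star\cF)
\]
says that the module attached to $\cF$ and the module attached to $\pi^\star\cF$ have the \emph{same} image under $\Forg$.

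On the module side, $\pi^\star$ is just restriction of scalars along the map $\bR\End(\cE_{BB})\to\bR\End(\pi_\star\cE_{BB})$ induced by $\pi_\star$. Your transfer bimodule $\bR\Hom(\cE_{PQ},\pi_\star\cE_{BB})$ is simply $\bR\End(\cE_{PQ})$ itself. Both perversely-exotic t-structures are defined as $\Forg^{-1}$ of the perverse coherent t-structure on sheaves supported on $\cN$ (Theorem~\ref{thm: perverse coherent t-structure for dg-algebras}). Hence $\cF$ lies in either half if and only if $\pi^\star\cF$ does. This gives (a) at once. It also gives (b): for any t-structure making $\pi^\star$ t-exact, both halves are contained in the perversely-exotic ones, hence coincide with them.

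In Steps~1--2 you instead rely on the claim that the transfer bimodule is locally a direct summand of a free module, so that the underlying sheaf is a summand of $M^{\oplus r}$. You never prove this. The justification you cite from \cite{BM13} is vacuous here, since $\cE_P$ is \emph{defined} as $\pi_\star\cE_B$. The dg/non-exact base change ``main obstacle'' you flag is left unresolved, although it never actually arises. Moreover, a summand property would only give preservation of the t-structure, not reflection, which is why you then need conservativity.

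In Step~3 the formal part is correct: t-exact plus conservative implies the t-structure is determined. The justification of conservativity, however, is false. The $\rho$-twists cancel, so $\pi_\star\pi^\star=(\pr_1)_*\,i^*i_*\,\pr_1^*$, and $i^*i_*$ is the Koszul complex of the normal bundle of $i$, not the identity. Already in the Springer analogue with $G=SL_2$, $P=G$, where $i$ is the zero section of $T^*\bP^1$, one gets $\pi_\star\pi^\star V\simeq V\oplus V\otimes\Bbbk^3[1]$ as vector spaces, not a shift of $V$.

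Conservativity does hold, but for a different reason. If $\pi^\star\cF=0$, then $\bR\Hom(\cE_{PQ},\cF)\simeq\bR\Hom(\cE_{BB},\pi^\star\cF)=0$, so $\cF=0$ by Lemma~\ref{lem: E_PQ is a generator}. With the observation of the previous paragraph, though, conservativity is not needed at all.
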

\begin{proof}
Consider $\cF \in D_\exo^{> 0} (\Coh^{G \times \bG_m}(T^*\cP \times^\bL_\g T^*\cQ))$ --- an object in the positive part of the perversely-exotic t-structure. By definition, this is equivalent to $\bR\Hom(\cE_{PQ}, \cF)$ lying in the positive part of the perverse coherent t-structure on $\cN$. 
Rewrite
\begin{equation*}
\bR\Hom(\cE_{PQ}, \cF) \simeq \bR\Hom(\pi_\star \cE_{BB}, \cF) \simeq \bR\Hom(\cE_{BB}, \pi^\star \cF),
\end{equation*}
so this is equivalent to $\pi^\star \cF$ lying in the positive part of the perversely-exotic t-structure on $\Coh^{G \times \bG_m}(T^*\cB \times^\bL_\g T^*\cB)$. The same argument works for the negative parts of t-structures.

Thus both claims follow.
\end{proof}

\subsection{t-structures under coherent-constructible equivalences} \label{subsec: t-structures under coh-const}

Let $G^\vee$ be the Langlands dual group to $G$. We consider $G^\vee$ and related (ind-) varieties to be defined over a field $k$. Consider parabolic subgroups $P^\vee, Q^\vee \subset G^\vee$, corresponding to $P, Q \subset G$. Let $I^\vee_{P}, I^\vee_{Q}$ be the corresponding parahoric subgroups of the loop group $G^\vee_\cK = G^\vee(\cK)$, defined as pre-images of $P^\vee$ and $Q^\vee$ respectively under the evaluation $G^\vee(\cO) \rightarrow G^\vee$ map. Denote also $I^\vee = I^\vee_B$.

Let $D_{P^\vee Q^\vee}$ be the derived category of $\ell$-adic constructible sheaves on $I^\vee_{P} \backslash G^\vee_\cK / I^\vee_{Q} $. The following equivalence is the parabolic version of the ``two realizations of an affine Hecke algebra'' theorem, proved for $P = Q = B$ in \cite{Bez16}, and in \cite[Theorems~A,~B]{CD23} in the stated generality (based on earlier ideas of \cite{BL23}):
\begin{equation} \label{eq: parabolic two realizations}
D_{P^\vee Q^\vee} \simeq D^b \Coh^{G} (T^*\cP \times^\bL_\g T^*\cQ).
\end{equation}

Now we recall the functors which correspond to $(\pi_\star, \pi^\star)$ on the constructible side. Consider the natural projection $f: I^\vee_B \backslash G^\vee_\cK / I^\vee_B \rightarrow I^\vee_P \backslash G^\vee_\cK / I^\vee_Q$. There are the functors 
\begin{align*}
f^\star &:= f^*[\dim P^\vee / B^\vee + \dim Q^\vee / B^\vee] = f^![- \dim P^\vee / B^\vee - \dim Q^\vee / B^\vee], \\ 
f_\star &:= f_*[\dim P^\vee / B^\vee + \dim Q^\vee / B^\vee] = f_![ \dim P^\vee / B^\vee + \dim Q^\vee / B^\vee].
\end{align*}
They form an adjoint pair $(f_\star, f^\star)$. 

It is proved in \cite[Section~6.0]{CD23} that the equivalence~\eqref{eq: parabolic two realizations} is compatible with functors
\begin{equation} \label{eq: diagram of compatibilities for bezrukav equivalences}
\begin{tikzcd}[column sep=huge]
	{D_{B^\vee B^\vee}} & {D^b \text{Coh}^{G} (T^*\mathcal B \times^\bL_\mathfrak g T^*\mathcal B)} \\
	{D_{P^\vee Q^\vee}} & {D^b \text{Coh}^{G} (T^*\mathcal P \times^\bL_\mathfrak g T^*\mathcal Q)}
	\arrow["\cong", from=1-1, to=1-2]
	\arrow["{f_\star}", from=1-1, to=2-1]
	\arrow["{\pi_\star}", from=1-2, to=2-2]
	\arrow["{f^\star}", shift left=3, from=2-1, to=1-1]
	\arrow["\cong", from=2-1, to=2-2]
	\arrow["{\pi^\star}", shift left=3, from=2-2, to=1-2]
\end{tikzcd}
\end{equation}

There are the natural perverse (constructible) t-structures on the left-hand side of \eqref{eq: parabolic two realizations}. We also have the perversely-exotic t-structures on the right-hand sides. 
We show that these t-structures correspond to each other:
\begin{thm} \label{thm: t-structures under bezrukav equivalences}
Under derived equivalences \eqref{eq: parabolic two realizations}, the perverse constructible t-structure on the left-hand sides, corresponds to the perversely-exotic coherent t-structure on the right-hand sides. In particular, simple objects in the hearts are mapped to each other. 
\end{thm}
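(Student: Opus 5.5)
The plan is to reduce to the full flag case $P = Q = B$, which is \cite[Theorem~54]{Bez16}, using the compatibility diagram~\eqref{eq: diagram of compatibilities for bezrukav equivalences} together with Proposition~\ref{prop: pi star is t-exact}(b). Since that uniqueness statement characterizes the perversely-exotic t-structure on the parabolic Steinberg variety, it suffices to prove a constructible analogue. Namely, we need that the perverse t-structure on $D_{P^\vee Q^\vee}$ is the unique t-structure for which $f^\star$ is t-exact with respect to the perverse t-structure on $D_{B^\vee B^\vee}$. Granting this, transport the perverse t-structure on $D_{P^\vee Q^\vee}$ through~\eqref{eq: parabolic two realizations}. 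The diagram identifies $f^\star$ with $\pi^\star$, and \cite[Theorem~54]{Bez16} identifies the two t-structures upstairs. Hence the transported t-structure makes $\pi^\star$ t-exact for the perversely-exotic t-structure on $T^*\cB\times^\bL_\g T^*\cB$. By Proposition~\ref{prop: pi star is t-exact}(b), it must therefore be the perversely-exotic one. Proposition~\ref{prop: pi star is t-exact} is stated for $G$-equivariant categories, which matches~\eqref{eq: parabolic two realizations}. The $G\times\bG_m$ version would need a mixed/graded form of the equivalence, and I would only remark on it.

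For the constructible side, note that $f\colon I^\vee\backslash G^\vee_\cK/I^\vee \to I^\vee_P\backslash G^\vee_\cK/I^\vee_Q$ is smooth with fibers $P^\vee/B^\vee\times Q^\vee/B^\vee$ of relative dimension $\dim P^\vee/B^\vee+\dim Q^\vee/B^\vee$. The shift in $f^\star$ is exactly the one making smooth pullback perverse t-exact. Uniqueness follows if $f^\star$ is conservative: given two t-structures making $f^\star$ t-exact, an object $\cF$ lies in $D^{\le 0}$ for either one iff $f^\star\cF\in {}^pD^{\le 0}$, and likewise for $D^{\ge 0}$. Conservativity holds because $f$ is surjective; equivalently, $\cF$ is a direct summand of $f_\star f^\star\cF$ up to shifts, by the decomposition theorem for the smooth proper fibration. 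The ``if and only if'' is in fact the precise form of the statement: for smooth surjective $f$ with connected fibers, $f^\star\cF$ is perverse iff $\cF$ is perverse.

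The step I expect to be the main obstacle is bookkeeping rather than conceptual. One must check that the functors $f^\star$ and $\pi^\star$ in~\eqref{eq: diagram of compatibilities for bezrukav equivalences} match exactly, including the twist by $\cO(\rho_L,\rho_M)$ and the shifts, and not just up to a shift. Otherwise the t-structures would agree only up to a shift. I would verify this on a single object, for instance the unit/skyscraper on the closed stratum on both sides, whose images are known to be perverse (respectively perversely-exotic irreducible).

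Finally, the statement about simple objects is formal: an equivalence that is t-exact for the two t-structures restricts to an equivalence of hearts. It therefore sends simples to simples, namely IC sheaves of $I^\vee_P$-orbits to the objects $\IC(O,V)$ of Theorem~\ref{thm: perverse coherent t-structure for dg-algebras}.
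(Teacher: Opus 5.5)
Your proposal is correct and matches the paper's proof. The case $P=Q=B$ comes from (the argument of) \cite[Theorem~54]{Bez16}. For general $P,Q$, perverse t-exactness of $f^\star$ makes $\pi^\star$ t-exact for the transported perverse t-structure, so that t-structure coincides with the perversely-exotic one by the uniqueness in Proposition~\ref{prop: pi star is t-exact}(b).

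Two small remarks. The constructible uniqueness statement you set out to prove is never used, since only t-exactness of $f^\star$ enters. Also, the paper obtains the base case by adapting \cite[Theorem~54]{Bez16}, which is formulated for the monodromic equivalence, rather than by citing it verbatim.
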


\begin{proof}

First, consider the case $P = Q = B$. Then the argument is completely in line with \cite[Theorem~54]{Bez16}, which is the same result for the monodromic equivalence.

Now, we deduce the case of arbitrary $P$, $Q$ from the case $P = Q = B$. Note that the functor $f^\star$ in \eqref{eq: diagram of compatibilities for bezrukav equivalences} is t-exact with respect to perverse t-structure (since $f$ is smooth).
According to Proposition~\ref{prop: pi star is t-exact}, the perversely-exotic t-structure on  ${D^b \text{Coh}^{G} (T^*\mathcal P \times^\bL_\mathfrak g T^*\mathcal Q)}$ is the unique t-structure, for which $\pi^\star$ is t-exact. The claim follows.
\end{proof}

For an affine Weyl group element $w$, we denote by $\cP_w$ the simple perversely-exotic coherent sheaf, corresponding to $w$ under the bijection of Theorem~\ref{thm: t-structures under bezrukav equivalences}.

\begin{rem}
Note that for the case of parabolic Springer (as opposed to Steinberg) variety and the corresponding coherent-constructible equivalence, an analog of Theorem~\ref{thm: t-structures under bezrukav equivalences} was proved in \cite[2.D.]{ACR18}, \cite[A.0.8(5)]{Pro26}. 
\end{rem}

\begin{rem} \label{rem: simples to simples for pi^*}
Note that for the non-equivariant case, \cite[Theorem~4.2(b)]{BM13} asserts that $\pi^\star$ maps simples to simples in the hearts of the corresponding t-structures only if a certain condition is satisfied (which does not always hold outside of type A, see \cite[Remark~2.29]{HKM24}). In our equivariant case, this property of $\pi^\star$ is always true, since it is easy to see and well-known for $f^\star$.
\end{rem}

We will need the following property of Bezrukavnikov-type equivalences. Assume that the base field on the constructible side is $\overline{\mathbb F}_q$ for a finite field $\mathbb F_q$. Then the categories on the constructible side admit the Frobenius automorphism. For the coherent side, consider the automorphism $\bold q$, induced by the scaling of $\g$ by $q \in \Bbbk$, $\bold q: \g \rightarrow \g$, $x \mapsto qx$.
\begin{lem} \label{lem: Frobenius under Roma}
Under the equivalences \eqref{eq: diagram of compatibilities for bezrukav equivalences}, the Frobenius automorphism  is intertwined with the functor $\bold q^*$. 
\end{lem}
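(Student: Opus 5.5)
First I will settle the case $P=Q=B$, and then pass to arbitrary parabolics through the diagram \eqref{eq: diagram of compatibilities for bezrukav equivalences}.

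\emph{The Iwahori case.} Here I will use the known fact from \cite{Bez16} (see also the mixed version of Arkhipov--Bezrukavnikov) that Frobenius on the constructible side corresponds to $\bold q^*$ on the coherent side. The equivalence is determined by its values on the monoidal generators: the Wakimoto sheaves $J_\la$ go to the line bundles $\cO(\la)$ on the diagonal, and the standard and costandard objects $\Delta_s$, $\nabla_s$ for simple reflections go to the explicit complexes attached to the closures $\ol{Z}_s$ of the Steinberg components. I will check that each of these generators is Frobenius-stable on one side and $\bold q^*$-stable on the other, and that the identifications are compatible. Frobenius acts on the Tate twist, and the grading on the coherent side is given by $\bG_m$-equivariance on $\g$. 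Rescaling $\g$ by $q$ is the coherent incarnation of this, because the central element/monodromy $N$ on the constructible side (log of monodromy, or the Frobenius-weight filtration) corresponds to the coordinate on $\g$. Under Frobenius, $N$ is sent to $qN$, since $\mathrm{Fr}\, N\, \mathrm{Fr}^{-1}=q^{-1}N$ in Weil--Deligne conventions. This matches $x\mapsto qx$. Once the generators match, monoidality on both sides extends the intertwining to the whole category, because the Frobenius functor and $\bold q^*$ are both monoidal.

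\emph{The parabolic case.} The functors $f_\star, f^\star$ commute with Frobenius, since $f$ is defined over $\mathbb F_q$ and the shifts involved are even and untwisted. The functors $\pi_\star, \pi^\star$ commute with $\bold q^*$: the maps $\pr_1$ and $i$ are $\bG_m$-equivariant for scaling, and the twists by $\cO(\rho_L,\rho_M)$ are pulled back from the flag varieties, on which $\bold q$ acts trivially. Now $f^\star$ is conservative and $f_\star f^\star$ contains the identity as a direct summand up to a twist, so $D_{P^\vee Q^\vee}$ is generated by the essential image of $f_\star$, and likewise on the coherent side. By \cite{CD23}, the parabolic equivalence is characterized by compatibility with $(f_\star,\pi_\star)$ and $(f^\star,\pi^\star)$. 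Hence the functor $\Phi_{PQ}^{-1}\circ\bold q^*\circ\Phi_{PQ}\circ \mathrm{Fr}^{-1}$ is compatible with the Iwahori identity through these adjunctions. From this I conclude that it is isomorphic to the identity, using the monadic description of $D_{P^\vee Q^\vee}$ as modules over $f^\star f_\star$ (Barr--Beck).

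\emph{Main obstacle.} I expect the hardest step to be the Iwahori case, and specifically matching the monodromy or weight data with the $\bG_m$-scaling, with the correct sign convention ($q$ versus $q^{-1}$). A secondary difficulty is making sure that "intertwined" means an isomorphism of functors, not just of objects. For this I will argue at the level of dg-enhancements, via the monoidal generators. Alternatively, it may be enough to check the statement on objects and on the K-theory, which is all that is used later.
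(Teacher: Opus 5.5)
Your treatment of $P=Q=B$ is the paper's proof, which consists only of the pointer to \cite[11.1]{Bez16} and \cite[Proposition~1]{AB09}. The mechanism there is the one you indicate: Frobenius rescales the logarithm of monodromy of the central sheaves, and this goes to the tautological coordinate on $\g$ rescaled by $\bold q^*$. Your own gloss, however, would not by itself give a natural isomorphism of functors. Matching the generating objects $J_\la,\Delta_s,\nabla_s$ and extending by monoidality is not enough. In \cite{AB09} the intertwining is checked on the explicit construction of the functor, including on morphisms, and that is what you are really citing.

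The parabolic deduction has a genuine gap at the Barr--Beck step. Write $\Phi_{BB},\Phi_{PQ}$ for the horizontal equivalences and $F=\Phi_{PQ}^{-1}\circ\bold q^*\circ\Phi_{PQ}\circ(\mathrm{Fr}^*)^{-1}$.
\begin{itemize}
\item What your computation produces is an isomorphism of plain functors $f^\star F\cong f^\star$, assembled from the Iwahori intertwining and the comparison isomorphisms of \cite{CD23}.
\item Via monadicity, this only says that $F$ corresponds to \emph{some} automorphism of the monad $f^\star f_\star$, i.e.\ of the algebra objects given by the shifted constant sheaves on $I^\vee_P/I^\vee$ and $I^\vee_Q/I^\vee$ acting by convolution. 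Then $F\cong\id$ exactly when that automorphism is trivial, and nothing in your argument controls it.
\item Concretely, nothing prevents $F$ from rescaling the $\Ext$-classes between objects $f_\star Y$ that come from $H^{>0}(P^\vee/B^\vee)$. Whether it does is exactly the weight-versus-$\bG_m$-degree matching the lemma is supposed to deliver.
\end{itemize}
To close the gap you need two things. First, the Iwahori intertwining must be monoidal. Second, $\Phi_{BB}\circ f^\star f_\star\cong\pi^\star\pi_\star\circ\Phi_{BB}$ must be an isomorphism of monads taking the canonical Weil structure to the $\bG_m$-induced $\bold q^*$-structure. Here note that $f^*[d]$ and $f^![-d]$ differ by the Tate twist $(d)$. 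So the adjunction data are Frobenius-compatible only when normalized through $f^!$, and the shifts are not `even and untwisted'.

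For the same reason, saying that \cite{CD23} characterizes $\Phi_{PQ}$ by compatibility with $(f_\star,\pi_\star)$ is circular at the level of functors: $\Phi_{PQ}\circ F$ has the same compatibility for every $F$ with $f^\star F\cong f^\star$. Your fallback of checking only objects and $K$-theory would not suffice either. The later weight argument transports $\bG_m$-equivariant structures to Weil structures, so it needs the intertwining on morphisms.

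Observe, though, that the paper invokes the lemma only in the proof of the full-flag statement ($C_w\mapsto[\cP_w]$). It obtains the parabolic canonical bases from that through $\pi^\star$. So the part of your proposal that agrees with the paper covers what is actually used downstream.
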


As mentioned in \cite[11.1]{Bez16}, the proof is parallel to the proof of \cite[Proposition~1]{AB09}.


\subsection{Kazhdan--Lusztig bases for parabolic affine Hecke algebras and anti-spherical modules} \label{subsec: canonical bases in parabolic hecke}

Let $T \subset B \subset G$ be the maximal torus in Borel $B$. 
Recall that $L$ and $M$ are Levi subgroups, corresponding to parabolic $P$ and $Q$ respectively.
Let $\Delta_L$, $\Delta_M$, $\Delta=\Delta_G$ be their sets of roots and $I=I_G \subset \Delta_G$ be the set of simple roots. 
Let $\Lambda = \Lambda_T$ be the character lattice of $T$. 
Let $\widehat{W}:=W \ltimes \Lambda$ be the (extended) affine Weyl group and let $W_L$, $W_M$ be the finite parabolic subgroups corresponding to subsets $I_L, I_M$ of the set $I$ of simple roots in~$\Delta$. Let $\ell$ be the length function.


We consider the affine Hecke algebra $\mathcal{H}(\widehat{W})$, 
whose definition we now recall 
(we follow the normalization of \cite[Section~6.1]{R}). 
It is generated by $\{T_s,\, s \in I_G\}$ and $\{\theta_x,\, x \in \Lambda\}$ over $\mathbb{C}[v^{\pm 1}]$, subject to the following relations. For $s,t \in I$, let $m_{s,t} \in \mathbb{Z}_{\geqslant 1}$ be the order of $st$ in $\wh W$.

\begin{enumerate}[label=(\arabic*)]
\item $\underbrace{T_sT_{t} \ldots}_{{m_{s,t}}} = \underbrace{T_tT_{s} \ldots}_{{m_{s,t}}}$;
\item $\theta_{x}\theta_{y} = \theta_{x+y}$;
\item $T_s\theta_x = \theta_xT_s$~\text{if}~$s(x)=x$;
\item $\theta_x=T_s\theta_{x-\alpha}T_s$~\text{if}~$s=s_\alpha$~\text{and}~$s(x)=x-\alpha$;
\item $(T_s+v^{-1})(T_s-v)=0$.
\end{enumerate}
The relations above imply the following relation:
\begin{equation}\label{eq:rel_theta_vs_s}
T_{s_\alpha} \theta_{x} = \theta_{s_\alpha(x)} T_{s_\alpha} + (v^{-1}-v)\frac{\theta_{s_\alpha(x)}-\theta_{x}}{1-\theta_{-\alpha}}.
\end{equation}

The algebra $\mathcal{H}(\widehat{W})$ has an alternative presentation: it is a free module over $\mathbb{C}[v^{\pm 1}]$ with generators $\{T_w,\, w \in \widehat{W}\}$ subject to relations:
\begin{enumerate}[label=(\arabic*), leftmargin=*, align=left]
\item $T_wT_{w'}=T_{ww'}$~\text{if}~$\ell(ww')=\ell(w)+\ell(w')$,
\item $(T_s+v^{-1})(T_s-v)=0$.
\end{enumerate}

\begin{rem}\label{rem:norm_hecke}
Sometimes the Hecke algebra is defined using the relation $(T_s'+1)(T_s'-q)=0$. The identification is given by $T'_s \mapsto vT_s$, $q \mapsto v^2$.    
\end{rem}

Fix $x \in \Lambda$ and let us write $x=x_1-x_2$ with $x_1, x_2$ being dominant. Then
$
\theta_x = T_{x_1}T_{x_2}^{-1}.
$

The basis  $\{T_{w},\, w \in \widehat{W}\}$ is called {\emph{standard}}.
The {\emph{Kazhdan--Lusztig involution}} on $\mathcal{H}(\widehat{W})$ is determined by 
$\overline{v}=v^{-1}$, $\overline{T}_w=T_{w^{-1}}^{-1}$.
The {\emph{canonical}} basis $\{C_w,\, w \in \widehat{W}\}$ is the unique self-dual basis such that 
\begin{equation*}
C_w \in T_w + v^{-1}\bZ[v^{-1}] \underset{y < w}{\on{Span}_{\bC}}\{T_y\}.
\end{equation*}

\begin{example}
For $s \in I$ we have $C_s=T_s+v^{-1}$.
\end{example}

We start with the following well-known proposition, 
we refer the reader to \cite[Proposition 6.1.5]{R} and the references therein. 
To $x \in \Lambda$ we associate the line bundle $\mathcal{O}_{\mathcal{B}}(x)$ on $\cB$, defined  as $G \times^B \Bbbk_{-x}$; its lift to $T^*\cB$ is denoted similarly. 


Following \cite[Section~6.1]{R}, to a simple root $\alpha$ we associate a reduced subvariety $S_\alpha' \subset T^*\mathcal{B} \times_{\mathfrak{g}} T^*\mathcal{B}$ as follows. Let $\mathcal{P}_\alpha$ be the partial flag variety corresponding to $\alpha$ and let $\pi_{\alpha}\colon \mathcal{B} \rightarrow \mathcal{P}_\alpha$ be the natural map. Then
\begin{equation*}
S_\alpha' := \{(\mathfrak{b},\mathfrak{b}',x) \in T^*\mathcal{B} \times_{\mathfrak{g}} T^*\mathcal{B}\,|\, \pi_{\alpha}(\mathfrak{b})=\pi_{\alpha}(\mathfrak{b}')\}.
\end{equation*}
It has two irreducible components, one of them is the diagonal $\Delta_{T^*\mathcal{B}}$ and the other one is
\begin{equation*}
Y_\alpha := \{(\mathfrak{b},\mathfrak{b}',x) \in S'_{\alpha}\,|\, x \in \pi_{\alpha}(\mathfrak{b})^{\mathrm{nil}}\},
\end{equation*}
where $\bullet^{\mathrm{nil}}$ denotes the nilradical. Clearly, $Y_{\alpha}$ is a vector bundle on $\mathcal{B} \times_{\mathcal{P}_\alpha} \mathcal{B}$ of rank $\on{dim}(\mathfrak{g}/\mathfrak{b})-1$. For example, for $\mathfrak{g}=\mathfrak{sl}_2$, we have $Y_\alpha=\mathbb{P}^1 \times \mathbb{P}^1$.


\begin{prop}\label{eq:prop_explicit_ident_hecke_steinberg}
\begin{enumerate}
    \item \label{item: iso of hecke and k-theory}  There is an isomorphism
\begin{equation}\label{eq:iso_hecke_to_steinberg}
 \mathcal{H}(\widehat{W}) \iso K^{G \times \bG_m}(T^*\mathcal{B} \times^{\bL}_{\mathfrak{g}} T^*\mathcal{B}), 
\end{equation}
explicitly given by
\begin{align*}
\theta_x &\mapsto [\mathcal{O}_{\Delta_{T^*\mathcal{B}}}(x)], & T_{s_{\alpha}} \mapsto -v^{-1}[\mathcal{O}_{Y_\alpha}(-\rho, & \rho-\alpha)] - v^{-1}= -v^{-1}[\mathcal{O}_{S'_\alpha}].
\end{align*}

\item \label{item: perv exotic to canonical for full hecke} The identification (\ref{eq:iso_hecke_to_steinberg}) sends $C_w$ to $[\cP_{w}]$.

\end{enumerate}
\end{prop}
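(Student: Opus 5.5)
Part (1) is classical, so I would cite it rather than reprove it. The isomorphism \eqref{eq:iso_hecke_to_steinberg} is the Kazhdan--Lusztig/Ginzburg theorem (see \cite{KL87}, \cite{CG97}), in the normalization of \cite[Proposition~6.1.5]{R}. Two small points need checking.

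\begin{itemize}
\item Since $T^*\cB \times_\g T^*\cB$ is not a complete intersection of the expected dimension, the derived and classical fiber products differ. However, they have the same equivariant K-theory: the classes of the structure sheaves and of modules over $\pi_0$ generate, by the dévissage $K(\text{derived scheme}) \simeq K(\pi_0)$.
\item The sign and normalization convention must be matched. The generator is $-v^{-1}[\cO_{S'_\al}]$, and the identity $[\cO_{S'_\al}] = [\cO_{Y_\al}(-\rho,\rho-\al)] + [\cO_\Delta]$ comes from the short exact sequence for the union of the two components $\Delta$ and $Y_\al$ along their intersection. The twist is read off from the normal data of $\Delta \cap Y_\al$ inside $Y_\al$.
\end{itemize}

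I would check the quadratic relation and the relation \eqref{eq:rel_theta_vs_s} directly by convolution, reducing to the $\fsl_2$ case (where $Y_\al = \bP^1\times\bP^1$). I would then quote the cited references for the remaining relations and for bijectivity, which follows from the cellular filtration by $G$-orbits on $\cB\times\cB$.

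For part (2) I would pass through Bezrukavnikov's equivalence \eqref{eq: parabolic two realizations} with $P=Q=B$. Theorem~\ref{thm: t-structures under bezrukav equivalences} says this equivalence matches simple perversely-exotic objects $\cP_w$ with the IC sheaves $\IC_w$ on $I^\vee\backslash G^\vee_\cK/I^\vee$. The argument then has three steps.

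\begin{enumerate}
\item Work over $\overline{\mathbb F}_q$ and use mixed sheaves. By Lemma~\ref{lem: Frobenius under Roma}, Frobenius corresponds to $\bold q^*$. So passing to the Grothendieck group of the mixed (weight) category on the constructible side corresponds to passing to $K^{G\times\bG_m}$ on the coherent side, with $v$ corresponding to a half Tate twist, i.e. the $\bG_m$-weight. Here I would use the $\bG_m$-equivariant enhancement of the equivalence from \cite{Bez16}.
\item On the constructible side, the Grothendieck group of Frobenius-equivariant pure complexes is the affine Hecke algebra. Under it, $[\IC_w]$ maps to $C_w$: this is Kazhdan--Lusztig \cite{KL80}, via Deligne purity and the self-duality of $\IC_w$.
\item Check that the induced map on Grothendieck groups agrees with \eqref{eq:iso_hecke_to_steinberg}. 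Both maps are algebra isomorphisms, since the equivalence is monoidal. So it suffices to check them on generators:
\begin{itemize}
\item the Wakimoto sheaves $J_x$, which correspond to $\cO_\Delta(x)$, i.e. to $\theta_x$;
\item $\IC_s$ for simple $s$, which corresponds to $\cO_{S'_\al}$ up to shift and twist.
\end{itemize}
Both are stated in \cite{Bez16}.
\end{enumerate}

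The main obstacle is step 3 together with the $\bG_m$-weights: making the signs, Tate twists and $v$ versus $v^{-1}$ conventions agree exactly, so that $[\IC_s]$ goes to $C_s = T_s + v^{-1} = -v^{-1}[\cO_{Y_\al}(-\rho,\rho-\al)]$. I would first fix the $\fsl_2$ case by computing both sides explicitly, and then propagate to all generators using monoidality.
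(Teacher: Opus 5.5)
Part~(1) is handled as in the paper, which simply cites \cite{KL87, CG97}. For part~(2) you share the paper's skeleton: Theorem~\ref{thm: t-structures under bezrukav equivalences} gives the ungraded matching $\cP_w\leftrightarrow\IC_w$, Lemma~\ref{lem: Frobenius under Roma} converts $\bG_m$-weights into Frobenius weights, and Kazhdan--Lusztig purity finishes.

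The difference is in how the grading is transported. You want a graded monoidal form of the equivalence, so that $K^{G\times\bG_m}(T^*\cB\times^\bL_\g T^*\cB)$ becomes the Grothendieck group of a mixed Hecke category and two algebra isomorphisms can be compared on generators. Lemma~\ref{lem: Frobenius under Roma} does not supply this. An isomorphism $\Phi\circ\mathbf q^*\simeq\mathrm{Fr}^*\circ\Phi$ only puts Weil structures on individual objects, and such pairs do not form a triangulated category, let alone a monoidal one. So the ``so'' in your Step~1 needs, as an extra input, a genuinely graded version of Bezrukavnikov's equivalence. The lemma you cite does not provide it.

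The paper avoids this by working inside the heart, as in \cite[Section~4.2]{Bez06a}:
\begin{itemize}
\item The lemma gives a functor to Weil perverse sheaves. Under it, the $\bG_m$-grading on the multiplicity of a simple object in a standard object becomes the weight grading, which purity controls.
\item Compatibility with part~(1) comes from identifying the classes of standard objects with the $T_w$. This uses braid positivity, and the fact that the braid action of \cite{BR12} is built from the same kernels $\cO_\Delta(x)$ and twists of $\cO_{S'_\alpha}$ that appear in part~(1).
\end{itemize}
The second point carries the same information as your generator check. Your route is cleaner once a graded monoidal equivalence is available; the paper's needs only the Frobenius lemma.

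There is also one slip. $\IC_s$ is not a shift or line-bundle twist of $\cO_{S'_\alpha}$: such objects have classes of the form $\pm v^k\theta_xT_s\theta_y$, and these are never proportional to $C_s$. For suitable twists they give $T_s^{\pm1}$, so they match $\Delta_s$ or $\nabla_s$. $\IC_s$ corresponds to a shift of $\cO_{Y_\alpha}(-\rho,\rho-\alpha)$, in line with the formula you write at the end. Run the generator check on that object, or on $\Delta_s$ against a twist of $\cO_{S'_\alpha}$.
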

Recall that $\cP_w$ here stands for a simple perversely-exotic sheaf, see Theorem~\ref{thm: t-structures under bezrukav equivalences}.
\begin{proof}
For part \eqref{item: iso of hecke and k-theory}, see \cite{KL87} and \cite{CG97}.

Let us discuss part~\eqref{item: perv exotic to canonical for full hecke}.
If we forget $\bG_m$-equivariance, Theorem~\ref{thm: t-structures under bezrukav equivalences} establishes that classes of simple perversely-exotic sheaves correspond to classes of simple objects on the constructible side, which give the canonical basis in the $v = 1$ degeneration.

The way to prove the required fact in the $G \times \bG_m$-equivariant category using Lemma~\ref{lem: Frobenius under Roma} is parallel to \cite[Section~4.2]{Bez06a}. 
Namely, consider the multiplicity of a simple object in a standard object. It possesses a torus action, and we need to prove that its weights are {{non-positive}}. As explained in \cite{Bez06a}, Lemma~\ref{lem: Frobenius under Roma} implies that we have a functor from $D^b \Coh^{G \times \bG_m} (T^* \cP \times^\bL_\g T^* \cQ)$ to Weil perverse sheaves on $\Fl_{G^\vee}$, and the grading on the spaces of multiplicities corresponds to the weight filtration. The fact that the weights are {{non-positive}} then follows from the proof of Kazhdan--Lusztig conjectures, see e.g. \cite{KT90, BBD82}.

The compatibility of this with identification of part~\eqref{item: iso of hecke and k-theory} follows, since the perversely-exotic t-structure is characterized by braid positivity (see \cite[proof of Theorem~54]{Bez16}), and the braid group action in question is constructed using the same formulae as in isomorphism~\eqref{eq:iso_hecke_to_steinberg}, see \cite{BR12, R}.
\end{proof}

\begin{rem}\label{rem:image of Cs}
Clearly, 
\begin{equation*}
C_{s_\alpha} \mapsto -v^{-1}[\mathcal{O}_{Y_\alpha}(-\rho,\rho-\alpha)].
\end{equation*}
For $\mathfrak{g}=\mathfrak{sl}_2$ we see that $C_{s_\alpha} \mapsto -v^{-1}[\mathcal{O}_{\mathbb{P}^1}(-1) \boxtimes \mathcal{O}_{\mathbb{P}^1}(-1)]$.
\end{rem}

\subsubsection{Spherical--spherical case}\label{sec:sph_sph} The goal of this section is to prove the parabolic version of Proposition~\ref{eq:prop_explicit_ident_hecke_steinberg} and thus obtain an explicit description of the classes of the irreducible perversely-exotic sheaves in the parabolic case.

Let $w_{L,0}$, $w_{M,0}$ be the longest elements of $W_L$, $W_M$. 
Define
\begin{align*}
C_{w_{L,0}} &= \sum_{w \in W_L}v^{\ell(w)-\ell(w_{L,0})}T_w, & C_{w_{M,0}} &= \sum_{w \in W_M}v^{\ell(w)-\ell(w_{M,0})}T_w.
\end{align*}

Consider the subspace of $\mathcal{H}=\mathcal{H}(\widehat{W})$, defined as 
\begin{equation*}
{}^L\mathcal{H}{}^M:=C_{w_{L,0}}\mathcal{H} \cap \mathcal{H}C_{w_{M,0}}.
\end{equation*}
Note that $C_{w_{L,0}}T_s=vC_{w_{L,0}}=T_sC_{w_{L,0}}$ for a simple reflection $s \in W_L$ and $C_{w_{M,0}}T_s=vC_{w_{M,0}}=T_sC_{w_{M,0}}$ for a simple reflection $s \in W_M$, so 
\begin{equation*}
{}^L\mathcal{H}{}^M = \{x \in \cH~|~ T_wx=v^{\ell(w)}x,\, xT_u=v^{\ell(u)}x~\text{for}~w \in W_L, \,u \in W_M\}.
\end{equation*}

Let us define the {\emph{standard}} and {\emph{canonical}} bases in ${}^L\mathcal{H}^M$. This is well-known, so we omit the details. The bases are labeled by  $W_L \backslash \widehat{W}/W_M$ and we denote their elements by ${}^LT_{w}^M$, ${}^LC_{w}^{M}$. Every double coset $W_LwW_M$ contains the unique longest element to be denoted $n_w$, see \cite[Proposition~23]{Kob11}. We have:

\begin{equation*}
{}^LT{}^M_{w}:=\sum_{u \in W_LwW_M} v^{\ell(u)-\ell(n_{w})}T_u.
\end{equation*}
Both embeddings $C_{w_{L,0}}\mathcal{H} \subset \mathcal{H}$, $\mathcal{H}C_{w_{M,0}} \subset \mathcal{H}$ respect the bar involution, so we get the involution on ${}^L\mathcal{H}{}^M$.

The canonical basis $\{C_{n_w}\}$ is defined axiomatically as the unique self-dual basis  of ${}^L\mathcal{H}{}^M$ such that 
\begin{equation*}
C_{n_w}-{}^LT{}^M_{n_w} \in v^{-1}\bZ[v^{-1}]\underset{y < w}{\on{Span}_{\bC}}({}^LT_y{}^M).
\end{equation*}

The following proposition relating the canonical bases of ${}^L\mathcal{H}^M$ and $\mathcal{H}$ is standard. 

\begin{prop}\label{prop:comp_canonical_basis_L_H_M_H}
The embedding ${}^L\mathcal{H}{}^M \subset \mathcal{H}$ 
maps elements of the canonical basis to elements of the canonical basis (preserving the labeling).
\end{prop}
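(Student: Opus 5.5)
The plan is the standard argument by uniqueness of self-dual bases with a triangularity condition. Write $\iota\colon {}^L\mathcal{H}{}^M \hookrightarrow \mathcal{H}$ for the embedding. Since $\iota$ commutes with the bar involution (as noted above), $\iota({}^LC^M_{n_w})$ is self-dual in $\mathcal{H}$. The claim is then that $C_{n_w}$ (the canonical basis element of $\mathcal{H}$ for the longest element $n_w$ of the double coset) lies in ${}^L\mathcal{H}{}^M$ and satisfies the defining triangularity there. Uniqueness of the canonical basis of ${}^L\mathcal{H}{}^M$ then gives $\iota({}^LC^M_{n_w}) = C_{n_w}$.

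\emph{Step 1: $C_{n_w}\in {}^L\mathcal{H}{}^M$.} Since $n_w$ is longest in $W_L n_w W_M$, we have $s n_w < n_w$ for every simple $s\in W_L$ and $n_w s<n_w$ for every simple $s\in W_M$. The standard Kazhdan--Lusztig fact that $sy<y$ implies $T_sC_y = vC_y$ (in the present normalization, where $C_s=T_s+v^{-1}$ and $T_s$ has eigenvalues $v,-v^{-1}$) gives $T_sC_{n_w}=vC_{n_w}$ for $s\in W_L$ simple, and symmetrically on the right. By the description of ${}^L\mathcal{H}{}^M$ via eigenvalues, $C_{n_w}\in {}^L\mathcal{H}{}^M$. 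I would prove the fact $T_sC_y=vC_y$ from $C_sC_y=(v+v^{-1})C_y$ for $sy<y$, which is proved by the usual induction via the self-dual element $C_sC_{sy}$. Alternatively, I would cite it as standard; this works verbatim for the extended affine Weyl group, since length-zero elements only permute the generators.

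\emph{Step 2: triangularity.} Expand $C_{n_w}=\sum_u p_{u,n_w}T_u$ with $p_{n_w,n_w}=1$ and $p_{u,n_w}\in v^{-1}\bZ[v^{-1}]$ for $u<n_w$. Because $C_{n_w}$ is a left $W_L$- and right $W_M$-eigenvector, the coefficients are constant up to normalization on double cosets: $p_{u,n_w}=v^{\ell(u)-\ell(n_u)}p_{n_u,n_w}$, where $n_u$ is the longest element of $W_LuW_M$. This follows by comparing coefficients in $T_sC=vC$ using the multiplication rule $T_sT_u=T_{su}$ for $su>u$. Hence $C_{n_w}=\sum_{y} p_{n_y,n_w}\,{}^LT^M_{y}$, with the sum over double cosets. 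The coset of $w$ contributes ${}^LT^M_w$ with coefficient $1$. Every other coset has $n_y<n_w$ and coefficient in $v^{-1}\bZ[v^{-1}]$, and the Bruhat order on cosets is induced from that on the longest representatives. This is exactly the defining condition for ${}^LC^M_{n_w}$, which completes the argument.

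The main obstacle is Step 2's coefficient bookkeeping, namely showing that $p_{u,n_w}$ really is $v^{\ell(u)-\ell(n_u)}p_{n_u,n_w}$. This needs a careful treatment of double cosets, where $W_LuW_M$ need not be a free product and the stabilizer $W_L\cap uW_Mu^{-1}$ intervenes. I would handle it by inducting down from $n_u$ along reduced left and right multiplications by simple reflections, using the eigen-equations separately on each side. The factorization results of \cite{Kob11} for parabolic double cosets ensure that every element is reached from $n_u$ by length-decreasing steps.
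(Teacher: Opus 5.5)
Your argument is correct. The paper gives no proof and only calls the statement standard, so what you wrote is the standard uniqueness argument it has in mind.

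Step 2 is easier than you fear. From $T_sC_{n_w}=vC_{n_w}$, using the quadratic relation $T_sT_{su}=(v-v^{-1})T_{su}+T_u$ for $su>u$, you get $p_{su,n_w}=v^{\ell(su)-\ell(u)}p_{u,n_w}$. This relation is symmetric under $u\leftrightarrow su$, and the analogous relation holds on the right for $s\in W_M$. So it propagates along any chain of one-sided simple multiplications joining two elements of $W_LuW_M$, whether or not the chain is length-monotone. Neither the stabilizer $W_L\cap uW_Mu^{-1}$ nor Kobayashi's descending chains are needed.
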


We are now ready to prove the following proposition. 
\begin{prop}
The embedding 
\begin{equation}\label{eq:embed_parab_parab_hecke}
K^{G \times \bG_m}(T^*\mathcal{P} \times^{\mathbb{L}}_{\mathfrak{g}} T^*\mathcal{Q}) \hookrightarrow K^{G \times \bG_m}(T^*\mathcal{B} \times_{\mathfrak{g}}^{\mathbb{L}} T^*\mathcal{B}) \simeq \mathcal{H} 
\end{equation}
induced by the functor $\pi^{\star}$ gives the identification: 
\begin{equation}\label{eq:iso_L_H_M_K_theory}
K^{G \times \bG_m}(T^*\mathcal{P} \times^{\mathbb{L}}_{\mathfrak{g}} T^*\mathcal{Q}) \iso {}^L\mathcal{H}^M.
\end{equation}
Classes of simple perversely-exotic sheaves in $K^{G \times \bG_m}(T^*\mathcal{P} \times^{\mathbb{L}}_{\mathfrak{g}} T^*\mathcal{Q})$ are equal to $C_{n_{w}}$.
\end{prop}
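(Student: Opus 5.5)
We compare the three pieces of data on both sides: the functor $\pi^\star$ on the coherent side, the functor $f^\star$ on the constructible side, and the canonical bases on the Hecke side. First, $\pi^\star$ is injective on K-theory, and its image lies in ${}^L\mathcal{H}^M$. For injectivity, use the adjunction $(\pi_\star,\pi^\star)$ from \eqref{eq: correspondenses for parabollic steinbergs}. The composite $\pi_\star\pi^\star$ is given by tensoring with the pushforward of $\cO(\rho_L,\rho_M)\otimes\cO(-\rho_L,-\rho_M)$ along the fibers $P/B\times Q/B$. Its class is computable and is multiplication by an invertible scalar times $[P/B][Q/B]$-type Poincaré polynomials. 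In fact it is easier to observe, via \eqref{eq: diagram of compatibilities for bezrukav equivalences}, that $f_\star f^\star$ is $\mathrm{id}\otimes H^\bullet(P^\vee/B^\vee\times Q^\vee/B^\vee)$, shifted. This is nonzero, so injectivity follows once we tensor with $\bC(v)$. Integrally, we instead use the basis argument below.

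For the image, check on generators that $T_s\cdot\pi^\star(x)=v\,\pi^\star(x)$ for $s\in W_L$, and similarly on the right for $s\in W_M$. The action of $T_s$ is convolution with $-v^{-1}[\cO_{S'_\alpha}]$. Convolution of $\cO_{S'_\alpha}$ with an object pulled back from $T^*\cP$ is computed by the projection formula along the $\mathbb P^1$-fibration $\cB\to\cP_\alpha$, and this reduces to an $\mathfrak{sl}_2$ computation. The twist by $\cO(\rho_L)$ makes the relevant fiber cohomology $H^\bullet(\mathbb P^1,\cO(-1))=0$ appear, which yields the eigenvalue $v$. I expect this sign and normalization bookkeeping to be the main obstacle. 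A cleaner alternative is to argue on the constructible side: by Lemma~\ref{lem: Frobenius under Roma}, objects $f^\star\cF$ are $I^\vee_P$-equivariant on the left, and convolution of such an object with $\IC_s$ for $s\in W_L$ gives $\cF\otimes H^\bullet(\mathbb P^1)$, which matches $C_sx=(v+v^{-1})x$.

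Next we identify bases. By Theorem~\ref{thm: t-structures under bezrukav equivalences} and Proposition~\ref{prop: pi star is t-exact}, $\pi^\star$ is t-exact. Remark~\ref{rem: simples to simples for pi^*} says it sends simples to simples, since $f^\star$ sends $\IC(I_P^\vee n_w I_Q^\vee)$ to $\IC(I^\vee n_w I^\vee)$ up to shift. This uses that the preimage of the double coset of $n_w$ is the closure of the $I^\vee$-orbit of its longest element, together with the fact that pullback along a smooth map preserves IC's. Hence $\pi^\star[\text{simple}_w]=[\cP_{n_w}]$, which equals $C_{n_w}$ by Proposition~\ref{eq:prop_explicit_ident_hecke_steinberg}\eqref{item: perv exotic to canonical for full hecke}. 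Equivariant K-theory of the parabolic Steinberg is spanned by these simples, because the heart is finite length and generates. So the image of \eqref{eq:embed_parab_parab_hecke} is the span of $\{C_{n_w}\}$. The map is injective, since it sends a basis to distinct canonical basis elements of $\mathcal H$, which are linearly independent.

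Finally, by Proposition~\ref{prop:comp_canonical_basis_L_H_M_H}, the elements $\{C_{n_w}\}$ for $w\in W_L\backslash\widehat W/W_M$ are exactly the image of the canonical basis of ${}^L\mathcal{H}^M$. They span ${}^L\mathcal{H}^M$ over $\bC[v^{\pm1}]$. Therefore the image equals ${}^L\mathcal{H}^M$, which proves \eqref{eq:iso_L_H_M_K_theory} together with the statement about simples. With this approach, the eigenvalue computation in the second paragraph becomes unnecessary; only the correspondence between the longest coset elements and the IC pullback needs care.
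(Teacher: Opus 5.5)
Your third and fourth paragraphs are exactly the paper's proof. You show that $\pi^\star$ sends simples to simples via $f^\star$, using Theorem~\ref{thm: t-structures under bezrukav equivalences} and Remark~\ref{rem: simples to simples for pi^*}. Proposition~\ref{eq:prop_explicit_ident_hecke_steinberg}\eqref{item: perv exotic to canonical for full hecke} then sends the parabolic simples to the $C_{n_w}$, and Proposition~\ref{prop:comp_canonical_basis_L_H_M_H} identifies their span with ${}^L\mathcal{H}^M$. The only additions are things the paper leaves implicit: injectivity via linear independence of the $C_{n_w}$, and the fact that $K$-theory is spanned by classes of simples.

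As you note yourself, the first two paragraphs are then unnecessary, and they contain two slips. The description of $\pi_\star\pi^\star$ omits the Koszul factor coming from $i^*i_*$. The left $I^\vee_P$-equivariance of $f^\star\cF$ comes from its being a pullback along $f$, not from Lemma~\ref{lem: Frobenius under Roma}.
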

\begin{proof}
It follows from Theorem~\ref{thm: t-structures under bezrukav equivalences} and Remark~\ref{rem: simples to simples for pi^*} combined with Proposition~\ref{eq:prop_explicit_ident_hecke_steinberg}\eqref{item: perv exotic to canonical for full hecke} that classes of simples in $K^{G \times \bG_m}(T^*\mathcal{P} \times^{\mathbb{L}}_{\mathfrak{g}} T^*\mathcal{Q})$ map under (\ref{eq:embed_parab_parab_hecke}) to $C_{n_w} \in \mathcal{H}$. Now the claim follows from Proposition~\ref{prop:comp_canonical_basis_L_H_M_H}. 
\end{proof}

\subsubsection{Spherical--anti-spherical case} We again start with classical results. 
Define the algebra homomorphism from the finite Hecke algebra $\chi\colon \mathcal{H}(W) \rightarrow \mathbb{C}[v^{\pm 1}]$ by $\chi(T_w)=(-v^{-1})^{\ell(w)}$. In other words, $\chi(C_s)=0$ for $s \in I$. We define the
induced module $\mathcal{H}^{\mathrm{asph}}$ (anti-spherical module over $\mathcal{H}(\widehat{W})$) by
\begin{equation*}
\mathcal{H}^{\mathrm{asph}} := \mathcal{H}(\widehat{W}) \otimes_{\mathcal{H}(W)} \mathbb{C}[v^{\pm 1}].
\end{equation*}
Let $\varphi\colon \mathcal{H} \twoheadrightarrow \mathcal{H}^{\mathrm{asph}}$ be the surjection $h \mapsto [h \otimes 1]$. 
Abusing notation, for $x \in \Lambda$ we denote by $\theta_x \in \mathcal{H}^{\mathrm{asph}}$ the image of $\theta_x$. Clearly, $\{\theta_x,\, x \in \Lambda\}$ is a basis of $\mathcal{H}^{\mathrm{asph}}$ (over $\mathbb{C}[v^{\pm 1}]$). It also has a standard basis  $T^{\mathrm{asph}}_w=\varphi(T_w)$ labeled by elements $w \in \widehat{W}$ such that $w$ is shortest in $wW$. 
It also has the canonical basis $C_{w}^{\mathrm{asph}}$ with $w$ as above. It is not hard to see that $C_{w}^{\mathrm{asph}}=\varphi(C_w)$.

Recall the isomorphism of algebras $\mathcal{H} \simeq K^{G \times \bG_m}(T^*\mathcal{B} \times_{\mathfrak{g}}^{\mathbb{L}} T^*\mathcal{B})$ of Proposition~\ref{eq:prop_explicit_ident_hecke_steinberg} above. The latter  acts on $K^{G \times \bG_m}(T^*\mathcal{B})$ via convolution.

\begin{prop} \label{prop: isomorphism of springer with anti-spherical}
(i) There is an isomorphism of $\mathcal{H}$-modules
\begin{equation}\label{iso_asph_K_theory}
\mathcal{H}^{\mathrm{asph}} \iso K^{G \times \bG_m}(T^*\mathcal{B}),
\end{equation}
explicitly given by 
\begin{equation*}
\theta_x \mapsto [\mathcal{O}_{T^*\mathcal{B}}(x)].
\end{equation*}

(ii) The identification (\ref{iso_asph_K_theory}) sends $C_w^{\mathrm{asph}}$ to $[\mathcal{P}_w]$.
\end{prop}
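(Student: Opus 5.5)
Part (i) is classical, and part (ii) should follow by the same mechanism as Proposition~\ref{eq:prop_explicit_ident_hecke_steinberg}\eqref{item: perv exotic to canonical for full hecke}. For (i), we use that $K^{G\times\bG_m}(T^*\cB)\simeq K^{B\times\bG_m}(\mathrm{pt})\simeq \bC[v^{\pm1}][\Lambda]$ via the Thom isomorphism. So the classes $[\cO_{T^*\cB}(x)]$ form a basis, matching the basis $\{\theta_x\}$ of $\cH^{\mathrm{asph}}$.

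Consider the map $\cH\to K^{G\times\bG_m}(T^*\cB)$, $h\mapsto h\star[\cO_{T^*\cB}]$, given by convolution. We must check two things.
\begin{itemize}
\item It sends $\theta_x$ to $[\cO_{T^*\cB}(x)]$. This is immediate, since $\theta_x$ corresponds to $\cO_\Delta(x)$.
\item It kills $C_s$ for each finite simple reflection $s$, i.e. $T_{s_\alpha}\star[\cO_{T^*\cB}]=-v^{-1}[\cO_{T^*\cB}]$.
\end{itemize}
Since $T_{s_\alpha}\mapsto-v^{-1}[\cO_{S'_\alpha}]$, the second check amounts to
\[
[\cO_{Y_\alpha}(-\rho,\rho-\alpha)]\star[\cO_{T^*\cB}]=0.
\]
Convolution with $Y_\alpha$ factors through pushforward along the $\bP^1$-fibration $\cB\to\cP_\alpha$. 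The twist restricted to the fibers is $\cO_{\bP^1}(-1)$, whose cohomology vanishes, so the class is zero. The map therefore factors through $\varphi$ and is $\cH$-linear. It sends the basis $\theta_x$ to a basis, so it is an isomorphism.

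For (ii), we realize $K^{G\times\bG_m}(T^*\cB)$ as the Grothendieck group of the anti-spherical (Iwahori--Whittaker) category. This is the Springer version of Bezrukavnikov's equivalence \cite{AB09, Bez16}: $D^b\Coh^{G}(T^*\cB)\simeq D_{IW}(\Fl_{G^\vee})$. Under this equivalence the perversely-exotic t-structure corresponds to the perverse t-structure; this is \cite[6.2]{BM13} and \cite[Theorem~54]{Bez16}, and it can alternatively be deduced as in Theorem~\ref{thm: t-structures under bezrukav equivalences}.

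The equivalence is compatible with the convolution action of $D_{B^\vee B^\vee}$. Simple IW perverse sheaves are indexed by $w$ minimal in $wW$. Without $\bG_m$-equivariance, their classes give the anti-spherical Kazhdan--Lusztig basis at $v=1$; this follows from the parabolic Kazhdan--Lusztig theory for the anti-spherical module.

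To lift the statement to $v$, we argue as in the proof of Proposition~\ref{eq:prop_explicit_ident_hecke_steinberg}.
\begin{itemize}
\item The analogue of Lemma~\ref{lem: Frobenius under Roma} for the IW equivalence identifies the $\bG_m$-grading with Frobenius weights.
\item The multiplicities of simples in standard objects then have non-positive weights, by purity of IW IC sheaves.
\item Simples are self-dual under Grothendieck--Serre duality, which on $K$-theory matches the bar involution.
\end{itemize}
Together these characterize the $C^{\mathrm{asph}}_w$.

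Compatibility with the specific isomorphism of (i) follows from two facts. First, (i) is $\cH$-linear. Second, the perversely-exotic t-structure on $T^*\cB$ is characterized by braid positivity with respect to the same braid action \cite{BM13}.

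A slicker alternative avoids part of this. One could use that $\varphi(C_w)=C_w^{\mathrm{asph}}$ and find a functor $D^b\Coh(T^*\cB\times^\bL_\g T^*\cB)\to D^b\Coh(T^*\cB)$, namely convolution with $\cO_{T^*\cB}$, that sends $\cP_w$ to $\cP_w$ or $0$ according to whether $w$ is minimal in $wW$. I expect this step, showing that the functor sends simples to simples or zero in a t-exact way, to be the main obstacle. On the constructible side it is the averaging functor to Iwahori--Whittaker sheaves, which is known to be t-exact and to kill exactly the simples with non-minimal $w$, so I would try that first.
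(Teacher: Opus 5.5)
Your proposal is correct and follows the route the paper takes. The paper gives no separate proof: it treats (i) as classical, and its $\mathfrak{sl}_2$ example is exactly your fiberwise vanishing $H^*(\bP^1,\cO(-1))=0$ along $Y_\alpha\to T^*\cB$. It treats (ii) as the Arkhipov--Bezrukavnikov equivalence combined with \cite[Theorem~6.2.1]{BM13} and the Frobenius-weight argument of \cite[Section~4.2]{Bez06a}, which is what you sketch.

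One correction: the bar involution is not induced by plain Grothendieck--Serre duality. That duality sends $\theta_x$ to a multiple of $\theta_{-x}$, whereas $\overline{\theta_x}=T_{w_0}\theta_{w_0(x)}T_{w_0}^{-1}$ in $\cH$ (e.g.\ $\overline{\theta_\varpi}=v^2\theta_\varpi$ in $\cH^{\mathrm{asph}}$ for $G=SL_2$). The duality you need is Verdier duality transported through the equivalence, a twist of Grothendieck--Serre duality involving $T_{w_0}$, and under it self-duality of the simples is automatic.
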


\begin{example}
For $\mathfrak{g}=\mathfrak{sl}_2$, we see from Remark~\ref{rem:image of Cs} that $C_s$ indeed acts on $[\mathcal{O}_{T^*\mathcal{B}}]$ by zero (use that $H^*(\mathbb{P}^1,\mathcal{O}(-1))=0$).   
\end{example}

As in Section \ref{sec:sph_sph}, we define the spherical--anti-spherical object:
\begin{equation*}
{}^L\mathcal{H}^{\mathrm{asph}}:=C_{w_{L,0}}\mathcal{H}^{\mathrm{asph}}=\{h \in \mathcal{H}^{\mathrm{asph}} ~|~ T_wh=v^{\ell(w)}h~\text{for}~w \in W_L\}.
\end{equation*}
It has standard, canonical and ``$\theta$-type'' bases. Standard and canonical bases ${}^LT_w^{\mathrm{asph}}$, ${}^LC_w^{\mathrm{asph}}$ are defined as in Section \ref{sec:sph_sph} and are labeled by the elements $w \in \widehat{W}$ such that $w \in wW$ is {\emph{shortest}} and $w \in W_Lw$ is {\emph{longest}}. Let us describe the third basis ${}^L\theta_x$; it is labeled by $L$-dominant $x \in \Lambda$.

We start with the following lemma describing ${}^L\mathcal{H}^{\mathrm{asph}}$ in ``lattice'' terms. 
\begin{lem}\label{lem:expl_ident_H_asph_symm_funct}
We have 
\begin{equation}\label{eq:H_asph_expl_desc}
{}^L\mathcal{H}^{\mathrm{asph}} = \Big(\theta_{\rho_{\mathfrak{l}}} \cdot \prod_{\alpha \in \Delta^+_{L}}(v-v^{-1}\theta_{-\alpha})\Big) \cdot \mathbb{C}[T]^{W_L}[v^{\pm 1}] \subset \mathcal{H}^{\mathrm{asph}}.
\end{equation}
\end{lem}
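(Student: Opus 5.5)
We identify $\mathcal{H}^{\mathrm{asph}}$ with $\bC[\Lambda][v^{\pm1}]=\bC[T][v^{\pm1}]$ through the basis $\{\theta_x\}$, which is free over $\bC[v^{\pm1}]$ as noted above. The finite Hecke algebra then acts by Demazure--Lusztig type operators. Concretely, from relation \eqref{eq:rel_theta_vs_s} and $T_{s}\cdot 1=-v^{-1}$ in $\mathcal{H}^{\mathrm{asph}}$, for a simple root $\alpha$ of $L$ and $f\in\bC[\Lambda]$ we get
\begin{equation*}
T_{s_\alpha}\cdot f=-v^{-1}\,s_\alpha(f)+(v^{-1}-v)\frac{s_\alpha(f)-f}{1-\theta_{-\alpha}}.
\end{equation*}
The space ${}^L\mathcal{H}^{\mathrm{asph}}$ is the joint $v$-eigenspace of the $T_{s_\alpha}$ with $\alpha\in I_L$, since $T_w h=v^{\ell(w)}h$ for all $w\in W_L$ reduces to simple reflections. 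So the task is to compute the kernel of $T_{s_\alpha}-v$ for each such $\alpha$ and intersect these kernels.

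\textbf{Rank one.} Multiply the equation $T_{s_\alpha}f=vf$ by $1-\theta_{-\alpha}$ and rearrange. One obtains
\begin{equation*}
(v-v^{-1}\theta_{-\alpha})\,f=s_\alpha(f)\,(v^{-1}\theta_{-\alpha}\cdot(\ldots))
\end{equation*}
and, written out carefully, this should be equivalent to
\begin{equation*}
s_\alpha\!\left(\frac{f}{\theta_{\rho_\alpha}(v-v^{-1}\theta_{-\alpha})}\right)=\frac{f}{\theta_{\rho_\alpha}(v-v^{-1}\theta_{-\alpha})},
\end{equation*}
where $\theta_{\rho_\alpha}$ is chosen so that $\theta_{\rho_\alpha}(v-v^{-1}\theta_{-\alpha})$ transforms under $s_\alpha$ by the appropriate factor. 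I would verify this by a direct computation in $\mathfrak{sl}_2$ style. The conclusion is that $\ker(T_{s_\alpha}-v)$ consists of the $f$ with $f=\Phi_\alpha\cdot g$, where $\Phi_\alpha=\theta_{\rho_\alpha}(v-v^{-1}\theta_{-\alpha})$ and $g$ is $s_\alpha$-invariant in the fraction field.

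\textbf{Intersection over $W_L$.} Set $\Phi=\theta_{\rho_{\mathfrak{l}}}\prod_{\alpha\in\Delta_L^+}(v-v^{-1}\theta_{-\alpha})$. The key identity to check is that for each simple $\alpha\in I_L$, the ratio $\Phi/\Phi_\alpha$ is $s_\alpha$-invariant. This holds because $s_\alpha$ permutes $\Delta^+_L\setminus\{\alpha\}$, and $\theta_{\rho_{\mathfrak{l}}-\rho_\alpha}$ is compatible with that permutation because $s_\alpha(\rho_{\mathfrak{l}})=\rho_{\mathfrak{l}}-\alpha$. Consequently $T_{s_\alpha}(\Phi g)=v\Phi g$ exactly when $g$ is $s_\alpha$-invariant. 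Running over all simple $\alpha\in I_L$, we get $f\in{}^L\mathcal{H}^{\mathrm{asph}}$ if and only if $f=\Phi g$ with $g\in\Frac(\bC[T])^{W_L}(v)$. This already gives the inclusion $\supseteq$ in \eqref{eq:H_asph_expl_desc}.

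\textbf{Integrality (main obstacle).} It remains to show that $g=f/\Phi$ is a Laurent polynomial, i.e. $g\in\bC[T]^{W_L}[v^{\pm1}]$. I would argue divisibility factor by factor, using that the factors $v-v^{-1}\theta_{-\beta}$ for distinct $\beta\in\Delta_L^+$ are pairwise coprime in the UFD $\bC[v^{\pm1}][\Lambda]$.

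For a simple root $\beta$, the rank-one equation gives $(v-v^{-1}\theta_{-\beta})\mid f$. Here one uses that $v-v^{-1}\theta_{-\beta}$ and its $s_\beta$-image $v-v^{-1}\theta_\beta$ are coprime. For a non-simple $\beta$, write $\beta=w(\alpha)$ with $w\in W_L$ and $\alpha$ simple. Invariance of $g$ under $W_L$ together with $T_w f=v^{\ell(w)}f$ then transports divisibility by $v-v^{-1}\theta_{-\alpha}$ into divisibility by $v-v^{-1}\theta_{-\beta}$, up to a unit of the form $\theta_\mu$.

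Alternatively, one can bypass the factor-by-factor argument with a rank count. Compare both sides as free modules over $\bC[T]^{W}[v^{\pm1}]$, or specialize at generic $v$, where both sides have rank $|W|/|W_L|$. Combined with saturation of the right-hand side, this gives equality.

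I expect this integrality step, specifically handling the non-simple roots cleanly, to be the main technical point. The rest is the computational verification from relation \eqref{eq:rel_theta_vs_s}.
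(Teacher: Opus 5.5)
Your proposal is correct in substance and follows the paper's route. Both arguments show three things: $\Phi=\theta_{\rho_{\mathfrak l}}\prod_{\beta\in\Delta_L^+}(v-v^{-1}\theta_{-\beta})$ is a joint $v$-eigenvector, multiplying by $W_L$-invariants preserves the eigenspace, and every eigenvector is divisible by each factor $v-v^{-1}\theta_{-\beta}$. Your fraction-field characterization ($f\in{}^L\mathcal{H}^{\mathrm{asph}}$ iff $f/\Phi$ is $W_L$-invariant) is a genuine sharpening. It makes explicit the $W_L$-invariance of the quotient, which the paper leaves implicit. Three points need fixing, though.

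\emph{The rank-one identity.} Your displayed identity is garbled. Clearing denominators in $T_{s_\alpha}f=vf$ gives
\begin{equation*}
s_\alpha(f)\,(v-v^{-1}\theta_{-\alpha})=\theta_{-\alpha}\,f\,(v-v^{-1}\theta_{\alpha}).
\end{equation*}
This is exactly $s_\alpha(f/\Phi_\alpha)=f/\Phi_\alpha$ when $\langle\alpha^\vee,\rho_\alpha\rangle=1$. By coprimality it also gives $(v-v^{-1}\theta_{-\alpha})\mid f$ directly.

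\emph{Non-simple roots.} You do not need $T_wf=v^{\ell(w)}f$; for non-simple $w$ this is a composite of Demazure--Lusztig operators, not a usable single formula, and this is also where the paper's hint is terse. The $W_L$-invariance of $g=f/\Phi$ handles all roots at once. It gives $w_{L,0}(f)\,\Phi=f\,w_{L,0}(\Phi)$, and $w_{L,0}(\Phi)$ is a unit times $\prod_{\beta\in\Delta_L^+}(v-v^{-1}\theta_{\beta})$. Hence $\prod_{\beta}(v-v^{-1}\theta_{-\beta})$ divides $f\cdot\prod_{\beta}(v-v^{-1}\theta_{\beta})$. The two products are coprime in the UFD $\bC[v^{\pm1}][\Lambda]$. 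Up to units, the irreducible factors of $v-v^{-1}\theta_{-\beta}$ and of $v-v^{-1}\theta_{\gamma}$ have the form $\chi-\zeta$, with $\chi$ primitive and proportional to $(2,\beta)$, resp.\ $(2,-\gamma)$, in $\bZ\times\Lambda$. These characters are never proportional for $\beta,\gamma\in\Delta_L^+$. Therefore $\Phi\mid f$, so $g\in\bC[T]^{W_L}[v^{\pm1}]$. Your factor-by-factor conjugation can be completed the same way (apply $w^{-1}$ and cancel the factors indexed by roots inverted by $w$). But it needs exactly this coprimality and is unnecessary.

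\emph{The alternative route.} The ``rank count plus saturation'' alternative is not a bypass. Saturation of $\Phi\cdot\bC[T]^{W_L}[v^{\pm1}]$ inside $\bC[T][v^{\pm1}]$ is precisely the integrality statement you are trying to prove. So equality of ranks alone gives nothing here.
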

\begin{proof}
For $s \in I$ we have the following equality in $\mathcal{H}^{\mathrm{asph}}$ (it follows from (\ref{eq:rel_theta_vs_s})):
\begin{equation*}
T_s \cdot \theta_x = -v^{-1}\theta_{s_\alpha(x)} + (v^{-1}-v)\frac{\theta_{s_\alpha(x)}-\theta_{x}}{1-\theta_{-\alpha}}.
\end{equation*}
This easily implies that the element 
\begin{equation}\label{eq:element_by_which_multiply}
\theta_{\rho_{\mathfrak{l}}} \cdot \prod_{\alpha \in \Delta^+_{L}}(v-v^{-1}\theta_{-\alpha})
\end{equation}
indeed lies in ${}^L\mathcal{H}^{\mathrm{asph}}$. It follows that every element obtained from (\ref{eq:element_by_which_multiply}) by the multiplication by $f \in \mathbb{C}[T]^{W_L}[v^{\pm 1}]$ is also in ${}^L\mathcal{H}^{\mathrm{asph}}$. It remains to prove that every element of ${}^L\mathcal{H}^{\mathrm{asph}}$ is a multiple of (\ref{eq:element_by_which_multiply}). This follows from the fact that any $h \in \mathcal{H}^{\mathrm{asph}}$ is divisible by $(v\theta_{\alpha/2}-v^{-1}\theta_{-\alpha/2})$ for any $\alpha \in \Delta^+_{L}$ (use that $T_{s_\alpha}h=v^{\ell(s_\alpha)}h$). {\color{red}{$\alpha/2$}}
\end{proof}

\begin{example}
For $G = L = SL_2$, the element (\ref{eq:element_by_which_multiply}) is equal to $v\theta_{\alpha/2}-v^{-1}\theta_{-\alpha/2}$, and the element $T_s$ indeed acts on it via the multiplication by $v$. Conversely, if $h=\sum_{n \in \mathbb{Z}}a_n(v^{\pm 1})\theta_{\alpha/2}^n$ satisfies $T_sh=vh$, then:
\begin{equation*}
\sum_{n \in \mathbb{Z}}va_n(1-\theta_{-\alpha})\theta_{n\alpha/2} = \sum_{n \in \mathbb{Z}}a_n(v^{-1}\theta_{-\frac{n\alpha}{2}-\alpha}-v^{-1}\theta_{n\alpha/2}-v\theta_{-n\alpha/2}+v\theta_{n\alpha/2}),
\end{equation*}
which is equivalent to 
\begin{equation*}
 v(a_{m+2}-a_{-m})+v^{-1}(a_{-m-2}-a_m)= 0
\end{equation*}
for any $m \in \mathbb{Z}$. We see that any solution is of the form $a_n=vb_{n-1}-v^{-1}b_{n+1}$ for arbitrary $b_i$ such that $b_i=b_{-i}$.
\end{example}

We are now ready to define the ``$\theta$-type'' basis in ${}^L\mathcal{H}^{\mathrm{asph}}$. It is labeled by 
\begin{equation*}
\Lambda_L^{+}=\{\eta \in \Lambda~|~ \langle \alpha,\eta\rangle \geqslant 0\,~\forall \alpha \in I_L\}.
\end{equation*}
For $\eta \in \Lambda_L^{+}$ using the identification obtained in Lemma \ref{lem:expl_ident_H_asph_symm_funct} we set 
\begin{equation*}
{}^L\theta_\eta := \Big(\theta_{\rho_{\mathfrak{l}}} \cdot \prod_{\alpha \in \Delta^+_{\mathfrak{l}}}(1-v^{-2}\theta_{-\alpha})\Big) \cdot \on{ch}V(\eta),
\end{equation*}
where $V(\eta)$ is the irreducible representation of $L$ with highest weight $\eta$. It follows from Lemma \ref{lem:expl_ident_H_asph_symm_funct} that this is indeed a basis. 

\

We are now ready to compare the geometric and algebraic approaches. 
\begin{prop}\label{prop:explicit_ident_with_aspherical}
(i) The embedding $\pi^{\star}\colon K^{G \times \bG_m}(T^*\mathcal{P}) \hookrightarrow K^{G \times \bG_m}(T^*\mathcal{B})=\mathcal{H}^{\mathrm{asph}}$ identifies $K^{G \times \bG_m}(T^*\mathcal{P})$ with 
${}^L\mathcal{H}^{\mathrm{asph}} \subset \mathcal{H}^{\mathrm{asph}}$ as in Lemma \ref{lem:expl_ident_H_asph_symm_funct} and is explicitly  given by:
\begin{equation}\label{eq:exp_ident_K_theory_sph_asph}
[\mathcal{V}_{T^*\mathcal{P}}(\eta)] \mapsto {}^L\theta_\eta.
\end{equation}  
(ii) The identification (\ref{eq:exp_ident_K_theory_sph_asph}) sends ${}^LC_w^{\mathrm{asph}}$ to $[\mathcal{P}_w]$.
\end{prop}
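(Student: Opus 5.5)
For (i), I would compute $\pi^\star$ on the generating classes $[\mathcal{V}_{T^*\mathcal{P}}(\eta)]$. In the Springer setting, $\pi^\star\mathcal{F} = i_*\mathrm{pr}_1^*\mathcal{F}\otimes \mathcal{O}(\rho_L)$, where $\mathrm{pr}_1\colon T^*\mathcal{P}\times_{\mathcal{P}}\mathcal{B}\to T^*\mathcal{P}$ and $i\colon T^*\mathcal{P}\times_{\mathcal{P}}\mathcal{B}\hookrightarrow T^*\mathcal{B}$. The subvariety $T^*\mathcal{P}\times_{\mathcal{P}}\mathcal{B}$ has codimension $\dim P/B=|\Delta_L^+|$ in $T^*\mathcal{B}$. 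It is cut out by a regular section of the bundle $(\mathfrak{p}^{\mathrm{nil}})^\perp/\mathfrak{b}^\perp$, whose weights relative to $\mathfrak{b}$ are the roots in $\Delta^+_L$, up to sign. This bundle is scaled by $\bG_m$ with weight $v^2$ in our normalization. The Koszul resolution therefore gives
\[
[i_*\mathcal{O}] = \prod_{\alpha\in\Delta^+_L}(1-v^{-2}\theta_{-\alpha})
\]
in $K^{G\times\bG_m}(T^*\mathcal{B})=\mathcal{H}^{\mathrm{asph}}$, with $\theta_x\leftrightarrow[\mathcal{O}(x)]$. Twisting by $\mathcal{O}(\rho_L)$ contributes the factor $\theta_{\rho_{\mathfrak{l}}}$. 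Finally, $\mathrm{pr}_1^*\mathcal{V}(\eta)$ restricted to $\mathcal{B}$ is $G\times^B V(\eta)$, and its class equals $\on{ch}V(\eta)$ after expanding along a $B$-stable filtration with one-dimensional subquotients. Since $\pi^\star$ is a composition of exact functors and tensor products with vector bundles, the result is $[\mathcal{V}(\eta)]\mapsto {}^L\theta_\eta$. The main care here is bookkeeping: I need to fix the sign convention $\mathcal{O}(x)=G\times^B\Bbbk_{-x}$ and the $\bG_m$-weight of the normal bundle so that the factor is $(1-v^{-2}\theta_{-\alpha})$, not its bar-conjugate.

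Next I would show that the image is exactly ${}^L\mathcal{H}^{\mathrm{asph}}$ and that $\pi^\star$ is injective. The classes $[\mathcal{V}(\eta)]$, $\eta\in\Lambda^+_L$, span $K^{G\times\bG_m}(T^*\mathcal{P})\simeq K^{P\times\bG_m}(\mathrm{pt})=R(L)[v^{\pm1}]$ by Thom isomorphism, and they are a basis over $\bC[v^{\pm1}]$. Their images ${}^L\theta_\eta$ form a basis of ${}^L\mathcal{H}^{\mathrm{asph}}$ by Lemma~\ref{lem:expl_ident_H_asph_symm_funct}, because the prefactor differs from the element \eqref{eq:element_by_which_multiply} by the unit $v^{|\Delta^+_L|}$. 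The map therefore sends a basis to a basis, so it is an isomorphism onto ${}^L\mathcal{H}^{\mathrm{asph}}$. I would also check that $\pi^\star$ intertwines the module structures compatibly with the Steinberg version \eqref{eq:correspondenses for parabollic steinbergs}. This is not needed for the statement itself, only for consistency with Proposition~\ref{prop: isomorphism of springer with anti-spherical}.

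For (ii), I would argue as in the spherical--spherical case. I would use the Springer analogue of Theorem~\ref{thm: t-structures under bezrukav equivalences}: Proposition~\ref{prop: pi star is t-exact} and its proof apply verbatim to $T^*\mathcal{P}\to T^*\mathcal{B}$, and the cited results \cite{ACR18, Pro26} cover the corresponding constructible side. Together with Remark~\ref{rem: simples to simples for pi^*}, this shows that $\pi^\star$ sends simple perversely-exotic objects on $T^*\mathcal{P}$ to simple ones $\mathcal{P}_w$ on $T^*\mathcal{B}$. Constructibly, this is the statement that $f^\star$ sends IC sheaves on the partial affine flag variety to IC sheaves on the full one, labelled by the element $w$ that is longest in $W_Lw$. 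By Proposition~\ref{prop: isomorphism of springer with anti-spherical}(ii), $[\mathcal{P}_w]=C^{\mathrm{asph}}_w$.

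It then remains to check the anti-spherical analogue of Proposition~\ref{prop:comp_canonical_basis_L_H_M_H}: the embedding ${}^L\mathcal{H}^{\mathrm{asph}}\subset\mathcal{H}^{\mathrm{asph}}$ sends ${}^LC^{\mathrm{asph}}_w$ to $C^{\mathrm{asph}}_w$. My approach is to observe that the bar involutions are compatible, as noted in Section~\ref{sec:sph_sph}, and that the standard basis ${}^LT^{\mathrm{asph}}_w$ expands with the correct triangularity. This is the standard argument, giving self-duality together with the degree bound, so by uniqueness the two elements coincide. I expect the main obstacle to be justifying the labelling match between the coherent simples and these particular $w$, that is, the Springer-level version of Theorem~\ref{thm: t-structures under bezrukav equivalences} together with the $\bG_m$-weight (Frobenius) argument. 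For this I would rely on the cited works and on Lemma~\ref{lem: Frobenius under Roma}.
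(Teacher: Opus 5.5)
Your proposal follows the paper's proof. For (i), the paper likewise writes $\pi^\star$ as $\mathrm{pr}_1^*$ (the inclusion $\mathbb{C}[T]^{W_L}[v^{\pm1}]\hookrightarrow\mathbb{C}[T][v^{\pm1}]$), then the $\rho_L$-twist, then $i_*$ (multiplication by the K-theoretic Euler class $\prod_{\alpha\in\Delta^+_{\mathfrak l}}(1-v^{-2}\theta_{-\alpha})$); for (ii), it combines Proposition~\ref{prop: isomorphism of springer with anti-spherical}(ii) with the fact that $\pi^\star$ sends simples to simples, leaving implicit the standard compatibility ${}^LC^{\mathrm{asph}}_w\mapsto C^{\mathrm{asph}}_w$ that you spell out. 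One small slip: the normal bundle of $T^*\mathcal{P}\times_{\mathcal{P}}\mathcal{B}$ in $T^*\mathcal{B}$ is $G\times^B(\mathfrak{b}^\perp/\mathfrak{p}^\perp)\cong G\times^B(\mathfrak{n}/\mathfrak{n}_{\mathfrak{p}})$, not $(\mathfrak{p}^{\mathrm{nil}})^\perp/\mathfrak{b}^\perp$, but the rank and weights you actually use are the correct ones.
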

\begin{proof}
    Our goal is to compute $[\pi^\star(\mathcal{V}(\eta))]$. We have $\pi^{\star}(\mathcal{V}(\eta)) = i_*\on{pr}_1^{*}(\mathcal{V}(\eta))(\rho_L)$, where $\on{pr}_1$, $i$ are the maps:
\begin{equation*}
T^*\cP  \xleftarrow{\pr_1} T^*\cP \times_\cP \cB   \xhookrightarrow{i} T^* \cB.
\end{equation*} 
Let us separately describe the maps
\begin{equation*}
\on{pr}_1^*\colon \mathbb{C}[T]^{W_L}[v^{\pm 1}]=K^{G \times \bG_m}(T^*\mathcal{P})    \rightarrow  K^{G \times \bG_m}(T^*\mathcal{P} \times_{\mathcal{P}} \mathcal{B}) = \mathbb{C}[T][v^{\pm 1}],
\end{equation*}
\begin{equation*}
i_*\colon \mathbb{C}[T][v^{\pm 1}]  = K^{G \times \bG_m}(T^*\mathcal{P} \times_{\mathcal{P}} \mathcal{B}) \rightarrow K^{G \times \bG_m}(T^*\mathcal{B})=\mathbb{C}[T][v^{\pm 1}].
\end{equation*}
The map $\on{pr}^*_1$ is induced by the pull-back map for the natural morphism $p\colon \mathcal{B} \rightarrow \mathcal{P}$ so it sends $\mathcal{V}_{T^*\mathcal{P}}(\eta)$ to the pull-back of the vector bundle $\mathcal{V}_{\mathcal{B}}(\eta)$ to $T^*\mathcal{P} \times_{\mathcal{P}} \mathcal{B}$. In other words, $\on{pr}_1^*$ induces the natural inclusion $\mathbb{C}[T]^{W_L}[v^{\pm 1}] \hookrightarrow \mathbb{C}[T][v^{\pm 1}]$.

The morphism $i_*$ corresponds to the multiplication by the Euler class of the bundle $G \times^B (\mathfrak{n}/\mathfrak{n}_{\mathfrak{p}})$ so is the multiplication by $\prod_{\alpha \in \Delta^+_{\mathfrak{l}}}(1-v^{-2}\theta_{-\alpha})$. 

We conclude that  $\pi^\star$ is nothing else but the natural embedding ${}^L\mathcal{H}^{\mathrm{asph}} \hookrightarrow \mathbb{C}[T][v^{\pm 1}]$ described in Lemma \ref{lem:expl_ident_H_asph_symm_funct}. This implies part (i). Part (ii) follows from Proposition~\ref{prop: isomorphism of springer with anti-spherical}(ii) and the fact that $\pi^\star$ maps simples to simples, as is evident from \eqref{eq: diagram of compatibilities for bezrukav equivalences}.
\end{proof}

\begin{rem}
It is not hard to see that  $\pi_\star$ induces the Demazure operator $\Delta_{w_{L,0}}$ in K-theory. Note that Proposition \ref{prop:explicit_ident_with_aspherical} also allows us to explicitly describe the identification \eqref{eq:iso_L_H_M_K_theory} above by using the embedding 
\begin{equation*}
K^{G \times \bG_m}(T^*\mathcal{P} \times_{\mathfrak{g}}^{\mathbb{L}} T^*\mathcal{Q}) \hookrightarrow \operatorname{Hom}_{K^{G \times \bG_m}(\on{pt})}(K^{G \times \bG_m}(T^*\mathcal{Q}),K^{G \times \bG_m}(T^*\mathcal{P}))
\end{equation*}
given by the  convolution.   
\end{rem}




To summarize, we obtain the following result:

\begin{thm} \label{thm: simple perversely-exotic are canonical in parabolic hecke}
\begin{enumerate}[label=(\roman*)]\
    \item
Under the identification induced by (\ref{eq:iso_hecke_to_steinberg}), classes of simple perversely-exotic sheaves in $\cP_\exo^{G \times 
\bG_m} (T^* \cP \times^\bL_\g T^* \cQ)$ 
coincide with the canonical basis in ${}^P\wh \cH^Q$.
\item
Under the explicit identification given by (\ref{eq:exp_ident_K_theory_sph_asph}), classes of simple perversely-exotic objects in $\cP_\exo^{G \times 
\bG_m} (T^* \cP)$ coincide with the canonical basis in the anti-spherical module ${}^P\wh \cH^\textrm{asph}$. 
\end{enumerate}
\end{thm}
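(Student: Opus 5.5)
This theorem is essentially a repackaging of the two parabolic propositions proved just above, so the plan is to assemble them rather than to prove anything new. The common mechanism in both parts is the same. Theorem~\ref{thm: t-structures under bezrukav equivalences} and Proposition~\ref{prop: pi star is t-exact} show that $\pi^\star$ is t-exact for the perversely-exotic t-structures. Remark~\ref{rem: simples to simples for pi^*} shows that $\pi^\star$ sends simple objects to simple objects; this holds because under \eqref{eq: diagram of compatibilities for bezrukav equivalences} it corresponds to the smooth pullback $f^\star$, which preserves simple perverse sheaves. Consequently the class of a simple object $[\cL]$ for the parabolic data is sent by the injective map $\pi^\star$ on K-theory to the class of a simple object $[\cP_w]$ for the Borel data.

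For part (i), I would identify $K^{G\times\bG_m}(T^*\cP\times^\bL_\g T^*\cQ)$ with ${}^L\cH^M$ via the embedding \eqref{eq:embed_parab_parab_hecke}, using \eqref{eq:iso_L_H_M_K_theory}. By Proposition~\ref{eq:prop_explicit_ident_hecke_steinberg}\eqref{item: perv exotic to canonical for full hecke}, the image of each simple class is a canonical basis element $C_w$ of $\cH$. Since it lies in ${}^L\cH^M$, it must be some $C_{n_w}$. Here I would invoke Proposition~\ref{prop:comp_canonical_basis_L_H_M_H} together with the fact that the canonical basis of $\cH$ is linearly independent and that ${}^L\cH^M$ is spanned by the $C_{n_w}$. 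The result is that the classes of simple objects coincide with the canonical basis of ${}^P\wh\cH^Q$.

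To see that every canonical element actually occurs, I would count. Simple objects on the coherent side correspond bijectively, via \eqref{eq: parabolic two realizations}, to simple $I_P^\vee\times I_Q^\vee$-equivariant perverse sheaves, and these are labelled by $W_L\backslash\wh W/W_M$. Since $\pi^\star$ is injective on simples, the matching of labels gives a bijection onto the canonical basis.

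For part (ii), I would proceed in the same way using Proposition~\ref{prop:explicit_ident_with_aspherical}. Its part (i) supplies the identification \eqref{eq:exp_ident_K_theory_sph_asph} of $K^{G\times\bG_m}(T^*\cP)$ with ${}^L\cH^{\mathrm{asph}}$. Its part (ii), which relies on Proposition~\ref{prop: isomorphism of springer with anti-spherical}(ii) and on $\pi^\star$ preserving simples, gives that simple classes go to ${}^LC_w^{\mathrm{asph}}$.

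The only delicate point I anticipate is the Springer (non-Steinberg) analogue of Theorem~\ref{thm: t-structures under bezrukav equivalences}, which is needed to control the labelling. I would take it from the cited parabolic Springer results \cite{ACR18, Pro26}. Alternatively, I would deduce it as in the Steinberg case, noting that $\pi^\star$ is t-exact by the same adjunction argument with $\cE_P=\pi_\star\cE$.
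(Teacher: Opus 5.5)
Your proposal is correct and follows the paper's route. The theorem is just a summary of two earlier results: the spherical--spherical proposition (simple classes go under $\pi^\star$ to $C_{n_w}$ by Theorem~\ref{thm: t-structures under bezrukav equivalences}, Remark~\ref{rem: simples to simples for pi^*} and Proposition~\ref{eq:prop_explicit_ident_hecke_steinberg}, then one concludes with Proposition~\ref{prop:comp_canonical_basis_L_H_M_H}), and Proposition~\ref{prop:explicit_ident_with_aspherical}(ii). One small point: $W_L\backslash\widehat W/W_M$ is infinite, so counting alone does not give surjectivity onto the canonical basis; phrase that step either via labels (the $f^\star$-pullback of the simple attached to $W_LwW_M$ is the IC sheaf of the closure of the $n_w$-stratum) or via the fact that simple classes span the K-theory, which maps isomorphically onto ${}^L\mathcal{H}^M$.
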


\section{Modified Lusztig correspondence} \label{sec: modified Lusztig's correspondence}

In this section, we introduce (in the most general form) certain correspondences between convolution diagrams in affine Grassmannians and  cotangent bundles to partial flag varieties as well as their derived Steinberg versions. Various instances of these correspondences appeared in papers \cite{MV07}, \cite{MVK22}, \cite[Section~3]{CK18a}, \cite[Section~2]{an}, \cite[Section~4]{FF21}, \cite[(2.1.19)]{Z16} and go back to Lusztig \cite{L81}. We therefore call them modified Lusztig  correspondences.

From now on and throughout the rest of the paper, we work over an algebraically closed field of zero or large enough characteristic $\Bbbk$.

\subsection{Lusztig correspondence} \label{subsec: Lusztig's correspondences}
We work with the group $\GL_n$ and its affine Grassmannian $\Gr = \Gr_{\GL_n}$.
We fix a collection of dominant coweights $\ula = (\la_1,\ldots,\la_m)$ such that every $\la_i$ equals some fundamental coweight $\omega_{k_i}$. 
We denote $\ukey = (k_1, \hdots, k_m)$.
The corresponding affine Grassmannian convolution diagram   $\widetilde{\operatorname{Gr}}^{\ula}_{\on{GL}_n}=\widetilde{\operatorname{Gr}}^{\ula}$ is defined by:
\begin{equation*}
\widetilde{\operatorname{Gr}}^{\ula} = \{L_{m} \subset L_{m-1} \subset \ldots \subset L_{1} \subset L_0\,|\, \on{dim}L_{i-1}/L_{i}=k_{i},\, z(L_i) \subset L_{i+1}\},
\end{equation*}
where $L_0:=\Bbbk^n[z]$, and all $L_i$ are $\Bbbk[z]$-submodules of $L_0$.

Set 
$N:=\sum_{i=1}^m k_i$. 
Following \cite[Section 3]{CK18a} we define:
\begin{equation*}
   \mathbb{X}_N := \{L \subset L_0\,|\, zL \subset L,~\operatorname{dim}(L_0/L) = N\}. 
\end{equation*}
One can show that $\bX_N$ is isomorphic to the quotient by the symmetric group $S_N$ of the Schubert variety in the Beilinson--Drinfeld Grassmannian, corresponding to the tuple of coweights  $\underbrace{(\om_1, \hdots, \om_1)}_N$.

We have a natural morphism
\begin{equation*}
\widetilde{\on{Gr}}^{\ul{\la}} \rightarrow {\mathbb{X}}_N,~L_\bullet \mapsto (L_{m}     \subset L_0).
\end{equation*}
Fix $\ula^1$, $\ula^2$ and consider the corresponding (derived) Steinberg-type variety:
\begin{equation*}
\operatorname{St}^{\underline{\la}^1,\underline{\la}^2}_{\operatorname{GL}_n} := \widetilde{\on{Gr}}^{\ul{\la}^1} \times_{\mathbb{X}_N}^\bL \widetilde{\on{Gr}}^{\ul{\la}^2}. 
\end{equation*}


Let $\widetilde{\mathcal{N}}_{\underline{k}}$ be the following parabolic Springer resolution (for group $\GL_N$):
\begin{equation*}
\widetilde{\mathcal{N}}_{\ul{k}} = \{\{0\}=F_m \subset F_{m-1} \subset \ldots \subset F_{1} \subset F_0 = \Bbbk^N;\, x \in \mathfrak{gl}_N\,|\, \on{dim}F_{i-1}/F_{i}=k_i,\, x(F_i) \subset F_{i+1}\}.
\end{equation*}
Let $\mathfrak{gl}_N^{\leq n} \subset \mathfrak{gl}_N$ be the open subset obtained by deleting the nilpotent elements that have at least $n+1$ Jordan blocks. Let $\widetilde{\mathcal{N}}^{\leq n}_{\underline{k}} \subset \widetilde{\mathcal{N}}_\ukey$ be its preimage. 
We also consider the corresponding Steinberg varieties:
\begin{equation*}
\operatorname{St}^{\leq n}_{\ul{k}^1,\ul{k}^2} :=\widetilde{\mathcal{N}}^{\leq n}_{\ul{k}^1} \times_{\mathfrak{gl}_N}^{\bL} \widetilde{\mathcal{N}}^{\leq n}_{\ul{k}^2}.\end{equation*}

Let $\mathbb{Y}_N$ be the $\operatorname{GL}_N$-torsor over $\mathbb{X}_N$ given by:
\begin{equation*}
    \mathbb{Y}_N = \{(L \subset L_0,  \psi ) \, | \, \psi \colon L_0/L \simeq \Bbbk^N\}.
\end{equation*}
We then obtain $\operatorname{GL}_N$-torsors over $\widetilde{\operatorname{Gr}}^{\ul{\la}}$, $\operatorname{St}^{\underline{\la}^1,\underline{\la}^2}_{\operatorname{GL}_n}$ given by $\widetilde{\operatorname{Gr}}^{\ul{\la}} \times_{\mathbb{X}_N} \mathbb{Y}_N$, $\operatorname{St}^{\underline{\la}^1,\underline{\la}^2}_{\operatorname{GL}_n} \times_{\mathbb{X}_N} \mathbb{Y}_N$ respectively. 
We have a natural morphism:
\begin{equation}\label{eq:morphism Y to gl}
\mathbb{Y}_N \rightarrow \mathfrak{gl}^{\leq n}_{N},~ (L,\psi) \mapsto (z \curvearrowright L_0/L = \Bbbk^N),
\end{equation}
where the identification $L_0/L = \Bbbk^N$ is given by $\psi$. 
This morphism extends to natural morphisms 
\begin{align}\label{eq:morphisms_Lusztig_corresp}
\widetilde{\operatorname{Gr}}^{\ul{\la}} \times_{\mathbb{X}_N} \mathbb{Y}_N &\rightarrow \widetilde{\mathcal{N}}_{\ul{k}}^{\leq n}, & \operatorname{St}^{\underline{\la}^1,\underline{\la}^2}_{\operatorname{GL}_n} \times_{\mathbb{X}_N} \mathbb{Y}_N &\rightarrow \operatorname{St}^{\leq n}_{\ul{k}^1,\ul{k}^2}
\end{align}
such that the following diagrams are {\emph{Cartesian}}:
\begin{align} \label{eq:cartesian_diagr_Y_to_X}
&\xymatrix{\widetilde{\on{Gr}}^{\ul{\la}} \times_{\mathbb{X}_N} \mathbb{Y}_N \ar[d] \ar[r] & \widetilde{\mathcal{N}}^{\leq n}_{\ul{k}} \ar[d] \\
\mathbb{Y}_N \ar[r] & \mathfrak{gl}^{\leq n}_N}, & 
&\xymatrix{\operatorname{St}^{\underline{\la}^1,\underline{\la}^2}_{\operatorname{GL}_n} \times_{\mathbb{X}_N} \mathbb{Y}_N \ar[d] \ar[r] & \operatorname{St}^{\leq n}_{\ul{k}^1,\ul{k}^2} \ar[d] \\
\mathbb{Y}_N \ar[r] & \mathfrak{gl}^{\leq n}_N}.
\end{align}
For example, the first morphism in (\ref{eq:morphisms_Lusztig_corresp}) produces a flag consisting of $F_i:=L_i/L_m \subset L_0/L_m$ from $L_\bullet$. Diagrams (\ref{eq:cartesian_diagr_Y_to_X}) induce Cartesian diagrams of the corresponding quotient derived stacks:
\begin{align}\label{eq:cartesian_diagr_stacks}
&\xymatrix{\widetilde{\on{Gr}}^{\ul{\la}} \ar[d] \ar[r] & \left. \widetilde{\mathcal{N}}^{\leq n}_{\ul{k}} \right/ \on{GL}_N \ar[d] \\
\mathbb{X}_N \ar[r] & \left. \mathfrak{gl}^{\leq n}_N \right/ \GL_N}, &
&\xymatrix{\operatorname{St}^{\underline{\la}^1,\underline{\la}^2}_{\operatorname{GL}_n}  \ar[d] \ar[r] & \left. \operatorname{St}^{\leq n}_{\ul{k}^1,\ul{k}^2} \right/ \on{GL}_N \ar[d] \\
\mathbb{X}_N \ar[r] & \left. \mathfrak{gl}^{\leq n}_N \right/ \on{GL}_N}
\end{align}

The Lusztig correspondences are given by:
\begin{equation}\label{eq:lus_corresp}
\begin{array}{@{}l@{\qquad}l@{}}
\bX_{N} \leftarrow \bY_{N} \rightarrow \mathfrak{gl}^{\le n}_{N}, \; &
\widetilde{\Gr}^{\ula} \leftarrow
\widetilde{\Gr}^{\ula} \times_{\bX_{N}} \bY_{N} \rightarrow \widetilde{\mathcal N}^{\leq n}_{\ukey},\\[0.6ex]
\multicolumn{2}{c}{
\St^{\ula^{1},\ula^{2}}_{\GL_{n}} \leftarrow
\St^{\ula^{1},\ula^{2}}_{\GL_{n}}\times_{\bX_{N}} \bY_{N} \rightarrow
\St^{\leq n}_{\ukey^{1}, \ukey^{2}}. }
\end{array}
\end{equation}

\subsection{Modified Lusztig correspondences} \label{subsec: modified lusztig}
We now slightly modify the Lusztig correspondences. This modification will become crucial in Section \ref{subsec:iso_K_thery}. We start with the following simple lemma.

\begin{lem}\label{lem:iso_shifts_gr_st}
(1) The natural morphism
\begin{equation}\label{eq:iso_gr_lambda_shift_by_omega_n}
\widetilde{\on{Gr}}^{\ul{\la}} \iso \widetilde{\on{Gr}}^{(\omega_n, \ul{\la})},~ L_{\bullet} \mapsto (zL_m \subset zL_{m-1} \subset \ldots \subset zL_1 \subset zL_0 \subset L_0)
\end{equation}
is an isomorphism. 

(2) The closed embedding 
\begin{equation}\label{eq:emb_X_N_shift}
\mathbb{X}_{N} \hookrightarrow \mathbb{X}_{N+n},~ (L \subset L_0) \mapsto (zL \subset L_0)
\end{equation}
combined with (\ref{eq:iso_gr_lambda_shift_by_omega_n}) induces a morphism 
\begin{equation}\label{eq:iso_st_shift}
\operatorname{St}_{\on{GL}_n}^{\ul{\la}^1, \ul{\la}^2} \rightarrow \operatorname{St}_{\on{GL}_n}^{(\omega_n, \ul{\la}^1),(\omega_n, \ul{\la}^2)}
\end{equation}
which is an isomorphism on the classical loci.
\end{lem}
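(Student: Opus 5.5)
For part (1), I will show directly that the map $L_\bullet \mapsto (zL_m \subset \ldots \subset zL_0 \subset L_0)$ is well defined, and then write down its inverse. Multiplication by $z$ is an injective $\Bbbk[z]$-linear endomorphism of $L_0=\Bbbk^n[z]$, so $zL_{i-1}/zL_i \simeq L_{i-1}/L_i$ has dimension $k_i$. Moreover $z(zL_i)\subset zL_{i+1}$, and the first step $zL_0\subset L_0$ has codimension $n$ with $z L_0 \subset zL_0$. Hence the image lies in $\widetilde{\Gr}^{(\omega_n,\ula)}$, with the new first entry $k_0=n$.

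For the inverse, take a point $L'_{m+1}\subset \ldots\subset L'_1\subset L'_0=L_0$ with $\dim L_0/L'_1=n$ and $zL'_0\subset L'_1$. Comparing codimensions forces $L'_1=zL_0$. All $L'_i$ with $i\ge1$ lie in $zL_0$, so $L_{i-1}:=z^{-1}L'_i$ are $\Bbbk[z]$-submodules of $L_0$. The incidence conditions transfer back under $z^{-1}$. This construction works in families: for an $S$-point, the condition $zL_0\subset L'_1$ together with equal rank forces equality of the subbundles, and $z^{-1}$ is an isomorphism $zL_0\otimes\cO_S\to L_0\otimes \cO_S$. So the two maps are mutually inverse morphisms of schemes, and not merely bijections on points.

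For part (2), I first check that (\ref{eq:emb_X_N_shift}) is a closed embedding. The map $L\mapsto zL$ lands in $\{L'\in\bX_{N+n} \mid L'\subset zL_0\}$, and this is a closed condition. On that locus $L'\mapsto z^{-1}L'$ gives the inverse, so the map is an isomorphism onto a closed subscheme. Next, the maps $\widetilde{\Gr}^{\ula^i}\to\bX_N$ and $\widetilde{\Gr}^{(\omega_n,\ula^i)}\to \bX_{N+n}$ commute with (\ref{eq:iso_gr_lambda_shift_by_omega_n}) and (\ref{eq:emb_X_N_shift}), because both send $L_\bullet$ to $zL_m$. Functoriality of the derived fiber product then produces the morphism (\ref{eq:iso_st_shift}).

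On classical loci, the map becomes the map of ordinary fiber products
\[
\widetilde{\Gr}^{\ula^1}\times_{\bX_N}\widetilde{\Gr}^{\ula^2}\to \widetilde{\Gr}^{(\omega_n,\ula^1)}\times_{\bX_{N+n}}\widetilde{\Gr}^{(\omega_n,\ula^2)}.
\]
The two factors are identified by part (1). Since $\bX_N\to\bX_{N+n}$ is a monomorphism (it is a closed embedding), a fiber product over $\bX_N$ agrees with the fiber product over $\bX_{N+n}$ of the same schemes. Concretely, the equalizer condition on $S$-points is unchanged. This gives the isomorphism on classical loci.

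The main subtlety is explaining why no more than this is claimed. The derived structures differ, because $\bX_N\hookrightarrow\bX_{N+n}$ is a non-regular, or at least non-flat, closed embedding. Derived fiber products over $\bX_N$ and over $\bX_{N+n}$ therefore have different higher homotopy sheaves, since $\cO_{\bX_N}\otimes^\bL_{\cO_{\bX_{N+n}}}\cO_{\bX_N}$ is not $\cO_{\bX_N}$. So the proof only needs to verify the claim at the level of $\pi_0$. That reduces to the monomorphism argument above, and the only genuine check is scheme-theoretic flatness of the $z^{-1}$ construction in families, which I expect to be routine.
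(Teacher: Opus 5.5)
Your proof is correct and follows the paper's route. For (1), the paper likewise writes down the inverse $L'_\bullet\mapsto z^{-1}L'_\bullet$, justified by the observation that $\dim L_0/L'_1=n$ together with $zL_0\subset L'_1$ forces $L'_1=zL_0$; for (2) it just says the claim follows from (1), and your monomorphism argument on classical fiber products makes this explicit.

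One small correction to your closing remark, which the proof does not use: $\bX_N$ and $\bX_{N+n}$ are both smooth, being Quot schemes of points on $\mathbb{A}^1$ of dimensions $nN$ and $n(N+n)$. So $\bX_N\hookrightarrow\bX_{N+n}$ is a regular embedding of codimension $n^2$, not a non-regular one. The two derived fiber products differ because of non-flatness; concretely, their virtual dimensions differ by $n^2$.
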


\begin{proof}
Let's prove (1). The inverse is given by 
\begin{equation*}
L_{m+1} \subset L_{m} \subset \ldots \subset L_{2} \subset L_1 \subset L_0 \mapsto z^{-1}L_{m+1} \subset z^{-1}L_m \subset \ldots \subset z^{-1}L_2 \subset L_0,
\end{equation*}
it is well-defined because the conditions $\operatorname{dim}(L_0/L_1)=n$, $zL_0 \subset L_1$ force $L_1=zL_0$.

Part (2) follows from (1). 
\end{proof}

Fix a number $d \in \mathbb{Z}_{\geqslant 0}$. Set 
\begin{equation*}
'\mathbb{Y}_{N} := \{(L \subset L_0,\psi) \,|\, (L \subset L_0)\in {\mathbb{X}}_N,\, \psi\colon L_0/z^dL \simeq \Bbbk^{N+dn}\}.
\end{equation*}
Note that the natural morphism $'\mathbb{Y}_{N} \rightarrow \mathbb{X}_N$ is a $\GL_{N+dn}$-torsor. 
Morphisms (\ref{eq:emb_X_N_shift}), (\ref{eq:iso_st_shift})  induce a $\GL_{N+dn}$-equivariant morphism 
\begin{equation}\label{eq:corresp st base change shift}
\operatorname{St}^{\underline{\la}^1,\underline{\la}^2}_{\operatorname{GL}_n} \times_{{\mathbb{X}}_N} {'\mathbb{Y}}_N \rightarrow \operatorname{St}^{'\underline{\la}^1,'\underline{\la}^2}_{\operatorname{GL}_n} \times_{{\mathbb{X}}_{N+dn}} {\mathbb{Y}}_{N+dn}.
\end{equation}

Composing  \eqref{eq:iso_gr_lambda_shift_by_omega_n} and \eqref{eq:corresp st base change shift} with the Lusztig correspondences \eqref{eq:lus_corresp}, we obtain the modified Lusztig correspondences
\begin{align*} 
\widetilde{\on{Gr}}^{\underline{\la}} \leftarrow \widetilde{\operatorname{Gr}}^{'\ul{\la}} &\times_{\mathbb{X}_{N+dn}} \mathbb{Y}_{N+dn} \rightarrow \widetilde{\mathcal{N}}^{n}_{'\ul{k}}, &\operatorname{St}^{\underline{\lambda}^{1},\underline{\lambda}^2}_{\operatorname{GL}_n} \leftarrow \operatorname{St}^{\underline{\la}^1,\underline{\la}^2}_{\operatorname{GL}_n} \times_{{\mathbb{X}}_N} {'\mathbb{Y}}_N & \rightarrow \operatorname{St}^{\leq n}_{'\ul{k}^1,'\ul{k}^2},
\end{align*}
where $'\ul{\la}=(\underbrace{\omega_n,\ldots,\omega_n}_d,\ul{\la})$, $'\ul{k}=(\underbrace{n,\ldots,n}_{d},\ul{k})$. In other words, we have the morphisms of derived stacks: 
\begin{align} \label{eq:modified_lusztig_morphism_of_stacks}
\widetilde{\on{Gr}}^{\underline{\la}} &\rightarrow \left. \widetilde{\mathcal{N}}^{\leq n}_{'\ul{k}} \right/ \operatorname{GL}_{N+dn}, &\operatorname{St}^{\underline{\lambda}^{1},\underline{\lambda}^2}_{\GL_n}  &\rightarrow \left. \operatorname{St}^{\leq n}_{'\ul{k}^1,'\ul{k}^2} \right/ \operatorname{GL}_{N+dn}.
\end{align}
From now on, to simplify the notations, we assume $d=1$ but all results remain valid for arbitrary $d$.
Let us  compare morphisms (\ref{eq:cartesian_diagr_stacks}) and  (\ref{eq:modified_lusztig_morphism_of_stacks}). 

Set 
\begin{align*}
\mathfrak{gl}^n_{N+n} &= \{x \in \mathfrak{gl}_{N+n}\,|\, \operatorname{dim}(\operatorname{ker}x)=n\}, & \operatorname{St}^{n}_{'\ul{k}^1,'\ul{k}^2} &= \widetilde{\mathcal{N}}^{\leq n}_{'\ul{k}^1} \times_{\mathfrak{gl}^n_{N+n}}^\bL \widetilde{\mathcal{N}}^{\leq n}_{'\ul{k}^2}. 
\end{align*}
Note that $\mathfrak{gl}^n_{N+n}$ is nonsingular. We have a natural $\GL_{N+n}$-equivariant closed embedding $\mathfrak{gl}^n_{N+n} \subset \mathfrak{gl}^{\leq n}_{N+n}$. We have a morphism 
\begin{equation*}
\operatorname{St}^n_{'\ul{k}^1,'\ul{k}^2} \rightarrow \operatorname{St}^{\leq n }_{'\ul{k}^1,'\ul{k}^2}
\end{equation*}
inducing {{isomorphism}} on the classical loci. 

\begin{rem}\label{rem:st conv map factors through n}
It is clear that the morphism $\operatorname{St}^{\ula^1,\ula^2}_{\GL_n} \rightarrow \operatorname{St}^{\leq n}_{'\ul{k}^1,'\ul{k}^2}/\GL_{N+n}$ factors through $\operatorname{St}^{\ula^1,\ula^2}_{\GL_n} \rightarrow \operatorname{St}^{n}_{'\ul{k}^1,'\ul{k}^2}/\GL_{N+n}$.   
\end{rem}



\begin{lem}\label{lem_com_our_morphisms}
There exist natural smooth morphisms of quotient derived stacks:
\begin{align}\label{eq:morphisms diff quotients of tilde N}
\left. \widetilde{\mathcal{N}}^{\leq n}_{'\ul{k}} \right/ \on{GL}_{N + n} &\rightarrow  \left. \widetilde{\mathcal{N}}^{\leq n}_{\ul{k}} \right/ \on{GL}_{N}, & \left. \operatorname{St}^{n}_{'\ul{k}^1,'\ul{k}^2} \right/ \operatorname{GL}_{N+n} &\rightarrow \left. \operatorname{St}^{\leq n}_{\ul{k}^1,\ul{k}^2} \right/ \operatorname{GL}_{N}.
\end{align}
making the following diagrams commutative:
\begin{align} \label{eq:comp_lusztig_corresp}
&\xymatrix{\widetilde{\on{Gr}}^{\ul{\la}} \ar[dr] \ar[r] & \left. \widetilde{\mathcal{N}}^{\leq n}_{'\ul{k}} \right/ \on{GL}_{N+n} \ar[d] \\
& \left. \widetilde{\mathcal{N}}^{\leq n}_{\ul{k}} \right/ \on{GL}_{N} }, &
&\xymatrix{\operatorname{St}^{\underline{\la}^1,\underline{\la}^2}_{\operatorname{GL}_n}  \ar[dr] \ar[r] & \left. \operatorname{St}^{n}_{'\ul{k}^1,'\ul{k}^2} \right/ \on{GL}_{N+n} \ar[d] \\
& \left. \operatorname{St}^{\leq n}_{\ul{k}^1,\ul{k}^2} \right/ \on{GL}_{N}.}
\end{align}
\end{lem}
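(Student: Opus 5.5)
The plan is to write both morphisms as ``restriction to a canonical subspace'' and to check that, after passing to the smooth locus, they are base changes of smooth maps. Concretely, a point of $\widetilde{\mathcal N}^{\leq n}_{'\ul k}/\GL_{N+n}$ is a triple $(V,x,F_\bullet)$ with $\dim V=N+n$ and $V=F_0\supset F_1\supset\ldots\supset F_{m+1}=0$, where $\dim F_0/F_1=n$ and $x(F_i)\subset F_{i+1}$. In particular $x(V)\subset F_1=:W$ and $\dim W=N$. I send this triple to $(W,\,x|_W,\,F_1\supset F_2\supset\ldots\supset F_{m+1})$, a point of $\widetilde{\mathcal N}_{\ul k}/\GL_N$. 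Since $\ker(x|_W)\subset\ker x$, the restriction has at most $n$ Jordan blocks, so the point actually lies in $\widetilde{\mathcal N}^{\leq n}_{\ul k}/\GL_N$. The construction is functorial in families: $W$ is a subbundle because its rank is fixed. Hence it defines a morphism of quotient stacks.

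For smoothness of the first map, I identify the fiber over a point $(W,y,F_{\geq 1})$. It consists of a vector space $V\supset W$ with $\dim V/W=n$, together with an operator $x\colon V\to W$ extending $y$. Once a splitting $V=W\oplus V/W$ is chosen, such $x$ form the affine space $\Hom(V/W,W)$, and the condition ``at most $n$ Jordan blocks'' is open in it. This presents $\widetilde{\mathcal N}^{\leq n}_{'\ul k}/\GL_{N+n}$ as an open substack of the total space of a vector bundle over $\widetilde{\mathcal N}^{\leq n}_{\ul k}/\GL_N$ (more precisely, of an affine bundle over $\widetilde{\mathcal N}^{\leq n}_{\ul k}/\GL_N\times B\GL_n$, up to the parabolic automorphisms of $W\subset V$). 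Such a morphism is smooth.

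For the Steinberg case the key observation is this: on $\mathfrak{gl}^n_{N+n}$ one has $\dim\operatorname{im}x=N$, and both flags satisfy $\operatorname{im}x\subset F^1_1$ and $\operatorname{im}x\subset F^2_1$ with $\dim F^j_1=N$. Therefore $F^1_1=F^2_1=\operatorname{im}x$, so $W$ is determined by $x$ alone. Accordingly I first construct a smooth morphism $\mathfrak{gl}^n_{N+n}/\GL_{N+n}\to\mathfrak{gl}_N/\GL_N$ sending $x$ to $x|_{\operatorname{im}x}$. Its fiber over $y$ is the open locus of surjective extensions $x\colon V\twoheadrightarrow W$ of $y$, which is again open in an affine space, hence smooth. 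I then check that $\widetilde{\mathcal N}^{\leq n}_{'\ul k}\times_{\mathfrak{gl}_{N+n}}\mathfrak{gl}^n_{N+n}$ is the classical, hence also derived, base change of $\widetilde{\mathcal N}_{\ul k}$ along this map: the flag condition $F_1=\operatorname{im}x$ is forced, and $\widetilde{\mathcal N}_{\ul k}\to\mathfrak{gl}_N$ is flat over the relevant locus after the smooth base change. Base change then commutes with the derived fiber product, so $\operatorname{St}^n_{'\ul k^1,'\ul k^2}/\GL_{N+n}$ is the base change of $\operatorname{St}^{\leq n}_{\ul k^1,\ul k^2}/\GL_N$ along a smooth map. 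This gives the second morphism in \eqref{eq:morphisms diff quotients of tilde N} together with its smoothness.

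Commutativity of \eqref{eq:comp_lusztig_corresp} is a direct computation. The modified correspondence assigns to $L_\bullet$ the space $V=L_0/zL_m$ with the operator $z$, and then $W=zL_0/zL_m$. Multiplication by $z$ gives an isomorphism $L_0/L_m\iso zL_0/zL_m$. It intertwines the action of $z$ and carries $L_i/L_m$ to $zL_i/zL_m=F_{i+1}$. So the composite agrees with the original Lusztig morphism \eqref{eq:cartesian_diagr_stacks}, compatibly in families and canonically as an isomorphism of stacks. The main obstacle I expect is the derived Steinberg statement: one has to make sure that restricting to $\mathfrak{gl}^n_{N+n}$ is exactly what makes the base change identification hold at the derived level. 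Remark~\ref{rem:st conv map factors through n} indicates that this is why $\operatorname{St}^n$ rather than $\operatorname{St}^{\leq n}$ appears. My first attempt would be to verify it through the smoothness of $\mathfrak{gl}^n_{N+n}\to\mathfrak{gl}_N/\GL_N$ and transitivity of derived fiber products.
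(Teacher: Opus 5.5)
Your proposal is correct and is essentially the paper's argument: your map $(V,x,F_\bullet)\mapsto(\operatorname{im}x,\,x|_{\operatorname{im}x},\,F_{\geq 1})$ agrees with the paper's $(\bar x,F'_\bullet)$ on $V/\ker x$ via the isomorphism $V/\ker x\iso\operatorname{im}x$ induced by $x$, and the paper gets smoothness of both maps, as you do for the Steinberg one, by realizing them as base changes of $\mathfrak{gl}^n_{N+n}/\GL_{N+n}\to\mathfrak{gl}^{\leq n}_N/\GL_N$, whose smoothness it checks on the atlas $\mathfrak{gl}^{\leq n}_N$ (fibers open in $\Hom(\Bbbk^N,\ker q)$). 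One small correction: the base change you use is underived because $\mathfrak{gl}^n_{N+n}/\GL_{N+n}\to\mathfrak{gl}_N/\GL_N$ is flat, not because $\widetilde{\mathcal N}_{\ul k}\to\mathfrak{gl}_N$ is flat (it is not), so that clause should be dropped.
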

\begin{proof}
Fix a point $(x,F_{\bullet}) \in \widetilde{\mathcal{N}}^{\leq n}_{'\ul{k}}$. By definition, $x \in \mathfrak{gl}_{N+n}^{\leq n}$, hence $\operatorname{dim}\on{ker}x \leqslant n$. On the other hand, $x(\Bbbk^{N+n}) \subset F_1$ and $\on{dim}F_1=N$. It follows that $\operatorname{dim}(\on{ker}x)=n$. We can then define the morphism as follows. 
Set $F'_0:=F_0/\on{ker}(x)$. The map $x$ induces an isomorphism $F_0' \iso F_1$. For $i=1,\ldots, m$, let $F_i' \subset F_0'$ be the preimage of $F_{i+1} \subset F_1$ under this isomorphism. We obtain the flag $F_\bullet'$. Clearly, $x$ still acts on $F_0'=F_0/\on{ker}(x)$; we denote the corresponding endomorphism by $\bar{x}$.
Then
$
(x,F_\bullet) \mapsto (\bar{x},F'_{\bullet}).
$
So, we obtain the  required morphisms 
\begin{align*}
\widetilde{\mathcal{N}}^{\leq n}_{'\ul{k}^i}/\on{GL}_{N + n} &\rightarrow  \widetilde{\mathcal{N}}^{\leq n}_{\ul{k}^i}/ \on{GL}_{N}~(i=1,2), & \mathfrak{gl}^{n}_{N+n}/\GL_{N+n} &\rightarrow \mathfrak{gl}^{\leq n}_{N}/\GL_N.
\end{align*}
They induce the desired morphism between the Steinberg versions.

The commutativity of \eqref{eq:comp_lusztig_corresp} is clear. 

It remains to show that the morphisms \eqref{eq:morphisms diff quotients of tilde N} are smooth. Both are base changes of the morphism $\mathfrak{gl}^n_{N+n}/\on{GL}_{N+n} \rightarrow \mathfrak{gl}^{\leq n}_N/\on{GL}_N$ so it remains to check that the induced morphism $\mathfrak{gl}^n_{N+n} \rightarrow  \mathfrak{gl}_N^{\leq n}/\on{GL}_N$
is smooth. By smooth descent, it is enough to prove that the morphism:
\begin{equation*}
\mathfrak{gl}_{N+n}^n \times_{\mathfrak{gl}_N^{\leq n}/\on{GL}_N} \mathfrak{gl}_N^{\leq n} \rightarrow \mathfrak{gl}_N^{\leq n}
\end{equation*}
is smooth. The fiber product $\mathfrak{gl}_{N+n}^n \times_{\mathfrak{gl}^{\leq n}_N/\on{GL}_N} \mathfrak{gl}_N^{\leq n}$ parametrizes the triples $(x,y,\phi)$, where $x \in \mathfrak{gl}_{N+n}$ is a linear operator with $n$-dimensional kernel, $y \in \mathfrak{gl}_{N}$ and $\phi\colon \Bbbk^{N+n}/\operatorname{ker}(x) \iso \Bbbk^N$ is an isomorphism such that $\phi \circ \bar{x} = y \circ \phi$. 
Equivalently, it parametrizes the triples $(x,y,q)$, where $q\colon \Bbbk^{N+n} \twoheadrightarrow \Bbbk^N$ is surjective such that $\operatorname{ker}q=\operatorname{ker}x$ and $yq=qx$. 
Fixing the choices of $y$ and $q$ (which are independent of each other), we see that the space of possible $x$'s is parametrized by an open subset of $\operatorname{Hom}(\Bbbk^{N}, \operatorname{ker}q)$. The smoothness follows.
\end{proof}

\subsection{Slices in the affine Grassmannian and the Mirkovi\'c--Vybornov isomorphism}\label{sec:slices in aff Gr and MV iso}
Slices in the Beilinson--Drinfeld affine Grassmannian are certain locally closed subvarieties $\mathbb{X}_N^\mu \subset \mathbb{X}_N$ depending on a coweight $\mu$.
The morphism $\mathbb{Y}_N \rightarrow \mathbb{X}_N$ splits naturally when restricted to $\mathbb{X}_N^\mu$. Basic properties of $\mathbb{X}_N^\mu$ allow us to prove below that all of the morphisms in (\ref{eq:comp_lusztig_corresp}) are smooth on the classical loci. Along the way, we generalize and present a simple proof of \cite[Proposition~2.3]{an} (where only the case of two-row nilpotent elements was considered).  The 
Mirkovi\'c--Vybornov isomorphism  originally appeared in \cite{MV07, MVK22}. 
We follow \cite[Section~3.3]{CK18a} as this version is very transparent and the proofs are simple.

Let $e_1,\ldots,e_n$ be the standard basis of $\Bbbk^n$. For each $m \in \mathbb{Z}_{\geqslant 0}$ set:
\begin{equation*}
W^m := \operatorname{Span}(e_1,\ldots,z^{m-1}e_1,\ldots,e_n,\ldots,z^{m-1}e_n) \subset L_0.
\end{equation*}
Fix $\mu=(a_1,\ldots,a_n) \in \bZ_{\geqslant 0}^n$ such that $\sum_i a_i = N$. It determines the lattice $L_\mu \subset L_0$ generated by $z^{a_i}e_i$.
Define:
\begin{multline*}
{\mathbb{X}}_N^\mu :=\{L \in \mathbb{X}_N\,|\, [e_1],\ldots,[z^{a_1-1}e_1],\ldots,[e_n],\ldots,[z^{a_n-1}e_n]~\text{form a basis of}~L_0 / L, \\
\text{and}~z^{a_i}e_i \in W^{a_i}+L~\text{for all}~i\}.
\end{multline*}

Note that the $\GL_N$-torsor $\mathbb{Y}_N \rightarrow \mathbb{X}_N$ splits naturally when restricted to $\mathbb{X}_N^\mu$. Namely, for $L \in \mathbb{X}_N^\mu$ the identification $\psi\colon L_0/L \simeq \Bbbk^N$ is given by sending $[z^{a}e_{i}]$ ($0\leqslant a < a_i$) to the standard basis of $\Bbbk^N$. Let $i_\mu\colon \mathbb{X}_N^\mu \hookrightarrow \mathbb{Y}_N$ be the induced embedding.

The following lemma is clear: 
\begin{lem}\label{lem:iso_ind_X_mu_gr}
The closed embedding (\ref{eq:emb_X_N_shift}) induces a closed embedding $\mathbb{X}^\mu_N \hookrightarrow \mathbb{X}^{\omega_n + \mu}_{N+n}$. Combined with the identification (\ref{eq:iso_gr_lambda_shift_by_omega_n}),  it induces an isomorphism:
\begin{equation}\label{eq:iso_shift_base_change_mu}
\widetilde{\on{Gr}}^{\ul{\la}} \times_{\mathbb{X}_N} \mathbb{X}_N^\mu \iso  \widetilde{\on{Gr}}^{(\omega_n, \ul{\la})} \times_{\mathbb{X}_{N+n}} \mathbb{X}_{N+n}^{\mu+\omega_n}.
\end{equation}
\end{lem}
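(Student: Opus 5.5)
We will check both claims directly on points, using the explicit description of $\mathbb{X}_N^\mu$ and Lemma~\ref{lem:iso_shifts_gr_st}. Write $\iota\colon \mathbb{X}_N \hookrightarrow \mathbb{X}_{N+n}$, $L \mapsto zL$, for the embedding \eqref{eq:emb_X_N_shift}. Put $\mu' = \mu + \omega_n = (a_1+1,\ldots,a_n+1)$.

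\emph{Step 1: $\iota$ maps $\mathbb{X}^\mu_N$ into $\mathbb{X}^{\mu'}_{N+n}$.} Let $L \in \mathbb{X}_N^\mu$. We need the classes $[z^b e_i]$, $0 \le b \le a_i$, to form a basis of $L_0/zL$. The short exact sequence
\begin{equation*}
0 \to zL_0/zL \to L_0/zL \to L_0/zL_0 \to 0
\end{equation*}
handles this. Multiplication by $z$ gives $L_0/L \iso zL_0/zL$, so the classes $[z^{b}e_i]$ with $1 \le b \le a_i$ form a basis of the sub. The classes $[e_i]$ form a basis of the quotient. Together they give a basis of $L_0/zL$.

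For the second condition, start from $z^{a_i}e_i \in W^{a_i} + L$ and multiply by $z$. This gives $z^{a_i+1}e_i \in zW^{a_i} + zL \subset W^{a_i+1} + zL$, as required. Since $\iota$ is a closed embedding and $\mathbb{X}_N^\mu = \iota^{-1}(\mathbb{X}^{\mu'}_{N+n})$ set-theoretically (see Step 2), the restriction is a closed embedding $\mathbb{X}^\mu_N \hookrightarrow \mathbb{X}^{\mu'}_{N+n}$.

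\emph{Step 2: the isomorphism \eqref{eq:iso_shift_base_change_mu}.} By Lemma~\ref{lem:iso_shifts_gr_st}(1), the map $\widetilde{\Gr}^{\ula} \iso \widetilde{\Gr}^{(\omega_n,\ula)}$ lies over $\iota$. Hence it suffices to prove the following. If $L'_{m+1}$ is the last lattice of a point of $\widetilde{\Gr}^{(\omega_n,\ula)}$ and $L'_{m+1} \in \mathbb{X}^{\mu'}_{N+n}$, then $L := z^{-1}L'_{m+1}$ lies in $\mathbb{X}^\mu_N$. In other words, the square formed by $\widetilde{\Gr}^{\ula} \to \mathbb{X}_N$, the map $\widetilde{\Gr}^{(\omega_n,\ula)} \to \mathbb{X}_{N+n}$, and the inclusion of slices is Cartesian.

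The converse direction goes through the same sequence. Since $L' = zL$ with $zL_0 \supset L'$, the sub $zL_0/zL$ of $L_0/zL$ is spanned by the $[z^b e_i]$ with $b \ge 1$. Dividing by $z$, the $[z^{b}e_i]$ with $0 \le b < a_i$ form a basis of $L_0/L$.

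For the second condition, write $z^{a_i+1}e_i = w + zl$ with $w \in W^{a_i+1}$ and $l \in L$. Decompose $w = w_0 + z w_1$, where $w_0 \in \operatorname{Span}(e_j)$ and $w_1 \in W^{a_i}$. Then $w_0 = z(z^{a_i}e_i - w_1 - l) \in zL_0$, which forces $w_0 = 0$. Since $z$ is injective on $L_0$, we conclude $z^{a_i}e_i = w_1 + l \in W^{a_i} + L$.

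\emph{Step 3: scheme-theoretic statement.} The pointwise argument works verbatim for $R$-points, with $L_0$ replaced by $R[z]^n$ and lattices given as $R$-flat submodules of finite corank. Hence the map \eqref{eq:iso_shift_base_change_mu} is a bijection on functors of points, which gives the isomorphism. If derived fiber products are intended, note that $\mathbb{X}^\mu_N \to \mathbb{X}_N$ is locally closed, hence flat enough for the base change. So nothing extra is needed there.

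The main obstacle is the $w_0 = 0$ step in Step 2, i.e.\ showing that the degree-zero part of $w$ vanishes. In the relative setting this needs the freeness of $L_0/zL_0$ over $R$, which holds automatically.
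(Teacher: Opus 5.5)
Your proposal is correct, and it is essentially the direct verification that the paper omits (the paper just calls the lemma clear). Everything reduces to $\iota^{-1}(\mathbb{X}^{\mu+\omega_n}_{N+n})=\mathbb{X}^\mu_N$, which you prove correctly using $0\to zL_0/zL\to L_0/zL\to L_0/zL_0\to 0$ and $W^{a+1}=\operatorname{Span}(e_1,\ldots,e_n)\oplus zW^{a}$; \eqref{eq:iso_shift_base_change_mu} then follows because $\widetilde{\Gr}^{\ula}\iso\widetilde{\Gr}^{(\omega_n,\ula)}$ lies over $\iota$. Over a general base ring $R$, the step ``spanned by the $[z^be_i]$ with $b\geq 1$'' should use this splitting rather than a dimension count, but that is routine.

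Your closing remark on derived fiber products is not needed, since the lemma concerns classical fiber products, and its justification is wrong. A locally closed immersion is not flat in general, so flatness is not why the derived and classical fiber products agree. The paper deduces this separately, in the remark after the lemma, from the fact that $\mathbb{X}^\mu_N\subset\mathbb{X}_N$ is an exact base change in the sense of Bezrukavnikov--Mirkovi\'c (a transversal slice). For a non-flat closed embedding such as $\iota$, the derived version can genuinely fail; compare Lemma~\ref{lem:iso_shifts_gr_st}(2), which gives an isomorphism only on classical loci.
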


\begin{rem}
Note that both fiber products in (\ref{eq:iso_shift_base_change_mu}) coincide with their derived versions, because $\mathbb{X}^\mu_{N} \subset {\mathbb{X}}_N$ is an exact base change, see Remark~\ref{rem: our M is indeed a slice} and Proposition~\ref{prop:MV_iso_CK} below and \cite[1.3.1]{BM13}. 
\end{rem}

Following \cite[Section 3.3]{CK18a}, define (see also Example~3 {\it loc.cit.}):
\begin{multline*}
{\mathbb{M}}_N^\mu :=  \{A=(A_{ij})\,|\, A_{ij}~\text{is a matrix of size}~a_i \times a_j~\text{where}\\ 
A_{ii}~\text{has}~1\text{s just below the diagonal and all other non-zero entries in the last column,}\\
A_{ij},~\text{for}~i \neq j,~\text{has all non-zero entries in the last column but not below row}~a_j\}. 
\end{multline*}

\begin{rem}\label{rem: our M is indeed a slice}
It is easy to see that ${\mathbb{M}}^\mu_{N} \subset \mathfrak{gl}_N$ is an MV slice in the sense of \cite[Definition~5.2]{WWY20}; in particular, it is a normal slice in the sense of \cite[Lemma 1.3.1 iii)]{BM13}.
\end{rem}

\begin{prop}{\cite[Theorem 3.1]{CK18a}}\label{prop:MV_iso_CK}
The composition of $i_\mu\colon \mathbb{X}_N^\mu \hookrightarrow \mathbb{Y}_N$ with the morphism $\mathbb{Y}_N \rightarrow \mathfrak{gl}^{\leq n}_N$ induces an isomorphism $\mathbb{X}^\mu_N \iso \mathbb{M}^\mu_N$.    
\end{prop}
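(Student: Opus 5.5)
We verify directly that the composite map $\mathbb{X}^\mu_N \to \mathfrak{gl}_N$, $L \mapsto (z \curvearrowright L_0/L)$ written in the basis $[z^a e_i]$ ($0 \le a < a_i$), lands in $\mathbb{M}^\mu_N$. We then construct an explicit inverse $\mathbb{M}^\mu_N \to \mathbb{X}^\mu_N$ and check that both composites are identities.

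\textbf{Step 1: the image lies in $\mathbb{M}^\mu_N$.} Let $L \in \mathbb{X}^\mu_N$. For $a < a_i - 1$ we have $z\cdot[z^a e_i] = [z^{a+1}e_i]$, which is again a basis vector. Hence the column of $A_{ii}$ indexed by $a$ has a single $1$ just below the diagonal, and the corresponding columns of $A_{ji}$ for $j \ne i$ vanish. The last column records the expansion of $[z^{a_i}e_i]$ in the basis. By the condition $z^{a_i}e_i \in W^{a_i} + L$, this expansion only involves vectors $[z^b e_j]$ with $b < a_i$, i.e.\ rows $b+1 \le a_i$ of block $A_{ji}$.

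There is an index-convention point here. The definition of $\mathbb{M}^\mu_N$ says the nonzero entries of $A_{ij}$ lie ``not below row $a_j$''. With $A_{ji}$ being the $(j,i)$ block of size $a_j \times a_i$, the column computed above is bounded by $a_i$, the width of the block. So the convention must be read as rows $\le \min(a_i,a_j)$ in the appropriate sense, and I would fix the transposition conventions exactly as in \cite[Section~3.3]{CK18a}. This gives a morphism $\mathbb{X}^\mu_N \to \mathbb{M}^\mu_N$.

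\textbf{Step 2: the inverse.} Given $A \in \mathbb{M}^\mu_N$, make $\Bbbk^N$ into a $\Bbbk[z]$-module by letting $z$ act as $A$. Define the surjection $q\colon L_0 = \Bbbk^n[z] \to \Bbbk^N$ of $\Bbbk[z]$-modules by sending $e_i$ to the first basis vector of block $i$. This map is surjective because of the subdiagonal $1$'s: $A^a$ applied to that first vector gives the $(a+1)$-st basis vector of block $i$, for $a < a_i$.

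Set $L = \ker q$. This is $z$-stable of codimension $N$, and the classes $[z^a e_i]$ form a basis of $L_0/L$. The condition $z^{a_i}e_i \in W^{a_i} + L$ follows from the row restriction on the last columns of $A$, since $q(z^{a_i}e_i)$ equals the last column of block $i$, which lies in $q(W^{a_i})$. Under the identification $\psi$ determined by this basis, $z$ acts on $L_0/L$ by exactly $A$, so this construction inverts Step 1.

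Conversely, for $L \in \mathbb{X}^\mu_N$ the quotient map $L_0 \to L_0/L \cong \Bbbk^N$ is precisely the map $q$ built from the matrix of Step 1. Hence $\ker q = L$, and the other composite is also the identity.

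\textbf{Step 3: scheme-theoretic statement.} Both constructions work in families over an arbitrary base $S$: replace $\Bbbk^N$ by $\mathcal{O}_S^N$, and note that the lattice $\ker q$ is flat with locally free quotient. So both maps are morphisms of schemes and mutually inverse on $S$-points, which gives the isomorphism.

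The main obstacle is Step 1's bookkeeping: matching the row constraint in $\mathbb{M}^\mu_N$ with the condition $z^{a_i}e_i \in W^{a_i}+L$, which requires care with block indexing conventions. Everything else is formal.
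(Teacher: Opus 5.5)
Your overall route is the standard direct verification behind \cite[Theorem~3.1]{CK18a}, which the paper quotes without proof. You compute the matrix of $z$ in the basis $[z^a e_i]$, and you invert by taking $L=\ker q$ for the $\Bbbk[z]$-linear map $q\colon L_0\to\Bbbk^N$ determined by $A$. Steps 2 and 3 are fine as written. However, Step 1 contains an unjustified inference, and it is not the one you flagged.

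\textbf{The gap in Step 1.} The space $W^{a_i}$ contains $z^b e_j$ for every $j$ and every $b<a_i$. When $a_j<a_i$ this includes vectors with $a_j\le b<a_i$. Their classes are not among your basis vectors, and a priori their expansions could involve $[z^c e_k]$ with $c\ge a_i$ (when $a_k>a_i$). So $[z^{a_i}e_i]\in (W^{a_i}+L)/L$ does not by itself say that the expansion of $[z^{a_i}e_i]$ is supported on the $[z^be_j]$ with $b<a_i$.

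\textbf{The fix.} The repair uses the defining conditions for the \emph{other} indices $j$. Put $V_m=(W^m+L)/L$ and $R_m=\operatorname{Span}\{[z^be_j] : b<\min(m,a_j)\}$. Then $R_m\subseteq V_m$ and $zV_m\subseteq V_{m+1}$. Argue by strong induction on $m$. A spanning vector $[z^be_j]$ of $V_m$ with $a_j\le b<m$ satisfies
\[
[z^be_j]=z^{b-a_j}[z^{a_j}e_j]\in z^{b-a_j}V_{a_j}\subseteq V_b=R_b\subseteq R_m .
\]
Hence $V_m=R_m$. Then $[z^{a_i}e_i]\in R_{a_i}$, and linear independence of the basis gives exactly the vanishing of the last column of $A_{ji}$ below row $\min(a_i,a_j)$. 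This argument works verbatim over any base, since $L_0/L$ is free on these classes, so Step 3 is unaffected.

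\textbf{The convention worry.} Your concern about index conventions is unfounded. In $\mathbb{M}^\mu_N$ the block $A_{ij}$ has size $a_i\times a_j$, and the bound ``row $a_j$'' is its width. For $A_{ji}$ the bound is therefore $a_i$, which is precisely what you computed, and the effective bound $\min(a_i,a_j)$ is automatic. No transposition or reinterpretation is needed.
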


Note that the following corollary is stated purely in terms of Springer resolutions, but the argument is most transparent by using the connection to the affine Grassmannian. This is a generalization of \cite[Proposition 2.3]{an}.  
\begin{cor}\label{cor:iso_pull_st}
There exists an isomorphism of symplectic varieties
\begin{equation}\label{eq:iso springer after base change to slice}
\widetilde{\mathcal{N}}_{'\ul{k}} \times_{\mathfrak{gl}_{N+n}} \mathbb{M}_{N + n}^{\omega_n + \mu} \iso \widetilde{\mathcal{N}}_{\ul{k}} \times_{\mathfrak{gl}_N} \mathbb{M}^\mu_N 
\end{equation}
making the following diagram commutative:
\begin{equation}\label{eq:comm diagr base change to slice}
\xymatrix{\ar[d] \ar[r] \widetilde{\mathcal{N}}_{'\ul{k}} \times_{\mathfrak{gl}_{N+n}} \mathbb{M}_{N + n}^{\omega_n + \mu} & \widetilde{\mathcal{N}}_{\ul{k}} \times_{\mathfrak{gl}_N} \mathbb{M}^\mu_N  \ar[d] \\
\left. \widetilde{\mathcal{N}}^{\leq n}_{'\ul{k}} \right/ \on{GL}_{N+n} \ar[r] & \left. \widetilde{\mathcal{N}}^{\leq n}_{\ul{k}} \right/ {\on{GL}_N}}.
\end{equation} 
\end{cor}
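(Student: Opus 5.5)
The plan is to realize both sides of \eqref{eq:iso springer after base change to slice} as the same piece of the affine Grassmannian convolution diagram. First identify
\[
\widetilde{\mathcal{N}}_{\ul{k}} \times_{\mathfrak{gl}_N} \mathbb{M}^\mu_N \simeq \widetilde{\on{Gr}}^{\ul{\la}} \times_{\mathbb{X}_N} \mathbb{X}_N^\mu .
\]
Base change the left Cartesian square of \eqref{eq:cartesian_diagr_Y_to_X} along $i_\mu\colon \mathbb{X}^\mu_N \hookrightarrow \mathbb{Y}_N$. Then $\widetilde{\on{Gr}}^{\ul{\la}} \times_{\mathbb{X}_N} \mathbb{X}^\mu_N \simeq \widetilde{\mathcal{N}}^{\leq n}_{\ul{k}} \times_{\mathfrak{gl}^{\leq n}_N} \mathbb{X}^\mu_N$. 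By Proposition~\ref{prop:MV_iso_CK}, $\mathbb{X}^\mu_N \iso \mathbb{M}^\mu_N$ compatibly with the maps to $\mathfrak{gl}_N$, and $\mathbb{M}^\mu_N$ lies in $\mathfrak{gl}^{\leq n}_N$. Hence this equals $\widetilde{\mathcal{N}}_{\ul{k}} \times_{\mathfrak{gl}_N} \mathbb{M}^\mu_N$.

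The same argument for $N+n$, the tuple $(n,\ul{k})$ and the coweight $\omega_n+\mu$ identifies the left-hand side of \eqref{eq:iso springer after base change to slice} with $\widetilde{\on{Gr}}^{(\omega_n,\ul{\la})} \times_{\mathbb{X}_{N+n}} \mathbb{X}^{\omega_n+\mu}_{N+n}$. Lemma~\ref{lem:iso_ind_X_mu_gr} identifies these two Grassmannian-side varieties, and composing gives the isomorphism. All fiber products are classical, since the slices are exact base changes (Remark~\ref{rem: our M is indeed a slice}).

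For commutativity of \eqref{eq:comm diagr base change to slice}, I would track points explicitly. A point $L_\bullet$ of $\widetilde{\on{Gr}}^{\ul{\la}}$ with $L_m \in \mathbb{X}^\mu_N$ maps to $(z|_{L_0/L_m}, F_i = L_i/L_m)$ in the trivialization given by $i_\mu$. Its image on the $(N+n)$ side is $(z|_{L_0/zL_m}, F_0 = L_0/zL_m, F_1 = zL_0/zL_m, \ldots)$. The morphism of Lemma~\ref{lem_com_our_morphisms} quotients by $\ker x = zL_0/zL_m \cap \ker z = z^{-1}(zL_m)\cap L_0/zL_m$, which is $L_m$-shaped of dimension $n$ since $\ker z$ on $L_0/zL_m$ is $L_m/zL_m$. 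It then pulls the flag back via $x\colon F_0/\ker x \iso F_1$. Here $F_0/\ker x = L_0/L_m$, and multiplication by $z$ carries $L_0/L_m$ isomorphically onto $zL_0/zL_m$. So the pulled-back flag is $L_i/L_m$, with operator $z$. This is exactly the image on the $N$ side, up to the $\GL_N$-ambiguity absorbed by the stack quotient. This is precisely the commutativity of the left triangle in \eqref{eq:comp_lusztig_corresp}, restricted to the slice, so I can simply cite that lemma together with the compatibility of $i_\mu$ and $i_{\omega_n+\mu}$ under \eqref{eq:emb_X_N_shift}.

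The remaining claim is that the isomorphism is \emph{symplectic}. This is the step I expect to be the main obstacle, since the Grassmannian description does not make the form visible. I would first try to check that the symplectic forms, obtained by restricting the Poisson structure of $\widetilde{\mathcal{N}}$ to a transversal (MV/normal) slice, agree under the isomorphism. Both forms come from the Poisson structure on $\mathfrak{gl}^*$ pulled back along the stacky maps to $\mathfrak{gl}/\GL$ in \eqref{eq:comm diagr base change to slice}. The morphism $\mathfrak{gl}^n_{N+n}/\GL_{N+n}\to\mathfrak{gl}^{\leq n}_N/\GL_N$ is compatible with the shifted symplectic (coadjoint quotient) structures, since it is a Hamiltonian reduction by $\ker x$.

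If that route is too heavy, the fallback is as follows. Both varieties are smooth, connected and conical. Any two symplectic forms of the same $\bG_m$-weight that agree on the dense open locus over the regular part of the slice must coincide. On that locus the comparison reduces to a direct computation on $\mathbb{M}^\mu_N$ versus $\mathbb{M}^{\omega_n+\mu}_{N+n}$, where the block matrices differ by one extra row and column in each block.
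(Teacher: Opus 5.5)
Your construction of the isomorphism and your proof of commutativity are the paper's argument. After Proposition~\ref{prop:MV_iso_CK}, both sides become the Grassmannian fiber products of Lemma~\ref{lem:iso_ind_X_mu_gr}, and commutativity is the triangle in \eqref{eq:comp_lusztig_corresp} restricted to the slice. There is one small slip in your point-tracking: $\ker x = L_m/zL_m$, which need not lie in $zL_0/zL_m$. Your conclusion $F_0/\ker x = L_0/L_m$ is unaffected.

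The genuine gap is the word ``symplectic'': you leave it open, and your first route does not close it as stated.
\begin{itemize}
\item The form on $Y=\widetilde{\mathcal N}_{\ul k}\times_{\mathfrak{gl}_N}\mathbb{M}^\mu_N$ is the restriction of the canonical form of $T^*(\GL_N/P)$. It is not determined by the map $Y\to \widetilde{\mathcal N}^{\le n}_{\ul k}/\GL_N$: re-trivializing the torsor by a map $g\colon Y\to \GL_N$ changes the pulled-back form by an exact term built from the moment map and $dg\,g^{-1}$.
\item Hence commutativity of \eqref{eq:comm diagr base change to slice} by itself says nothing about the forms. In shifted language you would also have to match the isotropic structures of the slices $\mathbb{M}\to\mathfrak{gl}/\GL$.
\item Moreover, $\mathfrak{gl}^n_{N+n}/\GL_{N+n}\to\mathfrak{gl}^{\le n}_N/\GL_N$ is not a morphism of $1$-shifted symplectic stacks, since its source has dimension $-n^2$. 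At best it is a Lagrangian correspondence.
\end{itemize}
Your fallback is a sound reduction: agreement on a dense open subset of a smooth connected variety already forces equality, so no weight argument is needed. But it only reduces the claim to showing that $x\mapsto \bar x$ intertwines the restricted Kirillov--Kostant forms on $\mathbb{M}^{\omega_n+\mu}_{N+n}\cap O'$ and $\mathbb{M}^{\mu}_N\cap O$. Here $O', O$ are the Richardson orbits, over which the type A parabolic Springer maps are isomorphisms. That is a Kraft--Procesi-type statement, and you leave it as an unperformed computation.

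The missing input is the one the paper uses, in the remark after the corollary. By \cite[Theorem~A]{WWY20}, the Mirkovi\'c--Vybornov isomorphism is Poisson, so both forms are transported from the resolved Grassmannian slices. There, the identification of Lemma~\ref{lem:iso_ind_X_mu_gr} is $L\mapsto zL$, i.e.\ translation by the central element $z\cdot\mathrm{Id}\in\GL_n(\cK)$, which respects the Poisson structures. So, contrary to your remark, the Grassmannian model is exactly what makes the form visible. Combined with your dense-open reduction, it finishes the proof.
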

\begin{proof}
The isomorphism (\ref{eq:iso springer after base change to slice}) follows from 
the fact that after the identification of Proposition~\ref{prop:MV_iso_CK},  (\ref{eq:iso springer after base change to slice}) becomes (\ref{eq:iso_shift_base_change_mu}) which is an isomorphism by Lemma~\ref{lem:iso_ind_X_mu_gr}. 
The commutativity of (\ref{eq:comm diagr base change to slice}) then follows from the commutativity of (\ref{eq:comp_lusztig_corresp}). 
\end{proof}

\begin{rem}
In Section~\ref{sec:tiltings_for_lambda_and_shift}, we will use that the isomorphism~\eqref{eq:iso springer after base change to slice} respects symplectic structures and can be quantized to an isomorphism of the corresponding $\cW$-algebras. 
This is clear from the fact that the Mirkovi\'c--Vybornov isomorphism (Proposition~\ref{prop:MV_iso_CK}) is symplectic and admits a quantization (see \cite[Theorems~A,~C]{WWY20}).  
\end{rem}

\subsection{Smoothness of the morphisms}
The goal of this section is to show that all morphisms in (\ref{eq:comp_lusztig_corresp}) are smooth (a part of this claim is contained in Lemma \ref{lem_com_our_morphisms}). For the case $\ula = (\om_1, \hdots, \om_1)$, this is \cite[Lemma~4.9]{FF21}. We use the MV-isomorphism, which simplifies the proof and allows us to generalize it to arbitrary $\ul{\la}$.

\vspace{0.1cm}

Write $N=pn+r$ with $0 \leqslant r<n$. Set 
\begin{equation*}
\om =(\underbrace{p+1,p+1,\ldots,p+1}_{r},\underbrace{p,\ldots,p}_{n-r}).
\end{equation*}

\begin{lem}\label{lem_act_smooth}
The action morphism
$
\operatorname{GL}_{N} \times {\mathbb{M}}_N^\om \rightarrow
\mathfrak{gl}^{\leq n}_N
$
is smooth, its image is open and contains $\mathfrak{gl}_N^{\leq n} \cap \mathcal{N}$. 
\end{lem}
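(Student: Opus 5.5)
The plan is to use the fact that $\mathbb{M}^\om_N$ is an MV slice (Remark~\ref{rem: our M is indeed a slice}), i.e. a transversal slice to orbits in the sense of \cite[Definition~5.2]{WWY20}. Every point $A\in\mathbb{M}^\om_N$ then satisfies $T_A\mathfrak{gl}_N=[\mathfrak{gl}_N,A]+T_A\mathbb{M}^\om_N$, so the differential of the action morphism $a\colon \GL_N\times\mathbb{M}^\om_N\to\mathfrak{gl}_N$ is surjective at every point of the form $(1,A)$. By $\GL_N$-equivariance (with $\GL_N$ acting by left multiplication on the first factor and by conjugation on the target), the differential is surjective everywhere. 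Both source and target are smooth, so $a$ is smooth, and in particular open.

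If transversality at every point of the slice is not available directly from the cited definition, I would verify it by hand. The companion-type matrices in $\mathbb{M}^\om_N$ are exactly the matrices of multiplication by $z$ on $L_0/L$ in the basis $[z^a e_i]$, via Proposition~\ref{prop:MV_iso_CK}. Such an operator $A$ has the cyclic-type vectors $e_1,\dots,e_n$ generating $\Bbbk^N$ as a $\Bbbk[A]$-module, so $A$ has at most $n$ Jordan blocks. A dimension count would then finish the transversality: the centralizer of $A$ has dimension $\ge N^2-\dim T_A\mathbb{M}$, with the free parameters in the last columns numbering $\sum_{i,j}\min(a_i,a_j+1)$-type quantities. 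I expect this count to be the main technical obstacle, and I would try to avoid it by invoking the slice property.

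Next, the image lies in $\mathfrak{gl}_N^{\leq n}$. Each $A\in\mathbb{M}^\om_N$ is a block matrix with $n$ diagonal blocks that are companion-like, so $\Bbbk^N$ is generated over $\Bbbk[A]$ by $n$ vectors. Hence $\dim\ker A\le n$, and more generally $A$ has at most $n$ Jordan blocks for each eigenvalue. The image is $\GL_N$-stable and open by smoothness.

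It remains to show that the image contains $\mathfrak{gl}_N^{\leq n}\cap\mathcal N$. Since the image is $\GL_N$-stable, it suffices to find, for each nilpotent orbit with at most $n$ Jordan blocks, a point of $\mathbb{M}^\om_N$ lying in it. Through Proposition~\ref{prop:MV_iso_CK}, this amounts to showing that for every partition $\nu$ of $N$ with at most $n$ parts there is a lattice $L\in\mathbb{X}^\om_N$ with $z|_{L_0/L}$ of type $\nu$. One way to see this is to take $L$ in the $\GL_n(\cO)$-orbit $\Gr^\nu$, which contains $L_\nu$ ($z$-type $\nu$). Since $\om$ is the minimal dominant coweight of degree $N$, it lies below $\nu$, so the MV slice $\Gr_\om\cap\overline{\Gr^\nu}$ meets $\Gr^\nu$; this intersection is identified with $\mathbb{X}^\om_N\cap\Gr^\nu$. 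Alternatively, one can write down an explicit representative: order the Jordan blocks of sizes $\nu_1\ge\dots\ge\nu_n$ and glue them into the blocks of sizes $p$, $p+1$ using entries in the allowed last columns. I would settle for the geometric argument and cite the standard nonemptiness of MV slices for $\om\le\nu$.
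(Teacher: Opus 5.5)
Your proof is correct. On smoothness it agrees with the paper, which also just says this is a formal consequence of $\mathbb{M}^\om_N$ being a normal (MV) slice. It differs on the other two claims, where the paper simply cites \cite[Lemma~3.2(1),(2)]{MVK22}:
\begin{itemize}
\item You obtain openness of the image directly from smoothness, since a smooth morphism is flat of finite type and hence open.
\item You check that the image lands in $\mathfrak{gl}^{\leq n}_N$, because the first basis vectors of the $n$ blocks generate $\Bbbk^N$ over $\Bbbk[A]$.
\item You prove that the image contains $\mathfrak{gl}^{\leq n}_N\cap\mathcal{N}$ on the affine Grassmannian side. Via Proposition~\ref{prop:MV_iso_CK}, meeting the nilpotent orbit of type $\nu$ amounts to $\mathbb{X}^\om_N\cap\Gr^\nu=\mathcal{W}^\nu_\om\neq\emptyset$. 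This holds because $\om$ is the minimum of the dominance order among dominant coweights of degree $N$.
\end{itemize}
The paper's route is shorter and stays on the nilpotent side. Yours is more self-contained, and it explains why exactly the orbits with at most $n$ Jordan blocks occur: they correspond to the $\Gr^\nu$ with $\nu$ having at most $n$ parts.

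One remark on your fallback. Suppose the slice property you quote only gives $\mathfrak{gl}_N=[\mathfrak{gl}_N,e_\om]+T_{e_\om}\mathbb{M}^\om_N$ at the base point $e_\om$ (all last-column entries zero). Then a dimension count will not propagate this to other points of the slice, since it cannot exclude a nonzero intersection of the two subspaces there. The standard way is to use the contracting action $x\mapsto t\,\mathrm{Ad}(\rho(t))x$ for a suitable cocharacter $\rho$. This action fixes $e_\om$, acts on $\mathbb{M}^\om_N-e_\om$ with positive weights, and makes the action map equivariant. The locus where the differential is surjective is then open, stable under this $\bG_m$ and under left multiplication on $\GL_N$, and contains $(1,e_\om)$. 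Since $(1,e_\om)$ lies in the closure of every orbit $\{(1,t\cdot A)\}$, that locus is everything. Also, the correct number of free entries is $\sum_{i,j}\min(a_i,a_j)=\dim Z_{\mathfrak{gl}_N}(e_\om)$.
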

\begin{proof}
Smoothness is a formal corollary of the fact that $\mathbb{M}^\om_N$ is a normal slice in $\mathfrak{gl}_N$
 (see \cite[Section 3.2]{MVK22}, \cite[Section 5.1]{WWY20}). The fact that the image is open follows from \cite[Lemma 3.2(1)]{MVK22}. The last property follows from \cite[Lemma 3.2(2)]{MVK22}.
\end{proof}

\begin{cor}\label{cor:smooth_morph}
All morphisms in \eqref{eq:comp_lusztig_corresp} are smooth.
\end{cor}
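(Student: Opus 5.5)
We need to show the three morphisms in \eqref{eq:comp_lusztig_corresp} are smooth: the horizontal map $\widetilde{\on{Gr}}^{\ul{\la}} \to \widetilde{\mathcal{N}}^{\leq n}_{'\ul{k}}/\GL_{N+n}$ (and its Steinberg analogue), the diagonal map, and the vertical map. The vertical maps are smooth by Lemma~\ref{lem_com_our_morphisms}. Since the horizontal map composed with the vertical map is the diagonal map, and the vertical map is smooth, it suffices to prove the diagonal and horizontal maps are smooth directly. All the maps are base changes of maps between the bases. The diagonal map is the base change of $\mathbb{X}_N \to \mathfrak{gl}^{\leq n}_N/\GL_N$ along $\widetilde{\mathcal N}^{\leq n}_{\ul k}/\GL_N\to \mathfrak{gl}^{\leq n}_N/\GL_N$ by the Cartesian squares \eqref{eq:cartesian_diagr_stacks}. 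The horizontal map factors through the analogous base change for $N+n$ via Lemma~\ref{lem:iso_shifts_gr_st} and the embedding $\mathbb{X}_N\hookrightarrow \mathbb{X}_{N+n}$. So the core claim is that $\mathbb{X}_N \to \mathfrak{gl}^{\leq n}_N/\GL_N$ is smooth, and similarly that $\mathbb{X}_N \to \mathfrak{gl}^n_{N+n}/\GL_{N+n}$ is smooth.

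For the core claim, I pass to the $\GL_N$-torsor. By smooth descent it is equivalent to show that $\mathbb{Y}_N \to \mathfrak{gl}^{\leq n}_N$ from \eqref{eq:morphism Y to gl} is smooth. Then I use the slice. The splitting $i_\om\colon \mathbb{X}^\om_N\hookrightarrow \mathbb{Y}_N$ together with the $\GL_N$-action gives a map $\GL_N\times \mathbb{X}^\om_N \to \mathbb{Y}_N$. This map is an open embedding: its image is the open locus where the classes $[z^a e_i]$ with $a<a_i$ form a basis of $L_0/L$ and the conditions $z^{a_i}e_i\in W^{a_i}+L$ hold. I should double-check openness of this second condition, or else replace $\mathbb{X}^\om_N$ by the open chart where only the basis condition is imposed and argue via the group of lower-triangular changes. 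By Proposition~\ref{prop:MV_iso_CK}, the composition $\GL_N\times\mathbb{X}^\om_N\to\mathfrak{gl}^{\leq n}_N$ is identified with the action map $\GL_N\times\mathbb{M}^\om_N\to\mathfrak{gl}^{\leq n}_N$, which is smooth by Lemma~\ref{lem_act_smooth}. Translating by elements of $G(\cO)$ that move $e_1,\dots,e_n$, i.e.\ using other charts indexed by $\GL_n(\Bbbk[z])$-translates, the images cover $\mathbb{Y}_N$, since every $L$ admits such a basis after a change of coordinates. The map is equivariant, so smoothness holds everywhere.

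For the version with $N+n$ and $\mathfrak{gl}^n_{N+n}$ (needed for the horizontal Steinberg map, cf.\ Remark~\ref{rem:st conv map factors through n}), I run the same argument with the slice $\mathbb{X}^{\om+\omega_n}_{N+n}$ restricted to the image of $\mathbb{X}_N$, using Lemma~\ref{lem:iso_ind_X_mu_gr}. By Corollary~\ref{cor:iso_pull_st}, the slice $\mathbb{M}^{\omega_n+\om}_{N+n}$ lies in $\mathfrak{gl}^n_{N+n}$ and its $\GL_{N+n}$-saturation is open there. Smoothness on this saturation is then inherited from Lemma~\ref{lem_act_smooth} through the smooth vertical map. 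Finally, for the Steinberg versions, smoothness of the base-changed maps follows formally from smoothness of the base maps, since the squares are derived Cartesian.

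The main obstacle is the covering step: showing that the $\GL_N\times\mathbb{X}^\om_N$-charts, or their translates, cover $\mathbb{Y}_N$, and in particular all of the preimage of $\mathfrak{gl}^{\leq n}_N$ rather than only the nilpotent part. If that fails, I would instead check smoothness pointwise via the tangent map. For this I would compute deformations of $z\curvearrowright L_0/L$ as a $\Bbbk[z]$-module generated by $n$ elements and compare with $T\mathfrak{gl}_N$, showing surjectivity whenever $L_0/L$ is generated by at most $n$ elements.
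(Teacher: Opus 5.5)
Your reductions are sound. Via the Cartesian squares \eqref{eq:cartesian_diagr_stacks} and smooth descent, everything comes down to smoothness of $\mathbb{Y}_N\to\mathfrak{gl}^{\leq n}_N$ and of its $\mathfrak{gl}^n_{N+n}$-variant. You are right to single out the latter for the horizontal Steinberg map; the paper's proof only writes out the diagonal maps.

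The gap is in your proof of the core claim. The map $\GL_N\times\mathbb{X}^\om_N\to\mathbb{Y}_N$ is not an open embedding unless $n\mid N$. Over the chart where the classes $[z^ae_i]$, $a<a_i$, form a basis, the torsor is trivial and its preimage is open in $\mathbb{Y}_N$. But the extra condition $z^{a_i}e_i\in W^{a_i}+L$ cuts out a lower-dimensional locus. Counting free entries of $\mathbb{M}^\om_N\simeq\mathbb{X}^\om_N$ gives $\dim\mathbb{X}^\om_N=\sum_{i,j}\min(a_i,a_j)=pn^2+r^2$, whereas $\dim\mathbb{X}_N=nN=pn^2+nr$. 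So the slice is transversal to the orbit $\GL_n(\cO)\cdot z^\om\simeq\operatorname{Gr}(r,n)$, not open.

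Your charts are therefore only locally closed. Knowing that their composite to $\mathfrak{gl}_N$ is smooth (Lemma~\ref{lem_act_smooth}) and that their translates cover does not give smoothness of $\mathbb{Y}_N\to\mathfrak{gl}_N$. You would additionally need $\mathbb{Y}_N$ itself to be smooth. Then smoothness could be detected by surjectivity of the differential along the slice and spread by the $\GL_n(\cO)$-invariance of $\mathbb{Y}_N\to\mathfrak{gl}_N$.

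Your $N+n$ paragraph inherits this problem and adds a non sequitur. Smoothness of $\mathbb{X}_N\to\mathfrak{gl}^n_{N+n}/\GL_{N+n}$ cannot be ``inherited through the smooth vertical map'': smoothness of $g\circ f$ and of $g$ does not imply smoothness of $f$. Also, $\mathbb{M}^{\om+\omega_n}_{N+n}$ is not contained in $\mathfrak{gl}^n_{N+n}$, since it contains invertible matrices.

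The cleanest repair needs no slice; it is your fallback tangent computation done globally. The assignment $(L,\psi)\mapsto(x=\psi z\psi^{-1},\,v_i=\psi[e_i])$ identifies $\mathbb{Y}_N$ with the open subscheme of $\mathfrak{gl}_N\times(\Bbbk^N)^n$ where $v_1,\dots,v_n$ generate $\Bbbk^N$ as a $\Bbbk[x]$-module. The inverse takes $L$ to be the kernel of $\Bbbk[z]^n\to\Bbbk^N$, $e_i\mapsto v_i$, $z\mapsto x$. Hence $\mathbb{Y}_N\to\mathfrak{gl}_N$ is an open piece of a trivial vector bundle and is smooth.

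For the horizontal maps, note that a lattice $L'\in\mathbb{X}_{N+n}$ lies in $zL_0$ if and only if $z$ has $n$-dimensional cokernel (equivalently, kernel) on $L_0/L'$. So $\mathbb{X}_N\hookrightarrow\mathbb{X}_{N+n}$ is exactly the preimage of $\mathfrak{gl}^n_{N+n}/\GL_{N+n}$, and ${}'\mathbb{Y}_N\simeq\mathbb{Y}_{N+n}\times_{\mathfrak{gl}_{N+n}}\mathfrak{gl}^n_{N+n}$. The horizontal maps then follow by base change.

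The paper uses the slice on the target side rather than as charts on $\mathbb{Y}_N$. It base-changes along the smooth action map $\GL_N\times\mathbb{M}^\om_N\to\mathfrak{gl}^{\leq n}_N$, whose image contains the relevant nilpotent locus. It then asserts that $\mathbb{Y}_N\times_{\mathfrak{gl}^{\leq n}_N}(\GL_N\times\mathbb{M}^\om_N)\to\GL_N\times\mathbb{M}^\om_N$ is an isomorphism with inverse induced by $i_\om$. Be aware that $i_\om$ only gives a section there. The fibres of $\mathbb{Y}_N\to\mathfrak{gl}_N$ are the open sets of generating $n$-tuples in $(\Bbbk^N)^n$, of dimension $nN$. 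So that map is smooth rather than an isomorphism, and its smoothness again rests on the framed description above.
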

\begin{proof}
We show that the morphism $\operatorname{St}^{\underline{\la}^1,\underline{\la}^2}_{\operatorname{GL}_n} \rightarrow  \operatorname{St}^{\leq n}_{\ul{k}^1,\ul{k}^2}/\on{GL}_{N}$ is smooth, and the argument for $\widetilde{\on{Gr}}^{\ul{\la}} \rightarrow \widetilde{\mathcal{N}}^{\leq n}_{\ul{k}}/\on{GL}_N$ is analogous.   
We show that the morphism $\operatorname{St}^{\underline{\la}^1,\underline{\la}^2}_{\operatorname{GL}_n} \times_{\mathbb{X}_N} \mathbb{Y}_N \rightarrow \operatorname{St}^{\leq n}_{\ul{k}^1,\ul{k}^2}$ is smooth. 
It follows from Lemma~\ref{lem_act_smooth} (combined with the smooth descent) that it is enough to show that the morphism
\begin{equation*}
(\operatorname{St}^{\underline{\la}^1,\underline{\la}^2}_{\operatorname{GL}_n} \times_{\mathbb{X}_N} \mathbb{Y}_N) \times^\bL_{\mathfrak{gl}^{\leq n}_N} (\on{GL}_N \times \mathbb{M}^\om_N) \rightarrow \operatorname{St}^{\leq n}_{\ul{k}^1,\ul{k}^2} \times^\bL_{\mathfrak{gl}^{\leq n}_N} (\on{GL}_N \times \mathbb{M}^\om_N)
\end{equation*}
is smooth. 
We claim that it 
is actually an isomorphism. 
To see this, recall that by \eqref{eq:cartesian_diagr_Y_to_X} we have $\operatorname{St}^{\underline{\la}^1,\underline{\la}^2}_{\operatorname{GL}_n} \times_{\mathbb{X}_N} \mathbb{Y}_N \iso \operatorname{St}^{\leq n}_{\ul{k}^1,\ul{k}^2} \times^\bL_{\mathfrak{gl}^{\leq n}_N} \mathbb{Y}_N$. 
So, it remains to show that 
\[\mathbb{Y}_N \times^\bL_{\mathfrak{gl}^{\leq n}_N}  (\on{GL}_N \times \mathbb{M}^\om_N) \rightarrow \on{GL}_N \times \mathbb{M}^\om_N\] 
is an isomorphism. Indeed, the inverse is induced by the natural map $\operatorname{GL}_N \times \mathbb{M}^\om_N \hookrightarrow \mathbb{Y}_N$ that comes from  the identification $\mathbb{M}^\om_N \simeq \mathbb{X}^\om_N$ and the section $i_\om\colon \mathbb{X}^\om_N \hookrightarrow \mathbb{Y}_N$. 
\end{proof}

\subsection{Induced maps on K-theories} \label{subsec:iso_K_thery}
We finish this section by proving a result for which our modification of the Lusztig correspondences is essential. It would not be true for the original versions of Section~\ref{subsec: Lusztig's correspondences}. Since we are only dealing with K-theory in this section, we make no distinction between a derived scheme and its classical locus.

\vspace{0.2cm}

Corollary~\ref{cor:smooth_morph} ensures that pullbacks for the morphisms in \eqref{eq:comp_lusztig_corresp} are well-defined in equivariant K-theory.

\begin{prop}\label{prop:iso_on_K_theory}
The morphisms 
\begin{align*}
\widetilde{\on{Gr}}^{(\omega_n,\ul{\la})} &\rightarrow \widetilde{\mathcal{N}}^{\leq n}_{'\ul{k}}/\on{GL}_{N+n}, & \operatorname{St}^{(\omega_n,\underline{\la}^1),(\omega_n,\underline{\la}^2)}_{\operatorname{GL}_n}  &\rightarrow \operatorname{St}^{\leq n}_{'\ul{k}^1,'\ul{k}^2}/\on{GL}_{N+n}
\end{align*}
induce isomorphisms:
\begin{equation}\label{iso_springer_K_theory}
K^{\operatorname{GL}_{N+n} \times \bG_m}(\widetilde{\mathcal{N}}^{\leq n}_{'\ul{k}}) \iso   K^{\operatorname{GL}_n(\cO) \rtimes \bG_m}(\widetilde{\on{Gr}}^{(\omega_n,\ula)}),
\end{equation}
\begin{equation}\label{iso_steinberg_K_theory}
K^{\operatorname{GL}_{N+n} \times \bG_m}(\operatorname{St}^{\leq n}_{'\ul{k}^1,'\ul{k}^2}) \iso K^{\operatorname{GL}_n(\cO) \rtimes \bG_m}(\operatorname{St}^{(\omega_n,\underline{\la}^1),(\omega_n,\underline{\la}^2)}).
\end{equation}
\end{prop}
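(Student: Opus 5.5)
We plan to reduce both isomorphisms to a statement about torsors and stacks.

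\emph{Step 1: pass to quotient stacks.} By the Cartesian diagrams \eqref{eq:cartesian_diagr_stacks} applied to $'\ula$ and $N+n$, the map $\widetilde{\on{Gr}}^{(\omega_n,\ula)} \to \widetilde{\mathcal{N}}^{\leq n}_{'\ul{k}}/\GL_{N+n}$ is the base change of $\mathbb{X}_{N+n} \to \mathfrak{gl}^{\leq n}_{N+n}/\GL_{N+n}$ along the projection from $\widetilde{\mathcal{N}}$, and the same holds for the Steinberg version. The first reduction is to replace $\GL_n(\cO)\rtimes\bG_m$-equivariant K-theory of $\widetilde{\on{Gr}}^{(\omega_n,\ula)}$ by $\GL_n(\cO)\rtimes \bG_m$-equivariant K-theory of the $\GL_{N+n}$-torsor $\widetilde{\on{Gr}}^{(\omega_n,\ula)}\times_{\mathbb{X}_{N+n}}\mathbb{Y}_{N+n}$, made also $\GL_{N+n}$-equivariant. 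We view this space as $\widetilde{\mathcal{N}}^{\leq n}_{'\ul{k}} \times_{\mathfrak{gl}^{\leq n}_{N+n}} \mathbb{Y}_{N+n}$. The key observation is that the image of $\mathbb{Y}_{N+n}$ restricted to the image of $\widetilde{\on{Gr}}^{(\omega_n,\ula)}$ is exactly the locus where the lattice $L$ satisfies $L \subset zL_0$, i.e.\ $\mathbb{X}_N \subset \mathbb{X}_{N+n}$ via \eqref{eq:emb_X_N_shift}. By the proof of Lemma~\ref{lem_com_our_morphisms}, this corresponds to operators with $n$-dimensional kernel, i.e.\ $\mathfrak{gl}^n_{N+n}$. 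So it suffices to compare $\mathbb{X}_N$ with $\mathfrak{gl}^n_{N+n}/\GL_{N+n}$, via the map $L \mapsto (z \curvearrowright L_0/zL)$.

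\emph{Step 2: the group action.} The group $\GL_n(\cO)$ acts on $\mathbb{X}_N$ through its finite quotients $\GL_n(\cO/z^{s})$ for $s\ge N+1$. On the other side, $\mathfrak{gl}^n_{N+n}$ is a union of $\GL_{N+n}$-orbits of operators, and the points of $\mathbb{X}_N$ correspond to operators $x$ on $L_0/zL$ together with an identification of $\operatorname{coker}$-data. Concretely, a $\Bbbk[z]$-module $M = L_0/zL$ of length $N+n$, generated by $n$ elements with $\dim \ker z = n$, is the same as a $\GL_n(\cO)$-orbit datum: choosing the surjection $L_0 \to M$ is a torsor for the automorphisms of $\cO^n$ mapping onto $\operatorname{Aut}_{\Bbbk[z]}(M)$. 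We then aim to prove that
\[
\mathbb{X}_N/(\GL_n(\cO)\rtimes\bG_m) \;\longrightarrow\; \mathfrak{gl}^n_{N+n}/(\GL_{N+n}\times\bG_m)
\]
is a fibration with fibers that are quotients of affine spaces by unipotent groups. The condition $\dim\ker x = n$, which is precisely what the shift by $\omega_n$ guarantees, is what makes the surjection $\cO^n \to \Bbbk^{N+n}$ of $\Bbbk[z]$-modules exist for every point. This is why the unmodified version fails, since there $\mathfrak{gl}^{\leq n}_N$ contains operators with smaller kernel. A cleaner way to phrase this is to show that the morphism is smooth with affine-space fibers. Smoothness is given by Corollary~\ref{cor:smooth_morph}, and the fibers can be computed on the slices $\mathbb{M}^\mu$ using Proposition~\ref{prop:MV_iso_CK} and Lemma~\ref{lem:iso_ind_X_mu_gr}.

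\emph{Step 3: K-theory.} Given such an affine-fibration-type statement, Thomason's homotopy invariance gives an isomorphism of equivariant K-theory for the base stacks. By base change along the Cartesian squares, together with the fact that pullbacks are well defined by smoothness, this gives an isomorphism for $\widetilde{\mathcal{N}}$ and $\St$. Here we use that the derived Steinberg fiber product over $\mathfrak{gl}^{\leq n}$ versus $\mathfrak{gl}^n$ does not affect K-theory, since the classical loci agree by Remark~\ref{rem:st conv map factors through n}. To pass through the base change we can use a stratification by $\GL_{N+n}$-orbits (Jordan types) in $\mathfrak{gl}^n_{N+n}$ and the localization sequence, reducing to one orbit at a time. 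On a single orbit $O_x$, the K-theory is that of $\operatorname{Stab}(x)\times\bG_m$ acting on the fiber, and the corresponding $\GL_n(\cO)$-orbit in $\mathbb{X}_N$ has stabilizer surjecting onto $\operatorname{Aut}(M)=\operatorname{Stab}(x)$ with pro-unipotent kernel.

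The main obstacle is this stabilizer comparison. Surjectivity uses that $\cO^n \to M$ is a projective cover precisely when $\dim\ker x=n$, together with the compatibility with the stratifications needed to run the localization argument, in particular the exactness of the resulting long exact sequences. For the exactness, we would try to use cellularity or the Künneth-type freeness of the K-theory of each stratum.
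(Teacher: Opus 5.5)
Your final argument is exactly the paper's proof: stratify by nilpotent orbits (equivalently, by $\GL_n(\cO)$-orbits of points $z^\mu$ with all $a_i \geq 1$), reduce on each stratum to equivariant $K$-theory of the common fiber, and compare stabilizers. The paper computes both reductive parts directly as $\prod_i \GL_{r_i}\times\bG_m$, while you get the same conclusion from the surjection $\Stab(L')\to\Aut_{\cO}(M)$, whose kernel is pro-unipotent because $\cO^n\to M$ is a projective cover exactly when $\dim\ker x=n$.

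Two points in your write-up need no further work. The exactness you flag as the main obstacle is not one: the stabilizer comparison gives isomorphisms on all higher equivariant $K$-groups of each stratum, so the five lemma on the long exact localization sequences suffices, with no cellularity needed. Also, the global ``affine fibration'' framing in Steps 1--2 is not needed; there the fibers are really classifying stacks of pro-unipotent groups rather than affine spaces.
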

\begin{proof}
We prove \eqref{iso_springer_K_theory}. The proof of \eqref{iso_steinberg_K_theory} is the same.

Consider the stratification of $\widetilde{\mathcal{N}}^{\leq n}_{'\ul{k}}$ by preimages of nilpotent orbits and the stratification of $\widetilde{\on{Gr}}^{(\omega_n,\ula)}$ by the preimages of $\on{GL}_n(\mathcal{O})$-orbits. The morphism (\ref{iso_springer_K_theory}) is compatible with filtrations induced by these stratifications, so it is sufficient to check that it becomes an isomorphism after passing to the associated graded. Recall also that the morphism (\ref{iso_springer_K_theory}) is given by the pullback:
\begin{equation*}
K^{\operatorname{GL}_{N+n} \times \bG_m}(\widetilde{\mathcal{N}}^{\leq n}_{'\ul{k}}) \rightarrow K^{(\operatorname{GL}_n(\cO) \rtimes \bG_m) \times \on{GL}_{N+n}}(\widetilde{\on{Gr}}^{(\omega_n,\ula)} \times_{\mathbb{X}_{N+n}} \mathbb{Y}_{N+n}).
\end{equation*}

Pick $\mu=(a_1,\ldots,a_n) \in \mathbb{Z}_{\geqslant 0}^{n}$ such that $\sum_{i} a_i=N+n$. It defines the  point $z^\mu \in \mathbb{X}_{N+n}$. Recall that $i_\mu(z^{\mu}) \in \mathbb{Y}_{N+n}$ is the natural lift of $z^\mu$ (see Section \ref{sec:slices in aff Gr and MV iso} above for the definition of $i_\mu$).
Let $e_{\mu} \in \mathfrak{gl}^{\leq n}_{N + n}$ be the image of $i_\mu(z^{\mu})$ under the map (\ref{eq:morphism Y to gl}). Note that the fibers of $\widetilde{\mathcal{N}}^{\leq n}_{'\ul{k}} \rightarrow \mathfrak{gl}^{\leq n}_{N+n}$ and $\widetilde{\on{Gr}}^{(\omega_n,\ula)} \times_{\mathbb{X}_{N+n}} \mathbb{Y}_{N+n} \rightarrow \mathbb{Y}_{N+n}$ over $e_{\mu}$ and $i_\mu(z^\mu)$ respectively are canonically isomorphic, see~\eqref{eq:cartesian_diagr_Y_to_X}.

So, it remains to prove that the natural morphism 
\begin{equation*}
\on{Stab}_{(\operatorname{GL}_n(\cO) \rtimes \bG_m) \times \on{GL}_{N+n}}(i_\mu(z^{\mu}))^{\mathrm{red}} \rightarrow \on{Stab}_{\on{GL}_{N+n} \times \bG_m}(e_{\mu})^{\mathrm{red}}
\end{equation*}
induced by the map $\mathbb{Y}_{N+n} \rightarrow \mathfrak{gl}^{\leq n}_{N+n}$ is an isomorphism, where by $\bullet^{\mathrm{red}}$ we mean the reductive part of an algebraic group. 

The point $i_\mu(z^{\mu})$ consists of a lattice $L_\mu \subset L_0$ together with a fixed choice of the basis in $L_0/L_\mu$. We may assume that the fiber of $\widetilde{\operatorname{Gr}}^{(\omega_n,\underline{\lambda})} \rightarrow \mathbb{X}_{N+n}$ over $z^\mu$ is nonempty, so $L_\mu \subset zL_0$, i.e., $a_i \geqslant 1$ for any $i$.

The reductive part of the stabilizer of $i_\mu(z^\mu)$ is isomorphic to the product $\prod_{i}\on{GL}_{r_i} \times \bG_m$, where $r_i$ is the number of times $i$ appears in $\mu$.


The nilpotent  $e_\mu$ corresponds to the action of $z$ on $L_0/L$ so it has $n$ Jordan blocks of sizes $a_1, a_2, \ldots, a_n$. Using that all $a_i$ are positive we see that the reductive part of the centralizer is again $\prod_{i} \on{GL}_{r_i} \times \bG_m$.
\end{proof}

\begin{cor} \label{cor: Psi define iso on K-theory}
The morphisms 
\begin{align*}
\widetilde{\on{Gr}}^{\ul{\la}} &\rightarrow \widetilde{\mathcal{N}}^{\leq n}_{'\ul{k}}/\on{GL}_{N+n}, &\operatorname{St}^{\underline{\la}^1,\underline{\la}^2}_{\operatorname{GL}_n}  &\rightarrow \operatorname{St}^{\leq n}_{'\ul{k}^1,'\ul{k}^2}/\on{GL}_{N+n}
\end{align*}
induce isomorphisms:
\begin{align*}
K^{\operatorname{GL}_{N+n} \times \bG_m}(\widetilde{\mathcal{N}}^{\leq n}_{'\ul{k}}) &\iso   K^{\operatorname{GL}_n(\cO) \rtimes \bG_m}(\widetilde{\on{Gr}}^{\ula}), & K^{\operatorname{GL}_{N+n} \times \bG_m}(\operatorname{St}^{\leq n}_{'\ul{k}^1,'\ul{k}^2}) &\iso K^{\operatorname{GL}_n(\cO) \rtimes \bG_m}(\operatorname{St}^{\underline{\la}^1,\underline{\la}^2}_{\operatorname{GL}_n}).
\end{align*}
\end{cor}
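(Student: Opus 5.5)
The plan is to reduce the corollary to Proposition~\ref{prop:iso_on_K_theory} using the shift isomorphisms of Lemma~\ref{lem:iso_shifts_gr_st}. (We work with $d=1$ as in the text; general $d$ follows by iterating the shift $d$ times.) The modified Lusztig morphism $\widetilde{\on{Gr}}^{\ula} \to \widetilde{\mathcal{N}}^{\leq n}_{'\ukey}/\GL_{N+n}$ is, by construction in Section~\ref{subsec: modified lusztig}, the composite of the isomorphism \eqref{eq:iso_gr_lambda_shift_by_omega_n}, $\widetilde{\on{Gr}}^{\ula}\iso \widetilde{\on{Gr}}^{(\om_n,\ula)}$, with the (unmodified) Lusztig morphism $\widetilde{\on{Gr}}^{(\om_n,\ula)} \to \widetilde{\mathcal{N}}^{\leq n}_{'\ukey}/\GL_{N+n}$. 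Pullback in K-theory is contravariantly functorial, so the induced map on K-theory is the composite of \eqref{iso_springer_K_theory} with the pullback along \eqref{eq:iso_gr_lambda_shift_by_omega_n}.

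First, the pullback along \eqref{eq:iso_gr_lambda_shift_by_omega_n} is an isomorphism of $\GL_n(\cO)\rtimes\bG_m$-equivariant K-groups. The map $L_\bullet\mapsto zL_\bullet$ is an isomorphism of varieties, and it commutes with the $\GL_n(\cO)$-action since $z$ is central in the loop group. It is also $\bG_m$-equivariant up to a twist: loop rotation rescales $z$, so multiplication by $z$ intertwines the action with a character twist. One would either check directly that $zL$ is stable under rotation whenever $L$ is, or note that the twist changes the equivariant structure only by an automorphism of $K(\pt)$, which is still an isomorphism after matching conventions. Second, \eqref{iso_springer_K_theory} is an isomorphism by Proposition~\ref{prop:iso_on_K_theory}. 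Composing the two gives the first claimed isomorphism.

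For the Steinberg varieties the argument is the same, but Lemma~\ref{lem:iso_shifts_gr_st}(2) only asserts that \eqref{eq:iso_st_shift} is an isomorphism on classical loci. Since Section~\ref{subsec:iso_K_thery} works with classical loci in K-theory (the K-theory of a derived scheme equals that of its classical truncation), this suffices. By construction through \eqref{eq:corresp st base change shift}, the modified morphism factors as \eqref{eq:iso_st_shift} followed by the Lusztig morphism for $(\om_n,\ula^i)$. The second part then follows from \eqref{iso_steinberg_K_theory}.

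The only real subtlety is the $\bG_m$-equivariance of the shift isomorphism and its compatibility with the factorization of the modified correspondence at the level of stacks, in particular the identification $L_0/zL\simeq \Bbbk^{N+n}$ used in $'\mathbb{Y}_N$. I expect this bookkeeping, rather than any new geometric input, to be the main point to check.
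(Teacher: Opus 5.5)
Your proposal is correct and is essentially the paper's argument. The paper also factors the modified correspondence as the shift isomorphism of Lemma~\ref{lem:iso_shifts_gr_st} followed by the Lusztig morphism for $(\omega_n,\ula)$, or for $(\omega_n,\ula^1),(\omega_n,\ula^2)$, where an isomorphism on classical loci suffices for K-theory, and then applies Proposition~\ref{prop:iso_on_K_theory}. Your worry about a $\bG_m$-twist is unnecessary: loop rotation $\sigma_t$ gives $\sigma_t(zL)=tz\,\sigma_t(L)=z\,\sigma_t(L)$ as subspaces, so $L_\bullet\mapsto zL_\bullet$ is strictly $\GL_n(\cO)\rtimes\bG_m$-equivariant and its pullback is a $K^{\GL_n(\cO)\rtimes\bG_m}(\pt)$-linear isomorphism.
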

\begin{proof}
Follows from Proposition \ref{prop:iso_on_K_theory} and Lemma \ref{lem:iso_shifts_gr_st}.
\end{proof}

Let us finish this section by mentioning the geometric interpretation of the homomorphism defined in \cite[Section 10]{CK18b}. Let $\hat{\mathcal{H}}_N$ be the affine Hecke algebra for $\operatorname{GL}_N$ over the base ring $K^{\operatorname{GL}_n}(\operatorname{pt})$. In \cite{CK18b}, the authors defined a homomorphism of algebras
\begin{equation}\label{eq:homom sectio 10 CK}
\hat{\mathcal{H}}_N \rightarrow K^{\operatorname{GL}_n(\mathcal{O}) \rtimes \mathbb{G}_m}(\operatorname{St}^{\underline{\lambda},\underline{\lambda}}_{\operatorname{GL}_n}), \qquad \underline{\lambda}=(\underbrace{\omega_1,\ldots,\omega_1}_{N}) 
\end{equation}
and made three conjectures about it (two of them were proved in \cite{LTV}, namely, \cite[Conjectures 10.2 and 10.3]{CK18b}).

From the perspective of this paper, homomorphism (\ref{eq:homom sectio 10 CK}) should be  the pullback under the $(\operatorname{GL}_n(\mathcal{O}) \rtimes \mathbb{G}_m) \times \operatorname{GL}_N $-equivariant map:
\begin{equation}
\operatorname{St}^{\underline{\lambda},\underline{\lambda}}_{\operatorname{GL}_n} \times_{\mathbb{X}_N} {\mathbb{Y}}_N \rightarrow \operatorname{St}^{\leq n}_{\underline{k},\underline{k}} \hookrightarrow \operatorname{St}_{\underline{k},\underline{k}} 
\end{equation}
as above (note that the action of $\operatorname{GL}_n(\mathcal{O})$ on $\operatorname{St}_{\underline{k},\underline{k}}$ is trivial). Here $\underline{k}=(\underbrace{1,\ldots,1}_{N})$. A simple modification of the proof of Proposition \ref{prop:iso_on_K_theory} above implies that this pullback
 is surjective (as was conjectured in \cite[Section 10.1]{CK18b}). It would be interesting to see if this geometric realization helps  prove \cite[Conjecture 10.4]{CK18b} describing the kernel of (\ref{eq:homom sectio 10 CK}). 

\section{Noncommutative resolution of affine Schubert varieties in type A}\label{sec:noncomm_resol_conv_diagr}


\subsection{General setting and conjecture} \label{subsection: conjecture in any type}
Let $G$ be a connected reductive algebraic group, and let $\Gr = G(\cK) / G(\cO)$ be the affine Grassmannian of $G$. Let $\la^\svee_1, \hdots, \la^\svee_k$ be minuscule coweights, and let $\la^\svee = \la^\svee_1 + \hdots + \la^\svee_k$. Let $p\colon \Gr^{\la_1^\svee} \conv \hdots \conv \Gr^{\la_k^\svee} = \wti \Gr^{\underline{\la}^\svee} \rightarrow \ol \Gr^{\la^\svee}$ be the resolution of the affine Schubert variety by the convolution variety.

Let $\Gr^\om$ be the closed $G(\cO)$-orbit in $\ol \Gr^{\lambda^\svee}$, so $\om$ is a minuscule coweight (possibly 0). 
Consider the transversal slice $ \cW_\om^\lach$ in $\ol \Gr^\lach$ to $\Gr^\om$ at the point $t^\om$ (see \cite{KWWY14} for the definition). Let $\wti \cW_\om^\lach$ be its pre-image under $p$. Denote $p'\colon \wti \cW_\om^\lach \rightarrow  \cW_\om^\lach$ --- it is a conical symplectic resolution.

Recall the notion of \textit{relative tilting generator} from \cite[1.4.2]{BM13}. 
The morphism $p'$ admits an important tilting generator (the word ``relative'' may be omitted here, since $ \cW_\om^\lach$ is affine), related to quantizations in positive characteristics --- see a construction in \cite{Kal08} for general symplectic resolutions or in \cite{Web19} for Coulomb branches, which $ \cW^\lach_\om$ is in types ADE, due to \cite{BFN16}. Note that such a tilting generator is not unique, and below we mean {\it some} of these tiltings.

We propose the following
\begin{conj} \label{conjecture: tilting is restriction}
There is a $G(\cO) \rtimes \bG_m$-equivariant sheaf $\cE$ on $\wti \Gr^{\underline{\la}^\svee}$, whose restriction $\cE'$ to $\wti \cW_\om^\lach$ is the above mentioned tilting generator for $p'$.
\end{conj}

The main application of this conjecture is the following
\begin{cor} \label{if restriction of sheaf is tilting then it is tilting}
Suppose Conjecture \ref{conjecture: tilting is restriction} holds. Then $\cE$ is a relative tilting generator for $p$.
\end{cor}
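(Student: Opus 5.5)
We must show that $\cE$ is a relative tilting generator for $p\colon \wti\Gr^{\ula^\svee}\to\ol\Gr^{\lach}$. That is: (i) $\cE$ is a vector bundle with $R^{i}p_*\shhom(\cE,\cE)=0$ for $i>0$; (ii) $\cE$ generates $D^-\Coh(\wti\Gr^{\ula^\svee})$ relative to $p$, i.e. $\bR p_*\shhom(\cE,\cF)=0$ implies $\cF=0$. The plan is to reduce both conditions to the slice $\wti\cW^\lach_\om$, where they hold by hypothesis, using the contracting $\bG_m$-action together with $G(\cO)$-equivariance.

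The key geometric input is that the $G(\cO)$-translates of the slice cover a neighbourhood of the closed orbit, and that this neighbourhood is everything. Consider the action map $a\colon G(\cO)\times\cW^\lach_\om\to\ol\Gr^\lach$; more precisely, use a finite-dimensional quotient $G(\cO)\to G_j$ through which the action factors. By the standard properties of transversal slices (\cite{KWWY14}), $a$ is smooth onto an open subset $U\subset\ol\Gr^\lach$ containing $\Gr^\om$. Since $U$ is open, $G(\cO)$-stable, and contains the unique closed orbit in its closure, every orbit closure meets $U$, so $U=\ol\Gr^\lach$. Base change along $p$ then gives a smooth surjection $\tilde a\colon G(\cO)\times\wti\cW^\lach_\om\to\wti\Gr^{\ula^\svee}$. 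By equivariance of $\cE$, the pullback satisfies $\tilde a^*\cE\simeq \cO_{G_j}\boxtimes\cE'$.

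With this in hand, the two conditions follow. For (i), the question is local on the base, and flat base change along $a$ gives $a^*R^ip_*\shhom(\cE,\cE)\simeq\cO_{G_j}\boxtimes R^ip'_*\shhom(\cE',\cE')$. The right-hand side vanishes for $i>0$ because $\cE'$ is tilting. Since $a$ is faithfully flat onto its image, the vanishing descends. For (ii), suppose $\bR p_*\shhom(\cE,\cF)=0$. Pulling back along the smooth cover gives $\bR(\id\times p')_*\shhom(\cO\boxtimes\cE',\tilde a^*\cF)=0$. Restricting to fibres $\{g\}\times\wti\cW$, or arguing relatively over $G_j$ using that $\cE'$ is a relative generator compatible with flat base change (\cite[1.4]{BM13}), yields $\tilde a^*\cF=0$, hence $\cF=0$. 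One can alternatively avoid the cover and use the contracting torus: the support of a nonzero equivariant $\cF$ is $G(\cO)\rtimes\bG_m$-stable and closed, so it meets $p^{-1}(\Gr^\om)\subset\wti\cW$ up to translation. One then needs a Nakayama-type argument with the derived restriction to the slice, which is an exact base change by normality of the slice.

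The main obstacle is making the generation step rigorous. Smooth base change for $\bR p_*$ is fine, but one must check that "relative generator" is stable under smooth base change and descends along smooth surjections. I would handle this by the flat descent argument above, taken in the relative setting over $G_j$, rather than by restricting to fibres. The other point needing care is passing from the ind-group $G(\cO)$ to a finite-type quotient, which is harmless since everything is of finite type over $\ol\Gr^\lach$.
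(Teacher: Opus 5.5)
Your argument is correct, but it is organized differently from the paper's proof.

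\textbf{The paper's route.} The paper works Zariski-locally on the base. For $\om=0$ the slice $\cW^\lach_0$ is itself open in $\ol\Gr^\lach$. For general $\om$ the paper uses that $G\cdot\cW^\lach_\om\simeq G\times^{P_\om}\cW^\lach_\om$ is an open subscheme; this is where minusculity of $\om$ enters. It then asserts that the restriction of $\cE$ to $G\times^{P_\om}\wti\cW^\lach_\om$ is relative tilting. Finally it covers $\ol\Gr^\lach$ by $G(\cO)$-translates of this open set, using two facts: every $G(\cO)$-orbit meets the slice, and being a relative tilting generator is local on the base.

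\textbf{Your route.} You use one smooth surjection $\tilde a\colon G_j\times\wti\cW^\lach_\om\to\wti\Gr^{\ula^\svee}$. You descend the $\Ext$-vanishing and the generation property along it by flat base change and faithfully flat descent.

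\textbf{What each approach buys.} Your approach is uniform in $\om$. It also treats explicitly a step the paper leaves implicit: passing from $\wti\cW^\lach_\om$ to $G\times^{P_\om}\wti\cW^\lach_\om$ is itself a descent along a $P_\om$-torsor, of exactly the kind you carry out.

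The cost is the smoothness of the action map $a$, which you only attribute to general properties of slices. The cleanest justification is precisely the paper's openness of $G\cdot\cW^\lach_\om$, i.e.\ openness of $G[t^{-1}]t^\om$ for minuscule $\om$. This lets you factor $a$ as a $P_\om$-torsor $G_j\times\cW^\lach_\om\to G_j\times^{P_\om}\cW^\lach_\om$ followed by a smooth map. After that, your observation that the image is open, $G(\cO)$-stable and contains the closed orbit gives surjectivity.

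\textbf{Remaining points.} Your generation step is fine as written. Either the fibrewise derived Nakayama argument over $G_j$ works, or you can use that $\cO_{G_j}\boxtimes\cE'$ generates because $G_j$ is affine. The contracting loop-rotation action is not needed in either route, so your alternative support-based sketch can be dropped. Finally, local freeness of $\cE$, which you list in (i) but do not prove, follows by the same descent from $\tilde a^*\cE\simeq\cO_{G_j}\boxtimes\cE'$.
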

\begin{proof}
For simplicity, we first consider the case $\om = 0$. The property of being relative tilting generator is local over the base, so it is sufficient to show it for an open cover. By assumption, it holds for $ \cW^\lach_0 \subset \ol \Gr^\lach$, and hence by equivariance, it holds for $g  \cW^\lach_0 \subset \ol \Gr^\lach$ for any $g \in G(\cO)$. It remains to note that these open subschemes form a cover, since any $G(\cO)$-orbit in $\ol \Gr^\lach$ intersects $\cW^\lach_0$ nontrivially.

Now consider the case of arbitrary $\om$. Let $P_\om = \Stab_G(t^\om) \subset G$ be a parabolic subgroup. Consider $G. \cW^\lach_\om = G \times^{P_\om} \cW^\lach_\om$ --- an open subscheme of $\ov \Gr^\lach$, as well as $G. \wti \cW^\lach_\om = G \times^{P_\om} \wti \cW^\lach_\om$ --- an open subscheme of $ \wti \Gr^{\underline{\la}^\svee}$. Since $\cE'$ is a relative tilting generator for $p': \wti \cW^\lach_\om \rightarrow \cW^\lach_\om$, it follows that the restriction of $\cE$ to $G. \wti \cW^\lach_\om$ is a relative tilting for $G \times^{P_\om} \wti \cW^\lach_\om \rightarrow G \times^{P_\om} \cW^\lach_\om$. Now the result follows by the same argument as for the $\om = 0$ case, using the open cover $\{ g G. \cW^\lach_\om | g \in G(\cO) \}$.
\end{proof}

Thus, Conjecture \ref{conjecture: tilting is restriction} can be used to define 
perversely-exotic t-structure on the category 
$D^b(\Coh^{G(\cO) \rtimes \bG_m} \wti \Gr^{\underline{\la}^\svee})$, similarly to \cite{BM13} for the Springer resolution.

\begin{rem}
Note that the property of a coherent sheaf on $\wti \cW^{\ula^\svee}_\om$ being the restriction of an equivariant sheaf on $\wti \Gr^{\ula^\svee}$ is equivalent to being equivariant under an action of a certain Lie groupoid, as in the proof of~\cite[Proposition~5.4]{Dum25}.
\end{rem}

Below we prove this conjecture for $G = \GL_n$, using the results of Section~\ref{sec: modified Lusztig's correspondence}, relating the geometry of affine Grassmannian to the geometry of nilpotent cone. 
This approach also establishes compatibility of t-structures and corresponding canonical bases under this relation; we elaborate it in Subsections \ref{subsec: titlings and simples},~\ref{subsec: description of basis for schuberts}.

\subsection{Proof in type A by relating to nilpotent orbits} \label{subsection: construction of tilting in type A}

Let $\Gr = \Gr_{\GL_n}$. It is acted by the group $\GL_n(\cO) \rtimes \bG_m$. 

Recall the notations of Section~\ref{sec: modified Lusztig's correspondence}. Let $\ula = (\la_1, \hdots, \la_m)$ be a tuple of minuscule (i.e., fundamental) coweights of $\GL_n$ and $\la = \la_1 + \hdots + \la_m$. Let $1 \leq k_i \leq n$ be such that $\la_i = \om_{k_i}$.
Let $p: \wti \Gr^{\ula} \rightarrow \ol \Gr^\la$ be the resolution of singularities.

As in Section~\ref{subsec: Lusztig's correspondences}, we consider the partial Springer resolution $s: \wti{\cN}_{\ukey} \rightarrow \cN_{\ukey}$ for the group $\GL_{N}$, $N = \sum k_i$.

In the notation of Section~\ref{sec: modified Lusztig's correspondence}, denote $\wti \cM_{\ukey}^{\ula} = \widetilde{\operatorname{Gr}}^{\ul{\la}} \times_{\mathbb{X}_{N}} \mathbb{Y}_{N}$, and let $\cM_{\ukey}^{\la} = \ol \Gr^\la  \times_{\bX_N} \bY_N$.
These spaces are endowed with a $\GL_{N} \times \GL_n(\cO)\rtimes \bG_m$-action, and we have the following diagram, see~\eqref{eq:lus_corresp} (compare with \cite[(4.3)]{FF21})
\begin{equation} \label{convolution space M}
\begin{tikzcd}
	{ \widetilde{\mathcal N}_{\ukey}} & {\widetilde{\mathcal M}^{\ula}_{\ukey}} & { \widetilde{\Gr}^{ \underline \lambda}} \\
	{ {\mathcal N}_{\ukey}} & {{\mathcal M}^{\la}_{\ukey}} & {\overline{\Gr}^{\lambda}}
	\arrow["s", from=1-1, to=2-1]
	\arrow["{\widetilde \psi_2}"', from=1-2, to=1-1]
	\arrow["{\widetilde \psi_1}", from=1-2, to=1-3]
	\arrow["m", from=1-2, to=2-2]
	\arrow["p", from=1-3, to=2-3]
	\arrow["{\psi_2}"', from=2-2, to=2-1]
	\arrow["{\psi_1}", from=2-2, to=2-3]
\end{tikzcd}
\end{equation}
The morphisms $\psi_2, \wti \psi_2$ are flat due to Corollary~\ref{cor:smooth_morph}.
The morphisms $\psi_1, \wti \psi_1$ are principal $\GL_{N}$-bundles, hence $\psi_1^*, \wti \psi_1^*$ induce equivalences of triangulated categories
\begin{align*}
\psi_1^*\colon D^b (\Coh^{\GL_n(\cO) \rtimes \bG_m} (\ol \Gr^{\la})) &\xrightarrow{\sim}  D^b (\Coh^{\GL_{N} \times \GL_n(\cO) \rtimes \bG_m} (\cM^{\la}_{\ukey})); \\
\wti \psi_1^*\colon D^b (\Coh^{\GL_n(\cO) \rtimes \bG_m} (\wti \Gr^{\ula})) &\xrightarrow{\sim}  D^b (\Coh^{\GL_{N} \times \GL_n(\cO) \rtimes \bG_m} (\wti \cM^{\la}_{\ukey})).
\end{align*}
The inverse equivalences $(\psi_1^*)^{-1}$ and $(\wti \psi_1^*)^{-1}$ are equivariant descents along torsors.

We define the functors 
\begin{align*}
{\Psi} = (\psi_1^*)^{-1} \circ \psi_2^* :  D^b (\Coh^{\GL_{N} \times \bG_m} (\cN_\ukey)) &\rightarrow D^b (\Coh^{\GL_n(\cO) \rtimes \bG_m} (\ol \Gr^\la)) ; \\
\wti \Psi = (\wti \psi_1^*)^{-1} \circ \wti \psi_2^*:  D^b (\Coh^{\GL_{N } \times \bG_m} ( \wti \cN_\ukey)) &\rightarrow D^b (\Coh^{\GL_n(\cO) \rtimes \bG_m} (\wti \Gr^\ula)).
\end{align*}
Equivalently, $\Psi$ is the pullback under the flat morphism of quotient stacks 
\begin{equation*}
[{\GL_n(\cO) \rtimes \bG_m} \backslash \ol \Gr^{\la}] \rightarrow [{\GL_{N }} \times \bG_m \backslash \cN_{\ukey}]
\end{equation*}
and similarly for $\wti \Psi$.

Let $\mu$ be a dominant coweight of $\GL_n$ such that $\la \geq \mu$ w.r.t. the dominance order. We consider $\cW^\la_\mu$ --- the transversal slice to an orbit in the affine Grassmannian (in notations of Section~\ref{sec: modified Lusztig's correspondence}, it is the intersection of $\bX^\mu_N$ with $\ol \Gr^\la$), and $\wti \cW^\ula_\mu = p^{-1}(\cW^\la_\mu)$ --- its resolution. Recall that we denote  $p'\colon \wti \cW^\ula_\mu \rightarrow \cW^\ula_\mu$.

If $N \om_1 - \mu = \sum_i m_i \al_i$, we define a partition of $N$:
\[
\pi = (\pi_1, \hdots, \pi_n),
\]
where
\[
\pi_1 = m_{n - 1},~ \pi_2 = m_{n - 2} - m_{n - 1},~ \hdots,~ \pi_{n - 1} = m_{1} - m_{2},~ \pi_n = N - m_{1}.
\]

Mirkovi\'c--Vybornov \cite{MVK22}, in particular, constructed a slice to the nilpotent orbit, corresponding to the partition $\pi$, which we denote $\cT_\pi$. We denote by $\cT_\pi^{\ukey}$ its intersection with $\cN_{\ukey}$ (in notations of Section~\ref{sec: modified Lusztig's correspondence}, this is the intersection of $\mathbb M^\mu_N$ with $\cN_\ukey$). Summarizing Section~\ref{sec: modified Lusztig's correspondence}, we have the following commutative diagram: 
\begin{equation} \label{Mirkovic-Vybornov}
\begin{tikzcd}
	&& {\widetilde{\mathcal M}^\lambda_\ukey} \\
	{\widetilde{\mathcal N}_{\ukey}} & {\widetilde {\mathcal T}^{\ukey}_\pi} && {\widetilde{\mathcal{W}}^{\underline \lambda}_\mu} & {\widetilde{\mathrm{Gr}}^{\underline \lambda}} \\
	&& {{\mathcal M}^\lambda_{\ukey}} \\
	{{\mathcal N}_{\ukey}} & {{\mathcal T}^{\ukey}_\pi} && {{\mathcal{W}}^{\lambda}_\mu} & {\overline{\mathrm{Gr}}^{\lambda}}
	\arrow["{\widetilde \psi_2}"', from=1-3, to=2-1]
	\arrow["{\widetilde \psi_1}", from=1-3, to=2-5]
	\arrow["m"{pos=0.7}, from=1-3, to=3-3]
	\arrow["s"', from=2-1, to=4-1]
	\arrow["{\widetilde j_\pi}", hook', from=2-2, to=2-1]
	\arrow["{s'}"', from=2-2, to=4-2]
	\arrow["{\widetilde \theta}", from=2-4, to=1-3]
	\arrow[equals, from=2-4, to=2-2]
	\arrow["{\widetilde i_\mu}"', hook, from=2-4, to=2-5]
	\arrow["{p'}", from=2-4, to=4-4]
	\arrow["p", from=2-5, to=4-5]
	\arrow["{\psi_2}"'{pos=0.7}, from=3-3, to=4-1]
	\arrow["{\psi_1}"{pos=0.7}, from=3-3, to=4-5]
	\arrow["{j_\pi}", hook', from=4-2, to=4-1]
	\arrow[equals, from=4-2, to=4-4]
	\arrow["\theta", from=4-4, to=3-3]
	\arrow["{i_\mu}"', hook, from=4-4, to=4-5]
\end{tikzcd}
\end{equation}


Recall the Bezrukavnikov--Mirkovi\'c tilting generators, discussed in Section~\ref{sec: coherent sheaves on parabolic steinbergs}. 
The restriction of this tilting generator along any exact base change (see \cite[1.3]{BM13} for the definition) is still a tilting generator. It is shown in \cite[Lemma 1.3.1 iii)]{BM13} that $\wti \cT^{\ukey}_\pi$, being a normal slice to an orbit, has the property of exact base change. 

We denote the Bezrukavnikov--Mirkovi\'c (BM) tilting on $\wti \cN_{\ukey}$ by $\cE_{{\ukey}}$. Its restriction to $\wti \cT^{\ukey}_\pi = \wti \cW^\ula_\mu$ is also a tilting. 
Denote also $\cA_\ukey = \End(\cE_\ukey)$ --- the noncommutative (parabolic) Springer resolution (see Section~\ref{sec: coherent sheaves on parabolic steinbergs}), which we view as a sheaf of algebras on $\cN_\ukey$.

The following proposition proves Conjecture \ref{conjecture: tilting is restriction} for $\GL_n$.

\begin{prop} \label{psi of tilting restrictied to slice is tilting}
The restriction of the sheaf $\cE_{\ula} := \wti \Psi(\cE_{{\ukey}})$ from $\wti \Gr^\ula$ to $\wti \cW^\ula_\mu$ coincides with a BM tilting under the identification $\wti \cW^\ula_\mu = \wti \cT^{\ukey}_\pi$.
\end{prop}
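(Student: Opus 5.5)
The plan is a diagram chase in \eqref{Mirkovic-Vybornov}: $\wti\Psi$ is a pullback along a morphism of quotient stacks, so restricting $\wti\Psi(\cE_\ukey)$ to the slice is again a pullback, and I will show it is literally the restriction $\wti j_\pi^*\cE_\ukey$. Precisely, by definition $\wti\psi_1^*\wti\Psi(\cE_\ukey)\simeq \wti\psi_2^*\cE_\ukey$ as $\GL_N\times\GL_n(\cO)\rtimes\bG_m$-equivariant sheaves on $\wti\cM^\la_\ukey$.

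First, observe that $\wti i_\mu = \wti\psi_1\circ\wti\theta$. This holds because $\wti\theta$ is the base change of the section $\theta$ of the $\GL_N$-torsor $\psi_1$ over $\cW^\la_\mu\subset \bX^\mu_N$. That section is the splitting $i_\mu\colon\bX^\mu_N\hookrightarrow\bY_N$ from Section~\ref{sec:slices in aff Gr and MV iso}. Hence
\[
\wti i_\mu^*\wti\Psi(\cE_\ukey)\simeq \wti\theta^*\wti\psi_1^*\wti\Psi(\cE_\ukey)\simeq \wti\theta^*\wti\psi_2^*\cE_\ukey \simeq (\wti\psi_2\circ\wti\theta)^*\cE_\ukey .
\]
Here I forget equivariance, which is harmless since the tilting property is non-equivariant.

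Second, I identify $\wti\psi_2\circ\wti\theta$ with $\wti j_\pi$ under $\wti\cW^\ula_\mu=\wti\cT^\ukey_\pi$. On the base this is exactly Proposition~\ref{prop:MV_iso_CK}: the composite $\bX^\mu_N\to\bY_N\to\mathfrak{gl}_N$ is the Mirkovi\'c--Vybornov isomorphism onto $\bM^\mu_N$. Intersecting with $\ol\Gr^\la$, resp. $\cN_\ukey$, gives $\cW^\la_\mu\simeq\cT^\ukey_\pi$. On the resolutions, the Cartesian square \eqref{eq:cartesian_diagr_Y_to_X} identifies the fibre of $\wti\cM^\ula_\ukey\to\bY_N$ with that of $\wti\cN_\ukey\to\mathfrak{gl}_N$. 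Thus base-changing along $\theta$ yields $\wti\cW^\ula_\mu\simeq \wti\cN_\ukey\times_{\mathfrak{gl}_N}\bM^\mu_N=\wti\cT^\ukey_\pi$, and under this isomorphism the map to $\wti\cN_\ukey$ is the inclusion $\wti j_\pi$. This is precisely the identification used in the statement.

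Finally, I conclude $\wti i_\mu^*\cE_\ula\simeq \wti j_\pi^*\cE_\ukey$. Since $\wti\cT^\ukey_\pi$ is a normal slice, it is an exact base change by \cite[Lemma 1.3.1 iii)]{BM13}, so by Theorem~\ref{thm: BM, tiltings}(c) this restriction is the BM tilting on the slice. The only delicate point I expect is the second step: checking that derived and classical fibre products agree, again by exactness of the base change, and keeping the scheme-theoretic identifications of the slices compatible, including the equality of $\cW^\la_\mu$ with $\ol\Gr^\la\cap\bX^\mu_N$ as schemes. For this I rely on Proposition~\ref{prop:MV_iso_CK} and Corollary~\ref{cor:iso_pull_st}. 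The rest is formal.
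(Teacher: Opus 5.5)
Your proposal is correct and is essentially the paper's proof. Both arguments use that $\wti\theta$ is a section of the $\GL_N$-torsor $\wti\psi_1$ with $\wti\psi_1\circ\wti\theta=\wti i_\mu$, so that $\wti i_\mu^*\circ(\wti\psi_1^*)^{-1}\simeq\wti\theta^*$, and then read off $\wti\psi_2\circ\wti\theta=\wti j_\pi$ from the commutativity of \eqref{Mirkovic-Vybornov}; your closing appeal to the exact base change along the normal slice \cite[Lemma 1.3.1 iii)]{BM13} is the remark the paper makes just before the proposition to ensure that $\wti j_\pi^*\cE_{\ukey}$ is a BM tilting.
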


\begin{proof}
We show that $\wti i_\mu^*  \wti \Psi (\cE_{{\ukey}}) \simeq \wti j_\pi^* \cE_{{\ukey}}$. Unrevealing the definition of $\wti \Psi$, it is sufficient to verify that
\[
\wti i_\mu^* \circ (\wti \psi_1^*)^{-1} \circ  \wti \psi_2^*  \simeq \wti j_\pi^*.
\]
Recall that $\wti \theta$ is a section of a $\GL_N$-principal bundle $\wti \psi_1$, hence it trivializes this bundle over $\wti \cW^\ula_\mu$. Thus, $\wti i_\mu^* \circ (\wti \psi_1^*)^{-1}$ is a descent along the trivial torsor, and hence one has $\wti i_\mu^* \circ (\wti \psi_1^*)^{-1} = \wti \theta^*$. Thus, the required identity becomes
\[
\wti \theta^* \circ  \wti \psi_2^*  \simeq \wti j_\pi^*,
\]
which is evident from the commutativity of the diagram \eqref{Mirkovic-Vybornov}.
\end{proof}

Thus:
\begin{thm} \label{thm: noncommutative resolution of affine Schubert variety}
There is a $\GL_n(\cO) \rtimes \bG_m$-equivariant vector bundle $\cE_{\ula}$ on ${\wti \Gr^\ula}$, such that the functor $\cF \mapsto \bR p_* \bR\shhom(\cE_{\ula}, \cF)$ defines the derived equivalences
\begin{align*}
D^b(\Coh(\wti \Gr^\ula)) &\simeq D^b(\cA_\ula\mathrm{-mod});\\
D^b(\Coh^{\GL_n(\cO) \rtimes \bG_m} (\wti \Gr^\ula)) &\simeq D^b(\cA_{\ula}\mathrm{-mod}^{\GL_n(\cO) \rtimes \bG_m}),
\end{align*}
where $\cA_\ula = p_* \shhom(\cE_{\ula}, \cE_{\ula})$ is a coherent sheaf of algebras on $\ol \Gr^\la$.

Moreover, for any $\mu$, the restriction of $\cE_{\ula}$ to $\wti \cW^\ula_\mu$ defines a tilting generator for $\wti \cW^\ula_\mu$, which coincides with the Bezrukavnikov---Mirkovi\'c tilting generator under the Mirkovi\'c---Vybornov isomorphism.
\end{thm}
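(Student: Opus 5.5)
The plan is to take $\cE_\ula := \wti\Psi(\cE_\ukey)$ and deduce everything from Proposition~\ref{psi of tilting restrictied to slice is tilting} together with the argument of Corollary~\ref{if restriction of sheaf is tilting then it is tilting}.

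First, $\cE_\ula$ is a $\GL_n(\cO)\rtimes\bG_m$-equivariant vector bundle. Indeed, $\wti\psi_2^*\cE_\ukey$ is a $\GL_N\times\GL_n(\cO)\rtimes\bG_m$-equivariant vector bundle on $\wti\cM^\ula_\ukey$, since pullback of a locally free sheaf is locally free. Equivariant descent along the $\GL_N$-torsor $\wti\psi_1$ then gives a vector bundle on $\wti\Gr^\ula$. The last sentence of the theorem is precisely Proposition~\ref{psi of tilting restrictied to slice is tilting}. It combines with the fact, recalled before that proposition, that restricting the BM tilting generator to the normal slice $\wti\cT^\ukey_\pi$ is an exact base change \cite[Lemma 1.3.1 iii)]{BM13}, so $\wti j_\pi^*\cE_\ukey$ is a tilting generator for $s'$.

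Next, I will show that $\cE_\ula$ is a relative tilting generator for $p$. Being a relative tilting generator means two things: vanishing of $R^{>0}p_*\shhom(\cE_\ula,\cE_\ula)$, and the condition that $\bR p_*\bR\shhom(\cE_\ula,\cF)=0$ implies $\cF=0$. Both conditions are local on $\ol\Gr^\la$ and compatible with flat base change. So it suffices to find an open cover of $\ol\Gr^\la$ by opens on each of which the statement holds. For $\GL_n$, the closed $\GL_n(\cO)$-orbit in $\ol\Gr^\la$ is $\Gr^\om$ with $\om$ minuscule. I will run the argument of Corollary~\ref{if restriction of sheaf is tilting then it is tilting}, taking as input Proposition~\ref{psi of tilting restrictied to slice is tilting} for $\mu=\om$:
\begin{itemize}
\item $\cE_\ula|_{\wti\cW^\ula_\om}$ is tilting for the affine morphism $p'$;
\item hence the restriction to $G\times^{P_\om}\wti\cW^\ula_\om$ is relatively tilting;
\item by equivariance, the same holds on all $\GL_n(\cO)$-translates;
\item these translates cover $\ol\Gr^\la$, because every $\GL_n(\cO)$-orbit meets the slice $\cW^\la_\om$.
\end{itemize}
Alternatively, I can avoid the parabolic induction step and use, for each orbit $\Gr^\mu\subset\ol\Gr^\la$, the slice $\cW^\la_\mu$ directly, whose translates cover a neighborhood of $\Gr^\mu$. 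For non-dominant representatives $z^\mu$ with $a_i\ge 0$, one should pass to $\bX^\mu_N$ charts; by Lemma~\ref{lem_act_smooth} these give a smooth cover.

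Once $\cE_\ula$ is a relative tilting generator with $\cA_\ula=p_*\shhom(\cE_\ula,\cE_\ula)$, the non-equivariant equivalence $D^b\Coh(\wti\Gr^\ula)\simeq D^b(\cA_\ula\mods)$ follows from the standard result \cite[1.4.2]{BM13}. Since $\ol\Gr^\la$ is projective, coherence of $\cA_\ula$ is clear. The equivariant version follows formally, because the functor $\bR p_*\bR\shhom(\cE_\ula,-)$ is equivariant and the equivariant derived categories are obtained by descent, or via comodules over the group.

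The main obstacle I expect is the covering step. One must check carefully that translates of the chosen slice charts really cover $\ol\Gr^\la$ and that the equivariance for the non-reduced or pro-algebraic group $\GL_n(\cO)$ is legitimate, acting through a finite-dimensional quotient. One must also check that the identification $\wti\cW^\ula_\mu=\wti\cT^\ukey_\pi$ holds scheme-theoretically, with no derived corrections. The latter point is addressed by the remark that $\bX^\mu_N\subset\bX_N$ is an exact base change.
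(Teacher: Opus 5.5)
Your proposal is correct and follows the paper's proof: set $\cE_\ula=\wti\Psi(\cE_\ukey)$, get the slice statement from Proposition~\ref{psi of tilting restrictied to slice is tilting}, then deduce the relative tilting property, and hence both equivalences, from the covering argument of Corollary~\ref{if restriction of sheaf is tilting then it is tilting} applied at the closed orbit $\Gr^\om$. One small correction: coherence of $\cA_\ula$ follows from properness of $p$ (because $\wti\Gr^\ula$ is projective), not from projectivity of $\ol\Gr^\la$.
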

Here $\cA_{\ula}\mathrm{-mod}$ and $\cA_{\ula}\mathrm{-mod}^{\GL_n(\cO) \rtimes \bG_m}$ are the categories of coherent $\cA_\ula$-modules and equivariant coherent $\cA_{\ula}$-modules respectively.
\begin{proof}
We let $\cE_{\ula} = \wti \Psi( \cE_{\ukey})$. The second claim of the theorem follows from Proposition \ref{psi of tilting restrictied to slice is tilting}. The first claim then follows from Proposition \ref{if restriction of sheaf is tilting then it is tilting}.
\end{proof}


\begin{definition}
We call the sheaf of algebras $\cA_{\ula} = p_* \shhom(\cE_{\ula}, \cE_{\ula})$ on $\ol \Gr^\la$ the noncommutative resolution of the affine Schubert variety $\ol \Gr^\la$.
\end{definition}

Note that $\cA_\ula$ does depend on $\ula$ (not just on $\la$).

\subsection{Steinberg-type varieties} \label{subsec: tiltings for steinbergs}

Consider dominant coweights $\la^1, \la^2$. We say that $\la^1$ and $\la^2$ are {\it compatible} if $\la^1 = \sum_i \om_{k^1_i}$, $\la^2 = \sum_i \om_{k^2_i}$, and $\sum_i k^1_i = \sum_i k^2_i = N$. It is equivalent to the fact that under the Lusztig correspondence, the associated Springer resolutions are for the algebraic group $\GL_N$ (for the same~$N$).

In what follows, we study varieties $\wti \Gr^{\ula^1} \times_\Gr \wti \Gr^{\ula^2}$ with compatible $\la^1, \la^2$. 
More precisely, we take the derived product over the space $\bX_N$ (see Section~\ref{sec: modified Lusztig's correspondence}), which, as noted above, is the Beilinson--Drinfeld deformation of the Schubert variety $\ol \Gr^{N\om_1}$. The fact that we take the product over $\bX_N$ corresponds, under the Lusztig correspondence, to the fact that on the Springer side, we consider the derived product over $\g$ (and not over $\cN$). We abbreviate $\bX = \bX_N$.

Consider $\wti \Gr^{\ula^1}$ and $\wti \Gr^{\ula^2}$ for compatible $\la^1, \la^2$. Let $\cE_{\ula^1}$ and $\cE_{\ula^2}$ be the tilting generators on them, constructed above. Then the dual bundle $\cE_{\ula^2}^*$ is also a tilting generator of $\wti \Gr^{\ula^2}$, since this holds for the BM-tilting for $\wti \cT_\pi^\ukey$, see \cite{BM13} and Section~\ref{sec: coherent sheaves on parabolic steinbergs}. 
As in the Springer case in Section~\ref{sec: coherent sheaves on parabolic steinbergs}, we consider the sheaf $\cE_{\ula^1} \boxtimes \cE^*_{\ula^2}$ on $\wti \Gr^{\ula^1} \times^\bL_{\Gr} \wti \Gr^{\ula^2}$, which is a generator. 
We have the sheaf of dg-algebras 
$\cA_{\ula^1} \otimes^\bR_{\cO_\bX} \cA^{\op}_{\ula^2}$ 
on $\bX$, set-theoretically supported on $\ol \Gr^{N\om_1}$. 
Completely parallel to Corollary~\ref{cor: sheaves on steinberg equivalent to bimodules}, we derive from  Theorem~\ref{thm: noncommutative resolution of affine Schubert variety} that there is the derived equivalence
\begin{equation} \label{eq: derived equivalence for steinbergs}
D^b \Coh^{\GL_n(\cO) \rtimes \bG_m} ( \wti \Gr^{\ula^1} \times^\bL_{\bX} \wti \Gr^{\ula^2}) \simeq D^b(\cA_{\ula^1} \otimes^\bR_{\cO_\bX} \cA_{\ula^2}^\op\mods^{\GL_n(\cO) \rtimes \bG_m}). 
\end{equation}


\subsection{t-structures and simple objects} \label{subsec: titlings and simples}

Recall the definition of perversely-exotic t-structure on (partial) Springer resolutions, see Section~\ref{sec: coherent sheaves on parabolic steinbergs}.


We make a similar definition for the case of the resolution $\wti \Gr^\ula \rightarrow \ol \Gr^\la$.

\begin{definition}
perversely-exotic t-structure on the category $D^b(\Coh^{\GL_n(\cO) \rtimes \bG_m} (\wti \Gr^\ula))$ is defined as the image of the t-structure of perverse coherent $\cA_\ula$-modules under the equivalence $D^b(\Coh^{\GL_n(\cO) \rtimes \bG_m} (\wti \Gr^\ula)) \simeq D^b(\cA_\ula\mathrm{-mod}^{\GL_n(\cO) \rtimes \bG_m})$.
We denote its heart by $\cP_\exo^{\GL_n(\cO) \rtimes \bG_m}(\wti \Gr^\ula)$.

perversely-exotic t-structure on the category $D^b(\Coh^{\GL_n(\cO) \rtimes \bG_m} (\wti \Gr^{\ula^1} \times^\bL_{\bX} \wti \Gr^{\ula^2} ))$ is defined as the image of the t-structure of perverse coherent $\cA_{\ula^1} \otimes^\bR_{\cO_\bX} \cA_{\ula^2}^\op$-modules under the equivalence~\eqref{eq: derived equivalence for steinbergs}. 
We denote its heart by $\cP_\exo^{\GL_n(\cO) \rtimes \bG_m}(\wti \Gr^{\ula^1} \times^\bL_{\bX} \wti \Gr^{\ula^2})$.
\end{definition}

In the second part of this definition, we used Theorem~\ref{thm: perverse coherent t-structure for dg-algebras} about perverse coherent modules over dg-algebras.
Note also that the notion of coherent $\IC$-extension 
is well-defined in the case when there are finitely many group orbits, and the dimensions of adjacent orbits differ at least by 2 (\cite{Bez00, AB10}). These conditions are well known to hold for $\ol \Gr^\la$. Hence 
simple perversely-exotic sheaves on $\wti \Gr^\ula$ are parametrized by pairs $(\mu, V)$, where $\mu \leq \la$ dominant, and $V$ is an irreducible $(G(\cO) \rtimes \bG_m)_{t^\mu}$-equivariant $\left. \cA_\ula \right|_{t^\mu}$-module (here $(G(\cO) \rtimes \bG_m)_{t^\mu}$ is the stabilizer of $t^\mu \in \Gr$). 
Similarly for $\wti \Gr^{\ula^1} \times^\bL_{\bX} \wti \Gr^{\ula^2}$.

Note that $\wti \Psi$ is the composition of a flat pullback and an equivariant descent. Hence, it commutes with taking the $\shhom$-sheaf. Thus, we have $\wti \Psi(\cEnd \cE_{\ukey}) = \cEnd \cE_{\ula}$. Applying $\bR p_*$ and the flat base change \eqref{Mirkovic-Vybornov}, we obtain
\begin{equation*}
\Psi(\cA_{\ukey}) \simeq \cA_{\ula},
\end{equation*}
and hence the functor $\Psi$ induces the functor
\begin{equation*}
\Psi_\cA: D^b(\cA_{\ukey}\mods^{\GL_N \times \bG_m}) \rightarrow D^b(\cA_{\ula}\mods^{\GL_n(\cO) \rtimes \bG_m}),
\end{equation*}
such that we have $\Forg \circ \Psi_\cA \simeq \Psi \circ \Forg$ (here $\Forg$ stand for functors, forgetting the $\cA$-module structure on both sides). We have the following:
\begin{lem}
The following diagram commutes, with vertical functors being equivalences
\[\begin{tikzcd}[column sep = huge, row sep = large]
	{D^b(\mathrm{Coh}^{\GL_N \times \bG_m}(\widetilde{\mathcal N}_\ukey))} & {D^b(\mathrm{Coh}^{\GL_n(\mathcal O) \rtimes \bG_m}(\widetilde{\mathrm {Gr}}^{\underline \lambda}))} \\
	{D^b(\mathcal A_{\ukey}\mathrm{-mod}^{\GL_N \times \bG_m})} & {D^b(\mathcal A_{{\underline \lambda}}\mathrm{-mod}^{\GL_n(\mathcal O) \rtimes \bG_m})}
	\arrow["{\widetilde \Psi}", from=1-1, to=1-2]
	\arrow["{\bR s_* \bR\shhom( \cE_{\ukey}, -)}", "\cong"', from=1-1, to=2-1]
	\arrow["{\bR p_* \bR\shhom( \cE_{\ula}, -)}", "\cong"', from=1-2, to=2-2]
	\arrow["{\Psi_{\mathcal A}}", from=2-1, to=2-2]
\end{tikzcd}\]
\end{lem}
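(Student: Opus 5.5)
The vertical arrows are equivalences by earlier results. The left one is Theorem~\ref{thm: BM, tiltings} in its parabolic form, i.e.\ the BM tilting $\cE_\ukey$ on $\wti\cN_\ukey$, with $s$ replacing the global $\bR\Hom$ since $\cN_\ukey$ is affine. The right one is Theorem~\ref{thm: noncommutative resolution of affine Schubert variety}. So the only content is a natural isomorphism
\[
\bR p_*\bR\shhom(\cE_\ula,\wti\Psi(\cF))\simeq \Psi_\cA\bigl(\bR s_*\bR\shhom(\cE_\ukey,\cF)\bigr)
\]
compatible with the $\cA_\ula$-actions. I would build this isomorphism in two steps, using $\wti\Psi=(\wti\psi_1^*)^{-1}\circ\wti\psi_2^*$ and $\Psi=(\psi_1^*)^{-1}\circ\psi_2^*$ from diagram~\eqref{convolution space M}.

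The first step compares the $\shhom$ sheaves. Both $\wti\psi_2^*$ (a flat pullback, by Corollary~\ref{cor:smooth_morph}) and the torsor descent $(\wti\psi_1^*)^{-1}$ are monoidal and commute with internal $\shhom$ when the source is a vector bundle. Hence
\[
\wti\Psi\,\bR\shhom(\cE_\ukey,\cF)\simeq \bR\shhom(\cE_\ula,\wti\Psi\cF),
\]
functorially in $\cF$ and compatibly with the actions of $\cEnd\cE_\ukey$ and $\cEnd\cE_\ula$, exactly as in the remark preceding the lemma.

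The second step commutes the functors with pushforward. I would show $\bR p_*\circ\wti\Psi\simeq\Psi\circ\bR s_*$, via two base-change statements. The square formed by $s$, $m$, $\psi_2$, $\wti\psi_2$ is Cartesian by \eqref{eq:cartesian_diagr_Y_to_X}, so flat base change gives $\psi_2^*\bR s_*\simeq\bR m_*\wti\psi_2^*$. The square formed by $p$, $m$ and the $\GL_N$-torsors $\psi_1$, $\wti\psi_1$ is also Cartesian, so $\bR m_*\wti\psi_1^*\simeq\psi_1^*\bR p_*$, and therefore $\bR p_*(\wti\psi_1^*)^{-1}\simeq(\psi_1^*)^{-1}\bR m_*$. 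Composing these gives the desired isomorphism. Applied to $\bR\shhom(\cE_\ukey,\cF)$ and combined with the first step, it yields the natural isomorphism after applying $\Forg$.

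The main obstacle is checking that this isomorphism is one of $\cA_\ula$-modules, which is what it means to land in $\Psi_\cA$ rather than merely in $\Psi$ after forgetting. The base-change maps are lax monoidal: they are built from the units and counits of the pullback–pushforward adjunctions. So they intertwine the action maps $\bR s_*\cEnd\otimes\bR s_*\bR\shhom\to\bR s_*\bR\shhom$, and this transports the $\cA_\ukey$-action into the $\Psi(\cA_\ukey)\simeq\cA_\ula$-action. To make this rigorous I would work at the dg-enhanced level, regarding all functors as pullbacks and pushforwards along maps of quotient stacks, where base change is an isomorphism of lax monoidal functors. Equivariance then needs no separate argument: every map is $\GL_N\times\GL_n(\cO)\rtimes\bG_m$-equivariant, and descent along $\psi_1$ converts $\GL_N$-equivariance on $\cM$ into the required structure on $\ol\Gr^\la$.
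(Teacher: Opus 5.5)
Your proposal is correct and follows the paper's route. The paper's proof is the one-line appeal to flat base change in diagram \eqref{Mirkovic-Vybornov}, combined with the preceding remark that $\wti\Psi$ commutes with $\shhom$ (so $\Psi(\cA_\ukey)\simeq\cA_\ula$); your two steps and your monoidality check for the $\cA$-actions spell exactly this out, including the point, tacit in the paper as well, that the square with $s$ and $m$ is Cartesian, i.e.\ that $\cM^\la_\ukey$ is the preimage of $\cN_\ukey$ under $\bY_N\to\gl_N$, which follows from \eqref{eq:cartesian_diagr_Y_to_X} because scheme-theoretic images commute with flat base change.
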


\begin{proof}
The claim follows immediately from the flat base change, applied to \eqref{Mirkovic-Vybornov}.
\end{proof}

So, instead of working with $\wti \Psi$, we work with $\Psi_\cA$, since it is better suited for the perversely-exotic t-structures. We have the following

\begin{prop} \label{prop: psi intertwines ic extensions}
\begin{enumerate}[a)]

\item $\Psi_\cA$ is $t$-exact with respect to the perversely-exotic $t$-structures of both sides. Therefore, it induces an exact functor between abelian categories:
\[
\Psi_\cA: \cP_{\exo}^{\GL_N \times \bG_m}\left(\mathcal{N}_{\ukey} \right) \rightarrow \cP_\exo^{\GL_n(\cO) \rtimes \bG_m}(\wti \Gr^\ula).
\]

\item  $\Psi_\cA$ is compatible with the $\IC$-extensions, i.e. for any $\GL_N$-orbit $j\colon O \hookrightarrow \cN_{\ukey}$, let $j'\colon O' \hookrightarrow \ol \Gr^\la$ be the corresponding $\GL_n(\cO)$-orbit; then
\[
\Psi_\cA \circ  j_{!*} \simeq j'_{!*} \circ \Psi_\cA.
\]

\end{enumerate}
\end{prop}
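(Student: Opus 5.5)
We work throughout with $\Psi = (\psi_1^*)^{-1}\circ\psi_2^*$, which is the pullback along the flat morphism of quotient stacks $[\GL_n(\cO)\rtimes\bG_m\backslash \ol\Gr^\la]\to[\GL_N\times\bG_m\backslash\cN_\ukey]$; Corollary~\ref{cor:smooth_morph} shows this morphism is in fact smooth. Since $\Forg\circ\Psi_\cA\simeq\Psi\circ\Forg$, and the perversely-exotic t-structures are by definition pulled back along $\Forg$ from the perverse coherent t-structures on $D^b\Coh$ of the bases, part a) reduces to a statement about $\Psi$ itself. Namely, we must show that $\Psi$ is t-exact for the middle-perversity perverse coherent t-structures on $\cN_\ukey$ and on $\ol\Gr^\la$.

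To prove this t-exactness we use the pointwise description of perverse coherent sheaves from \cite{AB10}. An object $\cF$ lies in ${}^pD^{\le 0}$ if and only if, for every orbit $O$ with generic point $x_O$, the restriction $i_{x_O}^*\cF$ sits in degrees $\le -\tfrac12\dim O$. The condition for ${}^pD^{\ge 0}$ is the dual one with $i^!$, or equivalently it can be phrased through Grothendieck--Serre duality. Under the modified correspondence, each $\GL_n(\cO)$-orbit $O'=\Gr^\mu\cap\ol\Gr^\la$ is the preimage of the nilpotent orbit $O_\pi$. By the slice picture of diagram~\eqref{Mirkovic-Vybornov}, the slices are identified, $\cW^\la_\mu=\cT^\ukey_\pi$. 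Consequently the codimension of $O'$ in $\ol\Gr^\la$ equals the codimension of $O_\pi$ in $\cN_\ukey$. Smooth pullback preserves local rings up to a smooth extension, and it transforms $i^*$ and $i^!$ at generic points compatibly, with $i^!$ shifted by the relative dimension. Since the relative dimension is constant, the shifts cancel once we normalize by codimension. For this reason it is cleaner to write the middle perversity as $-\tfrac12\codim$ instead of $\tfrac12\dim$; the two differ by a global shift, and we apply the same global normalization on both sides. This gives t-exactness.

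For part b) we use the characterization of $j_{!*}$ from \cite{AB10}, equivalently as in the proof of Theorem~\ref{thm: perverse coherent t-structure for dg-algebras}. The object $j_{!*}\cG$ is the unique perverse extension $\cF$ of $\cG$ to $\ol O$ such that, on each smaller orbit $O_1\subset\ol O\setminus O$, the restriction $i_{x_{O_1}}^*\cF$ lies in degrees $\le -\tfrac12\dim O_1-1$ and $i_{x_{O_1}}^!\cF$ lies in degrees $\ge -\tfrac12\dim O_1+1$. These are exactly the strict versions of the estimates used in part a). Applying the same pointwise computation as above, $\Psi_\cA(j_{!*}\cG)$ is a perverse $\cA_\ula$-module. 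It restricts to $\Psi_\cA\cG$ on $O'$, because $\Psi$ commutes with restriction to open substacks by base change. It also satisfies the strict inequalities on the boundary orbits. By uniqueness, $\Psi_\cA(j_{!*}\cG)\simeq j'_{!*}\Psi_\cA(\cG)$.

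The main obstacle is the orbit bookkeeping. We need that the preimage of $O_\pi$ is a single $\GL_n(\cO)$-orbit, and that the orbit codimensions match, so that the perversities correspond. We obtain both from the slice isomorphism $\cW^\la_\mu\simeq\cT^\ukey_\pi$ together with the smoothness in Corollary~\ref{cor:smooth_morph}. A second point to verify is that the generic-point criteria of \cite{AB10} extend to $\cA$-modules; this holds because the t-structures are defined through $\Forg$.
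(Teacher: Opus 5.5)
Your proposal is correct and follows essentially the same route as the paper. The paper's proof just invokes \cite[Proposition~4.12]{FF21} for perverse coherent $\cO$-modules and says the $\cA$-module case is identical; that is exactly your reduction through $\Forg\circ\Psi_\cA\simeq\Psi\circ\Forg$, followed by the pointwise criteria with matching orbit codimensions and the strict-inequality characterization of $j_{!*}$.

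One correction to how you describe the mechanism. At generic points of orbits the smooth pullback shifts neither $i^*$ nor $i^!$: codimension is preserved, so the local homomorphism $\cO_y\to\cO_x$ is flat with $\fm_y\cO_x=\fm_x$. The only discrepancy is therefore the constant $\dim O'-\dim O$ in the perversity, and this is what your normalization by codimension absorbs. Note also that in your sign convention the codimension-normalized degree bound is $+\tfrac12\codim O$, not $-\tfrac12\codim O$.
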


\begin{proof}
These claims are proved for categories of perverse $\cO$-modules in \cite[Proposition~4.12]{FF21}. Proofs for the categories of $\cA$-modules are the same.
\end{proof}

\begin{thm} \label{thm: psi for resolutions maps simples to simples}
The functor $\wti \Psi$ maps simple perversely-exotic sheaves on $\wti \cN_\ukey$ to simple perversely-exotic sheaves on $\wti \Gr^\ula$.
\end{thm}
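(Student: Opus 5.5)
By the commutative square in the preceding Lemma, it suffices to show that $\Psi_\cA$ sends simple perverse coherent $\cA_\ukey$-modules to simple perverse coherent $\cA_\ula$-modules. The vertical equivalences there define the perversely-exotic t-structures on both sides, so they identify simple objects.

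By Theorem~\ref{thm: perverse coherent t-structure for dg-algebras}(2), every simple object on the Springer side has the form $j_{!*}(O, V)$. Here $O \subset \cN_\ukey$ is a $\GL_N$-orbit, and $V$ is an irreducible $(\GL_N\times\bG_m)_x$-equivariant module over $\cA_\ukey|_x$ for a point $x\in O$; by Remark~\ref{rem: simples are supported on zero coh}, this amounts to a simple object of $\cP^{\GL_N\times \bG_m}_\coh(\cA_\ukey|_O\mods)$. Proposition~\ref{prop: psi intertwines ic extensions}(b) gives
\[
\Psi_\cA(j_{!*}V) \simeq j'_{!*}\Psi_\cA(V).
\]
The problem therefore reduces to the restriction of $\Psi_\cA$ to a single orbit. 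One also has to handle orbits $O$ that do not meet the image, i.e.\ nilpotents with more than $n$ Jordan blocks. For these I would check that $\Psi$ kills them, or restrict the statement to $\wti\cN^{\le n}_\ukey$ as in Section~\ref{sec: modified Lusztig's correspondence}. Granting this, it remains to show that $\Psi_\cA(V)$ is a simple object of $\cP_\coh^{\GL_n(\cO)\rtimes\bG_m}(\cA_\ula|_{O'}\mods)$. Then $j'_{!*}$ of it is simple by Theorem~\ref{thm: perverse coherent t-structure for dg-algebras}(2).

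For this orbitwise step, pick $\mu$ with $t^\mu\in O'$ and $e_\mu\in O$ as in the proof of Proposition~\ref{prop:iso_on_K_theory}. Over a single orbit, an equivariant $\cA$-module is the same as a module over the fiber algebra at a point, equivariant for the stabilizer. Restricted to the slice point, the functor $\Psi_\cA$ becomes restriction of equivariance along the stabilizer map
\[
\Stab_{(\GL_n(\cO)\rtimes\bG_m)\times\GL_N}(i_\mu(z^\mu)) \to \Stab_{\GL_N\times\bG_m}(e_\mu).
\]
The fiber algebras agree: $\cA_\ula|_{t^\mu}\simeq\cA_\ukey|_{e_\mu}$, either by $\Psi(\cA_\ukey)\simeq\cA_\ula$ with base change, or directly by Proposition~\ref{psi of tilting restrictied to slice is tilting} and the identification $\cW^\la_\mu=\cT^\ukey_\pi$.

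The key input is that this stabilizer map induces an isomorphism on reductive parts, $\prod_i\GL_{r_i}\times\bG_m$. For the modified correspondence this was shown in Proposition~\ref{prop:iso_on_K_theory}; combined with Lemma~\ref{lem:iso_shifts_gr_st} it also covers the unmodified $\ula$. Irreducible equivariant modules over a finite-dimensional algebra are annihilated by the unipotent radical, because the action factors through $H^0$ and is semisimple along the unipotent part. Hence simplicity depends only on the reductive quotient, and restriction along this map preserves simplicity.

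The main obstacle is this last point. One must make precise that both the $\GL_N$-torsor descent and the restriction to the slice identify the orbit categories with equivariant modules over the fiber algebra. In addition, irreducibility of equivariant modules over $\cA|_x$ must be unaffected by passing from $\Stab$ to its reductive part. I would argue this by noting that a connected unipotent group acts trivially on any simple equivariant module, since its invariants form a nonzero submodule.
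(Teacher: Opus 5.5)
Your reduction matches the paper's: you pass to $\Psi_\cA$, reduce to a single orbit via compatibility with $\IC$-extensions, identify the fiber algebras at a point, and compare stabilizers. Your caveat about orbits with more than $n$ Jordan blocks is also fair. The gap is the key stabilizer claim.

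The functor $\wti\Psi$ in the theorem comes from the \emph{unmodified} correspondence $\wti\Gr^\ula\to\wti\cN^{\le n}_\ukey/\GL_N$. For it, the stabilizer map is \emph{not} an isomorphism on reductive parts whenever $\om_n$ does not occur in $\ula$. Take $z^\mu$ with $\mu=(a_1,\dots,a_n)$ and set $r_j=\#\{i: a_i=j\}$. The reductive part of its stabilizer in $\GL_n(\cO)\rtimes\bG_m$ is $\prod_{j\ge 0}\GL_{r_j}\times\bG_m$. By contrast, $e_\mu\in\mathfrak{gl}_N$ only has Jordan blocks of the positive sizes $a_i$, so its reductive centralizer is $\prod_{j\ge 1}\GL_{r_j}\times\bG_m$, and the factor $\GL_{r_0}$ is lost. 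For example, $n=2$, $\ula=(\om_1)$, $\mu=(1,0)$ gives $T\times\bG_m\to\GL_1\times\bG_m$.

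The proof of Proposition~\ref{prop:iso_on_K_theory} gets an isomorphism only because $\om_n$ occurs in $(\om_n,\ula)$. This forces $L\subset zL_0$, i.e.\ all $a_i\ge 1$. Lemma~\ref{lem:iso_shifts_gr_st} does not transfer this to $\wti\Psi$. It identifies the varieties $\wti\Gr^\ula\simeq\wti\Gr^{(\om_n,\ula)}$, but $\wti\Psi$ is the modified correspondence followed by pullback along $\wti\cN^{\le n}_{'\ukey}/\GL_{N+n}\to\wti\cN^{\le n}_\ukey/\GL_N$ (Lemma~\ref{lem_com_our_morphisms}). That map discards the centralizer factor attached to Jordan blocks of size one. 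As a sanity check: if your claim held, the argument of Proposition~\ref{prop:iso_on_K_theory} would make the $d=0$ map on K-theory an isomorphism, whereas the paper (following \cite{FF21}) says it is only injective.

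The statement survives, and the paper's proof uses exactly this, because the map on reductive parts is still surjective. In general the reductive part of $\Stab_p(\GL_N\times\bG_m)$ is a direct factor of that of $\Stab_{p'}(\GL_n(\cO)\rtimes\bG_m)$. Restricting equivariance along a surjective homomorphism preserves simplicity, since the invariant submodules do not change.

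In fact the whole stabilizer map is surjective, by the following argument.
\begin{itemize}
\item Any $\Bbbk[z]$-automorphism of $L_0/L_\mu$ lifts to $\Stab_{\GL_n(\cO)}(L_\mu)$. On the span of the $e_i$ with $a_i\ge 1$, any lift is invertible because the kernel of the lifting map lies in $z\cdot\End$ there. On the $e_i$ with $a_i=0$, extend by the identity.
\item Loop rotation covers the scaling $\bG_m$.
\end{itemize}
Using this lets you drop your unipotent-radical step, which is not sound as written. The unipotent radical $U$ acts on the fiber algebra $A$ by automorphisms. So for $v\in V^U$ you only get $u(av)=(u\cdot a)v$, and $V^U$ need not be an $A$-submodule.
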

\begin{proof}
In view of Proposition \ref{prop: psi intertwines ic extensions}, it is sufficient to show that restricted to a fixed orbit, $\Psi_\cA$ maps simples to simples. More formally, if $O$ is a $\GL_N$-orbit in $\cN_{\ukey}$, and $O'$ is the corresponding $\GL_n(\cO)$-orbit of $\ol \Gr^\la$, we need to check that 
\[
\Psi_\cA \vert_O: \cA_{\ukey}\vert_O\mods^{\GL_N \times \bG_m} \rightarrow \cA_{\ula}\vert_{O'}\mods^{\GL_n(\cO) \rtimes \bG_m}
\]
maps simple sheaves to simple. Since we work on group orbits now, it is sufficient to restrict to a point, what we now do.

Let $\Gr^\om$ be the closed orbit in $\ol \Gr^\la$. Take a closed point $p' \in O' \cap \cW^\la_\om$, which exists since $\cW^\la_\om$ intersects each orbit in $\ol \Gr^\la$, and let $p = \psi_2 \circ \theta (p') \in O$. We are interested in $\left. \cA_{\ula} \right|_{p'} = \left. \Psi (\cA_{\ukey})\right|_{p'}$. As in the proof of Proposition \ref{psi of tilting restrictied to slice is tilting}, we use that $i^*_\om \circ \Psi \simeq \theta^* \circ \psi_2^*$ (in Proposition~\ref{psi of tilting restrictied to slice is tilting} it was proved for functors on the top part of \eqref{Mirkovic-Vybornov}, the proof for the bottom part is the same). We get
\begin{equation} \label{eq: algebra at a point}
\left. \cA_{\ula} \right|_{p'} \simeq \left. \Psi (\cA_{\ukey})\right|_{p'} \simeq \left. \bigl( (\psi_2 \circ \theta)^* \cA_{\ukey} \bigr) \right|_{p'} \simeq \left. \cA_{\ukey} \right|_p.
\end{equation}

If $\om_n$ appears among $\om_{k_i}$, we showed during the proof of Proposition~\ref{prop:iso_on_K_theory} that 
the reductive parts of stabilizers $\Stab_{p'}(\GL_n(\cO) \rtimes \bG_m)$ and $\Stab_p(\GL_N \times \bG_m)$ are the same. In general, a similar easy computation shows that the reductive part of $\Stab_p(\GL_N \times \bG_m)$ is a direct factor in $\Stab_{p'}(\GL_N(\cO) \rtimes \bG_m)$ (see e.g. \cite[Section~5]{FF21}). Together with \eqref{eq: algebra at a point}, this  implies the result.
\end{proof}

Now let $\la^1$, $\la^2$ be compatible coweights and $\ula^1, \ula^2$ be any sequences of fundamental, defining the resolutions $\wti \Gr^{\ula^1} \rightarrow \ol \Gr^{\la^2}$, $\wti \Gr^{\ula^2} \rightarrow \ol \Gr^{\la^2}$. Let $\wti \cN_{\ukey^1} \rightarrow \cN_{\ukey^1}$ and $\wti \cN_{\ukey^2} \rightarrow \cN_{\ukey^2}$ the corresponding parabolic Springer resolutions for the Lie group $\GL_N$ ($N$ is the same since $\la^1$ and $\la^2$ are compatible). Then evidently, we have the functor 
\begin{equation*}
\wti \Psi_{\ula^1, \ula^2}: D^b(\Coh^{\GL_N \times \bG_m}(\wti \cN_{\ukey^1} \times^\bL_{\gl_N} \wti \cN_{\ukey^2})) \rightarrow D^b(\Coh^{\GL_n(\cO) \rtimes \bG_m}(\wti \Gr^{\ula^1} \times^\bL_{\bX} \wti \Gr^{\ula^2})),
\end{equation*}
defined as the product of $\wti \Psi_{\ula^1}$ and $\wti \Psi_{\ula^2}$. 

It immediately follows from the above that
\begin{equation*}
\Psi (\cA_{\ukey^1} \otimes^\bR_{\cO_{\gl_N}}  \cA_{\ukey^2}^{op} ) \simeq \cA_{\ula^1} \otimes^\bR_{\cO_{\bX}} \cA_{\ula^2}^{op},
\end{equation*}
and we have the following
\begin{thm} \label{thm: Psi for steinbergs}
The functor $\wti \Psi_{\ula^1, \ula^2}$ is t-exact w.r.t perversely-exotic t-structures on both sides.
It commutes with IC-extensions from orbits, and maps simple perversely-exotic sheaves to simple.
\end{thm}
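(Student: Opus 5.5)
The plan is to run the Steinberg analogue of the argument for Theorem~\ref{thm: psi for resolutions maps simples to simples}, working throughout with the algebraic incarnation of $\wti \Psi_{\ula^1, \ula^2}$.

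\emph{Step 1: reduction to dg-modules.} Write $\cA^{(2)}_\ukey := \cA_{\ukey^1} \otimes^\bR_{\cO_{\gl_N}} \cA_{\ukey^2}^{op}$ and $\cA^{(2)}_\ula := \cA_{\ula^1} \otimes^\bR_{\cO_{\bX}} \cA_{\ula^2}^{op}$. The functor $\Psi$ is the pullback along the flat morphism $\bX \to \gl_N/\GL_N$ followed by descent. Hence it commutes with derived tensor products and $\shhom$, which gives the isomorphism $\Psi(\cA^{(2)}_\ukey) \simeq \cA^{(2)}_\ula$ stated above. Consequently $\Psi$ lifts to a functor
\[
\Psi_{\cA^{(2)}} \colon D^b(\cA^{(2)}_\ukey\mods^{\GL_N \times \bG_m}) \to D^b(\cA^{(2)}_\ula\mods^{\GL_n(\cO) \rtimes \bG_m})
\]
with $\Forg \circ \Psi_{\cA^{(2)}} \simeq \Psi \circ \Forg$. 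Flat base change in the Steinberg version of \eqref{Mirkovic-Vybornov}, with tilting $\cE_{\ula^1} \boxtimes \cE_{\ula^2}^* = \wti \Psi_{\ula^1}(\cE_{\ukey^1}) \boxtimes \wti \Psi_{\ula^2}(\cE_{\ukey^2}^*)$, shows that $\Psi_{\cA^{(2)}}$ is intertwined with $\wti \Psi_{\ula^1, \ula^2}$ by the equivalences \eqref{eq: equivalence for noncommutative resolutions on parabolic steinberg} and \eqref{eq: derived equivalence for steinbergs}. It therefore suffices to prove all three claims for $\Psi_{\cA^{(2)}}$ and the perverse coherent t-structures of Theorem~\ref{thm: perverse coherent t-structure for dg-algebras}.

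\emph{Step 2: t-exactness and IC-extensions.} Both perverse t-structures are defined by applying $\Forg$ to the perverse coherent t-structures on $D^{b}_{\cN_N}(\gl_N)$ and $D^b_{\ol\Gr^{N\om_1}}(\bX)$. Because $\Forg$ commutes with $\Psi$, t-exactness reduces to t-exactness of the plain $\cO$-module functor $\Psi$ for supported sheaves. This is the content of \cite[Proposition~4.12]{FF21}. The inputs are that $\Psi$ is a smooth pullback followed by descent, and that the Lusztig correspondence matches $\GL_N$-orbits with $\GL_n(\cO)$-orbits while preserving codimensions (up to the shift fixed by the middle perversity). This gives part~a) of Proposition~\ref{prop: psi intertwines ic extensions} in the dg setting.

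Compatibility with $j_{!*}$ then follows as in \emph{loc.\ cit.} The IC-extension is characterized by perverse conditions on restrictions and corestrictions to smaller orbits, that is, by the vanishing of certain perverse cohomology on the boundary. Both $\Psi$ and these conditions commute with smooth pullback.

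\emph{Step 3: simples to simples.} By Theorem~\ref{thm: perverse coherent t-structure for dg-algebras}(2) and Step 2, it remains to check the claim on a single orbit. By Remark~\ref{rem: simples are supported on zero coh}, simples on an orbit are simple equivariant modules over $H^0(\cA^{(2)})$ at a point, so we restrict to points. Take $p' \in O' \cap \cW_\om$ and $p = \psi_2\theta(p')$. Exactly as in \eqref{eq: algebra at a point}, using $i_\om^* \Psi \simeq \theta^*\psi_2^*$, we obtain $\cA^{(2)}_\ula|_{p'} \simeq \cA^{(2)}_\ukey|_p$, and hence the same $H^0$.

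The stabilizer comparison from the proof of Theorem~\ref{thm: psi for resolutions maps simples to simples} then applies: the reductive part of $\Stab_p(\GL_N\times\bG_m)$ is a direct factor of that of $\Stab_{p'}$, with equality when $\om_n$ occurs. So restriction of equivariance matches simple equivariant modules on the two sides, and simples go to simples.

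I expect the main obstacle to be Step 1, in the dg and derived setting. One must check that the derived fiber product over $\bX$ is indeed the base change of the one over $\gl_N$. This uses the Cartesian squares \eqref{eq:cartesian_diagr_stacks}, the flatness of Corollary~\ref{cor:smooth_morph}, and Remark~\ref{rem:st conv map factors through n}. With that in place, the isomorphism $\Psi(\cA^{(2)}_\ukey)\simeq\cA^{(2)}_\ula$ holds as dg-algebras and not merely on cohomology.
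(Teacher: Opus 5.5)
Your proposal is correct and follows essentially the same route as the paper. The paper's proof says only that the argument is completely analogous to Proposition~\ref{prop: psi intertwines ic extensions} and Theorem~\ref{thm: psi for resolutions maps simples to simples}, with the dg-algebra $\cA_{\ula^1}\otimes^\bR_{\cO_\bX}\cA_{\ula^2}^{op}$ in place of $\cA_\ula$. Your Steps 1--3 spell out that reduction: lift $\Psi$ to dg-modules, get t-exactness and $\IC$-compatibility from \cite[Proposition~4.12]{FF21} via $\Forg$, and then compare the fibers of the algebra and the reductive stabilizers at a point of the slice.
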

\begin{proof}
Completely analogous to Proposition~\ref{prop: psi intertwines ic extensions} and Theorem~\ref{thm: psi for resolutions maps simples to simples}, working with the dg-algebra $\cA_{\ula^1} \otimes^\bR_{\cO_{\bX}} \cA_{\ula^2}^{op}$ instead of just $\cA_{\ula}$.
\end{proof}

Let us finish this section by mentioning that both functors $\widetilde{\Psi}, \widetilde{\Psi}_{\underline{\lambda}^1,\underline{\lambda}^2}$ as well as the versions $\Psi_\cA, \Psi_{\cA, \ula^1, \ula^2}$
actually factor through the quotient categories of equivariant complexes on the {\emph{open}} subsets $\widetilde{\mathcal N}_\ukey^{\leq n}$, $\wti \cN_{\ukey^1}^{\leq n} \times^\bL_{\gl_N} \wti \cN_{\ukey^2}^{\leq n}$. All of the structures we consider are compatible with taking these quotients. This will be important in the proof of Theorem~\ref{thm: exotic in convolution via Hecke}, as only after passing to these quotients the functors above induce isomorphisms on K-theory.

\subsection{Noncommutative resolutions for $\la$ and $\la + d\om_n$}\label{sec:tiltings_for_lambda_and_shift}
As in Section~\ref{subsec: modified lusztig}, we fix $d \in \bZ_{\geq 0}$ and consider
\begin{align*}
'\ul{\la}&=(\underbrace{\omega_n,\ldots,\omega_n}_d,\ul{\la}), & '\ul{k}&=(\underbrace{n,\ldots,n}_{d},\ul{k}), & '\la &= d \om_n + \la.
\end{align*}
We then have the modified Lusztig correspondence $\wti \Gr^{'\ula} \simeq \wti \Gr^{\ula} \rightarrow \left. \wti \cN_{'\ukey}^{\leq n} \right/ \GL_{N + dn}$ of Section~\ref{sec: modified Lusztig's correspondence}, and all the constructions of Sections~\ref{subsection: construction of tilting in type A},~\ref{subsec: tiltings for steinbergs},~\ref{subsec: titlings and simples} work equally well for arbitrary $d$ (not just $d = 0$), and all the statements remain true.

In this subsection, we prove that in fact these constructions are compatible, meaning that the noncommutative resolutions on $\wti \Gr^{'\ula} \simeq \wti \Gr^{\ula}$ coming from different $d$ are Morita-equivalent, hence t-structures 
discussed above are the same for all $d$. 

As in Section~\ref{subsec: modified lusztig}, we assume $d = 1$ to simplify notations, but all arguments are the same for arbitrary $d$. We state and prove the main result for Springer resolutions and provide Remark~\ref{rem: morita-equivalence for Steinbergs} about the corresponding Steinberg-type varieties. 
{The main result of this subsection is Proposition~\ref{prop: compatibility of t-structures for modified lusztigs}; we first need to establish some related general theory.}

First, let us recall the general construction \cite{Kal08} of tilting generators for conical symplectic resolutions $X \rightarrow Y$. One does the reduction to positive characteristic, considers a quantization of $X$ as an Azumaya algebra on the Frobenius twist of $X$, splits this Azumaya algebra on the formal fiber of the conical point of $Y$; spreading of this splitting to whole $X$ (using the conical action) is the desired tilting generator. Below we call it the ``tilting generator for $X$''. 

Now, for any reductive $G$, let $T^*\cB$  be the Springer resolution, and $T^*\cP$ be its parabolic version for some $B \subset P \subset G$. Let $S \subset \mathfrak{g}$ be an MV-slice to some nilpotent orbit (see  \cite[Definition~5.2]{WWY20}; Slodowy slices and the slices appearing in Section~\ref{sec: modified Lusztig's correspondence} are particular cases).
As in \cite{BM13}, we denote by $(T^*\cB)_S$ the base change along $S \rightarrow \g$, and similarly for other varieties and morphisms. Note that $S \rightarrow \g$ has the property of exact base change.

\begin{prop}\footnote{We learned this argument from Roman Bezrukavnikov. All possible mistakes are ours.} \label{prop: morita-equivalence on slices}
Let $\cE_\cP$ be the tilting generator for the conical symplectic resolution $T^* \cP$. Let $\cE_\cP'$ be the tilting generator for the conical symplectic resolution $(T^* \cP)_S$. Then the restriction of $\cEnd (\cE_\cP)$ to $(T^* \cP)_S$ is Morita-equivalent to $\cEnd(\cE_\cP')$.
\end{prop}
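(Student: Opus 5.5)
The plan is to compare the two tilting generators through the Kaledin construction itself. Both $\cE_\cP$ and $\cE_\cP'$ arise, after reduction to characteristic $p \gg 0$, from splittings of Frobenius-twisted quantizations. The key claim will be that the quantization of $(T^*\cP)_S$ is the restriction of the quantization of $T^*\cP$. Concretely, reduce to $\Bbbk$ of large characteristic and let $\cD$ be the sheaf of crystalline (twisted) differential operators on $\cP$, viewed as an Azumaya algebra on $(T^*\cP)^{(1)}$; this is the quantization used in \cite{BM13} and \cite{Kal08} for $T^*\cP$. Since $S \to \g$ is an MV-slice, its quantization is a finite $\cW$-algebra-type (quantum Hamiltonian) reduction; by \cite{WWY20} this is compatible with the slice. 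Hence $\cD_S := \cD|_{(T^*\cP)^{(1)}_S}$ is an Azumaya algebra on $(T^*\cP)^{(1)}_S$ quantizing the restricted symplectic structure, and it is the one used to build $\cE'_\cP$. (Up to the usual choice of twist, the Kaledin tilting for the slice is unique up to Morita equivalence for a fixed quantization and splitting locus, so any choice suffices.)

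Next, compare the splitting loci. The bundle $\cE_\cP$ comes from a splitting bundle $\cV$ of $\cD$ on the formal neighbourhood of the fibre over $0 \in \cN^{(1)}$, extended to all of $T^*\cP$ by the conical $\bG_m$-action. Similarly, $\cE'_\cP$ comes from a splitting $\cV'$ of $\cD_S$ on the formal neighbourhood of the fibre over the conical point $e \in S$, again spread by the contracting action on $S$. The central step is to show that the restriction $\cE_\cP|_{(T^*\cP)_S}$ also splits $\cD_S$ on the formal neighbourhood of the fibre over $e$, possibly twisted by a vector bundle. Then $\cEnd(\cE_\cP)|_S$ and $\cEnd(\cE'_\cP)$ are both endomorphism algebras of splitting modules of the same Azumaya algebra on that formal neighbourhood. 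Two splitting modules of one Azumaya algebra differ by tensoring with a vector bundle, so their endomorphism algebras are Morita equivalent there via $\cHom(\cV', \cV|_S)$. By $\bG_m$-equivariance and the Kaledin spreading argument, this bimodule extends to all of $(T^*\cP)_S$; it is tilting since both sides are tilting generators and exact base change applies.

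The main obstacle is that $\cE_\cP$ is a splitting only near the fibre over $0$, not near $e$. To address this, I would use the fact from \cite{BM13} that $\cD$ splits on the formal neighbourhood of $s^{-1}(\chi)$ for every $\chi$ in the Frobenius-twisted nilpotent cone, with splitting bundle obtained by localization of baby Verma/Harish-Chandra modules. I would then show that the extension of the $0$-splitting to the point $e$ is still a splitting: this follows since $\cE_\cP$ is a vector bundle realizing $\cD$ as $\cEnd$ over the whole $\bG_m$-saturated locus $\cN^{(1)}$. Failing that, the fallback is to show that the restriction of $\cE_\cP$ to the slice is a tilting generator inducing the same ``noncommutative counterpart'' category. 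One would then identify both equivalences with localization for the central reduction of $U(\g)$ at the Frobenius character $e$, which gives Morita equivalence directly. Finally, lift the characteristic-$p$ Morita equivalence to characteristic zero by the standard spreading-out argument of \cite{BM13}, using $\Ext$-vanishing.
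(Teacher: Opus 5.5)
Your strategy differs from the paper's: you compare splitting bundles directly on $T^*\cP$, whereas the paper reduces to the full flag variety. As written, it has a genuine gap exactly at the step you single out as the main obstacle.

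The entire content of the proposition is the following claim. On the formal neighbourhood of the fibre over $e$, $\cE_\cP|_{(T^*\cP)_S}$ has the same indecomposable summands as a splitting bundle of $\cD_S$. Your argument for this is that $\cE_\cP$ realizes $\cD$ as $\cEnd(\cE_\cP)$ over the whole preimage of $\cN^{(1)}$. That is false.

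\begin{itemize}
\item Kaledin's construction gives an isomorphism $\cD\simeq\cEnd(\cE_\cP)$ only on the formal neighbourhood of the zero fibre.
\item Spreading by $\bG_m$ extends the bundle, not this isomorphism.
\item For $e\neq 0$, that formal neighbourhood does not even meet $(T^*\cP)_S$.
\item In fact $\cD$ does not split globally. Already for $\cP=\cB=\bP^1$, over $(T^*\bA^1)^{(1)}$ it is the Weyl algebra, whose generic fibre is a division algebra of degree $p$.
\end{itemize}

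Knowing that $\cD$ splits on the formal neighbourhood of each fibre $s^{-1}(x)$ only produces \emph{some} splitting bundle $\cV_e$ at $e$. Relating $\cV_e$ to the restriction of the bundle built at $0$ is a genuine theorem. For the full flag variety it is precisely \cite[Lemma~4.7]{BL23}.

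Your fallback is to identify both sides with localization for the central reduction of $U(\g)$ at $e$. That is the right kind of idea for $\cB$. For a partial flag variety, however, it would need singular localization and a parabolic analogue of the Morita equivalence between the completed noncommutative resolution and the completed central reduction. You provide neither.

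The paper sidesteps this by treating \cite[Lemma~4.7]{BL23} as a black box for $\cB$ and transporting it along the correspondence of \cite[Section~4]{BM13}:
\begin{itemize}
\item Since $\cE_\cP=\pi_\star\cE_\cB$, flat base change gives $(\cE_\cP)_S\simeq(\pi_S)_\star(\cE_\cB)_S$.
\item By the full-flag lemma, this is equiconstituted with $(\pi_S)_\star\cE'_\cB$.
\item The only new input is $(\pi_S)_\star\cE'_\cB\simeq\cE'_\cP$. The paper obtains it by realizing the sliced diagram as the Gan--Ginzburg Hamiltonian reduction of the unsliced one by $(U_\ell,\chi)$. It then reruns \cite[Theorems~4.1,~4.2]{BM13} for $U_\ell\backslash\!\!\backslash\!\!\backslash_\chi G/B\to U_\ell\backslash\!\!\backslash\!\!\backslash_\chi G/P$.
\end{itemize}

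If you want to keep your splitting-bundle framework, the same reduction is the natural way to close your gap. Get the equiconstitution at the fibre over $e$ for $\cB$ by invoking \cite{BL23}, then push it to $\cP$ with $\pi_\star$ as in \cite[Section~4]{BM13}.
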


\begin{proof}
For the case of full flag variety, this is \cite[Lemma~4.7]{BL23}. We deduce the parabolic case using the correspondences of \cite[Section~4]{BM13}.

We have the diagram with Cartesian squares
\begin{equation} \label{eq: diagram of correspondences for parabolic springers and slodowy}
\begin{tikzcd}
	& {(T^*\mathcal P \times_{\mathcal P} \mathcal B)_S} & \\
	{(T^*\mathcal P)_S} && {(T^*\mathcal B)_S} \\
	& {T^*\mathcal P \times_{\mathcal P} \mathcal B} \\
	{T^*\mathcal P} && {T^* \mathcal B}
	\arrow["{(pr_1)_S}"', from=1-2, to=2-1]
	\arrow["{i_S}", hook, from=1-2, to=2-3]
	\arrow[hook, from=1-2, to=3-2]
	\arrow[hook, from=2-1, to=4-1]
	\arrow[hook, from=2-3, to=4-3]
	\arrow["{pr_1}"', from=3-2, to=4-1]
	\arrow["i", hook, from=3-2, to=4-3]
\end{tikzcd}
\end{equation}
Denote $\pi_\star = (pr_1)_* \circ i^*$, and let $\cE_\cB'$ be the tilting generator for symplectic resolution $(T^*\cB)_S$.

It is shown in \cite[Section~4]{BM13} that $\pi_\star \cE_\cB = \cE_\cP$. Also by the flat base change (recall that $(T^*\cP)_S \rightarrow (T^*\cP)/G$ is smooth) we have $(\cE_\cP)_S = (\pi_\star \cE_\cB)_S \simeq (\pi_S)_\star( (\cE_\cB)_S)$. Since $(\cE_\cB)_S$ and $\cE_\cB'$ are equiconstituted by \cite[Lemma~4.7]{BL23}, the desired result will follow once we  prove that $(\pi_S)_\star \cE_\cB' \simeq \cE_\cP'$.

Recall \cite{GG02} that the MV-slices can be realized as the Hamiltonian reduction by a unipotent group $U_\ell$ w.r.t. to an additive character $\chi$ (see \cite[Section~5.1]{WWY20} and \cite[Section~2.4]{HKM24} for the parabolic case). The bottom part of \eqref{eq: diagram of correspondences for parabolic springers and slodowy} is obviously $U_\ell$-equivariant, and if we take the Hamiltonian reduction, we get precisely the top part. For the left and right terms, this is just the construction of \cite{GG02}, and for the middle term it is formally deduced from \cite{GG02}: one shows that the action morphism $U_\ell \times (T^*\mathcal P \times_{\mathcal P} \mathcal B)_S \rightarrow \nu^{-1}(\chi)$ is an isomorphism, where $\nu$ is the moment map for the $U_\ell$-action on $ T^*\mathcal P \times_{\mathcal P} \mathcal B$.

Hence, just like the bottom part of \eqref{eq: diagram of correspondences for parabolic springers and slodowy}, studied in \cite[Section~4]{BM13}, comes from the studying pullback and pushforward of D-modules along the morphism $G / B \rightarrow G/P$, the top part is the direct analog for the morphism $U_\ell \backslash\!\!\backslash\!\!\backslash_\chi G / B \rightarrow U_\ell \backslash\!\!\backslash\!\!\backslash_\chi G / P$. Then completely analogously to \cite[Theorems~4.1,~4.2]{BM13} one deduces the required isomorphism $(\pi_S)_\star \cE_\cB' \simeq \cE_\cP'$ for tilting generators, coming from quantizations of $T^*(U_\ell \backslash\!\!\backslash\!\!\backslash_\chi G / B) \simeq (T^*\cB)_S$ and $ T^*(U_\ell \backslash\!\!\backslash\!\!\backslash_\chi G / P) \simeq (T^*\cP)_S$, as required.
\end{proof}

It is likely that the claim of Proposition~\ref{prop: morita-equivalence on slices} can be proved in a broader setting of~\cite{KT21} using the results {\it loc.cit}.

\begin{prop} \label{prop: compatibility of t-structures for modified lusztigs}
The diagrams~\eqref{eq:comp_lusztig_corresp} are compatible with perversely-exotic t-structures. That is, the pullback of a BM tilting under $\wti \Gr^\ula \rightarrow \left. \wti \cN^{\leq n}_\ukey \right/ \GL_N$, defines a noncommutative resolution, Morita-equivalent to the noncommutative resolution, defined by the pullback of a BM tilting under $\wti \Gr^{\ula} \simeq \wti \Gr^{'\ula} \rightarrow \left. \wti \cN^{\leq n}_{'\ukey} \right/ \GL_{N + n}$.
\end{prop}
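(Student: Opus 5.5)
The plan is to reduce everything to slices, where both tiltings become tilting generators of one and the same conical symplectic resolution, and then apply Proposition~\ref{prop: morita-equivalence on slices}. Write $\cE^{(0)} = \wti \Psi(\cE_\ukey)$ for the pullback of the BM tilting along $\wti \Gr^\ula \rightarrow \wti \cN^{\leq n}_\ukey/\GL_N$. Write $\cE^{(1)}$ for the pullback of $\cE_{'\ukey}$ along $\wti \Gr^{\ula} \simeq \wti \Gr^{'\ula} \rightarrow \wti \cN^{\leq n}_{'\ukey}/\GL_{N+n}$. By the commutative triangle \eqref{eq:comp_lusztig_corresp}, $\cE^{(0)}$ is also the pullback of $\cE_\ukey$ along the composite through $\wti \cN^{\leq n}_{'\ukey}/\GL_{N+n}$. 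So it suffices to compare, on $\wti \cN^{\leq n}_{'\ukey}$, the BM tilting $\cE_{'\ukey}$ with the pullback $\cE''$ of $\cE_\ukey$ under the smooth morphism $\wti \cN^{\leq n}_{'\ukey}/\GL_{N+n} \rightarrow \wti \cN^{\leq n}_\ukey/\GL_N$ of Lemma~\ref{lem_com_our_morphisms}.

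Morita equivalence of the endomorphism algebras (after $\bR p_*$) amounts to $\cE^{(0)}$ and $\cE^{(1)}$ being equiconstituted, i.e.\ having the same indecomposable summands up to multiplicity. This property is local on the base $\ol \Gr^\la$. By the covering argument in the proof of Corollary~\ref{if restriction of sheaf is tilting then it is tilting}, using $G(\cO)$-translates of $G\cdot\cW^\la_\om$, and by equivariance of both sheaves, it suffices to check it after restriction to the slice $\wti \cW^\ula_\mu$, with $\mu = \om$ the closed orbit. Indecomposable summands of equivariant bundles are determined there because the slice is conical and contracts to the central fiber.

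On the slice, the restriction of $\cE^{(0)}$ is, by Proposition~\ref{psi of tilting restrictied to slice is tilting}, the restriction $(\cE_\ukey)_{S}$ of the BM tilting to $\wti \cT^\ukey_\pi = (\wti\cN_\ukey)_S$, with $S = \mathbb M^\mu_N$. The same argument for $'\ula$ identifies the restriction of $\cE^{(1)}$ with $(\cE_{'\ukey})_{S'}$, where $S' = \mathbb M^{\mu+\om_n}_{N+n}$. By Corollary~\ref{cor:iso_pull_st} and Lemma~\ref{lem:iso_ind_X_mu_gr}, both spaces are identified with $\wti \cW^\ula_\mu$ compatibly with the maps to the stacks. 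By Proposition~\ref{prop: morita-equivalence on slices}, applied once for $\GL_N$ and once for $\GL_{N+n}$, $\cEnd((\cE_\ukey)_S)$ is Morita equivalent to $\cEnd$ of the Kaledin tilting $\cE'$ of $(\wti\cN_\ukey)_S$. Likewise $\cEnd((\cE_{'\ukey})_{S'})$ is Morita equivalent to $\cEnd$ of the Kaledin tilting of $(\wti \cN_{'\ukey})_{S'}$.

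The main obstacle is showing that these two Kaledin tiltings agree under the isomorphism \eqref{eq:iso springer after base change to slice}. Kaledin's tilting depends only on the quantization of $X$ in positive characteristic, via splitting the Frobenius-twisted Azumaya algebra near the conical point. So I would use the remark after Corollary~\ref{cor:iso_pull_st}: the Mirkovi\'c--Vybornov isomorphism is symplectic and lifts to an isomorphism of the quantizations ($\cW$-algebras, \cite{WWY20}). This requires matching the period of the quantization, i.e.\ the $\rho$-shift, on the two sides. I expect this to work because the periods of $T^*\cP$ restrict compatibly to slices, but this compatibility is the step that needs the most care. Granting it, the two Azumaya algebras, and hence the two splitting bundles, correspond, possibly up to the Morita ambiguity allowed by choices of splitting. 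Combining this with the previous paragraph gives Morita equivalence on each slice, and hence globally. Finally, Morita-equivalent algebras have equivalent module categories compatible with $\Forg$ up to the equivalence, so the perverse coherent t-structures, and therefore the perversely-exotic t-structures, coincide.
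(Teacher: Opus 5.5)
Your proposal is correct and follows essentially the same route as the paper. Like the paper, you reduce to the maximal slice via the equivariance/covering argument of Section~\ref{subsection: conjecture in any type}, identify the restricted sheaves with BM tiltings on the isomorphic slices $(\wti\cN_\ukey)_S \simeq (\wti\cN_{'\ukey})_{S'}$, and apply Proposition~\ref{prop: morita-equivalence on slices} on both sides. The step you single out as the main obstacle is that the two Kaledin tiltings must match under the Mirkovi\'c--Vybornov isomorphism. The paper uses exactly this implicitly when it says both algebras are Morita equivalent to one ``defined in terms internal for this symplectic resolution,'' relying on the remark after Corollary~\ref{cor:iso_pull_st} that this isomorphism is symplectic and lifts to the quantizations. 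You are, if anything, more explicit than the paper about the period matching this requires.
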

\begin{proof}
Our above arguments in Section~\ref{subsection: conjecture in any type} show that the current group equivariant sheaf on $\wti \Gr^{\ula}$ is determined by its restriction to the maximal slice. This slice is isomorphic to the maximal slice in both $\wti \cN^{\leq n}_{\ukey}$ and $\wti \cN^{\leq n}_{'\ukey}$, and the morphism $\left. \wti \cN^{\leq n}_{'\ukey} \right/ \GL_{N + n} \rightarrow \left. \wti \cN^{\leq n}_{\ukey} \right/ \GL_{N}$ of Lemma~\ref{lem_com_our_morphisms} induces a symplectic isomorphism on these varieties (see Corollary~\ref{cor:iso_pull_st}).
So, it is sufficient to prove that the restrictions of BM tiltings to these pre-images of slices define Morita-equivalent noncommutative resolutions.

By Proposition~\ref{prop: morita-equivalence on slices} we see that both these restrictions define the algebra, Morita-equivalent to the one, defined in terms, internal for this symplectic resolution. The result follows.
\end{proof}

\begin{rem} \label{rem: morita-equivalence for Steinbergs}
Let us comment on the Steinberg case. Given compatible $\la^1, \la^2$, it follows from Proposition~\ref{prop: compatibility of t-structures for modified lusztigs} that the $\dg$-algebras $\cA_{\ula^1} \T^\bR_{\cO_{\bX}} \cA_{\ula_2}^{op}$ and $\cA_{{}'\ula^1} \T^\bR_{\cO_{\bX}} \cA_{{}'\ula_2}^{op}$ are Morita-equivalent. Further, there is a morphism $\cA_{{}'\ula^1} \T^\bR_{\cO_{{}'\bX}} \cA_{{}'\ula_2}^{op} \rightarrow \cA_{{}'\ula^1} \T^\bR_{\cO_{\bX}} \cA_{{}'\ula_2}^{op}$, which is clearly an isomorphism on $H^0$. In particular, it induces a bijection between classes of simple perversely-exotic sheaves, see Remark~\ref{rem: simples are supported on zero coh}.
\end{rem}

\subsection{Description of the basis via the Kazhdan--Lusztig canonical basis}\label{sec:descr_of_exotic_via_canonical}
In Section~\ref{subsec: titlings and simples} we defined the perversely-exotic t-structures on the derived categories of equivariant sheaves on $\wti \Gr^{\underline \la^1} \times^\bL_\bX \wti \Gr^{\underline \la^2}$ and $\wti \Gr^{\underline \la}$. Classes of simple objects in the hearts of these t-structures give bases in $K^{\GL_n(\cO) \rtimes \bG_m} (\wti \Gr^{\underline \la^1} \times_\Gr \wti \Gr^{\underline \la^2})$ and $K^{\GL_n(\cO) \rtimes \bG_m} (\wti \Gr^{\underline \la^1})$ respectively.
We are now ready to identify this basis 
with the Kazhdan--Lusztig basis.

Recall that to $\underline{\lambda}$ we associate $'\underline{k}$. We denote by ${}^{'\underline{k}}\mathcal{H}^{\mathrm{asph}}_{\leq n}$, ${}^{'\underline{k}^1}\mathcal{H}^{'\underline{k}^2}_{\leq n}$ the corresponding cell quotients of the spherical--anti-spherical and spherical--spherical modules, see Section~\ref{subsec: canonical bases in parabolic hecke}. Recall that, geometrically, 
\begin{equation*}
{}^{'\underline{k}^1}\mathcal{H}^{'\underline{k}^2}_{\leq n} = K^{\operatorname{GL}_{N+dn} \times \mathbb{G}_m}({\operatorname{St}_{'\ul{k}^1,'\ul{k}^2}^{\leq n}})=K^{\operatorname{GL}_{N+dn} \times \mathbb{G}_m}({\operatorname{St}_{'\ul{k}^1,'\ul{k}^2}})/K^{\operatorname{GL}_{N+dn} \times \mathbb{G}_m}({\operatorname{St}_{'\ul{k}^1,'\ul{k}^2}^{>n}}),
\end{equation*}
\begin{equation*}
{}^{'\underline{k}}\mathcal{H}^{\mathrm{asph}}_{\leq n} = K^{\operatorname{GL}_{N+dn} \times \mathbb{G}_m}({\widetilde{\mathcal{N}}_{'\ul{k}}}^{\leq n}) = K^{\operatorname{GL}_{N+dn} \times \mathbb{G}_m}({\widetilde{\mathcal{N}}_{'\ul{k}}})/K^{\operatorname{GL}_{N+dn} \times \mathbb{G}_m}({\widetilde{\mathcal{N}}_{'\ul{k}}}^{>n}),
\end{equation*}
where by $\bullet^{>n}$ we mean the preimages of the union of nilpotent orbits with at least $n+1$ Jordan blocks.

\begin{thm}\label{thm: exotic in convolution via Hecke}
\begin{enumerate}[(a)]
\
\item For any $d \geqslant 0$ there exist explicit homomorphisms
\begin{align}\label{eq: maps hecke to convolution not intro}
{}^{'\underline{k}^1}\mathcal{H}^{'\underline{k}^2}_{\leq n} &\rightarrow 
K^{\GL_n(\cO) \rtimes \bG_m} (\wti \Gr^{\ula^1} \times_{\Gr} \wti \Gr^{\ula^2}), & {}^{'\underline{k}}\mathcal{H}^{\mathrm{asph}}_{\leq n} &\rightarrow K^{\GL_n(\cO) \rtimes \bG_m} (\wti \Gr^\ula) 
\end{align}
compatible with algebra and module structures. The homomorphisms (\ref{eq: maps hecke to convolution not intro}) are injective for $d=0$ and isomorphisms for $d>0$.

\item Homomorphisms (\ref{eq: maps hecke to convolution not intro}) send Kazhdan--Lusztig canonical basis to the perversely-exotic basis.
\end{enumerate}
\end{thm}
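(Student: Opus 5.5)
The plan is to assemble the theorem from three ingredients already established: the algebraic identification of K-theory of parabolic Steinberg/Springer varieties with parabolic Hecke modules (Theorem~\ref{thm: simple perversely-exotic are canonical in parabolic hecke}), the modified Lusztig correspondence on K-theory (Corollary~\ref{cor: Psi define iso on K-theory}), and the compatibility of $\wti\Psi$, $\wti\Psi_{\ula^1,\ula^2}$ with perversely-exotic t-structures (Theorems~\ref{thm: psi for resolutions maps simples to simples}, \ref{thm: Psi for steinbergs}, Proposition~\ref{prop: compatibility of t-structures for modified lusztigs}).

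\textbf{Step 1: the cell quotients.} Apply Theorem~\ref{thm: simple perversely-exotic are canonical in parabolic hecke} for $G=\GL_{N+dn}$ with parabolics given by $'\ukey^1,'\ukey^2$ (resp.\ $'\ukey$). This identifies $K^{\GL_{N+dn}\times\bG_m}(\St_{'\ukey^1,'\ukey^2})$ with ${}^{'\ukey^1}\cH^{'\ukey^2}$, sending canonical basis to simple perversely-exotic classes, and similarly in the anti-spherical case. Restriction to the open subset $\St^{\leq n}$ (resp.\ $\wti\cN^{\leq n}_{'\ukey}$) is surjective on K-theory, with kernel spanned by classes supported on $\bullet^{>n}$. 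The closed complement is a union of orbit closures, so by the IC-description of simples (Theorem~\ref{thm: perverse coherent t-structure for dg-algebras}) its K-theory is spanned by classes of simples supported there. Hence the kernel is exactly the span of canonical basis elements for cells with $\geq n+1$ Jordan blocks. Moreover, the perversely-exotic t-structure restricts to the open part: restrictions of the remaining simples are exactly the simples there. This yields the cell quotient ${}^{'\ukey^1}\cH^{'\ukey^2}_{\leq n}$ together with its canonical basis matching the perversely-exotic basis on $\St^{\leq n}$. The algebra and module compatibility comes from convolution, which is preserved under restriction to open $\GL$-stable subsets since these are cut out by conditions on $x\in\gl$.

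\textbf{Step 2: the maps.} Define the homomorphisms \eqref{eq: maps hecke to convolution not intro} as the pullbacks along the modified Lusztig correspondences \eqref{eq:modified_lusztig_morphism_of_stacks}, i.e.\ the maps induced by $\wti\Psi$ and $\wti\Psi_{\ula^1,\ula^2}$. Since these factor through the open parts, they descend to the quotients. They are well-defined because the morphisms are smooth (Corollary~\ref{cor:smooth_morph}). Compatibility with convolution follows from the Cartesian squares \eqref{eq:cartesian_diagr_stacks} and base change. For $d>0$, bijectivity is Corollary~\ref{cor: Psi define iso on K-theory}.

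\textbf{Step 3: injectivity for $d=0$.} I expect this to be the main obstacle. The associated-graded argument of Proposition~\ref{prop:iso_on_K_theory} still applies stratum by stratum. Now the reductive stabilizer on the Springer side is a direct factor of the one on the Grassmannian side, as noted in the proof of Theorem~\ref{thm: psi for resolutions maps simples to simples}. Hence the pullback $R(H)\to R(H\times H')$ is split injective on each graded piece. The fibers over a point are canonically identified, so the graded map is an inclusion of the form $K^H(F)\hookrightarrow K^{H\times H'}(F)$. Injectivity on each graded piece then gives injectivity of the filtered map. If the filtration argument proves awkward, a fallback is to argue via bases: by Step 4, canonical basis elements map to distinct simple classes, and simple classes are linearly independent.

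\textbf{Step 4: bases.} By Step 1 the canonical basis is the set of simple perversely-exotic classes on the Springer/Steinberg side. Theorems~\ref{thm: psi for resolutions maps simples to simples} and \ref{thm: Psi for steinbergs} show $\wti\Psi$ sends these to simple perversely-exotic objects on $\wti\Gr^{\ula}$, resp.\ the Steinberg variety. For $d>0$ one also needs that the t-structure defined through $'\ukey$ agrees with the one defined via $\ukey$; this is Proposition~\ref{prop: compatibility of t-structures for modified lusztigs} and Remark~\ref{rem: morita-equivalence for Steinbergs}. The assignment on simples is injective, since distinct pairs (orbit, simple module) go to distinct pairs. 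So canonical basis maps into the perversely-exotic basis. For $d>0$, the map is an isomorphism of free modules sending a basis injectively into a basis, hence onto it.
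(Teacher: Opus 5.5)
Your proposal is correct and takes the same route as the paper. The maps come from Theorem~\ref{thm: simple perversely-exotic are canonical in parabolic hecke} composed with pullback along the modified Lusztig correspondence, which is an isomorphism for $d>0$ by Corollary~\ref{cor: Psi define iso on K-theory}. The basis statement follows from Theorems~\ref{thm: psi for resolutions maps simples to simples} and~\ref{thm: Psi for steinbergs} together with Proposition~\ref{prop: compatibility of t-structures for modified lusztigs}.

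You also spell out points the paper's two-line proof leaves implicit. These are the identification of the cell quotient with $K$-theory of the open part, injectivity for $d=0$, and surjectivity onto the perversely-exotic basis for $d>0$. For $d=0$ injectivity, your basis-theoretic fallback is the cleaner argument: the stratum-by-stratum filtration argument tacitly needs the localization sequences to be left exact.
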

\begin{proof}
Theorem \ref{thm: simple perversely-exotic are canonical in parabolic hecke} combined with Corollary \ref{cor: Psi define iso on K-theory} gives the desired homomorphisms \eqref{eq: maps hecke to convolution not intro}. 
 Theorems \ref{thm: Psi for steinbergs}, \ref{thm: psi for resolutions maps simples to simples}, \ref{thm: simple perversely-exotic are canonical in parabolic hecke} combined with Proposition \ref{prop: compatibility of t-structures for modified lusztigs} imply that these identifications send Kazhdan--Lusztig canonical basis to the perversely-exotic basis.
\end{proof}

\subsection{K-theoretic quantum affine skew Howe duality}

In this section, we reprove some results of Cautis--Kamnitzer (see \cite[Sections~5,~6]{CK18b}) using geometry.


Recall $\dot U_v(\widehat \gl_m)$ --- Lusztig's idempotent form of the quantum affine group. Recall \cite[5.1]{CK18b} that it may be viewed as a category with objects being sequences $(k_1, \hdots, k_m) \in \bZ^m$ and certain morphisms between them (our $v$ is their $q$). We note that there are no nonzero morphisms between $(k^1_1, \hdots, k^1_m)$ and $(k_1^2, \hdots, k_m^2)$ unless $\sum_i k^1_i = \sum_i k^2_i$, since the sum of coordinates of any simple root is 0.
As in \cite{CK18b}, we consider its quotient $\dot U_v(\widehat \gl_{m})^{n}$, obtained by killing all the objects except for those of the form $(k_1, \hdots, k_{m})$ with $0 \leq k_i \leq n$ for any $i$.
Depending on the context, we view it either  as a category, or as an algebra (with idempotents) --- the direct sum of $\Hom$'s in this category.

We denote $\wh \cH_{N} = \wh \cH_{\GL_N}$ --- the affine Hecke algebra for $\GL_N$. Recall from Section~\ref{sec:sph_sph} its  elements $C_{w_{L,0}}$ for a Levi $L$.

Recall the quantum affine Schur algebra:
\begin{equation*}
\mathbb S(m, N) = \End_{\widehat \cH_{N}}\Big(\bigoplus_{L} \widehat \cH_{N} C_{w_{L,0}}\Big),
\end{equation*}
where the sum runs over Levi $L = (k_1, \hdots, k_{m})$ with $\sum_i k_i = N$.
We can rewrite
\begin{equation} \label{eq: schur algebra as double sum of hecke two-sided quotients}
\mathbb S(m, N) \simeq \bigoplus_{L_1, L_2} \Hom_{\widehat \cH_{N}}(\widehat \cH_{N} C_{w_{L_1,0}}, \widehat \cH_{N} C_{w_{L_2,0}}) \simeq \bigoplus_{L_1, L_2} C_{w_{L_1,0}} \widehat \cH_{N} C_{w_{L_2,0}}.
\end{equation}

We can pack $\mathbb S(m, N)$ for all $N$ in the {\bf quantum affine Schur category} $\dot{\mathbb S}(m)$ (analogously to $\dot U_v(\widehat \gl_m)$) in the following way: the objects of the category $\dot{\mathbb S}(m)$ are elements $(k_1, \hdots, k_m) \in \bZ^m_{\geq 0}$; the $\Hom$ space between $(k^1_1, \hdots, k^1_m)$ and $(k_1^2, \hdots, k_m^2)$ is zero if $\sum k_i^1 \neq \sum k_i^2$, and is given by $C_{w_{L_1,0}} \widehat \cH_{N} C_{w_{L_2,0}}$ if $\sum k_i^1 = \sum k_i^2 = N$ (here $L_1 = (k_1^1, \hdots, k_m^1)$ and $L_2 = (k_1^2, \hdots, k_m^2)$). The composition is defined in an obvious way. Geometrically, the $\Hom$-space is given by $K^{\GL_N \times \bG_m} (\wti \cN_{\ukey^1} \times_{\gl_N} \wti \cN_{\ukey^2})$.

We define $\dot{\mathbb{S}}(m)^n$ as the quotient, obtained by killing all objects except for those with $0 \leq k_i \leq n$ for all $i$. 

\begin{prop}
The $\Hom$ spaces in $\dot{\mathbb{S}}(m)^n$ are given by $K^{\GL_N \times \bG_m} (\wti \cN_{\ukey^1}^{\leq n} \times_{\gl_N} \wti \cN_{\ukey^2}^{\leq n})$.
\end{prop}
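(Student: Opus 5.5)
The plan is to compute the $\Hom$ spaces of the quotient category directly, using the geometric description of $\Hom$ spaces in $\dot{\mathbb S}(m)$. By the definition just given, before passing to the quotient the $\Hom$ space between $\ukey^1$ and $\ukey^2$ is
\[
C_{w_{L_1,0}} \wh \cH_N C_{w_{L_2,0}} \simeq K^{\GL_N \times \bG_m}(\wti \cN_{\ukey^1} \times_{\gl_N} \wti \cN_{\ukey^2}),
\]
by Theorem~\ref{thm: simple perversely-exotic are canonical in parabolic hecke}(i) and the identification \eqref{eq:iso_L_H_M_K_theory}. The quotient $\dot{\mathbb S}(m)^n$ is obtained by killing all objects $\ukey$ with some $k_i > n$. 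Hence its $\Hom$ space from $\ukey^1$ to $\ukey^2$ (both with entries in $[0,n]$) is the original $\Hom$ space modulo the subspace $J$ of morphisms factoring through such killed objects. So the task is to identify $J$ with the kernel of the restriction
\[
K^{\GL_N \times \bG_m}(\wti \cN_{\ukey^1} \times_{\gl_N} \wti \cN_{\ukey^2}) \to K^{\GL_N \times \bG_m}(\wti \cN^{\leq n}_{\ukey^1} \times_{\gl_N} \wti \cN^{\leq n}_{\ukey^2}).
\]
By the localization sequence this kernel is the image of $K$-theory of the closed part lying over nilpotents with at least $n+1$ Jordan blocks.

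First I would check the inclusion of $J$ into the kernel. A morphism factoring through $\ukey$ is a convolution of classes supported on $\wti \cN_{\ukey^1}\times_{\gl_N}\wti\cN_{\ukey}$ and $\wti\cN_{\ukey}\times_{\gl_N}\wti\cN_{\ukey^2}$. If some $k_i>n$, then every $x$ strictly preserving a flag of type $\ukey$ has $\dim\ker x \geq k_i>n$, since $x$ kills the last step $F_{m-1}$ of dimension $k_m$. More generally, the rank bound $\dim\ker x\ge\max_i k_i$ follows from the Jordan structure of a nilpotent compatible with a flag of type $\ukey$. So the support lies in the locus with $\geq n+1$ Jordan blocks, and convolution preserves this closed support condition. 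This gives $J\subseteq\ker$.

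For the reverse inclusion I would use canonical bases and cells. The kernel is spanned by classes of simple perversely-exotic objects supported over orbits with $\geq n+1$ blocks, that is, by canonical basis elements $C_{n_w}$ in the two-sided cells attached to partitions with at least $n+1$ parts. This uses Theorem~\ref{thm: simple perversely-exotic are canonical in parabolic hecke} together with the fact that the localization kernel is spanned by simples supported on the closed locus, since the heart is finite length and compatible with restriction to open unions of orbits. Any such cell corresponds, via Lusztig's bijection between two-sided cells of the affine Hecke algebra of $\GL_N$ and partitions, to a partition with more than $n$ parts. Its dual partition $\pi'$ then has $\pi'_1>n$. I would then show that each such $C_{n_w}$ factors through the object $\ukey=\pi'$ (or a permutation of it), which has an entry exceeding $n$. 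The natural route is the standard fact that elements of a two-sided cell $\mathbf c_\pi$ lie in the ideal generated by $C_{w_{L,0}}$ with $L$ of type $\pi'$. Concretely, $C_{w_{L_1,0}}hC_{w_{L_2,0}}$ lies in $\wh\cH_N C_{w_{L,0}}\wh\cH_N$ modulo lower cells, and one then inducts on the cell order. Geometrically, this reflects that the orbit closure $\ol{O_\pi}$ is the image of $\wti\cN_{\pi'}$.

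The hard step is this reverse inclusion, namely showing that the two-sided ideal generated by the idempotents $C_{w_{L,0}}$ of types with a part $>n$, cut by $C_{w_{L_1,0}}(-)C_{w_{L_2,0}}$, is exactly the span of the cell elements for partitions with more than $n$ parts. I would first try to argue it geometrically. Classes supported over $\ol{O_\pi}$ can be written as convolutions through $\wti\cN_{\pi'}$ using the resolution $\wti\cN_{\pi'}\to\ol{O_\pi}$ and the fact that $K$-theory of Steinberg varieties is generated by convolution with the parabolic pieces. I would fall back on the Hecke algebra description of cell ideals if the geometric argument stalls.
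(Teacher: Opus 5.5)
Your setup and the first inclusion are fine and agree with the paper. If some $k_i>n$, then $x|_{F_{i-1}}\colon F_{i-1}\to F_i$ has kernel of dimension at least $k_i$, so $\wti\cN_{\ukey}=\wti\cN_{\ukey}^{>n}$, and any morphism factoring through such $\ukey$ is supported over the $>n$ locus.

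The gap is the reverse inclusion. You correctly single it out as the crux, but you do not prove it.
\begin{itemize}
\item The ``standard fact'' you invoke (cell elements lie in $\wh\cH_N C_{w_{L,0}}\wh\cH_N$ modulo lower cells, for $L$ of type $\pi'$) is essentially a restatement of the claim.
\item Even granting it, you would still have to turn membership in this two-sided ideal of $\wh\cH_N$ into a sum of compositions $\Hom(\ukey^1,\ukey)\times\Hom(\ukey,\ukey^2)\to\Hom(\ukey^1,\ukey^2)$ in the Schur category. This is not automatic over $\bC[v^{\pm1}]$, since $C_{w_{L,0}}$ is only quasi-idempotent.
\item Your geometric fallback (``$K$-theory of Steinberg varieties is generated by convolution with the parabolic pieces'') gives no reason why a class supported over $\ol{O_\pi}$ should factor through $\wti\cN_{\pi'}$.
\end{itemize}

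The missing idea, which the paper uses, is to pass to the associated graded for the orbit filtration and exploit a one-point fiber.
\begin{itemize}
\item The graded piece attached to $\mathbb O_e$ is $K^{Z}(s_{\ukey^1}^{-1}(e)\times s_{\ukey^2}^{-1}(e))$, where $Z=Z_{\GL_N\times\bG_m}(e)$.
\item Choose $\ukey$ so that $\wti\cN_{\ukey}\to\ol{\mathbb O}_e$ is a resolution, namely the transpose of the Jordan type of $e$. It has a part equal to the number of Jordan blocks, hence a part $>n$. It has at most $m$ parts because $e^m=0$.
\item Then $s_{\ukey}^{-1}(e)$ is a single point. So on graded pieces, convolution through $\ukey$ is just the exterior product
\[
K^{Z}(s_{\ukey^1}^{-1}(e))\otimes_{K^{Z}(\pt)}K^{Z}(s_{\ukey^2}^{-1}(e))\longrightarrow K^{Z}(s_{\ukey^1}^{-1}(e)\times s_{\ukey^2}^{-1}(e)).
\]
\item This map is an isomorphism by the equivariant K\"unneth formula, which applies because type A parabolic Springer fibers admit $Z$-stable affine pavings.
\end{itemize}
Hence every graded piece over an orbit with $\geq n+1$ blocks lies in the ideal generated by $1_{\ukey}$. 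Induction on the closure order of orbits (all orbits below such an orbit also have $\geq n+1$ blocks) then gives the reverse inclusion.

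The one-point fiber coming from birationality, and the K\"unneth surjectivity, are exactly what your sketch lacks. Equivariant $K$-theory of a product is not in general spanned by exterior products, so without them there is no reason the required factorization exists.
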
 
\begin{proof}
Let $I_n$ be the two-sided ideal in $\dot{\mathbb{S}}(m)$ generated by idempotents $1_{\underline{k}}$ corresponding to $\underline{k}$ with $k_i>n$ for some $i \in \{1,\ldots,m\}$. Let $J_n$ be the two-sided ideal in $\dot{\mathbb{S}}(m)$ corresponding to the union of Lusztig cells such that their partitions have at least $n+1$ blocks. Our goal is to show that $I_n=J_n$. Let $\underline{k}$ be such that $k_i>n$ for some $i$. Then $\wti{\mathcal{N}}_{\underline{k}}=\wti{\mathcal{N}}_{\underline{k}}^{> n}$, so $1_{\underline{k}} \in J_n$, hence,  $I_n \subset J_n$. It remains to prove that $1_{\underline{k}}$ as above generate $J_n$. 
That can be shown after passing to the associated graded under the cell filtration.

 Fix a nilpotent $e$, the corresponding subquotient in $K^{\GL_N \times \bG_m} (\wti \cN_{\ukey^1} \times_{\gl_N} \wti \cN_{\ukey^2})$ is isomorphic to $K^{Z_{\GL_N \times \bG_m}(e)}(s_{\ukey^1}^{-1}(e) \times s_{\ukey^2}^{-1}(e))$, where $s_{\ukey^i}\colon \wti \cN_{\ukey^i} \rightarrow \mathfrak{gl}_N$ is the natural map. Let $\underline{k}$ be such that $\widetilde{\mathcal{N}}_{\underline{k}} \rightarrow \overline{\mathbb{O}}_e$ is a resolution. Then, $s_{\underline{k}}^{-1}(e)$ consists of {\emph{one}} point and the  convolution map (compare with \cite[Theorem B.2]{TX06})
 \begin{multline*}
 K^{Z_{\GL_N \times \bG_m}(e)}(s_{\ukey^1}^{-1}(e) \times s_{\underline{k}}^{-1}(e)) \otimes_{K^{Z_{\GL_N \times \bG_m}(e)}(\operatorname{pt})} K^{Z_{\GL_N \times \bG_m}(e)}(s_{\underline{k}}^{-1}(e) \times s_{\ukey^2}^{-1}(e)) \rightarrow  \\ \rightarrow K^{Z_{\GL_N \times \bG_m}(e)}(s_{\ukey^1}^{-1}(e) \times s_{\underline{k}^2}^{-1}(e))
 \end{multline*}
 given by $x \otimes y \mapsto x \boxtimes y$
 is an isomorphism by the equivariant Kunneth formula which can be applied in this setting as all of the spaces above admit 
 ${Z_{\GL_N \times \bG_m}(e)}$-invariant
 affine pavings (same proof as of \cite[Theorem 3.7(3)]{NY04}). This proves that 
 after passing to the associated graded the ideal generated by the class of $1_{\underline{k}}$ contains $K^{Z_{\GL_N \times \bG_m}(e)}(s_{\ukey^1}^{-1}(e) \times s_{\underline{k}^2}^{-1}(e))$. 
\end{proof}

The quantum affine Schur algebra realizes the quantum affine Schur--Weyl duality, meaning that there is a natural functor $\dot U_v(\wh \gl_m) \rightarrow \dot{\mathbb{S}}(m)$, see e.g. \cite{SV00}. It obviously induces a functor 
\begin{equation} \label{eq: from quantum group to schur}
\Theta_m\colon  \dot U_v(\wh \gl_m)^n \rightarrow \dot{\mathbb{S}}(m)^n.  
\end{equation}


Also following \cite{CK18b}, we define the {\bf convolution category} $\mathrm{Conv}^n$. To be consistent with \cite{CK18b}, we work with Schubert varieties for $\PGL_n$ (they are {{isomorphic}} to the $\operatorname{GL}_n$-Schubert varieties we consider in the paper). The objects of $\mathrm{Conv}^n$ are sequences $(k_1, \hdots, k_\ell)$ with $1 \leq k_i \leq n - 1$ (for some $\ell$), and the morphisms are given by $K^{\GL_n(\cO) \rtimes \bG_m}(\wti \Gr_{\PGL_n}^{\ukey^1} \times_{\Gr_{\PGL_n}} \wti \Gr^{\ukey^2}_{\PGL_n})$. Note that again the $\Hom$ space is zero, unless $\sum k_i^1 - \sum k_i^2$ is divisible by $n$, since  otherwise the images of $\wti \Gr^{\ukey^1}_{\PGL_n}$ and $\wti \Gr^{\ukey^2}_{\PGL_n}$ lie in different connected components of $\Gr_{\PGL_n}$.


Recall also \cite[Section 5.4]{CK18b} the functors $\dot U_v(\wh \gl_m)^n \rightarrow \dot U_v(\wh \gl_{m + 1})^n$, which at the level of objects insert 0 or $n$ at some place. They are faithful by~\cite[Proposition~5.6]{CK18b}.

\begin{thm}[K-theoretic quantum affine skew Howe duality] \label{thm: quantum affine skew howe duality} \

\begin{enumerate}[1.]
\item \label{it: functor from U to conv}
For any $m$, there is a functor
\begin{equation} \label{eq: from quantum group to conv}
\dot U_v(\wh \gl_m)^n \rightarrow \mathrm{Conv}^n. 
\end{equation}
On objects, it removes 0's and $n$'s from sequences $(k_1, \hdots, k_m)$.

\item \label{it: embeddings of quantum groups are compatible with conv} All embeddings $\dot U_v(\gl_m)^n \rightarrow \dot U_v(\gl_{m + 1})^n$ are compatible with functors~\eqref{eq: from quantum group to conv}. 

\item \label{it: surjectivity for m + 2} For any $\ukey^1, \ukey^2 \in \bZ^{m}$, such that $\sum_i k_i^1=\sum_i k_i^2$, the map
\begin{equation*}
\Hom_{\dot U_v({\wh \gl_{m + 2}})}((0, \ukey^1, n), (0, \ukey^2, n)) \rightarrow \Hom_{\mathrm{Conv}^n}('\ukey^1, '\!\ukey^2)
\end{equation*}
is surjective. It remains surjective after adding arbitrarily more 0's or $n$'s.
\end{enumerate}
\end{thm}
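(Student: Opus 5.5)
The functor \eqref{eq: from quantum group to conv} will be the composite of three pieces. First comes the Schur--Weyl functor $\Theta_m$ of \eqref{eq: from quantum group to schur}. Next, on Hom spaces $\Hom_{\dot{\mathbb S}(m)^n}(\ukey^1,\ukey^2)=K^{\GL_N\times\bG_m}(\wti\cN^{\leq n}_{\ukey^1}\times_{\gl_N}\wti\cN^{\leq n}_{\ukey^2})$ (by the preceding Proposition) we apply the modified Lusztig pullback of Section~\ref{sec: modified Lusztig's correspondence}. Concretely, we pull back along the morphism
\begin{equation*}
\St^{\ula^1,\ula^2}_{\GL_n}\to \St^{\leq n}_{\ukey^1,\ukey^2}/\GL_N,
\end{equation*}
which is smooth by Corollary~\ref{cor:smooth_morph}. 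Here $\ula^i$ is obtained from $\ukey^i$ by deleting the entries $0$ and $n$. Entries $0$ contribute trivially. Entries $n$ are absorbed by the isomorphism $\wti\Gr^{\ula}\simeq\wti\Gr^{(\om_n,\ula)}$ of Lemma~\ref{lem:iso_shifts_gr_st}, and each $n$ can be commuted to the front. Finally we pass from $\GL_n$ to $\PGL_n$, which is harmless since the relevant Schubert varieties are isomorphic.

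The first step is functoriality. Pullback along the Lusztig correspondences must be compatible with convolution. This holds because the diagrams \eqref{eq:cartesian_diagr_stacks} are Cartesian over the common base $\mathfrak{gl}^{\leq n}_N/\GL_N$, respectively $\bX_N$. Since the morphisms are smooth, base change identifies the pulled-back convolution with the convolution on the Grassmannian side, exactly as for $\wti\Psi_{\ula^1,\ula^2}$. This proves part~\ref{it: functor from U to conv}.

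Part~\ref{it: embeddings of quantum groups are compatible with conv} needs two compatibility checks.
\begin{itemize}
\item The embeddings $\dot U_v(\wh\gl_m)^n\to\dot U_v(\wh\gl_{m+1})^n$ inserting a $0$ or an $n$ should correspond under $\Theta$ to the analogous embeddings of quantum affine Schur categories. For an inserted $0$ this is immediate, since the Levi is unchanged. For an inserted $n$ the rank goes from $N$ to $N+n$, and the Schur side embedding should be the pullback along the smooth maps \eqref{eq:morphisms diff quotients of tilde N} of Lemma~\ref{lem_com_our_morphisms}.
\item By the commutative triangles \eqref{eq:comp_lusztig_corresp}, composing the modified correspondence for $'\ukey$ with this pullback gives the correspondence for $\ukey$. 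So compatibility reduces to showing that \eqref{eq:morphisms diff quotients of tilde N} intertwines the $\Theta$'s.
\end{itemize}
I expect this identification to be the main obstacle. It requires matching the algebraic Cautis--Kamnitzer embedding with a geometric pullback, for which I would check the generators $E_i,F_i$, with $E_i$ and $F_i$ the Chevalley generators. For the finite generators these are classes of the standard Hecke-type correspondences, and both sides are visibly compatible. The affine generator $E_0$ is where explicit care is needed. As a fallback, I would compare canonical bases: the modified correspondences preserve perversely-exotic simples (Theorem~\ref{thm: Psi for steinbergs} together with Proposition~\ref{prop: compatibility of t-structures for modified lusztigs}), and by \cite{SV00} $\Theta$ sends canonical bases to canonical bases. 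If the pullback also preserves these bases, it is determined by them, which forces the compatibility.

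For part~\ref{it: surjectivity for m + 2}, take $(0,\ukey^i,n)$. The inserted $0$ does not change the geometry, while the inserted $n$ is placed at the front (up to the symmetry above), giving exactly $d=1$. The map then factors as $\Theta_{m+2}$ followed by the isomorphism of Corollary~\ref{cor: Psi define iso on K-theory}. It therefore suffices that $\Theta_{m+2}$ surjects onto $\Hom_{\dot{\mathbb S}(m+2)^n}$. By \cite{SV00} the Schur--Weyl map sends canonical basis to canonical basis integrally and is surjective onto the Schur algebra. Passing to the quotient $\dot{\mathbb S}(m+2)^n$ preserves surjectivity. Adding further $0$'s or $n$'s increases $d$ and changes nothing in this argument.
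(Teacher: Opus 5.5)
Part~1 matches the paper's construction. Part~3, however, has a genuine gap, and part~2 is not established.

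\textbf{Part 3.} You claim that \cite{SV00} gives surjectivity of $\Theta_{m+2}$ onto the affine Schur algebra. For $\dot U_v(\wh\gl_m)$ as used here (generated by the $E_i,F_i$, with no Heisenberg part), this is false. By Lusztig's aperiodicity, as recalled in \cite[Section~4.3]{SV00}, the image of $\dot U_v(\wh\gl_{m+2})\to\dot{\mathbb S}(m+2,N+n)$ is only the span of canonical basis elements indexed by \emph{aperiodic} periodic matrices $A=(a_{ij})_{i,j\in\bZ}$. These are the matrices such that, for each $r\neq 0$, some $a_{i,i+r}$ vanishes. Non-aperiodic matrices occur in general, and their canonical basis elements are missed. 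This is why \cite[Theorem~5.8]{CK18b} only obtains surjectivity after adding \emph{some} number of $0$'s and $n$'s.

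The missing idea is that the inserted $0$ is exactly what makes the first arrow surjective. The first entry of both $(0,\ukey^1,n)$ and $(0,\ukey^2,n)$ is zero, so the corresponding row of every matrix indexing this block vanishes. Hence $a_{1,1+r}=0$ for all $r\neq 0$, every matrix in the block is aperiodic, and $\Theta_{m+2}$ surjects onto this block (and hence onto its quotient in $\dot{\mathbb S}(m+2)^n$). Your remark that the $0$ ``does not change the geometry'' is true, but your argument never uses the $0$ at all, which should have been a warning sign. Also, adding further $0$'s does not increase $d$. The second arrow, the isomorphism of Corollary~\ref{cor: Psi define iso on K-theory}, is handled as in the paper.

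\textbf{Part 2.} For $n$-insertion you reduce to showing that pullback along the maps of Lemma~\ref{lem_com_our_morphisms} intertwines the $\Theta$'s, and you leave this open. That lemma also only covers insertion at the front. Your fallback does not close the gap: two linear maps that each send canonical basis elements to canonical basis elements or zero need not agree unless the labelings match, which is precisely what has to be proved.

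For $0$-insertion, the compatibility of $\Theta_m,\Theta_{m+1}$ with the Schur-side embedding is not ``immediate'', especially at the affine node. The paper takes it from \cite[Proposition~2.3.1]{Li24}; only the geometric triangle is trivial, by deleting the repeated step.

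The paper then avoids any generator check for $n$-insertion. It uses the Chevalley involution $(k_1,\ldots,k_m)\mapsto(n-k_1,\ldots,n-k_m)$, $E_i^{(s)}\leftrightarrow F_i^{(s)}$, of $\dot U_v(\wh\gl_m)^n$. This corresponds on $\mathrm{Conv}^n$ to the automorphism induced by inverse-transpose on $\PGL_n$, is intertwined by the functor of part~1, and exchanges $0$-insertion with $n$-insertion. Cyclic relabeling of the entries reduces insertion at an arbitrary position to the first one.
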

Part~\ref{it: surjectivity for m + 2} should be thought of as a strengthening of \cite[Theorem~5.8]{CK18b}, where it is only claimed that the map becomes surjective after adding {\it some number} of 0's and $n$'s.
\begin{proof}
Let us prove \ref{it: functor from U to conv} We claim that there is a functor $\dot{\mathbb{S}}(m)^n \rightarrow \mathrm{Conv}^n$, and composing it with~\eqref{eq: from quantum group to schur} we get the claim. On objects, it sends $\underline{k}$ to $\underline{k}'$ obtained 
by removing all 0's and $n$'s. On morphisms it is given by:
\begin{multline*}
\Hom_{\dot{\mathbb S}(m)^n} (\ukey^1, \ukey^2) = 
K^{\GL_{N } \times \bG_m} (\wti \cN_{\ukey^1}^{\leq n} \times_{\gl_{N}} \wti \cN_{\ukey^2}^{\leq n}) \rightarrow \\ \rightarrow K^{\GL_n(\cO) \rtimes \bG_m} (\wti \Gr_{\operatorname{GL}_n}^{\ukey^1} \times_{\Gr_{\PGL_n}} \wti \Gr_{\GL_n}^{\ukey^2}) \iso \\
\iso
K^{\GL_n(\cO) \rtimes \bG_m} (\wti \Gr_{\PGL_n}^{'\ukey^1} \times_{\Gr_{\PGL_n}} \wti \Gr_{\PGL_n}^{'\ukey^2}) =
\Hom_{\mathrm{Conv}^n}('\ukey^1, '\ukey^2),
\end{multline*}
where the first nontrivial arrow is given by Lusztig correspondence of Section~\ref{sec: modified Lusztig's correspondence}, and the second one is the natural isomorphism.
This proves the claim.

Let us prove~\ref{it: embeddings of quantum groups are compatible with conv}
Let $\varepsilon\in\{0,n\}$, and let
\[
\iota^\varepsilon\colon
\dot U_v(\widehat{\mathfrak{gl}}_m)^n
\longrightarrow
\dot U_v(\widehat{\mathfrak{gl}}_{m+1})^n
\]
be one of the embeddings obtained by inserting $\varepsilon$ into a
sequence. Since all the constructions are equivariant with respect to
cyclic relabeling of the entries, it is enough to consider insertion in
the first position.

First, we consider the case $\varepsilon = 0$. Then in \cite{Li24}, the corresponding functor
\begin{equation} \label{eq: embedding of schurs}
\jmath^0 \colon
\dot{\mathbb S}(m)^n \rightarrow \dot{\mathbb S} (m+1)^n    
\end{equation}
is defined, and the commutativity of the top square of the diagram
\begin{equation} \label{eq: diagram embedding and conv}
\begin{tikzcd}
	{\dot U_v(\widehat{\mathfrak{gl}}_m)^n} && {\dot U_v(\widehat{\mathfrak{gl}}_{m+1})^n} \\
	{\dot{\mathbb S}(m)^n} && {\dot{\mathbb S}(m + 1)^n} \\
	& {\mathrm{Conv}^n}
	\arrow["{\iota^0}", from=1-1, to=1-3]
	\arrow["{\Theta_m}", from=1-1, to=2-1]
	\arrow["{\Theta_{m + 1}}", from=1-3, to=2-3]
	\arrow["{\jmath^0}", from=2-1, to=2-3]
	\arrow[from=2-1, to=3-2]
	\arrow[from=2-3, to=3-2]
\end{tikzcd}
\end{equation}
is proved, see~\cite[Proposition~2.3.1]{Li24}.
The commutativity of the lower triangle in~\eqref{eq: diagram embedding and conv} 
is obtained by deleting the repeated step on both sides of the Lusztig correspondence. This proves the claim for $\varepsilon = 0$.

To prove the claim for $\varepsilon = n$, we note that there is the Chevalley automorphism of $\dot U_v(\wh \gl_m)^n$, defined by 
\begin{equation*}
(k_1,\ldots,k_m)\longmapsto
(n-k_1,\ldots,n-k_m),
\qquad
E_i^{(s)}\longleftrightarrow F_i^{(s)}.
\end{equation*}
The corresponding automorphism of $\mathrm{Conv}^n$ is induced by the inverse-transpose automorphism of $\PGL_n$. It is easy to see that these automorphisms are intertwined by $\dot U_v(\wh \gl_m)^n \rightarrow \mathrm{Conv}^n$, and, moreover, they interchange the 0-insertion with $n$-insertion embeddings of $\dot U_v(\wh \gl_m)^n$. This proves the claim for $\varepsilon = n$.\footnote{Functors~\eqref{eq: embedding of schurs} and diagram~\eqref{eq: diagram embedding and conv} make perfect geometric meaning for both both $\varepsilon = 0$ and $\varepsilon = n$, and the commutativity may be shown using the affine version of the geometric language of~\cite{BLM90}. We do not do it here to save space.}

Let us prove~\ref{it: surjectivity for m + 2} Denote
$\widetilde{\ukey}^{a} = (0,\ukey^a,n)$ for $a=1,2$.
Put $\sum_i k_i^1=\sum_i k_i^2=N$.
The map in the statement factors as
\begin{equation} \label{eq: hom from quantum group to conv for m + 2 decomposition}
\operatorname{Hom}_{
  \dot U_v(\widehat{\mathfrak{gl}}_{m+2})}
  \bigl(\widetilde{\ukey}^{1},
        \widetilde{\ukey}^{2}\bigr) \rightarrow
\operatorname{Hom}_{\dot {\mathbb{S}}(m+2)^n}
  \bigl(\widetilde{\ukey}^{1},
        \widetilde{\ukey}^{2}\bigr)  \rightarrow
\operatorname{Hom}_{\operatorname{Conv}^n}
  (\ukey^1,\ukey^2).
\end{equation}
Below we show that our addition of 0 ensures that the first arrow is surjective, and the addition of $n$ ensures that the second arrow is an isomorphism.

By \cite[(2.2.2)]{DF10} the periodic basis of the block
\[
C_{w_{\widetilde{\ukey}^{2}, 0}}
\dot {\mathbb{S}}(m+2,  N + n)
C_{w_{\widetilde{\ukey}^{1}, 0}}
\]
is indexed by periodic matrices $A=(a_{ij})_{i,j\in\mathbb Z}$
whose row sums and column sums are
$\widetilde{\ukey}^{2}$ and
$\widetilde{\ukey}^{1}$, respectively. Such a matrix is called aperiodic if, for every $r\neq0$, there is an $i$ such that $ a_{i,i+r} = 0$.

The first component of both
$\widetilde{\ukey}^{1}$ and
$\widetilde{\ukey}^{2}$ is zero. Hence the corresponding row
of every matrix indexing this block is identically zero. In
particular, for every $r\neq0$, $a_{1,1+r}=0$.
Thus every periodic matrix occurring in this block is aperiodic.

By \cite[Section~4.3]{SV00}, the image of
\[
\dot U_v(\widehat{\mathfrak{gl}}_{m+2})
\longrightarrow
\dot{\mathbb S}(m+2, N + n)
\]
is spanned by the canonical basis elements indexed by aperiodic
matrices. It follows that the map onto the above block is
surjective. 
Therefore, the first arrow in~\eqref{eq: hom from quantum group to conv for m + 2 decomposition} is surjective.

The second arrow in~\eqref{eq: hom from quantum group to conv for m + 2 decomposition} is an isomorphism by the modified Lusztig correspondence, Corollary~\ref{cor: Psi define iso on K-theory}. 
\end{proof}



\begin{cor}\label{cor: action on module}
\begin{enumerate}[1.]
\item For any $d \geq 0$, there is a natural structure of a $U_v(\widehat \gl_{m + d})$-module on the space
\begin{equation} \label{eq: sum of K-theories of convolutions}
\bigoplus_{\underline k} K^{\GL_n(\cO) \rtimes \bG_m} (\wti \Gr^{\underline k}_{\operatorname{PGL}_n}),
\end{equation}
where the sum runs over all tuples 
$\underline k = (k_1, \hdots, k_m)$ with $1 \leq k_i \leq n-1$ and with a fixed $\sum_{i} k_i$ mod $n$. 

\item For $d_1, d_2 >0$ with $d=d_1+d_2$ each summand $K^{\GL_n(\cO) \rtimes \bG_m} (\wti \Gr^{\underline k})$ is cyclic as a module over 
\begin{equation}\label{eq:map_homs_U_v_conv}
\operatorname{Hom}_{\dot{U}_v(\widehat{\mathfrak{gl}}_{m+d})^n}\Big(\scalebox{1.4}{$1$}_{\underbrace{0,\ldots,0}_{d_1},\scalebox{1.1}{$\underline{k}$},\underbrace{n,\ldots,n}_{d_2}},
\scalebox{1.4}{$1$}_{\underbrace{0,\ldots,0}_{d_1},\scalebox{1.1}{$\underline{k}$},\underbrace{n,\ldots,n}_{d_2}}\Big) \twoheadrightarrow K^{\GL_n(\cO) \rtimes \bG_m} (\wti \Gr^{\underline k} \times_\Gr \wti \Gr^{\underline k}),
\end{equation}
where the map (\ref{eq:map_homs_U_v_conv}) is induced by (\ref{eq: from quantum group to conv}). The generator is the structure sheaf of the diagonal $\wti \Gr^{\underline k}  \hookrightarrow \wti \Gr^{\underline k} \times_\Gr \wti \Gr^{\underline k}$.
\end{enumerate}
\end{cor}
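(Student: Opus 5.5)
Part 1 follows from Theorem~\ref{thm: quantum affine skew howe duality}. A functor out of a category with objects and Hom-spaces is the same thing as a module over the idempotented algebra. Given $d$, consider sequences $(\ukey, \underbrace{0,\ldots,0}_{d})$, or more generally any insertion of $d$ entries equal to $0$ or $n$. Their images in $\mathrm{Conv}^n$ are just $\ukey$. The plan is to compose the functor $\dot U_v(\wh\gl_{m+d})^n \to \mathrm{Conv}^n$ of part~\ref{it: functor from U to conv} with the representable functor $\Hom_{\mathrm{Conv}^n}(\emptyset,-)$. Here $\emptyset$ is the empty sequence, corresponding to $\wti\Gr^{\emptyset}=\pt$, and $\Hom_{\mathrm{Conv}^n}(\emptyset,\ukey) = K^{\GL_n(\cO)\rtimes\bG_m}(\wti\Gr^{\ukey}\times_{\Gr}\pt)=K^{\GL_n(\cO)\rtimes\bG_m}(\wti\Gr^{\ukey})$.

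I first need to check that this identification is legitimate. The derived product over $\bX$ with the point $z^0$ should be $\wti\Gr^\ukey$ itself, at least classically, which is all K-theory sees. The $\PGL_n$ versus $\GL_n$ issue only affects connected components, and that is why the sum is over $\sum k_i$ fixed mod $n$.

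A single object of $\dot U_v(\wh\gl_{m+d})^n$ maps to a given $\ukey$ only after inserting $0$'s and $n$'s. To get an action of the whole idempotented algebra on the direct sum~\eqref{eq: sum of K-theories of convolutions}, I assign to each object $(k'_1,\ldots,k'_{m+d})$ the summand indexed by the sequence obtained by deleting its $0$'s and $n$'s. Compatibility with composition is just functoriality.

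Part 2 is the main content, and I would deduce it from part~\ref{it: surjectivity for m + 2} of the theorem. In its "arbitrarily more $0$'s or $n$'s" form, and using cyclic relabeling invariance to move the $0$'s to the front and the $n$'s to the back, part~\ref{it: surjectivity for m + 2} gives the surjectivity in~\eqref{eq:map_homs_U_v_conv}. This uses $d_1,d_2>0$: at least one $0$ makes every matrix in the block aperiodic, so the \cite{SV00} image is everything, and at least one $n$ makes Corollary~\ref{cor: Psi define iso on K-theory} apply.

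It then remains to show that $K^{\GL_n(\cO)\rtimes\bG_m}(\wti\Gr^\ukey)$ is cyclic over the convolution algebra $K^{\GL_n(\cO)\rtimes\bG_m}(\wti\Gr^\ukey\times_\Gr\wti\Gr^\ukey)$. This is the step I expect to be the main obstacle, though it should be formal. The candidate generator is the unit $[\cO_{\wti\Gr^\ukey}]$, which is the image of the diagonal's structure sheaf in $\Hom(\emptyset,\ukey)$. Any class $[\cF]$ on $\wti\Gr^\ukey$ should be $[\cF\boxtimes \cO]$ pushed from the Steinberg variety, convolved with $[\cO]$.

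The concern is that the fiber product is over $\bX$, and $\wti\Gr^\ukey\to\bX$ does not factor through the point. So I would instead argue on the Springer side via the modified Lusztig correspondence. There, $K^{\GL_{N+dn}\times\bG_m}(\wti\cN^{\leq n}_{'\ukey})$ is a quotient of the parabolic anti-spherical module ${}^{'\ukey}\cH^{\mathrm{asph}}$. That module is cyclic over ${}^{'\ukey}\cH^{'\ukey}$, since it is generated by the image of $C_{w_{L,0}}$ over $C_{w_{L,0}}\cH C_{w_{L,0}}$, as $\cH^{\mathrm{asph}}$ is cyclic over $\cH$. Passing through the isomorphisms of Corollary~\ref{cor: Psi define iso on K-theory}, and using that they intertwine the module structures (Theorem~\ref{thm: exotic in convolution via Hecke}(a)), gives cyclicity with the generator corresponding to the diagonal.
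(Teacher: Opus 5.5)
\textbf{There is a genuine gap: your Part~1 builds a module structure on the wrong space.} The identification $\Hom_{\mathrm{Conv}^n}(\emptyset,\ukey)=K^{\GL_n(\cO)\rtimes\bG_m}(\wti\Gr^{\ukey})$ is false. A fiber product over $\Gr$ (or over $\bX$) with a point is the fiber, not the space. So $\Hom_{\mathrm{Conv}^n}(\emptyset,\ukey)=K^{\GL_n(\cO)\rtimes\bG_m}(p^{-1}(t^0))$, where $p^{-1}(t^0)=\{L_\bullet : L_m=z^kL_0\}$ is a projective subvariety of Springer-fiber type, empty unless $\sum_i k_i=kn$. Two examples:
\begin{itemize}
\item $\Hom_{\mathrm{Conv}^n}(\emptyset,(1))=0$ because the sums differ mod $n$, whereas $\wti\Gr^{(1)}=\Gr^{\omega_1}\cong\bP^{n-1}$ has nonzero K-theory.
\item For $\ukey=(1,n-1)$ you get the K-theory of $p^{-1}(zL_0)\cong\bP^{n-1}$, whereas $\wti\Gr^{\ukey}$ is a $\bP^{n-1}$-bundle over $\bP^{n-1}$.
\end{itemize}
Composing with the representable functor therefore gives a module built from K-theories of central fibers, i.e.\ a standard-module-type piece as in the last subsection of the paper. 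It is not a module structure on $\bigoplus_{\ukey}K^{\GL_n(\cO)\rtimes\bG_m}(\wti\Gr^{\ukey})$, and for a residue of $\sum_i k_i$ not divisible by $n$ it is simply zero.

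What you need is the convolution representation of $\mathrm{Conv}^n$, namely $\ukey\mapsto K^{\GL_n(\cO)\rtimes\bG_m}(\wti\Gr^{\ukey})$ with $[\cF]*[\cG]=[\bR\pr_{1*}(\cF\otimes^{\bL}\pr_2^*\cG)]$. Here $\pr_1$ is proper, being a base change of $\wti\Gr^{\ukey^2}\to\Gr$. Composing this with the functor of part 1 of Theorem~\ref{thm: quantum affine skew howe duality}, together with your bookkeeping of deleting $0$'s and $n$'s, gives Part~1.

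Your surjectivity argument in Part~2 is correct and is what the paper uses: part 3 of Theorem~\ref{thm: quantum affine skew howe duality}, with one $0$ forcing aperiodicity and one $n$ making Corollary~\ref{cor: Psi define iso on K-theory} apply. With the convolution module, however, cyclicity over $K(\wti\Gr^{\ukey}\times_\Gr\wti\Gr^{\ukey})$ is immediate. For the diagonal $\Delta$ one has $\pr_1\circ\Delta=\id$, so $[\Delta_*\cF]*[\cO_{\wti\Gr^{\ukey}}]=[\cF]$; your $\cF\boxtimes\cO$ should be $\Delta_*\cF$. Your worry that $\wti\Gr^{\ukey}\to\bX$ does not factor through a point comes only from the wrong identification above.

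The Springer-side replacement is also wrong as written. $\cH^{\mathrm{asph}}$ is generated by $\varphi(1)$, but $C_{w_{L,0}}\varphi(1)=\chi(C_{w_{L,0}})\varphi(1)=0$ as soon as $W_L\neq 1$. To see this, apply $\chi$ to $C_{w_{L,0}}T_s=vC_{w_{L,0}}$, which gives $(v+v^{-1})\chi(C_{w_{L,0}})=0$. The condition $W_L\neq 1$ always holds for ${}'\ukey$ when $d>0$ and $n\geq 2$, because of the entries equal to $n$. The correct generator of ${}^{L}\cH^{\mathrm{asph}}$ is ${}^{L}\theta_0$, the image of $[\cO_{T^*\cP}]$, which is just the Springer-side shadow of the diagonal argument.
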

\begin{proof}
Both of the claims follow from Theorem~\ref{thm: quantum affine skew howe duality}. 
\end{proof}

\subsection{Description of the basis via quantum group} \label{subsec: description of basis for schuberts}

We are ready to prove our theorem on description of the basis in equivariant K-theory of the convolution diagram of affine Schubert varieties via quantum groups.



For two vector spaces $V_1, V_2$ (possibly algebras) with fixed bases, we say that a surjective morphism $V_1 \twoheadrightarrow V_2$ between them is {\it compatible} with the bases if every basis vector in $V_2$ has a unique basis vector in $V_1$ in its pre-image, and every basis vector in $V_1$ is mapped either to a basis vector or to zero.

As we explained in Section~\ref{subsec: canonical bases in parabolic hecke}, there is the Kazhdan--Lusztig canonical basis in each summand of \eqref{eq: schur algebra as double sum of hecke two-sided quotients}, and so $\mathbb{S}(m, N)$ possesses the Kazhdan--Lusztig basis. On the other hand, there is the Lusztig canonical basis \cite{Lus10} in $\dot U_v(\widehat \gl_{m})$. It is the main result of \cite{SV00} that the morphism~\eqref{eq: from quantum group to schur} is compatible with these canonical bases in the above sense. 
So the canonical basis in $\mathbb{S}(m,  N)$ resembles both the relation of this algebra to the affine Hecke algebra, and to the affine quantum group.




\begin{thm} \label{thm: main theorem description of basis}
\begin{enumerate}
\item[1.] The perversely-exotic basis in the Hom spaces of the category $\operatorname{Conv}^n$
is a part of the Lusztig canonical basis of $\dot U_v(\widehat \gl_{m})$. 
That is, the functor (\ref{eq: from quantum group to conv})
is compatible w.r.t. the perversely-exotic basis in $\operatorname{Conv}^n$ and the canonical basis in $\dot U_v(\widehat \gl_{m})$. 

\item[2.] The perversely-exotic basis in $K^{\GL_n(\cO) \rtimes \bG_m} (\wti \Gr^{\underline \la})$ coincides with the canonical basis, if we view this space as a cyclic module over the Hom-space (\ref{eq:map_homs_U_v_conv}). 
\end{enumerate}
\end{thm}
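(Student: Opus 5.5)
The plan is to factor the functor \eqref{eq: from quantum group to conv} as $\dot U_v(\wh \gl_m)^n \xrightarrow{\Theta_m} \dot{\mathbb S}(m)^n \rightarrow \mathrm{Conv}^n$, as in the proof of Theorem~\ref{thm: quantum affine skew howe duality}, and to check compatibility with bases separately on each arrow. Compatibility for a composition of surjections that are each compatible with bases follows directly from the definition. To get surjectivity, we first replace $m$ by $m+2$ and pad by $0$ and $n$. By Theorem~\ref{thm: quantum affine skew howe duality}(2), the resulting map agrees with the original one on the relevant Hom spaces, and the embeddings $\dot U_v(\wh\gl_m)\hookrightarrow \dot U_v(\wh\gl_{m+2})$ send canonical basis elements to canonical basis elements by \cite[Proposition~5.6]{CK18b} and \cite{SV00}. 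So it suffices to treat the padded Hom spaces $\Hom((0,\ukey^1,n),(0,\ukey^2,n))$.

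For the first arrow we use \cite{SV00}. The map $\dot U_v(\wh\gl_{m+2}) \to \dot{\mathbb S}(m+2)$ is compatible with the Lusztig canonical basis and the Kazhdan--Lusztig basis of the summands $C_{w_{L_1,0}}\wh\cH C_{w_{L_2,0}}$. In the padded block every periodic matrix is aperiodic, as shown in the proof of Theorem~\ref{thm: quantum affine skew howe duality}(3), so this map is a basis-compatible surjection. Passing to the quotient $\dot{\mathbb S}(m+2)^n$ kills exactly the cell ideal $J_n$. This ideal is spanned by KL basis elements attached to cells with at least $n+1$ Jordan blocks, and its image in the quotient is ${}^{'\ukey^1}\cH^{'\ukey^2}_{\le n}$. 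Basis compatibility persists because $J_n$ is spanned by a subset of the basis.

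For the second arrow we apply Theorem~\ref{thm: exotic in convolution via Hecke} with $d>0$: the map ${}^{'\ukey^1}\cH^{'\ukey^2}_{\le n}\to K^{\GL_n(\cO)\rtimes\bG_m}(\wti\Gr^{\ula^1}\times_\Gr\wti\Gr^{\ula^2})$ is an isomorphism carrying the KL basis to the perversely-exotic basis. Composing the two arrows gives part~1.

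Part~2 then follows formally. The class of $\cO_\Delta$ is the identity element, which is the image of the canonical basis element $1_{\ukey}$. Its action on $K(\wti\Gr^{\ula})$ matches the anti-spherical picture ${}^{'\ukey}\cH^{\mathrm{asph}}_{\le n}$ of Theorem~\ref{thm: exotic in convolution via Hecke}. In ${}^{'\ukey}\cH^{\mathrm{asph}}_{\le n}$, the canonical basis is the image of the canonical basis of ${}^{'\ukey}\cH^{'\ukey}$ under the map to the anti-spherical module, which sends $C_w$ to $C_w^{\mathrm{asph}}$ or to zero. Combined with Theorem~\ref{thm: exotic in convolution via Hecke}(b), this yields the claim.

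The main obstacle is bookkeeping in the identification of $\dot{\mathbb S}(m+2)^n$ Hom spaces with the cell quotient. We need the quotient by $I_n$, established in the proposition preceding Theorem~\ref{thm: quantum affine skew howe duality}, to coincide with the KL-spanned ideal $J_n$, and we need the \cite{SV00} basis to match the parabolic KL basis used in Theorem~\ref{thm: simple perversely-exotic are canonical in parabolic hecke}, including normalizations of $v$ and signs. I would address this via Proposition~\ref{prop:comp_canonical_basis_L_H_M_H} and Remark~\ref{rem:norm_hecke}, together with the known statement that two-sided cell ideals are spanned by canonical basis elements.
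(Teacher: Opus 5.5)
Your argument for part~1 is the paper's argument, unpacked. The paper's proof combines the compatibility of $\dot U_v(\wh\gl_m)\to\mathbb S(m,N)$ with canonical bases from \cite{SV00}, Corollary~\ref{cor: Psi define iso on K-theory}, Theorems~\ref{thm: psi for resolutions maps simples to simples} and~\ref{thm: Psi for steinbergs}, and Theorem~\ref{thm: simple perversely-exotic are canonical in parabolic hecke}. Your padding, aperiodicity and cell-ideal bookkeeping, taken from the proof of Theorem~\ref{thm: quantum affine skew howe duality}, is what makes ``compatible'' (which includes surjectivity) actually hold.

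One step should be removed. \cite[Proposition~5.6]{CK18b} only gives faithfulness of the embeddings $\iota$, and neither it nor \cite{SV00} says that $\iota$ sends canonical basis elements to canonical basis elements. You also do not need it. On unpadded Hom spaces the map to $\mathrm{Conv}^n$ is in general not surjective: periodic matrices occur, and for $d=0$ the map on K-theory is only injective. So compatibility in the paper's sense can only be asserted for padded blocks, and transporting bases along $\iota$ would not create surjectivity anyway. Prove part~1 for the padded blocks directly.

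Part~2 has an intermediate step that fails as written.
\begin{enumerate}
\item $[\cO_\Delta]$ is the unit of the convolution algebra. It is not an element of $K^{\GL_n(\cO)\rtimes\bG_m}(\wti\Gr^\ula)$, so it cannot be the cyclic generator of that module.
\item The only available map of the form ``$C_w\mapsto C^{\mathrm{asph}}_w$ or $0$'' is $\varphi\colon\cH\to\cH^{\mathrm{asph}}$. But $\varphi$ vanishes identically on ${}^L\cH^L\subset\cH C_{w_{L,0}}$, because $\chi(C_{w_{L,0}})=v^{-\ell(w_{L,0})}\sum_{w\in W_L}(-1)^{\ell(w)}=0$ whenever $W_L\neq 1$, which is the case here.
\end{enumerate}
A nonzero module map ${}^{'\ukey}\cH^{'\ukey}\to{}^{'\ukey}\cH^{\mathrm{asph}}$ must have the form $h\mapsto h\cdot m_0$ with $m_0\neq 0$, for instance ${}^L\theta_0$, the class of $\cO_{T^*\cP}$. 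The claim that this sends Kazhdan--Lusztig elements to Kazhdan--Lusztig elements or zero, hitting each exactly once, is an additional result. Geometrically it says that convolution with $\cO_{T^*\cP}$ is t-exact for the perversely-exotic t-structures and preserves simples, a parabolic analogue of exactness of Whittaker averaging. Neither your proposal nor the paper proves it.

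The paper's proof reads part~2 simply as the identification, via $\wti\Psi$ and Corollary~\ref{cor: Psi define iso on K-theory}, of the perversely-exotic basis with the Kazhdan--Lusztig basis of ${}^{'\ukey}\cH^{\mathrm{asph}}_{\le n}$ (Theorem~\ref{thm: simple perversely-exotic are canonical in parabolic hecke}(ii)). Under that reading, your final appeal to Theorem~\ref{thm: exotic in convolution via Hecke}(b) already suffices and the intermediate claim should be dropped. If instead you mean ``canonical basis of the cyclic module'' in Lusztig's based-module sense, you must supply the missing exactness argument.
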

\begin{proof}
By Corollary~\ref{cor: Psi define iso on K-theory}, the functors $\wti \Psi_{\ula}$ (resp. $\wti \Psi_{\ula_1, \ula_2}$) define isomorphisms at the level of equivariant K-theory with the ($\leq n$)-part of the corresponding Springer (resp. Steinberg) varieties. By Theorems~\ref{thm: psi for resolutions maps simples to simples},~\ref{thm: Psi for steinbergs}, they map classes of simple perversely-exotic sheaves to the classes of simple perversely-exotic sheaves. By Theorem~\ref{thm: simple perversely-exotic are canonical in parabolic hecke}, classes of simples correspond to the Kazhdan--Lusztig basis. Both parts of the theorem follow.
\end{proof}

\subsection{Relation to the constructions of Ginzburg--Vasserot and Nakajima}
Let us finish this section by mentioning the relation between our approach and  Ginzburg--Vasserot's and Nakajima's geometric constructions of representations of quantum loop algebras via the geometry of quiver varieties.

In Ginzburg--Vasserot's approach (see \cite{GV93, Vas98}) one starts with the space $\mathscr{F}$ of all $m$-step partial flags:
\begin{equation*}
\mathscr{F} = \{\{0\}=F_0 \subseteq F_1 \subseteq \ldots \subseteq F_{m-1} \subseteq F_m = \mathbb{C}^N\}, \quad \mathscr{F}=\bigsqcup_{\underline{k}} \mathscr{F}_{\underline{k}},
\end{equation*}
where for $\underline{k}=(k_1,\ldots,k_m)$, $\mathscr{F}_{\underline{k}}$ is the connected component of $\mathscr{F}$ consisting of $F_{\bullet}$ with $\operatorname{dim}F_i/F_{i-1}=k_i$.
Consider the cotangent bundle $T^*\mathscr{F}$. In our notation,
\begin{equation*}
T^*\mathscr{F} = \bigsqcup_{(k_1,\ldots,k_m)}\widetilde{\mathcal{N}}_{\underline{k}},
\end{equation*}
so 
\begin{equation}\label{eq:K theory Schur}
K^{\operatorname{GL}_N \times \mathbb{G}_m}(T^*\mathscr{F} \times_{\mathfrak{gl}_N} T^*\mathscr{F}) = \mathbb{S}(m,N).    
\end{equation}

Then one considers a homomorphism  (compare with (\ref{eq: from quantum group to schur}) above)
\begin{equation}\label{eq:U to S}
U_v(\widehat{\mathfrak{gl}}_m) \rightarrow \mathbb{S}(m,N)
\end{equation}
and constructs a family of {\emph{standard}} modules over $U_v(\widehat{\mathfrak{gl}}_m)$ via this homomorphism as follows. For any nilpotent $e \in \mathfrak{gl}_N$ with $e^m=0$, let $\mathscr{F}_e \subset T^*\mathscr{F}$ be the fiber over $e$ under the Springer map. The identification (\ref{eq:K theory Schur}) combined with (\ref{eq:U to S})
gives the action 
\begin{equation}\label{eq:action of quantum affine on fiber}
U_v(\widehat{\mathfrak{gl}}_m) \curvearrowright K^{Z_{\operatorname{GL}_N \times \mathbb{G}_m}(e)}(\mathscr{F}_e)   
\end{equation}
by $K^{Z_{\operatorname{GL}_N \times \mathbb{G}_m}(e)}(\operatorname{pt})$-linear endomorphisms. Specializing to $s \in \operatorname{Spec}K^{Z_{\operatorname{GL}_N \times \mathbb{G}_m}(e)}(\operatorname{pt})$, we obtain the standard module corresponding to $(e,s)$.

Nakajima in \cite{Nak01} constructs the action (\ref{eq:action of quantum affine on fiber}) via the geometry of quiver varieties (note that his approach works for an arbitrary quiver $Q$). Nakajima's construction can be reformulated as follows in our type A case (compare with \cite{Sav06}). Let $\mathscr{S}_e := e + Z_{\mathfrak{gl}_n}(f)$ be the Slodowy slice associated with an $\mathfrak{sl}_2$-triple $(e,h,f)$. The action map 
$
\operatorname{GL}_N \times \mathscr{S}_e \rightarrow \mathfrak{gl}_N
$
is smooth, so it induces a (pullback) homomorphism of convolution algebras:
\begin{equation}\label{eq:homom conv alg}
K^{\operatorname{GL}_N \times \mathbb{G}_m}(T^*\mathscr{F} \times_{\mathfrak{gl}_N} T^*\mathscr{F}) \rightarrow K^{Z_{\operatorname{GL}_N \times \mathbb{G}_m}(e)^{\mathrm{red}}}(\widetilde{\mathscr{S}}_e \times_{\mathfrak{gl}_N} \widetilde{\mathscr{S}}_e),
\end{equation}
where $\widetilde{\mathscr{S}}_e \subset T^*\mathscr{F}$ is the preimage of $\mathscr{S}_e$ and $\bullet^{\mathrm{red}}$ refers to the reductive part of the centralizer. Compare with \cite{TX06, BM13}.
Note that the fibers of $T^*\mathscr{F} \rightarrow \mathfrak{gl}_N$, $\widetilde{\mathscr{S}}_e \rightarrow \mathfrak{gl}_N$ over $e$ are both equal to $\mathscr{F}_e$.
It is not hard to see that the convolution action 
\begin{equation*}
K^{\operatorname{GL}_N \times \mathbb{G}_m}(T^*\mathscr{F} \times_{\mathfrak{gl}_N} T^*\mathscr{F}) \curvearrowright K^{Z_{\operatorname{GL}_N \times \mathbb{G}_m}(e)}(\mathscr{F}_e)
\end{equation*}
we used above is induced by the convolution action 
\begin{equation}\label{eq:act of slice K theory}
K^{Z_{\operatorname{GL}_N \times \mathbb{G}_m}(e)^{\mathrm{red}}}(\widetilde{\mathscr{S}}_e \times_{\mathfrak{gl}_N} \widetilde{\mathscr{S}}_e) \curvearrowright K^{Z_{\operatorname{GL}_N \times \mathbb{G}_m}(e)}(\mathscr{F}_e)
\end{equation}
composed with the homomorphism (\ref{eq:homom conv alg}). So, the action (\ref{eq:action of quantum affine on fiber}) is induced by (\ref{eq:act of slice K theory}) and the homomorphism $U_v(\widehat{\mathfrak{gl}}_m) \rightarrow K^{Z_{\operatorname{GL}_N \times \mathbb{G}_m}(e)^{\mathrm{red}}}(\widetilde{\mathscr{S}}_e \times_{\mathfrak{gl}_N} \widetilde{\mathscr{S}}_e)$.

By \cite{Maf05, NT17}, components of $\wti{\mathscr{S}}_e$  are isomorphic to Nakajima quiver varieties corresponding to a fixed framing vector (determined by $e$) and varying dimension vectors.

Than the resulting action of $U_v(\widehat{\mathfrak{gl}}_m)$ boils down to the one defined in \cite{Nak01}, up to the conventions for the $\mathbb{G}_m$-action and the normalization of the quantum-loop generators.

The action (\ref{eq:action of quantum affine on fiber}) can be recovered from the action (\ref{eq: sum of K-theories of convolutions}) discussed in Corollary \ref{cor: action on module}. Namely, under the Lusztig correspondence, the choice of a nilpotent orbit $\mathbb{O}_e \subset \mathfrak{gl}_N$ with at most $n$ Jordan blocks determines the choice of the $\operatorname{PGL}_n(\mathcal{O})$-orbit $\operatorname{PGL}_n(\mathcal{O}) \cdot z^\eta \subset \overline{\operatorname{Gr}}_{\operatorname{PGL}_n}^{N\omega_1}$ (compare with the proof of Proposition \ref{prop:iso_on_K_theory}). In this way, we obtain a filtration of~\eqref{eq: sum of K-theories of convolutions} by $U_v(\widehat{\mathfrak{gl}}_m)$-submodules labeled by nilpotent orbits with at most $n$ Jordan blocks. The subquotient corresponding to $e$ is  isomorphic as a  $U_v(\widehat{\mathfrak{gl}}_m)$-module to $K^{Z_{\operatorname{GL}_N \times \mathbb{G}_m}(e)}(\mathscr{F}_e)$ appearing  in~\eqref{eq:action of quantum affine on fiber}.


\end{document}